\documentclass[11pt,reqno]{amsart}
\usepackage[utf8]{inputenc}
\usepackage{lmodern} 
\usepackage{amsmath,amssymb,amsfonts,amsthm,latexsym}
\usepackage{graphicx,multirow,enumerate}
\usepackage{tikz}
\usetikzlibrary{calc}
\usepackage{booktabs} 
\usepackage{float}
\usepackage{caption}
\usepackage{array}      
\usepackage{cellspace}  
\usepackage{mathrsfs}  
\usepackage[all]{xy}   
\usepackage{color,xcolor} 
\usepackage{cancel}     
\usepackage{url}        
\usepackage{longtable}
\usepackage{hyperref}
\theoremstyle{plain}
\newtheorem{thm}{Theorem}[section]
\newtheorem{lem}[thm]{Lemma}
\newtheorem{prop}[thm]{Proposition}
\newtheorem{cor}[thm]{Corollary}
\theoremstyle{definition}
\newtheorem{ex}[thm]{Example}
\newtheorem{defn}[thm]{Definition}
\newtheorem{rem}[thm]{Remark}

\begin{document}
\title[]{Gelfand--Dorfman Algebras: Nilpotency, Solvability, Construction and Classification}

    \author{Ziyi Zhang}
	\address{School of Mathematics and Statistics, Northeast Normal University, Changchun 130024, Jilin, China}
	\email{zhangziyi@nenu.edu.cn}
	
	\author{Zeyu Hao}
	\address{School of Mathematics and Statistics, Northeast Normal University, Changchun 130024, Jilin, China}
	\email{haozeyu@nenu.edu.cn}
	
	\author{Yining Sun}
	\address{School of Mathematics and Statistics, Northeast Normal University, Changchun 130024, Jilin, China}
	\email{yiningsun@nenu.edu.cn}
	
	\author{Liangyun Chen*}
	\address{School of Mathematics and Statistics, Northeast Normal University, Changchun 130024, Jilin, China}
	\email{chenly640@nenu.edu.cn}

\begin{abstract}
In this paper, we characterize the nilpotency and solvability of Gelfand--Dorfman (GD) algebras. In contrast with Poisson algebras and transposed Poisson algebras, we give examples show the nilpotency and solvability of a GD algebra are not determined by the nilpotency and solvability of its underlying algebras. To obtain more examples of special and non-special GD algebras, we give several construction methods and determine whether the resulting algebras are special. Futhermore, we study GD algebra structures on simple Lie algebras. We provide examples demonstrating that GD algebra structures on simple Lie algebras are not necessarily trivial, distinguishing them from Poisson and transposed Poisson algebras. Finally, we provide a complete algebraic classification of low-dimensional complex GD algebras, and determine their nilpotency, solvability and speciality.
\end{abstract}

	\renewcommand{\thefootnote}{\fnsymbol{footnote}}

\thanks{*Corresponding author.}
\thanks{\emph{MSC}(2020). 17A30, 17B40, 17B63.}
\thanks{\emph{Key words and phrases}. Gelfand--Dorfman Algebras; Nilpotency; Solvability; Construction and Classification.}
\thanks{This work is supported by NNSF of China (No. 12271085).}

\maketitle

\section{Introduction}

Gelfand--Dorfman (GD) algebras, originated from the study of
Hamiltonian operators and Hamiltonian pairs in integrable systems
\cite{Gelfand-Dorfman1979,Dorfman1993}. They are algebraic structures carrying simultaneously a Lie bracket and a Novikov product subject to a compatibility identity. 
The correspondence between quadratic conformal algebras and GD algebra structures originated in the Hamiltonian-pair theory of Gelfand and Dorfman\cite{Gelfand-Dorfman1979}.
Xu established the correspondence between quadratic conformal superalgebras and GD super algebras in \cite{Xu2000} and classified them via compatible Lie superalgebra and Novikov superalgebra structures. He also constructed GD algebras from classical \(R\)-matrices in \cite{Xu2002}.
 A standard construction of GD algebras arises from differential Poisson algebras. A GD algebra is called special if it can be embedded into a differential Poisson algebra. Kolesnikov and his collaborators explored  the speciality of GD algebras. In \cite{Kolesnikov-Sartayev-Orazgaliev}, they exhibited two independent special identities of degree 4. In
\cite{Kolesnikov-Sartayev}, they also proved that all special identities of degree at most 5 are consequences of the two degree 4 identities, while whether these identities generate all special identities remains open. In \cite{Kolesnikov-Panasenko}, they proved that Novikov commutator algebras are special. The relation with Novikov--Poisson algebras was further clarified in \cite{Kolesnikov-Nesterenko}, where it was shown that every commutator GD algebra (known as a transposed Poisson algebra) obtained from a Novikov--Poisson algebra is special. Quadratic Lie conformal superalgebras related to Novikov superalgebras were studied in \cite{KKP2021}, and free special GD algebras were further investigated in \cite{Gubarev-Sartayev}. On the other hand, Hong and collaborators studied quadratic Lie conformal algebras and their simplicity, central extensions and conformal derivations \cite{Hong-Wu,Hong2022}. They developed extending structures for GD algebras \cite{Wen-Hong2024} and Hong, Bai and Guo introduced a bialgebra theory of GD algebras, with applications to Lie conformal bialgebras \cite{Hong-Bai-Guo2025}. 

	Transposed Poisson algebras form a particular class of GD algebras. Transposed Poisson algebras were first introduced in \cite{Bai-Bai-Guo-Wu2023}, where they were defined as a dual notion to Poisson algebras by exchanging the roles of the associative product and the Lie bracket in the Leibniz rule. Research on transposed Poisson algebras has gained significant attention in recent years. Transposed Poisson algebra structures on simple Lie algebras were studied in \cite{FernandezOuaridi2024, FernandezOuaridi2026}, and the classification of complex \(3,4\)-dimensional transposed Poisson algebras was obtained in \cite{BeitesFernandezKaygorodov,Yuan,Zhang,Zhang4D}. Moreover, further progress has been made in the study of transposed Poisson structures on specific algebraic frameworks. By building
the connection between \(\frac12\)-derivations of Lie algebras and transposed Poisson algebras \cite{Ferreira}, descriptions of all transposed Poisson structures on certain Lie algebras have been obtained. These include the Witt algebra, the Virasoro algebra and the \(W(a,b)\) algebra \cite{Ferreira}, Block Lie algebras and superalgebras \cite{Kaygorodov-Block}, solvable Lie algebras with filiform nilradical \cite{Abdurasulov-Adashev-Eshmeteva},
Galilean and solvable Lie algebras \cite{Kaygorodov-Lopatkin-Zhang}, and other classes of Lie algebras. Further developments include Hom and BiHom versions
\cite{LaraiedhSilvestrov2021,MaLi2023}, and the recent study of nilpotency and Frattini theory for transposed Poisson algebras \cite{JinHong2026}.

The first purpose of this paper is to study nilpotency and solvability for GD algebras. These questions are natural in view of recent work on Poisson algebras \cite{F2025}, where these properties are completely determined by the nilpotency and solvability of the underlying algebras. A similar phenomenon also occurs in generalized Poisson algebras, transposed Poisson algebras and poisson \(n\)-lie algebras~\cite{JinHong2026,Cao2025,Cao2026}.
Regarding nilpotency, we show that for GD algebras of dimension at most 3, the nilpotency is completely determined by the underlying structures. However, we provide 4-dimensional examples to demonstrate that this property is not determined by its underlying algebras. Furthermore, we establish a nilpotency criterion and derive an Engel-type theorem. These results recover and extend the main nilpotency theorems for transposed Poisson algebras presented in \cite{JinHong2026}. As for solvability, we prove that for GD algebras of dimension up to 3, their solvability is fully characterized by that of their underlying algebras. However, we construct an explicit counterexample demonstrating that this
characterization fails in dimension \(4\). Furthermore, we provide a characterization of solvability, which we then apply to characterize the solvability of transposed Poisson algebras. Finally, we study the relationship between nilpotency and solvability in GD algebras. We show that for dimensions $n \leq 3$, a GD algebra is solvable if and only if its derived algebra is nilpotent. However, this equivalence is no longer valid in dimension 4. We further characterize the relationship between solvability and the nilpotency of the derived algebra in the general case. Additionally, as an application of the result, we obtain corresponding solvability criteria for transposed Poisson algebras.

To obtain further examples of special and non-special GD algebras, we present several constructions of GD algebras. Some constructions produce special GD algebras, whereas others give non-special examples detected by the degree-4 special identities. We also study GD algebra structures on simple Lie algebras. For a fixed Lie algebra, the GD compatibility condition can be
interpreted as a Chevalley--Eilenberg \(1\)-cocycle condition for the right multiplication map. We classify all GD products on the Lie algebra $\mathfrak{sl}_2(\mathbb{C})$ and show that it admits non-trivial GD algebra structures. Unlike Poisson and transposed Poisson algebras, which are known to be trivial on simple Lie algebras \cite{FernandezOuaridi2024, BenayadiBoucetta2014}. Finally, we give low-dimensional classification results over \(\mathbb C\), including the classification of complex 2,3-dimensional GD algebras and the determination of their nilpotency, solvability and speciality.

The paper is organized as follows. Section~\ref{sec:preliminaries} contains the basic definitions and preliminary results. Section~\ref{sec:nilpotency} studies nilpotency of GD algebras. We prove that, in dimensions at most \(3\), a GD algebra is nilpotent if and only if both its underlying Lie algebra and its underlying Novikov algebra are nilpotent. We construct a \(4\)-dimensional counterexample showing that this characterization fails in general. We then give a nilpotency criterion and derive an
Engel-type theorem, and obtain the nilpotency characterization for transposed Poisson algebras as a consequence.
Section~\ref{sec:solvability} is devoted to solvability. We prove that, in dimensions at most \(3\), the solvability of a GD algebra is completely determined by the solvability of its underlying algebras, whereas a \(4\)-dimensional counterexample shows that this is no longer true in general. We further characterize the solvability of arbitrary GD algebras and obtain the corresponding solvability criterion for transposed Poisson algebras. Finally, we study the relationship between nilpotency and solvability. We show that the solvability of a GD algebra is equivalent to the nilpotency of its derived algebra in dimensions up to 3, but provide counterexamples in dimension 4. A general characterization is then established and applied to derive solvability criteria for transposed Poisson algebras. Section~\ref{sec:constructions}
presents several construction methods for GD algebras and determines whether the resulting algebras are special or non-special. Section~\ref{sec:simple-lie} studies GD algebra structures on simple Lie algebras, in particular, we classify all GD products on \(\mathfrak{sl}_2(\mathbb C)\). Finally,
Section~\ref{sec:classification-3d} gives the classification of complex 2,3-dimensional GD algebras, together with the determination of their nilpotency, solvability and speciality.

Throughout this paper, unless otherwise stated, all vector spaces and algebras are assumed to be finite-dimensional over an algebraically closed field \(\Bbbk\) of characteristic zero. The classification results in Section~\ref{sec:classification-3d} are established over the field of complex numbers \(\mathbb C\).

\section{Preliminaries}
\label{sec:preliminaries}
In this section, we provided definitions and preliminary results that will be used throughout of the paper.

\begin{defn}\cite{Xu2000}
A \emph{Gelfand--Dorfman algebra}, or simply a \emph{GD algebra}, is a triple \((A,\circ,[\cdot,\cdot])\) equipped with two bilinear operations such that \((A,\circ)\) is a Novikov algebra, \((A,[\cdot,\cdot])\) is a Lie algebra, and the following compatibility identity holds for all \(x,y,z\in A\):
\begin{equation}
\label{eq:GD_compatibility}
    [x, y \circ z] - [z, y \circ x]
    + [y, x] \circ z - [y, z] \circ x
    - y \circ [x, z] = 0 .
\end{equation}
\end{defn}

\begin{defn}\cite{Kolesnikov-Sartayev-Orazgaliev}
A GD algebra \((A,\circ,[\cdot,\cdot])\) is said to be special if it can be embedded into a GD algebra induced by a differential Poisson algebra. That is, there exists a differential Poisson algebra \((P,\cdot,\{\cdot,\cdot\},d)\) such that \(A\) is isomorphic to a GD subalgebra of \(P^{(d)}\), where \(P^{(d)}\) is the GD algebra on \(P\) defined by
$
        [u,v]=\{u,v\}$ and $
        u\circ v=u\cdot d(v)
      $ for all $ u,v\in P.
$
\end{defn}

\begin{rem}
The class of special GD algebras forms a subvariety of the variety of all GD algebras. An identity which holds in all special GD algebras but does not hold in the whole variety of GD algebras is called a \emph{special identity}. The study of special identities for GD algebras was initiated in \cite{Kolesnikov-Sartayev-Orazgaliev} and further developed in \cite{Kolesnikov-Sartayev}. In particular, \cite{Kolesnikov-Sartayev-Orazgaliev} proved that there are no special identities of degree \(<4\) and exhibited the following two independent special identities of degree 4:
\begin{equation}
\label{eq:GD_identity_35}
    [x_1, x_3 \circ x_2] \circ x_4
    + \bigl([x_3, x_1] \circ x_2\bigr) \circ x_4
    =
    [x_1, (x_3 \circ x_4) \circ x_2]
    + [x_3 \circ x_4, x_1] \circ x_2 ,
\end{equation}
and
\begin{equation}
\label{eq:GD_identity_40}
\begin{aligned}
    2[x_4, (x_3 \circ x_2) \circ x_1]
    &=
    [x_4 \circ x_1, x_3 \circ x_2]
    + [x_4 \circ x_2, x_3 \circ x_1]  \\
    &\quad
    + [x_4, x_3 \circ x_2] \circ x_1
    + [x_4, x_3 \circ x_1] \circ x_2  \\
    &\quad
    + [x_3, x_4 \circ x_1] \circ x_2
    + [x_3, x_4 \circ x_2] \circ x_1 .
\end{aligned}
\end{equation}
Subsequently, \cite{Kolesnikov-Sartayev} proved that every special identity of degree at most 5 is a consequence of these two identities. It remains an open problem whether these two identities generate all special identities. Finally, not every GD algebra is special, non-special examples were found implicitly in \cite{Kolesnikov-Sartayev-Orazgaliev}, and explicit counterexamples were later given in \cite{Kolesnikov-Panasenko,KKP2021}.
\end{rem}

\begin{defn}
Let \((A,\circ,[\cdot,\cdot])\) be a GD algebra. The GD derived series of subspaces for \(n\geq 0\) is given by
\begin{equation}
\label{eq:derived-series}
        A^{(0)} = A,
        \quad
        A^{(n+1)}
        =
        A^{(n)}\circ A^{(n)}
        +
        [A^{(n)},A^{(n)}].
\end{equation}
Likewise, the lower central series is the sequence of subspaces for \(n\geq 1\) given by
\begin{equation}
\label{eq:GD-powers}
        A^{\langle 1\rangle} =A,
        \quad
        A^{\langle n+1\rangle}
        =
        \sum_{i=1}^{n}
        \left(
        A^{\langle i\rangle}\circ A^{\langle n+1-i\rangle}
        +
        [A^{\langle i\rangle},A^{\langle n+1-i\rangle}]
        \right).
\end{equation}
A GD algebra \(A\) is called solvable (nilpotent) if there exists \(N\geq 0\) (\(N\geq 1\)) such that
$
        A^{(N)}=0$ $
        \bigl ( A^{\langle N\rangle}=0\bigr).
$
\end{defn}

\begin{defn}\cite{Cao2025}
A \emph{generalized Poisson algebra} is a tuple
\((A,\cdot,[\cdot,\cdot],D)\), where \((A,\cdot)\) is a commutative associative algebra, \((A,[\cdot,\cdot])\) is a Lie algebra, and \(D\) is a derivation of both \((A,\cdot)\) and \((A,[\cdot,\cdot])\). Moreover, for all \(x,y,z\in A\),
\begin{equation}
\label{eq:Gen_Poisson_compatibility}
        [x\cdot y,z]
        =
        x\cdot [y,z]+[x,z]\cdot y
        +x\cdot y\cdot D(z).
\end{equation}
\end{defn}

\section{Nilpotency of GD Algebras}
\label{sec:nilpotency}
For Poisson algebras~\cite{F2025}, recent results show that nilpotency is equivalent to the simultaneous nilpotency of the underlying Lie algebra and the underlying commutative associative algebra. Related nilpotency questions have also been studied for transposed Poisson algebras, generalized Poisson algebras and Poisson \(n\)-Lie algebras~\cite{JinHong2026,Cao2025,Cao2026}. In this section, we study the nilpotency of GD algebras.

For \(x\in A\), let \(L_x,R_x,\operatorname{ad}_x\in \operatorname{End}_{\Bbbk}(A)\) be given by
\[
        L_x(a)=x\circ a,\quad
        R_x(a)=a\circ x,\quad
        \operatorname{ad}_x(a)=[x,a],
        \quad \text{for }a \in A.
\]
For a subset \(S\) of an associative algebra \(B\), denote by \(\operatorname{alg}_{B}(S)\) the non-unital associative subalgebra of \(B\) generated by \(S\). The multiplication algebra of \(A\) is
$
        \mathcal M(A)
        :=
        \operatorname{alg}_{\operatorname{End}_{\Bbbk}(A)}
        \{L_x,R_x,ad_x\mid x\in A\}.
$

\begin{lem}\label{lem:dial-vs-M}
For a GD algebra \((A,\circ,[\cdot,\cdot])\), the following conditions are equivalent:
\begin{enumerate}
\item \(A\) is nilpotent, that is, \(A^{\langle N\rangle}=0\) for some \(N\geq 1\).
\item all mixed nonassociative monomials in the operations \(\circ\) and \([\cdot,\cdot]\) of sufficiently large length vanish identically on \(A\).
\item \(\mathcal M(A)\) is a nilpotent associative algebra.
\end{enumerate}
\end{lem}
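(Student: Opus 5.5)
We prove the cycle (1)$\Rightarrow$(2)$\Rightarrow$(3)$\Rightarrow$(1). The main tool is a description of $A^{\langle n\rangle}$ by monomials.

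\emph{Preliminary step.} By induction on $n$, using the recursive definition \eqref{eq:GD-powers}, we show that $A^{\langle n\rangle}$ is exactly the linear span of all values on $A$ of mixed nonassociative monomials of length $n$. Such a monomial is a binary tree with $n$ leaves whose internal nodes are labelled $\circ$ or $[\cdot,\cdot]$. The inclusion ``$\supseteq$'' holds because a monomial of length $n+1$ is a product $u\ast v$, with $\ast\in\{\circ,[\cdot,\cdot]\}$, of monomials of lengths $i$ and $n+1-i$. The inclusion ``$\subseteq$'' holds because each summand in \eqref{eq:GD-powers} is spanned by such products, by bilinearity. As a byproduct, $A^{\langle n+1\rangle}\subseteq A^{\langle n\rangle}$: substitute a length-$2$ monomial for one leaf of a length-$n$ monomial. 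Hence the series is decreasing.

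\emph{(1)$\Leftrightarrow$(2).} By the preliminary step, $A^{\langle N\rangle}=0$ says precisely that all monomials of length $N$ vanish. Since the series is decreasing, all monomials of length $\geq N$ then vanish. The converse is immediate.

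\emph{(2)$\Rightarrow$(3).} Suppose all monomials of length $\geq N$ vanish. A product $T_1\cdots T_k$ of generators $T_j\in\{L_{x_j},R_{x_j},\operatorname{ad}_{x_j}\}$ applied to $a\in A$ is a mixed monomial of length $k+1$ in $x_1,\dots,x_k,a$. So every product of $k\geq N-1$ generators is zero. Every element of $\mathcal M(A)^{k}$ is a linear combination of products of at least $k$ generators, so $\mathcal M(A)^{N-1}=0$.

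\emph{(3)$\Rightarrow$(1).} Suppose $\mathcal M(A)^{m}=0$. The key claim is that every monomial $w$ of length $n$ evaluated at $a_1,\dots,a_n$ can be written as $T_1\cdots T_k(a_i)$ with generators $T_j$ and $k\geq \log_2 n$, or at least with $k\to\infty$ as $n\to\infty$. To see this, walk from the root of the tree down to a deepest leaf. At each internal node the current subword is $u\ast v$. If the path continues into $u$, the node contributes $R_v$ or $-\operatorname{ad}_v$ applied to $u$. If it continues into $v$, it contributes $L_u$ or $\operatorname{ad}_u$ applied to $v$. The number of operators equals the depth of the tree, and a binary tree with $n$ leaves has depth at least $\log_2 n$. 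Thus once $\log_2 n\geq m$, the value of $w$ lies in $\mathcal M(A)^m A=0$, and therefore $A^{\langle 2^m\rangle}=0$.

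The only real obstacle is this depth argument in (3)$\Rightarrow$(1), that is, converting unbalanced trees into operator strings. It is handled by the root-to-deepest-leaf path, which never needs the GD or Novikov identities. Indeed, none of the identities are used anywhere: the lemma holds for any algebra with two bilinear operations.
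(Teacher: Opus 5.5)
Your proposal is correct and follows the paper's proof essentially step by step. Both identify $A^{\langle n\rangle}$ with the span of values of length-$n$ monomials, both note that operator words of length $\geq N-1$ applied to $a$ are monomials of length $\geq N$ for (2)$\Rightarrow$(3), and both use a root--leaf path of length at least $\log_2 n$ to write a long monomial as an element of $\mathcal M(A)^m A$ for the converse. One small correction to your closing remark: the step $[u,v]=-\operatorname{ad}_v(u)$ uses skew-symmetry of the bracket. Since $\mathcal M(A)$ contains no right bracket multiplications, the argument needs the second operation to be anticommutative, not merely bilinear.
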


\begin{proof}
Let \(P_n(A)\) be the linear span of all values on \(A\) of mixed nonassociative monomials of length \(n\) in the operations \(\circ\) and \([\cdot,\cdot]\), where the length means the number of variables. By induction on \(n\), using \eqref{eq:GD-powers}, we have
$
        P_n(A)=A^{\langle n\rangle}
        \text{ for }n\geq 1.
$

The sequence \(A^{\langle n\rangle}\) is descending. Indeed, \(A^{\langle 2\rangle}\subseteq A^{\langle 1\rangle}\). If \(A^{\langle j\rangle}\subseteq A^{\langle j-1\rangle}\) for \(2\leq j\leq n\), then each summand in \(A^{\langle n+1\rangle}\) is contained in a summand appearing in the definition of \(A^{\langle n\rangle}\). Hence \(A^{\langle n+1\rangle}\subseteq A^{\langle n\rangle}\). Therefore \(A^{\langle N\rangle}=0\) for some \(N\) is equivalent to the vanishing of all mixed nonassociative monomials of sufficiently large length. This proves the equivalence of \((1)\) and \((2)\).

Assume that all mixed nonassociative monomials of length at least \(N\) vanish identically on \(A\). If \(A=0\), then \(\mathcal M(A)=0\). Otherwise \(N\geq 2\).
Let \(W\) be a word of length at least \(N-1\) in the operators \(L_x,R_x,ad_x\),\text{ where} \(x\in A\). Then \(W(a)\), \text{ for} \(a\in A\), is the value of a mixed nonassociative monomial of length at least \(N\). Hence \(W(a)=0\) for all \(a\in A\), and therefore \(W=0\). Since \(\mathcal M(A)^{N-1}\) is spanned by such operator words, we obtain \(\mathcal M(A)^{N-1}=0\). Thus \(\mathcal M(A)\) is nilpotent.

Conversely, suppose that \(\mathcal M(A)^r=0\). Let \(w\) be a mixed nonassociative monomial of length \(m>2^{r-1}\). In the binary tree corresponding to \(w\), there is a leaf of depth at least \(r\). Fix an evaluation of the variables in \(A\), and follow the path from this leaf to the root. Along this path, each step acts on the current value by one of the operators \(L_b,R_b,ad_b\), or by \(-ad_b\), where \(b\in A\) is the value of the corresponding sibling submonomial. Thus the value of \(w\) can be written as \(T(a)\), where \(a\in A\) and \(T\) is an operator word of length at least \(r\) in the generators of \(\mathcal M(A)\). Since \(\mathcal M(A)^r=0\), it follows that \(T=0\). Therefore every mixed nonassociative monomial of length
\(m>2^{r-1}\) vanishes identically on \(A\), proving \((2)\).
\end{proof}

\begin{lem}\label{lem:basic-identities}
Let \((A,\circ,[\cdot,\cdot])\) be a GD algebra. Then, for all \(x,y,z\in A\),
\begin{align}
        R_zL_x &= L_{x\circ z}, \label{eq:RL-collapse}\\
        \operatorname{ad}_xL_y
        &= L_y\operatorname{ad}_x - \operatorname{ad}_{y\circ x}
        + L_{[x,y]} + R_x\operatorname{ad}_y .
        \label{eq:QL-transfer}
\end{align}
\end{lem}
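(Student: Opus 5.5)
Both identities are pointwise statements about operators on \(A\). I will prove each one by applying both sides to an arbitrary \(a\in A\) and reducing to a single axiom. Recall that a Novikov algebra is left-symmetric and satisfies right commutativity:
\[
(x\circ y)\circ z=(x\circ z)\circ y \quad\text{for all } x,y,z\in A .
\]
Only right commutativity is needed for \eqref{eq:RL-collapse}, and only the compatibility identity \eqref{eq:GD_compatibility} is needed for \eqref{eq:QL-transfer}.

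For \eqref{eq:RL-collapse}, take \(a\in A\). Then
\[
R_zL_x(a)=(x\circ a)\circ z, \qquad L_{x\circ z}(a)=(x\circ z)\circ a .
\]
Right commutativity, applied with the last two arguments \(a\) and \(z\) swapped, says exactly that these two expressions are equal. Since \(a\) is arbitrary, the operator identity follows.

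For \eqref{eq:QL-transfer}, I evaluate both sides at \(a\in A\). The left side is \(\operatorname{ad}_xL_y(a)=[x,y\circ a]\). The right side is
\[
y\circ[x,a]-[y\circ x,a]+[x,y]\circ a+[y,a]\circ x .
\]
Now specialise \eqref{eq:GD_compatibility} to the triple \((x,y,z)=(x,y,a)\) and solve for the first term:
\[
[x,y\circ a]=[a,y\circ x]-[y,x]\circ a+[y,a]\circ x+y\circ[x,a].
\]
Antisymmetry of the bracket rewrites \([a,y\circ x]\) as \(-[y\circ x,a]\) and \(-[y,x]\circ a\) as \([x,y]\circ a\). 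The result matches the right-hand side term by term, so the identity holds on every \(a\) and hence as operators.

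There is no genuine obstacle here. The only step that needs care is bookkeeping: choosing the correct substitution of variables in \eqref{eq:GD_compatibility} and tracking the signs introduced by antisymmetry. In particular, the term \(R_x\operatorname{ad}_y(a)=[y,a]\circ x\) must be matched against \(-[y,z]\circ x\) with \(z=a\) after it is moved to the other side.
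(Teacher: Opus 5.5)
Your proof is correct and is exactly the direct computation the paper leaves to the reader: the first identity is right commutativity $(x\circ a)\circ z=(x\circ z)\circ a$, and the second is the compatibility identity \eqref{eq:GD_compatibility} at the triple $(x,y,a)$, rearranged using antisymmetry of the bracket. The paper only says ``the result follows by direct computation,'' so your write-up supplies the omitted details, and your signs check out.
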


\begin{proof}
The result follows by direct computation.
\end{proof}

Let \(\mathcal I_{R\operatorname{ad}}\triangleleft \mathcal M(A)\) be the two-sided ideal generated by all operators \(R_x\operatorname{ad}_y\), $ where $ $x,y\in A$, and set
$
        \overline{\mathcal M}(A)
        :=
        \mathcal M(A)/\mathcal I_{R\operatorname{ad}} .
$

\begin{lem}
\label{lem:normal-form}
The algebra \(\overline{\mathcal M}(A)\) is linearly spanned by ordered products of the form
\[
        \overline L_{x_1}\cdots \overline L_{x_p}\,
        \overline{\operatorname{ad}}_{y_1}\cdots
        \overline{\operatorname{ad}}_{y_q}\,
        \overline R_{z_1}\cdots \overline R_{z_r},
        \quad
        x_i,y_j,z_k\in A,\quad p,q,r\geq 0,\quad p+q+r\geq 1.
\]
\end{lem}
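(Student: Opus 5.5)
The plan is to show that every operator word in the generators \(\overline L_x,\overline{\operatorname{ad}}_x,\overline R_x\) of \(\overline{\mathcal M}(A)\) can be rewritten, modulo \(\mathcal I_{R\operatorname{ad}}\), as a linear combination of ordered words of the stated shape. Since \(\mathcal M(A)\) is spanned by such operator words and the projection \(\mathcal M(A)\to\overline{\mathcal M}(A)\) is surjective, this suffices. A word fails to be ordered exactly when it contains an adjacent pair of one of the following types:
\[
\overline R_z\overline L_x,\qquad \overline R_x\overline{\operatorname{ad}}_y,\qquad \overline{\operatorname{ad}}_x\overline L_y .
\]
The pairs \(\overline L\,\overline L\), \(\overline{\operatorname{ad}}\,\overline{\operatorname{ad}}\) and \(\overline R\,\overline R\) need no reordering, and any order among them is allowed by the statement.

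I treat each bad pair by a local rewriting rule.
\begin{enumerate}
\item For \(\overline R_z\overline L_x\), I use \eqref{eq:RL-collapse} of Lemma \ref{lem:basic-identities}, which gives \(\overline R_z\overline L_x=\overline L_{x\circ z}\). This shortens the word by one letter.
\item For \(\overline R_x\overline{\operatorname{ad}}_y\), the product lies in the two-sided ideal \(\mathcal I_{R\operatorname{ad}}\). Hence any word containing it, with arbitrary operators on either side, is zero in \(\overline{\mathcal M}(A)\).
\item For \(\overline{\operatorname{ad}}_x\overline L_y\), I reduce \eqref{eq:QL-transfer} modulo \(\mathcal I_{R\operatorname{ad}}\). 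The term \(R_x\operatorname{ad}_y\) dies, and we get
\[
\overline{\operatorname{ad}}_x\overline L_y=\overline L_y\overline{\operatorname{ad}}_x-\overline{\operatorname{ad}}_{y\circ x}+\overline L_{[x,y]} .
\]
The first term has the same length with \(L\) moved to the left of \(\operatorname{ad}\). The other two terms are shorter words.
\end{enumerate}
Substituting any of these rules inside a longer word preserves the equality in \(\overline{\mathcal M}(A)\), because we work in an associative algebra.

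Termination is argued by a double induction. The outer induction is on the word length \(n\). The inner induction, for fixed \(n\), is on the number of inversions, meaning pairs of positions \(i<j\) whose letter types are out of the order \(L<\operatorname{ad}<R\). Rules (1) and (2), and the last two terms of rule (3), produce either zero or words of length \(n-1\), which are handled by the outer induction hypothesis. The first term of rule (3) swaps an adjacent \(\operatorname{ad},L\) pair. This lowers the inversion count by exactly one and leaves the other letters untouched, so the inner induction hypothesis applies. A word of length \(n\) with no inversions is already of the form \(\overline L_{x_1}\cdots\overline L_{x_p}\overline{\operatorname{ad}}_{y_1}\cdots\overline{\operatorname{ad}}_{y_q}\overline R_{z_1}\cdots\overline R_{z_r}\) with \(p+q+r=n\geq1\). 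The base case \(n=1\) is trivial.

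The only real obstacle is making sure the process is well founded: the rule (3) rewriting must not create new bad pairs that undo progress. The combined measure (length, then inversion count), ordered lexicographically, handles this. The one point to verify carefully is that rule (3) is the only length-preserving rule and that it strictly lowers the inversion count. Both follow directly from the displayed relations, so no use of the Novikov identities beyond \eqref{eq:RL-collapse} and \eqref{eq:QL-transfer} should be needed.
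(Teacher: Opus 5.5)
Your proposal is correct and matches the paper's own proof: you use the same three local relations (the collapse $\overline R_z\overline L_x=\overline L_{x\circ z}$, the vanishing of $\overline R_x\overline{\operatorname{ad}}_y$, and \eqref{eq:QL-transfer} reduced modulo $\mathcal I_{R\operatorname{ad}}$ for $\overline{\operatorname{ad}}_x\overline L_y$), and the same induction on the lexicographically ordered pair (length, inversion number) for the order $L<\operatorname{ad}<R$. You also state explicitly that an adjacent swap lowers the inversion count by exactly one, which the paper leaves implicit.
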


\begin{proof}
In \(\overline{\mathcal M}(A)\), the defining ideal gives
$
        \overline R_x\,\overline{\operatorname{ad}}_y=0
        $ for all $ x,y\in A,
$
while Lemma~\ref{lem:basic-identities} gives
$
        \overline R_z\,\overline L_x=\overline L_{x\circ z},
$
and
$
        \overline{\operatorname{ad}}_x\,\overline L_y
        =
        \overline L_y\,\overline{\operatorname{ad}}_x
        -\overline{\operatorname{ad}}_{y\circ x}
        +\overline L_{[x,y]} .
$
Order the generators by \(L<\operatorname{ad}<R\). Call a product regular if all \(L\)-factors come first, followed by all \(\operatorname{ad}\)-factors and then all \(R\)-factors. The only adjacent pairs which violate this order are \(RL\), \(\operatorname{ad}L\), and \(R\operatorname{ad}\). The relations above replace \(RL\) by a single \(L\)-factor, replace \(R\operatorname{ad}\) by zero, and replace \(\operatorname{ad}L\) by a sum in which the only term of the same length has strictly smaller inversion number, while all other terms have smaller length. Therefore induction on the lexicographically ordered pair
consisting of the length and the inversion number reduces every product to a linear combination of regular products. 
\end{proof}

\begin{thm}\label{thm:main}
Let \((A,\circ,[\cdot,\cdot])\) be a finite-dimensional GD algebra. Then \(A\) is nilpotent if and only if the following three conditions hold:
\begin{enumerate}
\item the Lie algebra \((A,[\cdot,\cdot])\) is nilpotent.
\item the Novikov algebra \((A,\circ)\) is nilpotent.
\item the two-sided ideal
$
        \mathcal I_{R\operatorname{ad}}
$
is nilpotent.
\end{enumerate}
\end{thm}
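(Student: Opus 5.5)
Necessity is immediate from Lemma~\ref{lem:dial-vs-M}. If $A$ is nilpotent, then $\mathcal M(A)$ is a nilpotent associative algebra. Monomials using only $[\cdot,\cdot]$, or only $\circ$, are special cases of mixed monomials, so they vanish for large length. Hence the Lie algebra and the Novikov algebra are nilpotent. Finally, $\mathcal I_{R\operatorname{ad}}$ is an ideal of the nilpotent algebra $\mathcal M(A)$, so $\mathcal I_{R\operatorname{ad}}^N\subseteq\mathcal M(A)^N=0$.

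For sufficiency, by Lemma~\ref{lem:dial-vs-M} it suffices to show that $\mathcal M(A)$ is nilpotent. I would split this into two steps: first show that the quotient $\overline{\mathcal M}(A)=\mathcal M(A)/\mathcal I_{R\operatorname{ad}}$ is nilpotent, and then use the standard fact that an associative algebra is nilpotent whenever an ideal $I$ and the quotient by $I$ are both nilpotent. Concretely, if $\overline{\mathcal M}(A)^s=0$ and $\mathcal I_{R\operatorname{ad}}^t=0$, then $\mathcal M(A)^s\subseteq\mathcal I_{R\operatorname{ad}}$. It follows that $\mathcal M(A)^{st}\subseteq \mathcal I_{R\operatorname{ad}}^t=0$.

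For the quotient, I would use Lemma~\ref{lem:normal-form}. It shows that $\overline{\mathcal M}(A)^n$ is spanned by images of regular products $\overline L_{x_1}\cdots\overline L_{x_p}\,\overline{\operatorname{ad}}_{y_1}\cdots\overline{\operatorname{ad}}_{y_q}\,\overline R_{z_1}\cdots\overline R_{z_r}$. The one point to check is that the rewriting process never decreases length below $n$. It does not: the relations replace $RL$ by $L_{x\circ z}$, which has length one but whose argument is a product, and $\operatorname{ad}L$ by terms such as $\operatorname{ad}_{y\circ x}$. So I should track ``weight'', meaning the total number of original elements of $A$ involved, rather than word length. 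Then $\overline{\mathcal M}(A)^n$ is spanned by regular products whose labels $x_i,y_j,z_k$ are monomials in $A$ of total weight $n$.

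Now use the nilpotency of the two underlying algebras. Let $N$ bound the nilpotency indices of $(A,\circ)$ and of $(A,[\cdot,\cdot])$. Each label that is a mixed monomial is the difficulty, so I instead argue at the level of operators. The block $L_{x_1}\cdots L_{x_p}$ lies in the multiplication algebra of the Novikov algebra. So do $R$-blocks, and the ad-block lies in $\operatorname{ad}(A)$-words.

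The main obstacle is that the labels produced by the rewriting mix the two operations. To avoid this, I would argue differently. The left multiplication algebra of a nilpotent Novikov algebra is nilpotent, say of index $N_1$. The associative algebra generated by all $R_x$ is also nilpotent, since $R$-words are values of $\circ$-monomials. Likewise, $\operatorname{ad}$-words of length $N_2$ vanish. Since each $L_x$, $R_x$, $\operatorname{ad}_x$ is a single generator regardless of what $x$ is, the span in Lemma~\ref{lem:normal-form} with $p+q+r\ge N_1+N_2+N_3$ forces one block to have length at least its nilpotency index, and hence the product is zero.

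It remains to see that products of length $\ge M$ in the generators reduce to regular products of length $\ge$ some function of $M$. Length only drops in the $RL\to L$ step and in the shorter $\operatorname{ad}L$ terms. I would handle this by filtering $\mathcal M(A)$ by $\mathcal M(A)^{\langle k\rangle}$, the span of operators whose labels have total weight $\ge k$, and observing that a long regular word in weight $k$ either has a long block or a label of large weight. A label of large weight lies in $A^{\langle m\rangle}$ for large $m$. I would therefore try first to prove by induction, using finite-dimensionality, that $A^{\langle m\rangle}$ shrinks strictly until it is zero. This bookkeeping is where I expect the real work to lie.
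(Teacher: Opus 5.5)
Your necessity argument is correct and matches the paper. So is the reduction of sufficiency to the nilpotency of $\overline{\mathcal M}(A)=\mathcal M(A)/\mathcal I_{R\operatorname{ad}}$ via $\mathcal M(A)^{st}\subseteq\mathcal I_{R\operatorname{ad}}^t=0$.

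The gap is in the quotient step. Your block count correctly shows that regular products with $p+q+r$ large vanish. It does not show $\overline{\mathcal M}(A)^n=0$, because rewriting a word into regular form can collapse its length without bound. For instance, $\overline{\operatorname{ad}}_x\overline L_{y_1}\cdots\overline L_{y_n}$ yields the one-factor term $(-1)^n\overline{\operatorname{ad}}_{y_n\circ(\cdots(y_1\circ x))}$. Likewise, $\overline{\operatorname{ad}}_{x_2}\overline{\operatorname{ad}}_{x_1}\overline L_y$ yields $-\overline{\operatorname{ad}}_{[x_1,y]\circ x_2}$. Iterating gives one-factor terms whose labels are long mixed monomials.

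You propose to kill these by showing that labels of large weight vanish, i.e.\ that $A^{\langle m\rangle}=0$ for large $m$. By Lemma~\ref{lem:dial-vs-M}, that is exactly the nilpotency of $A$, which is what you are proving. Finite-dimensionality only makes the chain $A^{\langle m\rangle}$ stabilize; it does not make it stabilize at $0$.

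Worse, in the quotient step your plan uses only conditions (1) and (2), since (3) was spent on the extension step. Under (1) and (2) alone the chain need not reach zero. In Example~\ref{ex:four-dimensional-nilpotent-counterexample} one has $[e_1,e_3]\circ e_1=-e_3$ and $e_3=[e_1,e_2]$, so $e_3\in A^{\langle m\rangle}$ for every $m$. So no bookkeeping that forces labels to vanish in $A$ can prove that $\overline{\mathcal M}(A)$ is nilpotent, even though that statement is true. Bringing in (3) to get $A^{\langle m\rangle}=0$ is just the whole theorem again.

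The missing idea is to use Lemma~\ref{lem:normal-form} only as a spanning statement and argue with modules, so that length collapse never matters. The paper's argument runs as follows.
\begin{enumerate}
\item If the finite-dimensional algebra $\overline{\mathcal M}$ were not nilpotent, there would be a simple module $S$ with $\overline{\mathcal M}S=S$.
\item Since $\overline R_x\overline{\operatorname{ad}}_y=0$, one has $\mathcal A_R\mathcal A_{\operatorname{ad}}=0$. Hence the common kernel of the nilpotent algebra $\mathcal A_R$ in $S$ is nonzero and $\mathcal A_{\operatorname{ad}}$-stable.
\item Nilpotency of $\mathcal A_{\operatorname{ad}}$ then gives $0\neq w$ with $\mathcal A_Rw=\mathcal A_{\operatorname{ad}}w=0$.
\item Every regular product with $q>0$ or $r>0$ kills $w$, so $S=\overline{\mathcal M}w=\mathcal A_Lw$. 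Thus $w=Uw$ for some nilpotent $U\in\mathcal A_L$, and $w=U^mw=0$, a contradiction.
\end{enumerate}
This is where (1) and (2) are actually used, and it never needs to control the labels.
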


\begin{proof}
By Lemma~\ref{lem:dial-vs-M}, \(A\) is nilpotent if and only if
\(\mathcal M(A)\) is nilpotent. Hence necessity is immediate: nilpotency of \(\mathcal M(A)\) forces the nilpotency of the associative subalgebras generated by the \(\operatorname{ad}_x\)'s and by the \(L_x,R_x\)'s, and therefore the nilpotency of the
underlying Lie and Novikov algebras. It also implies the nilpotency of every two-sided ideal, in particular of \(\mathcal I_{R\operatorname{ad}}\).

Conversely, assume \((1)\)--\((3)\), and set
$
        \overline{\mathcal M}:=\mathcal M(A)/\mathcal I_{R\operatorname{ad}}.
$
Since \(\mathcal I_{R\operatorname{ad}}\) is nilpotent, it suffices to prove that \(\overline{\mathcal M}\) is nilpotent. Indeed, if \(\overline{\mathcal M}^{\,s}=0\), then \(\mathcal M(A)^s\subseteq \mathcal I_{R\operatorname{ad}}\), and hence \(\mathcal M(A)^{st}=0\) for any \(t\) such that \(\mathcal I_{R\operatorname{ad}}^{\,t}=0\).
Let
\[
        \mathcal A_L
        :=
        \operatorname{alg}_{\overline{\mathcal M}}
        \{\overline L_x\mid x\in A\},\quad
        \mathcal A_{\operatorname{ad}}
        :=
        \operatorname{alg}_{\overline{\mathcal M}}
        \{\overline{\operatorname{ad}}_x\mid x\in A\},\quad
        \mathcal A_R
        :=
        \operatorname{alg}_{\overline{\mathcal M}}
        \{\overline R_x\mid x\in A\}.
\]
By \((1)\), \(\mathcal A_{\operatorname{ad}}\) is nilpotent, since it is generated by the adjoint operators of the nilpotent Lie algebra \((A,[\cdot,\cdot])\). By \((2)\), the algebras \(\mathcal A_L\) and \(\mathcal A_R\) are nilpotent.

Suppose that \(\overline{\mathcal M}\) is not nilpotent. Since it is finite-dimensional, the standard simple-module criterion for finite-dimensional non-unital associative algebras yields a nonzero simple left \(\overline{\mathcal M}\)-module \(S\) such that \(\overline{\mathcal M}S=S\). Since \(\mathcal A_R\) is nilpotent, there exists \(0\neq w_1\in S\) with \(\mathcal A_Rw_1=0\). Moreover,
\(\mathcal A_R\mathcal A_{\operatorname{ad}}=0\) in \(\overline{\mathcal M}\), because
\(\overline R_x\overline{\operatorname{ad}}_y=0\) for all \(x,y\in A\). Hence \(S_R:=\{s\in S\mid \mathcal A_Rs=0\}\) is stable under \(\mathcal A_{\operatorname{ad}}\). Since \(\mathcal A_{\operatorname{ad}}\) is nilpotent, there exists \(0\neq w\in S_R\) such that
$
        \mathcal A_Rw=0$ and $ \mathcal A_{\operatorname{ad}}w=0.
$

The submodule \(\overline{\mathcal M}w\) is nonzero, otherwise \(\Bbbk w\) would be a nonzero submodule annihilated by \(\overline{\mathcal M}\), contrary to \(\overline{\mathcal M}S=S\). Thus \(S=\overline{\mathcal M}w\). By Lemma~\ref{lem:normal-form}, \(\overline{\mathcal M}\) is linearly spanned by ordered products
$
        \overline L_{x_1}\cdots \overline L_{x_p}
        \overline{\operatorname{ad}}_{y_1}\cdots \overline{\operatorname{ad}}_{y_q}
        \overline R_{z_1}\cdots \overline R_{z_r}.
$
All products with \(q>0\) or \(r>0\) annihilate \(w\). Hence \(S=\mathcal A_Lw\), so \(w=Uw\) for some \(U\in\mathcal A_L\). Since \(\mathcal A_L\) is nilpotent, \(U^m=0\) for some \(m\), and therefore \(w=U^mw=0\), a contradiction.
Thus \(\overline{\mathcal M}\) is nilpotent. Since \(\mathcal I_{R\operatorname{ad}}\) is nilpotent, \(\mathcal M(A)\) is nilpotent. Lemma~\ref{lem:dial-vs-M} then implies that \(A\) is nilpotent.
\end{proof}

\begin{rem}
Theorem~\ref{thm:main} shows that, besides the nilpotency of the
underlying Lie and Novikov algebras, one must also control the two-sided ideal \(\mathcal I_{R\operatorname{ad}}\). Thus the nilpotency of a GD algebra is not determined by the nilpotency of its underlying Lie and Novikov algebras. The next two propositions show that this obstruction does not occur in dimensions at most \(3\), while it already appears in dimension \(4\).
\end{rem}

\begin{prop}\label{prop:no-low-dimensional-counterexample}
Let \((A,\circ,[\cdot,\cdot])\) be a finite-dimensional GD algebra with \(\dim A\leq 3\). If the Lie algebra \((A,[\cdot,\cdot])\) and the Novikov algebra \((A,\circ)\) are both nilpotent, then \(A\) is nilpotent.
\end{prop}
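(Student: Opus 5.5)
The plan is to reduce, by a short case distinction on the Lie structure, to a single configuration, and then to find a common annihilated vector and induct on dimension.

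\emph{Reduction.} A nilpotent Lie algebra of dimension at most $2$ is abelian. In dimension $3$ it is either abelian or the Heisenberg algebra $\mathfrak h_3$, with $[e_1,e_2]=e_3$ and all other brackets of basis vectors zero.

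\emph{Abelian case.} If $[\cdot,\cdot]=0$, every mixed nonassociative monomial that contains at least one bracket vanishes. The remaining monomials are pure $\circ$-monomials, and these vanish for large length because $(A,\circ)$ is nilpotent. So condition (2) of Lemma~\ref{lem:dial-vs-M} holds and $A$ is nilpotent. Hence the only case requiring work is $\dim A=3$ with $(A,[\cdot,\cdot])\cong\mathfrak h_3$.

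\emph{Induction via a common annihilated vector.} In the Heisenberg case I would use a central-extension argument. Suppose some $0\neq w\in A$ satisfies
\[
L_xw=R_xw=\operatorname{ad}_xw=0 \quad\text{for all } x\in A .
\]
Then $\Bbbk w$ is a GD ideal. The quotient $A/\Bbbk w$ is a GD algebra of dimension $2$ whose underlying Lie and Novikov algebras are quotients of nilpotent ones, hence nilpotent. By the low-dimensional cases it is nilpotent, so $A^{\langle N\rangle}\subseteq \Bbbk w$ for some $N$. Because $w$ is killed by every $L_x$, $R_x$ and $\operatorname{ad}_x$, this gives $A^{\langle 2N\rangle}=0$. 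Equivalently, in the language of Theorem~\ref{thm:main}, $\mathcal M(A)^{N}A\subseteq\Bbbk w$ and $\mathcal M(A)\Bbbk w=0$.

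The natural candidate is $w=e_3$, which already spans the Lie center. So it remains to show $e_3\circ A=A\circ e_3=0$.

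\emph{Main step and expected obstacle.} The main obstacle is proving $e_3\circ A=A\circ e_3=0$, or at least finding some common null vector, using the compatibility identity \eqref{eq:GD_compatibility}. I would proceed as follows.
\begin{enumerate}
\item Substitute basis triples into \eqref{eq:GD_compatibility}. Since every bracket lies in $\Bbbk e_3$, this gives linear relations between the structure constants of $\circ$ and the terms $[e_i, e_j\circ e_k]$ and $[e_i,e_j]\circ e_k$.
\item For instance, $x=e_1$, $z=e_2$, $y=e_3$ gives $[e_1,e_3\circ e_2]-[e_2,e_3\circ e_1]=e_3\circ e_3$.
\item Similarly, $y=e_1$, $x=e_2$, $z=e_3$ expresses $e_3\circ e_3$ and $e_1\circ e_3$ through brackets, and hence places them in $\Bbbk e_3$.
\item Combine these relations with the nilpotency of $L_{e_3}$, $R_{e_3}$ and of $(A,\circ)$, which forbids nonzero eigenvalues. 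I expect this to force $e_3\circ e_i\in\Bbbk e_3$ with zero coefficient, and similarly $e_i\circ e_3=0$.
\end{enumerate}

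If the direct computation does not close, the fallback is to use the known classification of $3$-dimensional nilpotent Novikov algebras. Up to isomorphism there are only a few such algebras, and for each one I would solve \eqref{eq:GD_compatibility} for the possible Heisenberg brackets. In each solution I would then verify either that a common null vector exists or that $\mathcal I_{R\operatorname{ad}}$ is nilpotent. The latter is tractable here because every $\operatorname{ad}_y$ has image in $\Bbbk e_3$, so every element of $\mathcal I_{R\operatorname{ad}}$ has image in $\mathcal M(A)(A\circ e_3)+A\circ e_3$. After either verification, Theorem~\ref{thm:main} concludes the proof.
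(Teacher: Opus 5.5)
Your reduction to $\dim A=3$ with Heisenberg bracket is sound, and so is the central-extension step. If $e_3$ is killed by every $L_x$, $R_x$ and $\operatorname{ad}_x$, then $A/\Bbbk e_3$ has abelian bracket and nilpotent Novikov product, so it is nilpotent, and $A^{\langle 2N\rangle}=0$ follows.

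The gap is that the claim everything rests on, $e_3\circ A=A\circ e_3=0$, is never proved. You only say you expect the relations to force it, and the fallback through the classification of nilpotent Novikov algebras is not carried out either. Moreover, item~(3) misreads the identity. With $(x,y,z)=(e_2,e_1,e_3)$ it yields only $e_3\circ e_3=-[e_2,e_1\circ e_3]$. That fixes the $e_1$-coefficient of $e_1\circ e_3$, but it does not put $e_1\circ e_3$ in $\Bbbk e_3$. Your listed substitutions therefore give $e_3\circ e_3\in\Bbbk e_3$ and some linear relations among coefficients. They do not give the stability of $\Bbbk e_3$ under all $L_y$ and $R_y$, which your eigenvalue argument needs.

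The missing chain is short, and it is the computation in the paper.
\begin{enumerate}
\item Write $e_3\circ e_3=\lambda e_3$. Nilpotency of $L_{e_3}$ gives $\lambda=0$.
\item Put $z=e_3$ in \eqref{eq:GD_compatibility}. This gives $[x,y\circ e_3]=-[y,x]\circ e_3=0$, since $[y,x]\in\Bbbk e_3$. So $y\circ e_3$ is central, hence $A\circ e_3\subseteq\Bbbk e_3$.
\item Put $x=e_1$, $z=e_2$ and read the identity modulo $\Bbbk e_3$. This is now legitimate for the term $y\circ[e_1,e_2]=y\circ e_3$. It gives $[y,e_1]\circ e_2-[y,e_2]\circ e_1\in\Bbbk e_3$, and taking $y=e_1,e_2$ yields $e_3\circ e_1,\ e_3\circ e_2\in\Bbbk e_3$.
\item So $e_3$ is a common eigenvector of all the nilpotent operators $L_y$ and $R_y$. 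Hence $A\circ e_3=e_3\circ A=0$.
\end{enumerate}

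With this inserted, your proof is complete, and its ending genuinely differs from the paper's. The paper stops at the stability $A\circ C+C\circ A\subseteq C$ for $C=\Bbbk e_3$. It then observes that every element of $\mathcal I_{R\operatorname{ad}}$ maps $A$ into $C$ and kills $C$, so $\mathcal I_{R\operatorname{ad}}^2=0$, and invokes Theorem~\ref{thm:main}. Your version needs the extra eigenvalue step. In exchange it bypasses Theorem~\ref{thm:main} and its simple-module argument, using only the abelian case and a central-extension lift.
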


\begin{proof}
By Theorem~\ref{thm:main}, it is enough to prove that \(\mathcal I_{R\operatorname{ad}}\) is nilpotent. If \(\dim A\leq 2\), then the nilpotent Lie algebra \((A,[\cdot,\cdot])\) is abelian. Hence \(\operatorname{ad}_x=0\) for all \(x\in A\), and so
\(\mathcal I_{R\operatorname{ad}}=0\).

Assume now that \(\dim A=3\). If the Lie algebra is abelian, the same argument applies. Otherwise it is the three-dimensional Heisenberg Lie algebra. Choose a basis \(\{e_1,e_2,c\}\) such that \([e_1,e_2]=c\), and put \(C:=\Bbbk c\). Then \(C\) is the center and \([A,A]=C\), so \(\operatorname{ad}_x(A)\subseteq C\) and \(\operatorname{ad}_x(C)=0\) for all \(x\in A\).

We claim that \(C\) is stable under all left and right Novikov multiplications. Putting \(y=c,x=e_1,z=e_2\) in the GD compatibility identity gives \(c\circ c\in C\). Since \((A,\circ)\) is nilpotent, this implies \(c\circ c=0\).
Taking \(z=c\) in the compatibility identity gives \([x,y\circ c]+[y,x]\circ c=0\). Since \([y,x]\in C\) and \(c\circ c=0\), we get \([x,y\circ c]=0\) for all \(x,y\in A\). Hence \(A\circ C\subseteq C\). Finally, taking \(x=e_1\) and \(z=e_2\) and reducing modulo \(C\), we get
$
        [y,e_1]\circ e_2-[y,e_2]\circ e_1\equiv 0 \pmod C.
$
Putting \(y=e_1\) and \(y=e_2\), respectively, gives \(c\circ e_1,c\circ e_2\in C\). Together with \(c\circ c=0\), this proves \(C\circ A\subseteq C\).

Thus every operator \(R_z\operatorname{ad}_x\) maps \(A\) into \(C\) and annihilates \(C\). This property is preserved under multiplication on both sides by the generators \(L_a,R_a,\operatorname{ad}_a\) of \(\mathcal M(A)\). Hence every element of \(\mathcal I_{R\operatorname{ad}}\) maps \(A\) into \(C\) and annihilates \(C\), and therefore
$
        \mathcal I_{R\operatorname{ad}}^2=0.
$
Thus \(\mathcal I_{R\operatorname{ad}}\) is nilpotent. Theorem~\ref{thm:main} implies that \(A\) is nilpotent.
\end{proof}

\begin{rem}
The proof of the proposition~\ref{prop:no-low-dimensional-counterexample} can also be obtained from the classification of low-dimensional GD algebras presented later in this paper.
\end{rem}

\begin{ex}
\label{ex:four-dimensional-nilpotent-counterexample}
The following example shows that Proposition~\ref{prop:no-low-dimensional-counterexample} cannot be extended to dimension \(4\).
Let
$
        A=\Bbbk e_1\oplus \Bbbk e_2\oplus \Bbbk e_3\oplus \Bbbk e_4.
$
Define a skew-symmetric bilinear bracket by
$
        [e_1,e_2]=e_3,\quad [e_1,e_3]=e_4.
$
 Define a bilinear product by
\[
        e_3\circ e_1=e_2,\quad
        e_1\circ e_3=-e_2,\quad
        e_4\circ e_1=-e_3.
\]
Then \((A,\circ,[\cdot,\cdot])\) is a GD algebra whose underlying Lie algebra and underlying Novikov algebra are both nilpotent, but which is not nilpotent as a GD algebra.

\end{ex}

Theorem~\ref{thm:main} immediately yields an Engel-type criterion.

\begin{thm}
\label{thm:engel}
Let \((A,\circ,[\cdot,\cdot])\) be a GD algebra. Assume that
\begin{enumerate}
\item for every \(x\in A\), the operator \(\operatorname{ad}_x\) is nilpotent.
\item for every \(x\in A\), the operators \(L_x\) and \(R_x\) are nilpotent.
\item the ideal
$
        \mathcal I_{R\operatorname{ad}}
$
is a nil ideal, that is, every element of \(\mathcal I_{R\operatorname{ad}}\) is nilpotent.
\end{enumerate}
Then \(A\) is nilpotent.
\end{thm}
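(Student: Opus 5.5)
The plan is to reduce to Theorem~\ref{thm:main} by verifying its three conditions. The algebra $A$ is finite-dimensional throughout the paper, so finite-dimensional tools are available.

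\emph{Condition (1).} By hypothesis (1), every $\operatorname{ad}_x$ is nilpotent. The classical Engel theorem for Lie algebras then shows that $(A,[\cdot,\cdot])$ is nilpotent.

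\emph{Condition (2).} Here I would use the Engel-type theorem for Novikov algebras: a finite-dimensional Novikov algebra in which every $R_x$ is nilpotent is right nilpotent. Since every $L_x$ is also nilpotent, the algebra is in fact nilpotent. If one prefers to stay inside the paper's framework, one can argue instead with the associative algebra $\operatorname{alg}\{L_x,R_x\}$. Novikov identities give $[L_x,L_y]=L_{[x,y]_\circ}$ where $[x,y]_\circ=x\circ y-y\circ x$, and $R_xR_y=R_yR_x$. So the $L_x$ span a Lie algebra of nilpotent operators, and the $R_x$ span a commuting family of nilpotent operators. By Engel's theorem for linear Lie algebras, and by commutativity, $\operatorname{alg}\{L_x\}$ and $\operatorname{alg}\{R_x\}$ are nilpotent associative algebras. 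Combining them uses the relation $R_zL_x=L_{x\circ z}$ from \eqref{eq:RL-collapse}, together with the Novikov relation $L_xR_y-R_yL_x=R_{x\circ y}-R_yR_x$. A normal-form argument in the style of Lemma~\ref{lem:normal-form} would then give nilpotency of the Novikov multiplication algebra. I would mostly cite the known Novikov result (Zelmanov) and only sketch this alternative.

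\emph{Condition (3).} Here I use hypothesis (3). $\mathcal I_{R\operatorname{ad}}$ is a finite-dimensional associative algebra that is nil. By Wedderburn's theorem (a finite-dimensional nil associative algebra is nilpotent; equivalently, a nil ideal lies in the Jacobson radical, which is nilpotent in finite dimension), $\mathcal I_{R\operatorname{ad}}$ is nilpotent.

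With all three conditions in hand, Theorem~\ref{thm:main} gives that $A$ is nilpotent.

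I expect condition (2) to be the main obstacle. Elementwise nilpotency of the $L_x$ and $R_x$ does not immediately give nilpotency of $\operatorname{alg}\{L_x,R_x\}$, because the $L$'s and $R$'s are not simultaneously a Lie family. The cleanest way through is either to invoke Zelmanov's theorem, or to apply Wedderburn's theorem again. For the latter, one would show that the span of $L_x$ and $R_x$ generates a nil algebra; that is not automatic, which is why I would rely on the known Novikov result.
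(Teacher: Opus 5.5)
Your proposal is correct and follows the paper's proof. You use Engel's theorem for the Lie part and the Engel-type theorem for Novikov algebras for the Novikov part. For $\mathcal I_{R\operatorname{ad}}$ you use that a finite-dimensional nil associative algebra is nilpotent (you cite Wedderburn, the paper cites Levitzki; either suffices), and then Theorem~\ref{thm:main} applies.

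Your elementary alternative for (2) also works without Zelmanov. $\operatorname{alg}\{R_x\}$ is nilpotent because the $R_x$ are commuting nilpotent operators. $\operatorname{alg}\{L_x\}$ is nilpotent by Engel's theorem applied to the Lie family $\{L_x\}$. The relation $R_zL_x=L_{x\circ z}$ puts every word in the form $L\cdots L\,R\cdots R$, so the simple-module argument from the proof of Theorem~\ref{thm:main} combines the two.
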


\begin{proof}
By Engel's theorem for Lie algebras, condition \((1)\) implies that the Lie algebra \((A,[\cdot,\cdot])\) is nilpotent. By the Engel theorem for Novikov algebras, condition \((2)\) implies that the Novikov algebra \((A,\circ)\) is nilpotent.

By the standing assumption, \(\mathcal M(A)\) is finite-dimensional. Hence \(\mathcal I_{R\operatorname{ad}}\) is a finite-dimensional nil ideal of an associative algebra, and therefore it is nilpotent by Levitzki's theorem. Thus all three conditions of Theorem~\ref{thm:main} are satisfied. Consequently, the GD algebra \(A\) is nilpotent.
\end{proof}

The following corollary recovers the nilpotency criterion for transposed Poisson algebras obtained in \cite{JinHong2026}.

\begin{cor}\label{cor:TP-nilpotent-criterion}
Let \((P,\cdot,[\cdot,\cdot])\) be a transposed Poisson algebra. Then \(P\) is nilpotent if and only if the underlying commutative associative algebra \(P_A\) and the underlying Lie algebra \(P_L\) are nilpotent.
\end{cor}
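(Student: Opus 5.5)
I would reduce the statement to Theorem~\ref{thm:main}, viewing a transposed Poisson algebra as a GD algebra. The standard dictionary takes the Lie bracket unchanged and $x\circ y:=x\cdot y$. A commutative associative algebra is Novikov, and the transposed Leibniz rule $2z\cdot[x,y]=[z\cdot x,y]+[x,z\cdot y]$ implies the compatibility identity \eqref{eq:GD_compatibility}. This uses the standard identities of transposed Poisson algebras, e.g.\ $[x,y\cdot z]\cdot$-type relations from \cite{Bai-Bai-Guo-Wu2023}; I would take this from the literature, as the paper's introduction already treats transposed Poisson algebras as commutator/special GD algebras.

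The GD operations are then exactly the two underlying operations of $P$, so the lower central series \eqref{eq:GD-powers} coincides with the transposed Poisson one. Hence nilpotency of $P$ is nilpotency of the associated GD algebra.

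Necessity is then immediate from Theorem~\ref{thm:main}: conditions (1) and (2) are the nilpotency of $P_L$ and $P_A$. A nilpotent commutative associative algebra is a nilpotent Novikov algebra, since all nonassociative monomials reduce to associative ones.

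For sufficiency, assume $P_A$ and $P_L$ are nilpotent. Condition (2) of Theorem~\ref{thm:main} holds because $(P,\cdot)$ is nilpotent associative, so $P^{n}=0$ for some $n$ and all $\circ$-monomials of length $n$ vanish. It remains to verify condition (3), nilpotency of $\mathcal I_{R\operatorname{ad}}$, and this is the main step. Here $R_x=L_x$ by commutativity. I would show that $\mathcal I_{R\operatorname{ad}}$ lies in the two-sided ideal $\mathcal J$ of $\mathcal M(P)$ generated by the $L_x$. Then I would show $\mathcal J$ is nilpotent by proving that any operator word containing at least $k$ letters $L$ has image in the ideal $P^{k}$, or at least in some descending chain that reaches $0$.

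The key input is that $\operatorname{ad}$ moves past $L$ modulo $L$-terms. From Lemma~\ref{lem:basic-identities}, $\operatorname{ad}_xL_y=L_y\operatorname{ad}_x-\operatorname{ad}_{y\circ x}+L_{[x,y]}+L_x\operatorname{ad}_y$. The troublesome term is $\operatorname{ad}_{y\cdot x}$, which contains no $L$. I would control it with the transposed Poisson fact that $[P\cdot P,P]$ (more generally $[P^{k},P]$) lies in a suitable power of the associative ideal. The identity $2z[x,y]=[zx,y]+[x,zy]$ suggests that $P\cdot P$-products propagate under brackets, so $\operatorname{ad}_{P^{k}}(P)\subseteq P^{k}+\dots$. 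Concretely, I would aim to prove by induction that $\operatorname{ad}_a(P^{k})\subseteq P^{k}$ and $\operatorname{ad}_{P^k}(P)\subseteq P^{k}$ modulo lower-degree error that iterated brackets kill, using nilpotency of $P_L$. Then the filtration $P\supseteq P^{2}\supseteq\cdots\supseteq P^{n}=0$ is stable under $\mathcal M(P)$, each $L_x$ strictly raises the filtration degree, and $\operatorname{ad}$-words of length exceeding the Lie nilpotency class vanish on each graded piece. Hence $\mathcal I_{R\operatorname{ad}}\subseteq\mathcal J$ is nilpotent.

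The main obstacle is precisely this stability $[P^{k},P]\subseteq P^{k}$. If a direct proof fails, I would fall back on the known fact from \cite{JinHong2026} that $P\cdot P$ and its powers are Lie ideals in a transposed Poisson algebra. With condition (3) established, Theorem~\ref{thm:main} gives nilpotency of $P$.
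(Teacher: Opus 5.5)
Your reduction to Theorem~\ref{thm:main} and your handling of conditions (1) and (2) are fine. Your plan for condition (3) is the same as the paper's: filter $P$ by associative powers, and argue that each $L$ raises the degree while each $\operatorname{ad}$ preserves it. The gap is that the inclusion you single out as the main obstacle, $[P,P^k]\subseteq P^k$, is false in general. So is your fallback that $P\cdot P$ is a Lie ideal.

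Take $P=\operatorname{span}\{e,u,w\}$ whose only nonzero structure constants are $u\cdot u=e$ and $[e,u]=w=-[u,e]$. Then:
\begin{itemize}
\item $(P,\cdot)$ is commutative associative with $P^2=\Bbbk e$ and $P^3=0$.
\item The bracket is the Heisenberg algebra.
\item The transposed Leibniz rule holds. Indeed $z\cdot[x,y]\in P\cdot w=0$. Writing $x_u$ for the $u$-coordinate of $x$, we also get $[z\cdot x,y]+[x,z\cdot y]=z_ux_uy_u\,w-z_uy_ux_u\,w=0$.
\end{itemize}
But $[u,u\cdot u]=-w\notin P^2$, i.e.\ $\operatorname{ad}_uL_u(u)\notin P^2$, although this word contains an $L$. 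So the filtration $P\supseteq P^2\supseteq P^3=0$ is not $\operatorname{ad}$-stable, and no induction can prove that it is. The paper's proof asserts $[P_A^i,P_A^j]\subseteq P_A^{i+j-1}$ at exactly this point, and the same example refutes it. So that step cannot be imported from there either.

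The statement is still true; the repair is to count $\operatorname{ad}$-letters instead of $L$-letters. Put $a$ in the last slot of $2y\cdot[x,a]=[y\cdot x,a]+[x,y\cdot a]$. This gives the operator identity $\operatorname{ad}_xL_y=2L_y\operatorname{ad}_x-\operatorname{ad}_{x\cdot y}$. It is your formula from Lemma~\ref{lem:basic-identities} once $L_{[x,y]}+L_x\operatorname{ad}_y$ is rewritten as $L_y\operatorname{ad}_x$. That rewriting uses the cyclic identity $x\cdot[y,z]+y\cdot[z,x]+z\cdot[x,y]=0$, which is the cyclic sum of the transposed Leibniz rule.

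Your troublesome term $\operatorname{ad}_{x\cdot y}$ is harmless for this count, because both terms on the right contain exactly one $\operatorname{ad}$. Since $R=L$, every word in $\mathcal M(P)$ can therefore be rewritten as a combination of words $L_{u_1}\cdots L_{u_p}\operatorname{ad}_{v_1}\cdots\operatorname{ad}_{v_t}$ with the same number $t$ of $\operatorname{ad}$-factors. This follows by induction on length and number of inversions. A product of $s$ elements of $\mathcal I_{R\operatorname{ad}}$ has at least $s$ such factors, so it vanishes once $s$ is at least the nilpotency class of $P_L$.

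Thus $\mathcal I_{R\operatorname{ad}}$ is nilpotent using only the nilpotency of $P_L$; nilpotency of $P_A$ is needed only for condition (2). Theorem~\ref{thm:main} then finishes the proof. This is exactly the argument of Lemma~\ref{lem:TP-mixed-ideal-nilpotent} later in the paper.
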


\begin{proof}
Regard \(P\) as a GD algebra by setting \(x\circ y=x\cdot y\). If \(P\) is nilpotent, then the nilpotency of \(P_A\) and \(P_L\) follows immediately from the definition of the GD lower central series.

Conversely, assume that \(P_A\) and \(P_L\) are nilpotent. Then every \(\operatorname{ad}_x\) is nilpotent, and since \(L_x=R_x\) is the multiplication by \(x\), every \(L_x\) and \(R_x\) is nilpotent. Moreover, the transposed Poisson identity implies
$
        [P_A^i,P_A^j]\subseteq P_A^{i+j-1}
$
for all \(i,j\geq 1\). Hence each \(\operatorname{ad}_x\) preserves the associative-power filtration, while each \(R_x=L_x\) raises it by one. It follows that every element of \(\mathcal I_{R\operatorname{ad}}\) raises this filtration by at least one. Since \(P_A\) is nilpotent, \(\mathcal I_{R\operatorname{ad}}\) is nilpotent, and hence it is a nil ideal.
Therefore Theorem~\ref{thm:engel} applies and \(P\) is nilpotent.
\end{proof}

\section{Solvability of GD Algebras}
\label{sec:solvability}

For both Poisson algebras, generalized Poisson algebras and Poisson \(n\)-Lie algebras~\cite{Cao2025,F2025,Cao2026}, the solvability of the algebra is equivalent to the simultaneous solvability of its underlying algebra. In this section, we study the solvability of GD algebras.

\begin{prop}
\label{thm:dim3}
Let \((A,\circ,[\cdot,\cdot])\) be a GD algebra with \(\dim A\leq 3\). If the underlying Lie algebra \((A,[\cdot,\cdot])\) and the underlying Novikov algebra \((A,\circ)\) are both solvable, then \(A\) is solvable.
\end{prop}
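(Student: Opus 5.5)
The plan is to look for a chain of subspaces $0=V_0\subset V_1\subset\cdots\subset V_m=A$ in which each $V_i$ is a GD ideal (closed under $\circ$ and $[\cdot,\cdot]$ with $A$) and each quotient $V_{i}/V_{i-1}$ is a trivial GD algebra, i.e.\ $V_i\circ V_i+[V_i,V_i]\subseteq V_{i-1}$. Such a chain forces $A^{(m)}=0$. Alternatively, I would use a single GD ideal $I$ such that both $I$ and $A/I$ are solvable GD algebras. The key observation is that solvability of GD algebras is closed under extensions: if $I$ is an ideal with $I^{(k)}=0$ and $(A/I)^{(l)}=0$, then $A^{(l)}\subseteq I$, hence $A^{(l+k)}=0$. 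So it suffices to find such an ideal. The one- and two-dimensional cases feed the induction.

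For $\dim A=1$, the Lie bracket vanishes and a solvable Novikov algebra structure $e\circ e=\lambda e$ must have $\lambda=0$, so $A$ is trivial. For $\dim A=2$ or $3$ I would split according to the Lie algebra. If $(A,[\cdot,\cdot])$ is abelian, the GD derived series coincides with the Novikov derived series, and we are done. If it is nonabelian, then $L:=[A,A]$ is a nonzero proper Lie ideal. Since the Lie algebra is solvable and $\dim\le 3$, $L$ is abelian of dimension $1$ or $2$. I would then try to show that $L$ is also a Novikov ideal, or at least that a suitable subspace built from it is. The tool is compatibility identity \eqref{eq:GD_compatibility}, as in the proof of Proposition~\ref{prop:no-low-dimensional-counterexample}. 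Once $L$ is a GD ideal, it is solvable as a GD algebra: its bracket is zero, and it is a Novikov subalgebra of a solvable Novikov algebra, hence solvable. The quotient $A/L$ has zero bracket and is a quotient of a solvable Novikov algebra, hence solvable. The extension argument then finishes.

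The main obstacle is the claim that $[A,A]$ is stable under $\circ$, which may fail. The fallback is a direct case analysis over the solvable Lie algebras of dimension $\le 3$ over an algebraically closed field: the $2$-dimensional $[e_1,e_2]=e_2$, the Heisenberg algebra, $\mathfrak r_2\oplus\Bbbk$, and the families $[e_3,e_1]=e_1,\ [e_3,e_2]=\alpha e_2$ (or the Jordan-block version). For each of these I would substitute basis vectors into \eqref{eq:GD_compatibility}, using $x,y,z$ ranging over basis elements with at least one bracket nonzero, to constrain the structure constants of $\circ$.

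Plugging the element spanning the complement of $[A,A]$ (e.g.\ $e_3$, or $e_1$ in dimension $2$) into the compatibility identity should show that $A\circ L+L\circ A\subseteq L$. In the non-nilpotent cases, eigenvalue considerations should help: $\operatorname{ad}_{e_3}$ acts invertibly on $L$, so elements such as $[e_3,\ell\circ e_3]$ can be compared with $\ell$. Where $L$ turns out not to be a Novikov ideal, I would instead take $I=A\circ A+[A,A]=A^{(1)}$, which is always an ideal. Here the required step is to show $A^{(1)}\neq A$ using the solvability of $(A,\circ)$ and of the Lie algebra together with the compatibility constraints, and then induct on dimension, applying the lower-dimensional cases to $A^{(1)}$. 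Proving $A^{(1)}\ne A$, i.e.\ that $A\circ A$ and $[A,A]$ cannot together span $A$, is where I expect the real computation to lie. In dimension $3$ it reduces to ruling out a few explicit configurations, e.g.\ $[A,A]$ two-dimensional and $A\circ A$ containing the complement, by a contradiction with solvability of $(A,\circ)$.
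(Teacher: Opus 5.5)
\paragraph{Gap: the key step is announced but not proved.} Your fallback reduction is exactly the paper's. Solvability is closed under extensions, and if $A\circ A+[A,A]\neq A$ then $A^{(1)}$ is a GD subalgebra of dimension at most $2$ with solvable underlying algebras. So everything rests on the $2$-dimensional case and on excluding $A\circ A+[A,A]=A$ when $\dim A=3$. That exclusion is the whole content of the proposition, and you only announce it (``this is where I expect the real computation to lie'') without giving an argument.

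The same is true of the $2$-dimensional base case. For $[p,q]=q$ you must actually derive from the compatibility identity that $q\circ q=\gamma q$ and $q\circ p=\gamma p+\alpha q$, where $\alpha$ is the $p$-coefficient of $p\circ p$. Then right-commutativity gives $2\gamma^2=0$, and solvability gives $\alpha=0$.

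\paragraph{Your primary route fails in one case.} $[A,A]$ need not be a Novikov ideal. Take the algebra $G44$ of the classification: $[e_1,e_3]=e_1$, $e_1\circ e_3=e_2$, $e_3\circ e_1=-e_2$. Both underlying algebras are solvable (the Novikov product is even nilpotent) and $[A,A]=\Bbbk e_1$, yet $e_1\circ e_3=e_2\notin[A,A]$. For this Lie type, $\mathfrak r_2\oplus\Bbbk$, the complement of $[A,A]$ is $2$-dimensional, so ``plugging the element spanning the complement'' into the compatibility identity is not even available.

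\paragraph{What is missing.} You need a case analysis on $L=[A,A]$ under the hypothesis $A\circ A+[A,A]=A$.
\begin{itemize}
\item If $L=0$, the hypothesis says $A\circ A=A$, contradicting Novikov solvability.
\item If $\dim L=2$, then $L$ is abelian and $\operatorname{ad}_a|_L$ is invertible for $a\notin L$. The compatibility identity on $L$ gives $L\circ L\subseteq L$. With $x=y=a$, together with right-commutativity, it shows that $C(u):=\eta(a\circ u)$ (with $\eta$ the $a$-coordinate) satisfies $\eta(u\circ a)=-C(u)$ and $C(u\circ v)=C(u)C(v)$. A nonzero character of the solvable algebra $(L,\circ)$ is impossible, so $C=0$. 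Solvability also kills the $a$-component of $a\circ a$, so $A\circ A\subseteq L$. Here your idea does work, since $L$ is then a GD ideal.
\item In the Heisenberg case, your appeal to the argument of Proposition~\ref{prop:no-low-dimensional-counterexample} also works. You only need to note that $c\circ c=\lambda c$ with $\lambda\neq0$ would make $\Bbbk c$ a non-solvable Novikov subalgebra.
\item In the remaining case, $[h,e]=e$ with $c$ central, you must write out the general compatible product and use the Novikov identities. They kill the $h$-components of $e\circ h$ and $h\circ e$, which equal $\pm t$ with $2t^2=0$. They also kill the $h$-component $s$ of $h\circ c$: if $s\neq0$, then $\Bbbk e$ is a Novikov ideal with non-solvable quotient. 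Only after this is $\operatorname{span}\{e,c\}$ (not $[A,A]$) a GD ideal, and solvability then removes the $h$-component of $h\circ h$.
\end{itemize}
Without these computations the proposal is a plan rather than a proof.
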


\begin{proof}
We first present the 2-dimensional case. If the Lie bracket is zero, then the GD derived series coincides with the Novikov derived series. Otherwise \(\dim A=2\), and we may choose a basis \(p,q\) such that \([p,q]=q\). The GD compatibility identity forces the product to have the form
\[
        p\circ p=\alpha p+\beta q,\quad
        p\circ q=-\gamma p+\delta q,\quad
        q\circ p=\gamma p+\alpha q,\quad
        q\circ q=\gamma q .
\]
The Novikov identities imply \(2\gamma^2=0\), hence \(\gamma=0\). If \(\alpha\neq 0\), then repeated squaring gives nonzero elements with nonzero \(p\)-component in all terms of the Novikov derived series, contradicting the solvability of \((A,\circ)\). Hence \(\alpha=0\). Thus \(A\circ A+[A,A]\subseteq \Bbbk q\), and since \(q\circ q=[q,q]=0\), we get \(A^{(2)}=0\). Therefore the assertion holds for \(\dim A\leq 2\).

Assume now that \(\dim A=3\) and set \(D:=A\circ A+[A,A]\). If \(D\neq A\), then \(\dim D\leq 2\) and \(D\) is a GD subalgebra since \(D\circ D\subseteq A\circ A\subseteq D\) and \([D,D]\subseteq [A,A]\subseteq D\). The induced Lie and Novikov algebras on \(D\) are solvable, so the 2-dimensional case implies that \(D\) is solvable. As \(A^{(1)}=D\), the GD algebra \(A\) is solvable. Hence it remains only to exclude the case
\begin{equation}
\label{eq:dim3-D=A}
        A\circ A+[A,A]=A.
\end{equation}

Let \(L:=[A,A]\). Since the Lie algebra is 3-dimensional and solvable, \(\dim L\in\{0,1,2\}\). If \(L=0\), then \eqref{eq:dim3-D=A} gives \(A\circ A=A\), contradicting the solvability of \((A,\circ)\).

Assume \(\dim L=2\). If \(L\) were non-abelian, then every derivation of \(L\) would map \(L\) into \([L,L]\). Taking \(a_0\in A\setminus L\), the derivation \(\operatorname{ad}_{a_0}|_L\) would give
\([A,A]=[a_0,L]+[L,L]\subseteq [L,L]\), a contradiction. Hence \(L\) is abelian. Write \(A=\Bbbk a\oplus L\). Then \(\operatorname{ad}_a|_L\) is invertible. Let
\(\eta:A\to\Bbbk\) be the projection onto \(\Bbbk a\). For \(u,v,w\in L\), the GD identity gives
$
        \eta(v\circ w)[a,u]=\eta(v\circ u)[a,w],
$
and therefore \(L\circ L\subseteq L\). Set \(C(u):=\eta(a\circ u)\). The GD identity with \(x=a,y=a,z=u\), together with the Novikov identity
\((u\circ a)\circ v=(u\circ v)\circ a\), gives
$
        \eta(u\circ a)=-C(u)$ and $
        C(u\circ v)=C(u)C(v).
$
The solvability of \((A,\circ)\) forces \(C=0\). The same repeated-squaring argument gives \(\eta(a\circ a)=0\). Thus \(A\circ A\subseteq L\), contradicting \eqref{eq:dim3-D=A}.

It remains to consider \(\dim L=1\), say \(L=\Bbbk e\). Suppose first that \(e\) is central. Put \(P:=A\circ A\). Since \((A,\circ)\) is solvable, \(P\neq A\), while \eqref{eq:dim3-D=A} gives \(P+\Bbbk e=A\). Hence \(A=P\oplus\Bbbk e\), and the bracket on \(P\) is given by a non-degenerate alternating form \([u,v]=\omega(u,v)e\). Applying the GD identity to
\((x,y,z)=(e,u,v)\) and \((u,v,v)\), with \(u,v\in P\), yields
\[
        e\circ e=0,\quad P\circ e=0,\quad e\circ P=0.
\]
Thus all products involving \(e\) vanish, and so \(A\circ A=P\circ P=P\), again contradicting Novikov solvability.

Finally suppose that \(L\) is not central. Then there is a basis \(h,e,c\) of \(A\) such that \([h,e]=e\) and \(c\) is central. A direct substitution into the GD identity gives
\[
\begin{gathered}
        h\circ h=a h+b e+d c,\quad
        h\circ e=-t h+p e-\beta c,\quad
        e\circ h=t h+a e+\beta c,\quad
        e\circ e=t e,\\
        h\circ c=s h+r c,\quad
        e\circ c=s e,\quad
        c\circ h=q e+m c,\quad
        c\circ e=n e,\quad
        c\circ c=\ell c .
\end{gathered}
\]
The Novikov identities imply
$
        2t^2=0, \beta s=0$ and $ s(s-\ell)=0.
$
Thus \(t=0\). If \(s\neq 0\), then \(\beta=0\) and \(\ell=s\). In this case \(\Bbbk e\) is a Novikov ideal, and in \(A/\Bbbk e\) we have
$
        \bar h\circ \bar c=s\bar h+r\bar c$ and $
        \bar c\circ \bar c=s\bar c,
$
so \((A/\Bbbk e)\circ(A/\Bbbk e)=A/\Bbbk e\), contradicting Novikov
solvability. Hence \(s=0\). With \(t=s=0\), the only product with a possible \(h\)-component is \(h\circ h=a h+b e+d c\). If \(a\neq 0\), repeated squaring contradicts Novikov solvability. Therefore \(a=0\), and hence
$
        A\circ A+[A,A]\subseteq \operatorname{span}\{e,c\}\neq A,
$
contradicting \eqref{eq:dim3-D=A}. All cases lead to contradictions. Therefore \(A\) is solvable.
\end{proof}
\begin{rem}
    The proof of the above proposition can be derived from the classification of low-dimensional GD algebras presented later in this paper.
\end{rem}

\begin{ex}
\label{ex:four-dimensional-nonsolvable-solvable-underlying}
The following example shows that Theorem~\ref{thm:dim3} cannot be extended to dimension \(4\). Let \(A=\operatorname{span}\{h,c,e,f\}\). Define a skew-symmetric bilinear bracket by
\[
        [h,e]=e,\quad [h,f]=-f,\quad [e,f]=c,
\]
where \(c\) is central.
Define a bilinear product \(\circ\) by
\[
        c\circ e=e,\quad c\circ f=-f,\quad e\circ f=-h,\quad f\circ e=h.
\]
Then \((A,\circ,[\cdot,\cdot])\) is a GD algebra. The underlying Lie algebra and the underlying Novikov algebra are solvable. However, \((A,\circ,[\cdot,\cdot])\) is not solvable as a GD algebra.
\end{ex}

Next, we establish a characterization of the solvability of GD algebras. For a GD algebra \((A,\circ,[\cdot,\cdot])\), define \(\Sigma(A):=[A,A]+\delta(A,A)\), where \(\delta(x,y):=x\circ y-y\circ x\).

The following lemma is the main structural point.

\begin{lem}
\label{lem:Sigma-ideal}
For every GD algebra \(A\), the subspace \(\Sigma(A)\) is a GD ideal of \(A\).
\end{lem}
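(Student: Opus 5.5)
The plan is to check the three inclusions $[A,\Sigma(A)]\subseteq\Sigma(A)$, $\Sigma(A)\circ A\subseteq\Sigma(A)$ and $A\circ\Sigma(A)\subseteq\Sigma(A)$. The first two inclusions reduce to working modulo $\Sigma(A)$ with the Novikov identities and with \eqref{eq:GD_compatibility}; the last follows from the second.

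\emph{Lie part.} This is immediate. Since $\Sigma(A)\subseteq A$, we have $[A,\Sigma(A)]\subseteq[A,A]\subseteq\Sigma(A)$.

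\emph{Reduction of the Novikov part.} For every $a\in A$ and $u\in\Sigma(A)$ we have $a\circ u-u\circ a=\delta(a,u)\in\Sigma(A)$. Hence $a\circ u\equiv u\circ a \pmod{\Sigma(A)}$, and $A\circ\Sigma(A)\subseteq\Sigma(A)$ will follow once $\Sigma(A)\circ A\subseteq\Sigma(A)$ is known. More generally, I will freely use $a\circ b\equiv b\circ a\pmod{\Sigma(A)}$ for all $a,b\in A$. It therefore suffices to show
\[
\delta(x,y)\circ z\in\Sigma(A)\quad\text{and}\quad [x,y]\circ z\in\Sigma(A)
\]
for all $x,y,z\in A$.

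\emph{The $\delta$ term.} I use left-symmetry of the Novikov product, which gives
\[
\delta(x,y)\circ z=x\circ(y\circ z)-y\circ(x\circ z).
\]
Commuting modulo $\Sigma(A)$, this is congruent to $(y\circ z)\circ x-(x\circ z)\circ y$. By right-commutativity, $(a\circ b)\circ c=(a\circ c)\circ b$, this equals $(y\circ x)\circ z-(x\circ y)\circ z=-\delta(x,y)\circ z$. Thus $2\,\delta(x,y)\circ z\in\Sigma(A)$, and since the characteristic is zero, $\delta(x,y)\circ z\in\Sigma(A)$.

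\emph{The bracket term.} This is the only place where \eqref{eq:GD_compatibility} is needed, and I expect it to be the main point. Modulo $\Sigma(A)$, the terms $[x,y\circ z]$ and $[z,y\circ x]$ vanish, since both lie in $[A,A]$. Moreover $y\circ[x,z]\equiv[x,z]\circ y$. Writing $g(a,b,c):=[a,b]\circ c$, the compatibility identity becomes
\[
g(y,x,z)-g(y,z,x)-g(x,z,y)\equiv 0 .
\]
Using antisymmetry of $g$ in its first two arguments, this reads
\[
g(y,x,z)+g(z,y,x)+g(z,x,y)\equiv0 .
\]
The sum of the last two terms is symmetric in $x,y$. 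Swapping $x$ and $y$ and subtracting the two relations therefore gives $g(y,x,z)-g(x,y,z)=2g(y,x,z)\equiv0$. Hence $[y,x]\circ z\in\Sigma(A)$, again using characteristic zero.

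Combining the three parts shows that $\Sigma(A)$ is a GD ideal. The only subtle step is the symmetrization trick in the bracket term; everything else is routine rewriting modulo $\Sigma(A)$.
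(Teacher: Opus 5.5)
Your proposal is correct and follows essentially the paper's route: the Lie part is immediate, your identity $2\,\delta(x,y)\circ z=\delta(x,y\circ z)-\delta(y,x\circ z)$ is the paper's, and $[A,A]\circ A\subseteq\Sigma(A)$ comes from symmetrizing the compatibility identity modulo $\Sigma(A)$. The differences are cosmetic: you symmetrize via the transposition $x\leftrightarrow y$ instead of the paper's cyclic permutations, and you handle $A\circ\Sigma(A)$ by commuting modulo $\Sigma(A)$ rather than with the paper's explicit formula for $z\circ\delta(x,y)$.
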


\begin{proof}
For brevity, set \(xy=x\circ y\), and put \(\Delta:=\delta(A,A)\). The Novikov identities imply
$
        2\delta(x,y)z=\delta(xz,y)+\delta(x,yz)
      $ and $
        z\delta(x,y)=-\delta(x,y)z+\delta(x,zy)-\delta(y,zx).
$
Hence, since the ground field has characteristic zero, \(\Delta A+A\Delta \subseteq \Delta\).

It remains to control the products involving \([A,A]\). For \(x,y,z\in A\), set \(a=[x,y]z\), \(b=[y,z]x\) and \(c=[z,x]y\). Modulo \(\Sigma(A)\), the GD identity gives \(a+b-c\equiv 0\). Cyclically permuting \(x,y,z\), we also get \(b+c-a\equiv 0\) and \(c+a-b\equiv 0\). Thus \(2a,2b,2c\in \Sigma(A)\), and therefore
$
        [A,A]A\subseteq \Sigma(A).
$
Since \(uv-vu=\delta(u,v)\in\Delta\), it follows also that \(A[A,A]\subseteq\Sigma(A)\). Hence
$
        A\Sigma(A)+\Sigma(A)A\subseteq\Sigma(A).
$

Finally, the Jacobi identity gives \([A,[A,A]]\subseteq [A,A]\) and \([A,\Delta]\subseteq [A,A]\) because \(\Delta\subseteq A\). Therefore \([A,\Sigma(A)]\subseteq\Sigma(A)\). Thus \(\Sigma(A)\) is closed under both Novikov multiplications by elements of \(A\) and under the Lie bracket with \(A\), so it is a GD ideal.
\end{proof}

Define the skew-derived tower of a GD algebra \((A,\circ,[\cdot,\cdot])\) by
\[
        \mathcal S_0(A):=A,\quad
        \mathcal S_{n+1}(A):=
        \Sigma(\mathcal S_n(A))
        =
        [\mathcal S_n(A),\mathcal S_n(A)]
        +
        \delta(\mathcal S_n(A),\mathcal S_n(A)).
\]

\begin{thm}
\label{thm:skew-derived}
Let \((A,\circ,[\cdot,\cdot])\) be a GD algebra. Then \(A\) is solvable if and only if the Novikov algebra \((A,\circ)\) is solvable and \(\mathcal S_m(A)=0\) for some \(m\geq 0\).
\end{thm}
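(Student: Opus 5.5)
Necessity is straightforward. If $A^{(N)}=0$, then the Novikov derived series of $(A,\circ)$ sits inside the GD derived series, so $(A,\circ)$ is solvable. We then check by induction that $\mathcal S_n(A)\subseteq A^{(n)}$. This holds because $[\mathcal S_n,\mathcal S_n]\subseteq [A^{(n)},A^{(n)}]$, and $\delta(\mathcal S_n,\mathcal S_n)\subseteq A^{(n)}\circ A^{(n)}$ since $\delta(x,y)=x\circ y-y\circ x$. Hence $\mathcal S_N(A)=0$.

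For sufficiency, the plan is to pass down the tower $A=\mathcal S_0\supseteq \mathcal S_1\supseteq\cdots\supseteq\mathcal S_m=0$ using Lemma~\ref{lem:Sigma-ideal}.

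First, each $\mathcal S_{n+1}(A)=\Sigma(\mathcal S_n(A))$ is a GD ideal of the GD algebra $\mathcal S_n(A)$. This makes sense inductively: an ideal is in particular a GD subalgebra, so $\mathcal S_{n+1}$ is again a GD algebra and Lemma~\ref{lem:Sigma-ideal} applies to it.

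Second, consider the quotient $Q_n:=\mathcal S_n/\mathcal S_{n+1}$. In $Q_n$ the Lie bracket vanishes and $\delta\equiv 0$, so $Q_n$ is a commutative Novikov algebra with zero bracket. It is also a quotient of the subalgebra $\mathcal S_n\subseteq A$ of the solvable Novikov algebra $(A,\circ)$. Subalgebras and quotients of solvable Novikov algebras are solvable, since their derived series are the images or intersections of the ambient ones. So $(Q_n,\circ)$ is solvable, and because the bracket on $Q_n$ is zero, its GD derived series coincides with its Novikov derived series. Hence $Q_n$ is GD-solvable, say $Q_n^{(k_n)}=0$. This means $\mathcal S_n^{(k_n)}\subseteq \mathcal S_{n+1}$, where the superscript is the GD derived series of $\mathcal S_n$.

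Third, we chain these together. We need the standard monotonicity facts $B\subseteq C\Rightarrow B^{(k)}\subseteq C^{(k)}$ and $(B^{(j)})^{(k)}=B^{(j+k)}$, both immediate from the definition \eqref{eq:derived-series}. With them we get by induction $A^{(k_0+\cdots+k_{n-1})}\subseteq\mathcal S_n$. Taking $n=m$ gives $A^{(K)}=0$ with $K=k_0+\cdots+k_{m-1}$.

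The main point requiring care is the second step. There we must verify that the GD derived series of a quotient by a GD ideal is the image of the derived series; this holds because the projection is a GD homomorphism. We must also confirm that $Q_n$ inherits Novikov solvability from $A$ and not merely from $\mathcal S_n$ viewed abstractly. Everything else is formal, since Lemma~\ref{lem:Sigma-ideal} has already done the structural work.
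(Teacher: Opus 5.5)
Your proof is correct and follows essentially the same route as the paper: necessity via $\mathcal S_n(A)\subseteq A^{(n)}$, and sufficiency by using Lemma~\ref{lem:Sigma-ideal} to make the skew-derived tower a chain of GD ideals. Each quotient in that chain has zero bracket and is a Novikov-solvable subquotient of $(A,\circ)$, and the pieces are glued by solvability of extensions; your explicit inclusion $A^{(k_0+\cdots+k_{n-1})}\subseteq\mathcal S_n(A)$ simply writes out the extension step that the paper invokes in one line.
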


\begin{proof}
Assume first that \(A\) is solvable. Then \((A,\circ)\) is solvable, since \(A_{\mathrm{Nov}}^{(n)}\subseteq A^{(n)}\) for all \(n\).
Moreover,
$
\mathcal S_n(A) \subseteq A^{(n)} \text{for } n \geq 0,
$
because \([U,U]+\delta(U,U)\subseteq [U,U]+U\circ U\) for every subspace \(U\subseteq A\). Hence \(\mathcal S_m(A)=0\) for some \(m\).

Conversely, assume that \((A,\circ)\) is solvable and that
\(\mathcal S_m(A)=0\). By Lemma~\ref{lem:Sigma-ideal}, applied successively, \(\mathcal S_{n+1}(A)\) is a GD ideal of \(\mathcal S_n(A)\). Hence each quotient \(\mathcal S_n(A)/\mathcal S_{n+1}(A)\) is a GD algebra. By definition of \(\mathcal S_{n+1}(A)\), this quotient has zero Lie bracket and commutative Novikov product. Since it is a Novikov subquotient of the solvable Novikov algebra \((A,\circ)\), it is solvable as a Novikov algebra, and therefore solvable as a GD algebra.

Starting from \(\mathcal S_m(A)=0\) and using solvability of extensions, we obtain successively that
$
        \mathcal S_{m-1}(A),\ldots,\mathcal S_0(A)=A
$
are solvable GD algebras. Thus \(A\) is solvable.
\end{proof}

\begin{cor}
\label{cor:TP-solvability}
Let \((P,\cdot,[\cdot,\cdot])\) be a transposed Poisson algebra. \((P,\cdot,[\cdot,\cdot])\) is solvable if and only if \((P,\cdot)\) is solvable and \((P,[\cdot,\cdot])\) is solvable.
\end{cor}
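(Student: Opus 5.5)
Regarding \(P\) as a GD algebra with \(x\circ y=x\cdot y\), we will apply Theorem~\ref{thm:skew-derived}. Since \(\cdot\) is commutative, \(\delta(x,y)=x\cdot y-y\cdot x=0\) for all \(x,y\in P\). Hence \(\Sigma(U)=[U,U]\) for every GD subalgebra \(U\). Note that each \(\mathcal S_n(P)\) is a GD subalgebra: by Lemma~\ref{lem:Sigma-ideal} it is an ideal of \(\mathcal S_{n-1}(P)\), inductively. The skew-derived tower therefore reduces to
\[
\mathcal S_0(P)=P,\qquad \mathcal S_{n+1}(P)=[\mathcal S_n(P),\mathcal S_n(P)],
\]
which is exactly the derived series \(P_L^{(n)}\) of the Lie algebra \((P,[\cdot,\cdot])\). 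This identification follows by a trivial induction on \(n\).

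Theorem~\ref{thm:skew-derived} then says that \(P\) is solvable as a GD algebra if and only if \((P,\cdot)\) is solvable (as a Novikov algebra) and \(P_L^{(m)}=0\) for some \(m\). The second condition is precisely the solvability of \((P,[\cdot,\cdot])\).

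It remains to check two small points. First, the Novikov derived series of \((P,\cdot)\) is the derived series of the commutative associative algebra, since both are defined by \(U\mapsto U\cdot U\). So Novikov solvability coincides with solvability of \((P,\cdot)\). Second, the notion of solvability for transposed Poisson algebras must agree with the GD derived series \eqref{eq:derived-series} under \(\circ=\cdot\); this holds because both use \(U\cdot U+[U,U]\). The only mild obstacle is ensuring the hypotheses of Theorem~\ref{thm:skew-derived} apply to transposed Poisson algebras viewed as GD algebras, which is the standard fact recalled in the introduction that transposed Poisson algebras are GD algebras. No further computation is needed.
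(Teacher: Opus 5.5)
Your proof is correct and follows essentially the same route as the paper: you view \(P\) as a GD algebra, note that \(\delta\equiv 0\) collapses the skew-derived tower to the Lie derived series, and then apply Theorem~\ref{thm:skew-derived}. Your direct observation that the Novikov and commutative-associative derived series of \((P,\cdot)\) are both given by \(U\mapsto U\cdot U\) settles the last identification more cleanly than the paper's passing remark about nilpotency.
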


\begin{proof}
A transposed Poisson algebra is a GD algebra whose Novikov product is commutative and associative. Hence \(\delta(P,P)=0\), and the skew-derived tower satisfies
$
        \mathcal S_{n+1}(P)=[\mathcal S_n(P),\mathcal S_n(P)].
$
Thus \(\mathcal S_m(P)=0\) for some \(m\) if and only if the Lie algebra
\((P,[\cdot,\cdot])\) is solvable. By Theorem~\ref{thm:skew-derived}, \(P\) is
solvable if and only if \((P,\cdot)\) is solvable as a Novikov algebra and
\(\mathcal S_m(P)=0\) for some \(m\). Since \((P,\cdot)\) is commutative
associative, its Novikov solvability is equivalent to nilpotency. The result
follows.
\end{proof}

It is well known that the solvability of a Lie algebra is characterized by the nilpotency of its derived algebra. This characterization also holds for Poisson algebras, generalized Poisson algebras, and Poisson $n$-Lie algebras~\cite{Cao2025,F2025,Cao2026}. However, the following example demonstrates that this result does not hold for GD algebras.

\begin{prop}
\label{prop:small-dimensional-derived-nilpotent}
Let \((A,\circ,[\, ,\,])\) be a solvable GD algebra. If \(\dim A\leq 3\), then \(A^{(1)}\) is nilpotent.
\end{prop}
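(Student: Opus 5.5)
Set \(D:=A^{(1)}=A\circ A+[A,A]\). Since \(A\) is solvable, \(D\neq A\) (otherwise \(A^{(n)}=A\) for all \(n\)), so \(\dim D\leq 2\). Moreover \(D\) is a GD subalgebra, in fact a GD ideal, whose underlying Lie and Novikov algebras are solvable. By Proposition~\ref{prop:no-low-dimensional-counterexample}, applied to \(D\) with \(\dim D\leq 2\), it therefore suffices to show that the Lie algebra \((D,[\cdot,\cdot])\) and the Novikov algebra \((D,\circ)\) are both nilpotent.

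\emph{Lie part.} Since \([D,D]\subseteq [A,A]^{(1)}\)-type terms, we use the classical fact that for a solvable Lie algebra \((A,[\cdot,\cdot])\) the derived algebra \([A,A]\) is nilpotent. The subtlety is that \(D\) may be strictly larger than \([A,A]\). If \(\dim D\leq 1\), the bracket on \(D\) vanishes. If \(\dim D=2\) and \([D,D]\neq 0\), choose \(p,q\in D\) with \([p,q]=q\). We must then rule this out, since \(D\) would be a \(2\)-dimensional non-abelian Lie algebra sitting inside the \(3\)-dimensional solvable algebra \(A\), with \(D\) a GD ideal.

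\emph{Novikov part.} A \(2\)-dimensional solvable Novikov algebra need not be nilpotent, for example \(p\circ q=q\). So we must again exclude non-nilpotent \((D,\circ)\) using that \(D\) is the derived algebra of a \(3\)-dimensional solvable GD algebra.

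The main work is the case analysis with \(\dim A=3\) and \(\dim D=2\); the case \(\dim A\leq 2\) is immediate, since then \(\dim D\leq 1\) and \(D\circ D\) lies in \(D\) with \(d\circ d=\lambda d\), where solvability forces \(\lambda=0\). I would reuse the case split of Proposition~\ref{thm:dim3} according to \(L=[A,A]\).

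If \(L=0\), then \(A\) is a solvable Novikov algebra. I would appeal to the known structure of \(3\)-dimensional Novikov algebras, or argue directly that \(A\circ A\) is nilpotent, since the characteristic-zero right multiplications restricted to \(A\circ A\) are nilpotent.

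If \(\dim L=2\), then \(L\) is abelian and \(A\circ A\subseteq L\) by the computation in Proposition~\ref{thm:dim3}. Hence \(D=L\), and the explicit formulas there (with \(C=0\)) should give \(L\circ L\) nilpotent.

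If \(\dim L=1\), the explicit normal forms in the proof of Proposition~\ref{thm:dim3} (Heisenberg-type and \([h,e]=e\) cases) list all products. Imposing the conditions \(t=s=0\) and \(a=0\) already derived there, I would check directly that \(D\subseteq\operatorname{span}\{e,c\}\) carries nilpotent operations. I expect the main obstacle to be the Novikov nilpotency here, namely controlling the parameters \(n,\ell,m\), for example \(c\circ c=\ell c\) with \(\ell\neq 0\). This would be handled by showing that \(c\in D\) forces \(\ell=0\), via the Novikov identities relating \(\ell\) to the coefficients \(d,q,\beta\) that produce \(c\) in \(D\).
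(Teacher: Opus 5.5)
Your reduction is sound: \(D=A^{(1)}\) is a GD subalgebra with \(\dim D\le 2\). By Proposition~\ref{prop:no-low-dimensional-counterexample} it suffices that \((D,[\cdot,\cdot])\) and \((D,\circ)\) be nilpotent, and the Lie part does come out of your case split. The gap is that the real content of the statement is never carried out in any branch: you must exclude a \(2\)-dimensional \(D\) whose Novikov product has the form \(f\circ e=\beta e\), \(f\circ f=\gamma e\) with \(\beta\neq 0\). The inputs you intend to borrow do not supply this.

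\begin{itemize}
\item \emph{Case \([A,A]=0\).} Nilpotent right multiplications do not force a Novikov algebra to be nilpotent. On \(\operatorname{span}\{e,f\}\) with only \(f\circ e=e\), one has \(R_f=0\) and \(R_e^2=0\), but \(L_f(e)=e\). The appeal to the classification of \(3\)-dimensional Novikov algebras would only help after actually checking the list.

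\item \emph{Case \(\dim[A,A]=2\).} The proof of Proposition~\ref{thm:dim3} gives \(A\circ A\subseteq L\), so \(D=L\), but no formulas for \(L\circ L\). You still need an argument. For example, the GD identity with \(x=a\) and \(y,z\in L\) shows that \(\operatorname{ad}_a|_L\) is an invertible derivation of \((L,\circ)\). This is impossible if \(\beta\neq 0\): such a derivation preserves \(L\circ L=\Bbbk e\) and then has eigenvalue \(0\) on \(f\).

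\item \emph{Heisenberg case.} Proposition~\ref{thm:dim3} provides no normal form here. Its argument relies on \(A=P\oplus\Bbbk e\) with \(P=A\circ A\), which came from the contradiction hypothesis \(A\circ A+[A,A]=A\). That hypothesis is exactly what fails when \(A\) is solvable, so this branch is untouched.

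\item \emph{Non-central case.} The conditions \(t=s=a=0\) do transfer, since they used only Novikov solvability. Also \(\ell=0\) is immediate, because \(c\circ c=\ell c\) with \(\ell\neq 0\) contradicts solvability. The real obstruction is \(n\): \(c\circ e=ne\) with \(n\neq 0\) makes \(\operatorname{span}\{e,c\}\) non-nilpotent, and you do not treat it. One can check that the left-symmetric identity gives \(m=0\), and that \(n\neq 0\) forces \(\beta=r=d=0\), whence \(D=\Bbbk e\).
\end{itemize}

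The paper avoids the split according to \([A,A]\) and works with \(D\) directly. Assuming \(D\) is not nilpotent, it takes \(D^{(1)}=\Bbbk e\). It gets \(e\circ e=0\) from solvability and \(e\circ f=0\) from the left-symmetric identity, so \(f\circ e=\beta e\), \(f\circ f=\gamma e\), \([e,f]=\delta e\), with \((\beta,\delta)\neq(0,0)\) by Theorem~\ref{thm:engel}. Then, for \(x\notin D\), about a dozen coordinates of the left-symmetric, right-commutativity, Jacobi and GD identities force every product and bracket involving \(x\) to have zero \(f\)-component. Hence \(A\circ A+[A,A]\subseteq\Bbbk e\), contradicting \(\dim A^{(1)}=2\). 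This single computation handles all of your branches at once. Without it, or the branch-by-branch arguments indicated above, your proposal remains a plan rather than a proof.
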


\begin{proof}
Put
$
        D=A^{(1)}=A\circ A+[A,A].
$
Then \(D\) is a GD subalgebra of \(A\). Moreover, $D^{(n)}=A^{(n+1)}$ for any $n\geq 0$, so \(D\) is solvable. If \(A\neq 0\), then \(D\neq A\), otherwise the GD derived series of \(A\) would be constant. Hence \(\dim D\leq 2\).

If \(\dim D\leq 1\), then \(D\) is nilpotent. Indeed, if \(D=\Bbbk e\), then \([e,e]=0\) and \(e\circ e=\lambda e\). Since \(D\) is solvable, we must have \(\lambda=0\), and hence \(D^{\langle 2\rangle}=0\).

It remains to consider the case \(\dim D=2\). Suppose, for a contradiction, that \(D\) is not nilpotent. Since \(D\) is solvable, the subspace
$
        E:=D^{(1)}=D\circ D+[D,D]
$
is a nonzero proper subspace of \(D\). Thus \(\dim E=1\). Choose a basis \(e,f\) of \(D\) such that \(E=\Bbbk e\). Since all products and brackets in \(D\) lie in \(E\), we may write
\[
\begin{gathered}
        e\circ e=0,\quad
        e\circ f=\alpha e,\quad
        f\circ e=\beta e,\quad
        f\circ f=\gamma e,\quad
        [e,f]=\delta e .
\end{gathered}
\]
The equality \(e\circ e=0\) follows from the solvability of \(D\). The left-symmetric identity applied to \((e,f,f)\) gives
$
        (e\circ f)\circ f-e\circ(f\circ f)
        =
        (f\circ e)\circ f-f\circ(e\circ f),
$
and therefore \(\alpha^2e=0\). Then \(\alpha=0\). Thus
\[
\begin{gathered}
        e\circ e=0,\quad e\circ f=0,\quad
        f\circ e=\beta e,\quad f\circ f=\gamma e,\quad
        [e,f]=\delta e .
\end{gathered}
\]
If \((\beta,\delta)=(0,0)\), then the Lie bracket on \(D\) is zero and the only possible nonzero Novikov product is \(f\circ f=\gamma e\). Hence every \(\operatorname{ad}_x\) is zero, every \(L_x\) and \(R_x\) is nilpotent, and \(\mathcal I_{R\operatorname{ad}}=0\). By Theorem~\ref{thm:engel}, \(D\) is nilpotent, a contradiction. Therefore
$
        (\beta,\delta)\neq (0,0).
$

Since \(\dim A\leq 3\) and \(\dim D=2\), we may choose \(x\in A\setminus D\) such that
$
        A=\Bbbk x\oplus \Bbbk e\oplus \Bbbk f.
$
Because \(D==A^{(1)}\), all products and brackets in \(A\) take values in \(D\). Set
\[
\begin{gathered}
        x\circ x=pe+qf,\quad
        x\circ e=re+sf,\quad
        e\circ x=ue+wf,\quad
        x\circ f=me+nf,\\
        f\circ x=he+if,\quad
        [x,e]=\rho e+\sigma f,\quad
        [x,f]=\mu e+\nu f .
\end{gathered}
\]
Let \(\Phi(x,y,z)\) denote the left-hand side of the GD compatibility identity, and let \(\mathrm{LS}\), \(\mathrm{RC}\), and \(\mathrm{Jac}\) denote the left-hand sides of the left-symmetric identity, the right-commutativity identity, and the Jacobi identity, respectively. Comparing the indicated coordinates gives
\[
\begin{aligned}
    \mathrm{LS}(x,f,e)_f &= -\beta s, &\quad
    \Phi(e,x,f)_f &= -\delta s, &\quad
    \mathrm{RC}(e,x,e)_e &= \beta w, \\
    \Phi(x,e,f)_f &= -\delta w, &\quad
    \mathrm{Jac}(x,e,f)_f &= \delta\sigma, &\quad
    \Phi(x,e,e)_e &= -\beta\sigma-\delta w.
\end{aligned}
\]
Since \((\beta,\delta)\neq(0,0)\), these equalities imply
$
        s=w=\sigma=0.
$
Next,
\[
\begin{aligned}
        \mathrm{RC}(x,e,f)_e&=-\beta n+\gamma s,&\quad
        \mathrm{Jac}(x,e,f)_e&=-\delta\nu,&\quad
        \Phi(e,x,f)_e&=-\beta\nu+\gamma\sigma+\delta n .
\end{aligned}
\]
Using \(s=\sigma=0\), we get
\[
        \beta n=0,\quad
        \delta\nu=0,\quad
        -\beta\nu+\delta n=0.
\]
Again, since \((\beta,\delta)\neq(0,0)\), it follows that
$
        n=\nu=0.
$
Furthermore, substituting \(s=w=0\) yields
$
         \mathrm{LS}(x,e,x)_e=-u^2
       \text{ and }
       \mathrm{LS}(x,f,x)_f=-i^2,
$
which implies that \(u=i=0\). Finally, by setting \(s=u=\sigma=0\), we obtain
$
       \mathrm{RC}(x,x,e)_e=\beta q
        \text{ and }
       \Phi(x,x,e)_e=-\delta q.
$
It follows that \(q=0\).

Thus all products and brackets involving \(x\) have zero \(f\)-component. The products and brackets inside \(D\) already lie in \(\Bbbk e\). Therefore
$
        A\circ A+[A,A]\subseteq \Bbbk e,
$
contradicting
$
        D=A^{(1)}=\Bbbk e\oplus \Bbbk f.
$
Hence \(D=A^{(1)}\) must be nilpotent.
\end{proof}

\begin{ex}
\label{ex:four-dimensional-solvable-derived-not-nilpotent}
The bound in Proposition~\ref{prop:small-dimensional-derived-nilpotent} is sharp. Let
$
        A=\Bbbk t\oplus \Bbbk a\oplus \Bbbk c\oplus \Bbbk v.
$
Define a bilinear product by
\[
        t\circ c=a,\quad
        c\circ t=-a,\quad
        c\circ v=-v.
\]
Define a skew-symmetric bracket by
$
        [t,c]=c \text{ and }
        [a,v]=v.
$
Then \((A,\circ,[\, ,\,])\) is a solvable GD algebra, but \(A^{(1)}\) is not nilpotent.
\end{ex}

Next, we establish the relationship between the solvability of a GD algebra and the nilpotency of its derived algebra. As an application of this result, we prove that for transposed Poisson algebras, solvability is equivalent to the nilpotency of the derived algebra.

\begin{prop}
\label{prop:GD-derived-nilpotency-reduction}
Let \((A,\circ,[\, ,\,])\) be a finite-dimensional GD algebra. Let
$
        B=A^{(1)}=A\circ A+[A,A]
$
and \(\mathfrak n(A_{\mathrm{Lie}})\) be the nilradical of the Lie algebra \(A_{\mathrm{Lie}}=(A,[\, ,\,])\). Then \(B\) is nilpotent as a GD algebra if and only if the following conditions hold:
\begin{enumerate}
\item \(A\) is solvable as a GD algebra.
\item \(A\circ A\subseteq \mathfrak n(A_{\mathrm{Lie}})\).
\item \((B,\circ)\) is nilpotent as a Novikov algebra.
\item the ideal
$
        \mathcal I_{R\operatorname{ad}}(B)
$
is nilpotent.
\end{enumerate}
\end{prop}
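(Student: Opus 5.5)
The plan is to reduce everything to Theorem~\ref{thm:main} applied to the GD algebra \(B\). That theorem says \(B\) is nilpotent if and only if three things hold: \((B,[\cdot,\cdot])\) is a nilpotent Lie algebra, \((B,\circ)\) is a nilpotent Novikov algebra, and \(\mathcal I_{R\operatorname{ad}}(B)\) is nilpotent. Conditions (3) and (4) are exactly the last two of these. So the content of the statement is that, given (3) and (4), nilpotency of \((B,[\cdot,\cdot])\) is equivalent to conditions (1) and (2). There is also a necessity direction to handle.

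For necessity, assume \(B\) is nilpotent. Conditions (3) and (4) follow directly from Theorem~\ref{thm:main}. For (1), note that \(A^{(n+1)}=B^{(n)}\), and a nilpotent GD algebra is solvable, because \(B^{(n)}\subseteq B^{\langle 2^n\rangle}\) by induction. For (2), Theorem~\ref{thm:main} gives that \((B,[\cdot,\cdot])\) is nilpotent. Moreover \(B\supseteq [A,A]\) is a Lie ideal of \(A_{\mathrm{Lie}}\), so \(B\) is a nilpotent Lie ideal and hence \(B\subseteq \mathfrak n(A_{\mathrm{Lie}})\). In particular \(A\circ A\subseteq B\subseteq\mathfrak n(A_{\mathrm{Lie}})\).

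For sufficiency, assume (1)--(4). By Theorem~\ref{thm:main} it suffices to show that \((B,[\cdot,\cdot])\) is nilpotent. By (1), \(A_{\mathrm{Lie}}\) is solvable, since the Lie derived series sits inside the GD derived series. By Lie's theorem over our algebraically closed field of characteristic zero, this gives \([A,A]\subseteq\mathfrak n(A_{\mathrm{Lie}})\). Combined with (2), we get \(B=A\circ A+[A,A]\subseteq\mathfrak n(A_{\mathrm{Lie}})\). Now \(B\) is a Lie subalgebra of the nilpotent Lie algebra \(\mathfrak n(A_{\mathrm{Lie}})\), hence nilpotent. Theorem~\ref{thm:main} then yields that \(B\) is nilpotent.

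The step needing most care is the inclusion \([A,A]\subseteq\mathfrak n(A_{\mathrm{Lie}})\) for solvable \(A_{\mathrm{Lie}}\). This is the classical corollary of Lie's theorem and uses the standing hypotheses on \(\Bbbk\). I also need that a subalgebra of a nilpotent Lie algebra is nilpotent, which is immediate. Beyond that, the proof is bookkeeping: I will verify that \(B\) is indeed a GD subalgebra, so that Theorem~\ref{thm:main} applies to it, using \(B\circ B\subseteq A\circ A\subseteq B\) and \([B,B]\subseteq B\).
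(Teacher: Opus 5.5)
Your proposal is correct and follows essentially the same route as the paper. Both directions reduce to Theorem~\ref{thm:main} applied to \(B\): necessity comes from \(B\) being a nilpotent Lie ideal (so \(B\subseteq\mathfrak n(A_{\mathrm{Lie}})\)) together with \(A/B\) having trivial operations, and sufficiency comes from \([A,A]\subseteq\mathfrak n(A_{\mathrm{Lie}})\) for solvable \(A_{\mathrm{Lie}}\). The differences are cosmetic: you invoke Lie's theorem where the paper cites Cartan's criterion, and you make explicit the inclusion \(B^{(n)}\subseteq B^{\langle 2^n\rangle}\), which the paper uses tacitly when it deduces that \(A\) is solvable.
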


\begin{proof}
First, \(B\) is a GD ideal of \(A\), since for \(a\in A\) and \(b\in B\), we have \(a\circ b, b\circ a\in A\circ A\subseteq B\) and \([a,b]\in[A,A]\subseteq B\). Assume that \(B\) is GD-nilpotent. By Theorem~\ref{thm:main}, \((B,[\, ,\,])\) is Lie nilpotent, \((B,\circ)\) is Novikov nilpotent, and \(\mathcal I_{R\operatorname{ad}}(B)\) is nilpotent. Since \(A/B\) has zero product and zero bracket, \(A\) is GD-solvable. Moreover, \(B\) is a nilpotent Lie ideal of \(A_{\mathrm{Lie}}\), hence
$
        B\subseteq \mathfrak n(A_{\mathrm{Lie}}).
$
Thus \(A\circ A\subseteq \mathfrak n(A_{\mathrm{Lie}})\), and all four conditions hold.

Conversely, assume that the four conditions hold. Since \(A\) is GD-solvable, the Lie algebra \(A_{\mathrm{Lie}}\) is solvable. Hence, by Cartan's criterion,
$
        [A,A]\subseteq \mathfrak n(A_{\mathrm{Lie}}).
$
Together with \(A\circ A\subseteq \mathfrak n(A_{\mathrm{Lie}})\), this gives
$
        B=A\circ A+[A,A]\subseteq \mathfrak n(A_{\mathrm{Lie}}).
$
Therefore \((B,[\, ,\,])\) is Lie nilpotent. Conditions \((3)\) and \((4)\), together with Theorem~\ref{thm:main} applied to \(B\), imply that \(B\) is GD-nilpotent.
\end{proof}

\begin{lem}
\label{lem:nilpotent-half-derivation}
Let \(\mathfrak g\) be a solvable Lie algebra. Let \(D:\mathfrak g\to\mathfrak g\) be a nilpotent linear map satisfying \(D([x,y]) = \frac12\bigl([Dx,y]+[x,Dy]\bigr)\) for all \(x,y\in\mathfrak g\).
Then
$
        D(\mathfrak g)\subseteq \mathfrak n(\mathfrak g),
$
where \(\mathfrak n(\mathfrak g)\) is the nilradical of \(\mathfrak g\).
\end{lem}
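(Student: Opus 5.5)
We will reduce the statement to a property of adjoint eigenvalues. Since \(\mathfrak g\) is solvable and \(\Bbbk\) is algebraically closed of characteristic zero, Lie's theorem gives a complete flag \(0=V_0\subset V_1\subset\cdots\subset V_n=\mathfrak g\) of ideals. In a compatible basis every \(\operatorname{ad}_x\) is upper triangular, with diagonal entries \(\lambda_1(x),\dots,\lambda_n(x)\). Each \(\lambda_i\) is a linear functional vanishing on \([\mathfrak g,\mathfrak g]\). The set \(N:=\bigcap_i\ker\lambda_i\) is exactly the set of \(\operatorname{ad}\)-nilpotent elements. It contains \([\mathfrak g,\mathfrak g]\), so it is an ideal, and by Engel's theorem it is nilpotent. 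Conversely, every element of a nilpotent ideal is \(\operatorname{ad}\)-nilpotent. Hence \(N=\mathfrak n(\mathfrak g)\), and it suffices to prove that \(\operatorname{ad}_{Dx}\) is nilpotent for every \(x\in\mathfrak g\). Equivalently, we must show \(\lambda_i(Dx)=0\) for all \(i\) and \(x\).

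The basic tool is the operator form of the \(\tfrac12\)-derivation identity. Read as an identity in \(y\), the defining relation says
\[
\operatorname{ad}_{Dx}=2D\operatorname{ad}_x-\operatorname{ad}_xD,\qquad x\in\mathfrak g .
\]
Two consequences are immediate. First, \(D\) preserves \([\mathfrak g,\mathfrak g]\), since \(D[a,b]=\tfrac12([Da,b]+[a,Db])\). Second, \(\operatorname{tr}(\operatorname{ad}_{Dx})=\operatorname{tr}(D\operatorname{ad}_x)\).

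We then argue by induction on \(\dim\mathfrak g\), passing to quotients by a nonzero ideal \(I\) that is both \(D\)-stable and \(\operatorname{ad}\)-stable. A \(\tfrac12\)-derivation induces a nilpotent \(\tfrac12\)-derivation on \(\mathfrak g/I\). Moreover, the weights of \(\mathfrak g\) on \(\mathfrak g\) are those on \(I\) together with those on \(\mathfrak g/I\). So it suffices to treat the weights occurring on a minimal such \(I\). The natural candidates are the last nonzero term of the derived series, or the centre when it is nonzero; both are \(D\)-stable by the remark above.

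On such an \(I\), choose a common eigenvector \(v\) of \(\operatorname{ad}_{\mathfrak g}\) with weight \(\lambda\). Applying the operator identity repeatedly to \(v,Dv,D^2v,\dots\) gives
\[
[x,D^kv]=2D[x,D^{k-1}v]-[Dx,D^{k-1}v].
\]
Since \(D\) is nilpotent, the span of the \(D^kv\) is finite. We aim to show that on this span \(\operatorname{ad}_x\) is triangular with respect to the chain \(D^kv\), and to compare diagonal entries. Using \(D^Nv=0\), this comparison should force \(\lambda(Dx)=0\). In the top step the relation reads \(0=2\lambda(x)D^{N}v\cdots\), and equating coefficients of the last nonzero \(D^{k}v\) yields a relation of the form \((2^{k}-1)\lambda(x)=c_k\lambda(Dx)\) type. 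Combined with \(\lambda(Dx)=\lambda(D^2x')\)-style iteration and the nilpotency of \(D\), we expect this to force \(\lambda\circ D=0\).

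The main obstacle is this last step. The cross terms \([Dx,D^{j}v]\) need not lie in the span of the \(D^kv\), so the triangularity must be checked against the full flag rather than the \(D\)-chain. If that fails, the fallback is a trace argument. We would show \(\operatorname{tr}\bigl((\operatorname{ad}_{Dx})^k\bigr)=0\) for all \(k\) by expanding \((2D\operatorname{ad}_x-\operatorname{ad}_xD)^k\) in the triangular basis. The goal is to push every factor of \(D\) to one side, using the same identity modulo strictly lower flag terms, so that each trace becomes a trace of an operator containing the nilpotent \(D\) against diagonal entries, and hence vanishes.
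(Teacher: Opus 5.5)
There is a genuine gap: the decisive step is never carried out. Your preliminary reductions are fine. These are: the nilradical equals $\bigcap_i\ker\lambda_i$; the operator identity $\operatorname{ad}_{Dx}=2D\operatorname{ad}_x-\operatorname{ad}_xD$ holds; the derived terms and the centre are $D$-stable; and one may pass to $\mathfrak g/I$. But the induction only relocates the problem. You still must show $\lambda(Dx)=0$ for every weight $\lambda$ of $\mathfrak g$ on the ideal $I$, and this is exactly where the proposal stops at ``we expect''.

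As set up, this step cannot be completed, for the reason you yourself noticed. For a general $D$-stable ideal $I$ (such as the last derived term), the operators $\operatorname{ad}_x|_I$ need not commute, so $I$ need not split into $\mathfrak g$-stable generalized weight spaces. Moreover, $D$ need not preserve the Lie flag, so nothing controls where $Dv,D^2v,\dots$ land.

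The trace fallback does not help. Since $\operatorname{tr}(\operatorname{ad}_{Dx}^k)=\sum_i\lambda_i(Dx)^k$, its vanishing for all $k$ is just the claim restated. The proposed mechanism, that a trace of a product containing the nilpotent $D$ vanishes, is false in general. It holds only if $D$ is strictly triangular in a basis in which the other factors are triangular, which is precisely what is not known.

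What is missing is (i) a $D$-stable filtration whose layers carry a commuting action, and (ii) control of $D$ relative to the weights. The paper takes $V_r=\gamma_r(N)/\gamma_{r+1}(N)$ with $N=[\mathfrak g,\mathfrak g]$, and writes $\rho_r$ for the induced action and $D_r$ for the induced map. These layers are $D$-stable and $N$ acts trivially on them. Hence $\mathfrak g$ acts through the abelian quotient $\mathfrak g/N$, giving $V_r=\bigoplus_\alpha V_{r,\alpha}$, and $\rho_r(Dx)$ preserves each $V_{r,\alpha}$.

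Comparing $(\beta,\alpha)$-blocks in $\rho_r(Dx)=2D_r\rho_r(x)-\rho_r(x)D_r$ gives $\rho_{r,\beta}(x)T_{\beta\alpha}=2T_{\beta\alpha}\rho_{r,\alpha}(x)$ for $\beta\neq\alpha$. Hence $T_{\beta\alpha}=0$ unless $\beta=2\alpha$, so $D_r$ is block triangular and its diagonal blocks $T_{\alpha\alpha}$ are nilpotent. The paper then uses $[\rho_r(Dx),\rho_r(x)]=0$ to show that $T_{\alpha\alpha}$ commutes with $\rho_{r,\alpha}(x)$ when $\alpha(x)\neq0$. Therefore $\rho_{r,\alpha}(Dx)=T_{\alpha\alpha}\rho_{r,\alpha}(x)$ is nilpotent.

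Once (i) and (ii) are in place, your eigenvector idea would also finish directly. Take a common eigenvector $v\in V_{r,\alpha}$ and $x$ with $\alpha(x)\neq0$. Projecting $\rho_r(Dx)v=2\alpha(x)D_rv-\rho_r(x)D_rv$ onto $V_{r,\alpha}$ gives $\alpha(Dx)v=\bigl(2\alpha(x)-\rho_{r,\alpha}(x)\bigr)T_{\alpha\alpha}v$. The operator in parentheses is invertible and sends $v$ to $\alpha(x)v$, so $T_{\alpha\alpha}v=\frac{\alpha(Dx)}{\alpha(x)}v$. Nilpotency of $T_{\alpha\alpha}$ then forces $\alpha(Dx)=0$. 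Without (i) and (ii), however, neither your $D$-chain computation nor the trace computation has anything to grip on.
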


\begin{proof}
Let \(N=[\mathfrak g,\mathfrak g]\). Since \(\mathfrak g\) is solvable, \(N\)
is nilpotent. The defining identity of \(D\) gives \(D(N)\subseteq N\) and by induction, \(D\gamma_r(N)\subseteq\gamma_r(N)\) for \(r\geq 1\),
where \(\gamma_1(N)=N\) and \(\gamma_{r+1}(N)=[N,\gamma_r(N)]\).

Set \(V_r=\gamma_r(N)/\gamma_{r+1}(N)\). The adjoint action of \(\mathfrak g\) on \(V_r\) factors through the abelian Lie algebra \(\mathfrak g/N\), hence admits a generalized weight decomposition
$
        V_r=\bigoplus_{\alpha}V_{r,\alpha}.
$
Let \(\rho_r(x)\) be the induced action of \(x\) on \(V_r\), and let \(D_r\) be induced by \(D\). Then
$
        \rho_r(Dx)=2D_r\rho_r(x)-\rho_r(x)D_r.
$

Let \(T_{\beta\alpha}:V_{r,\alpha}\to V_{r,\beta}\) be the \((\beta,\alpha)\)-block of \(D_r\). For \(\beta\neq\alpha\), the \((\beta,\alpha)\)-block of \(\rho_r(Dx)\) is zero, so
$
        \rho_{r,\beta}(x)T_{\beta\alpha}
        =
        2T_{\beta\alpha}\rho_{r,\alpha}(x).
$
Thus \(T_{\beta\alpha}=0\) unless \(\beta=2\alpha\). Since the set of weights is finite and no nonzero weight can lie in a nontrivial cycle
\(\alpha\mapsto2\alpha\mapsto4\alpha\mapsto\cdots\), the weights can be ordered so that \(D_r\) is block upper triangular. Since \(D_r\) is nilpotent, each diagonal block \(T_{\alpha\alpha}\) is nilpotent.

We show that
$
        \alpha(Dx)=0
        \text{ for every weight }\alpha\text{ and every }x\in\mathfrak g.
$
There is nothing to prove if \(\alpha=0\). Suppose \(\alpha\neq0\), and first take \(x\in\mathfrak g\) with \(\alpha(x)\neq0\). On \(V_{r,\alpha}\), set
\[
        S=\rho_{r,\alpha}(x),\quad
        T=T_{\alpha\alpha},\quad
        C=\rho_{r,\alpha}(Dx).
\]
Then \(C=2TS-ST\). Since the representation factors through the abelian quotient \(\mathfrak g/N\), we have \(CS=SC\). The operator \(S\) has the single nonzero eigenvalue \(\alpha(x)\), and hence is invertible. Put
$
        \Phi(U)=SUS^{-1}.
$
From \(C=2TS-ST\) and \(CS=SC\), we get
$
        (\Phi-I)(\Phi-2I)(T)=0.
$
Since \(\Phi\) has only the eigenvalue \(1\), \(\Phi-2I\) is invertible. Thus \(\Phi(T)=T\), i.e. \(ST=TS\). Hence
$
        C=2TS-ST=TS.
$
As \(T\) is nilpotent and commutes with \(S\), \(C\) is nilpotent. But \(C=\rho_{r,\alpha}(Dx)\) has the single eigenvalue \(\alpha(Dx)\). Hence \(\alpha(Dx)=0\). For arbitrary \(x\), replace \(x\) by \(x+tx_0\), where \(\alpha(x_0)\neq0\), and use linearity in \(t\).

Thus \(\operatorname{ad}_{Dx}\) has only zero eigenvalues on every quotient \(\gamma_r(N)/\gamma_{r+1}(N)\), and it acts trivially on \(\mathfrak g/N\). Hence \(\operatorname{ad}_{Dx}\) is nilpotent on \(\mathfrak g\). Since in a finite-dimensional solvable Lie algebra over an algebraically closed field of characteristic \(0\), the nilradical consists precisely of the ad-nilpotent elements, we get
$
        Dx\in \mathfrak n(\mathfrak g)
      \text{ for all }x\in\mathfrak g.
$
Therefore \(D(\mathfrak g)\subseteq \mathfrak n(\mathfrak g)\).
\end{proof}

\begin{lem}
\label{lem:TP-square-in-nilradical}
Let $(A, \cdot, [\cdot, \cdot])$ be a transposed Poisson algebra. If $A$ is solvable as a GD algebra, then $A \cdot A \subseteq \mathfrak{n}(A_{\mathrm{Lie}})$.
\end{lem}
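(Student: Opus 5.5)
The plan is to observe that, in a transposed Poisson algebra, every multiplication operator $L_z$ is a $\tfrac12$-derivation of the underlying Lie algebra, and then to apply Lemma~\ref{lem:nilpotent-half-derivation}. Since $A\cdot A$ is spanned by the images $L_z(A)$, $z\in A$, it suffices to show that each $L_z$ satisfies the hypotheses of that lemma for $\mathfrak g=A_{\mathrm{Lie}}$.

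\textbf{Step 1: the underlying algebras.} First I extract the needed properties from GD-solvability. The Lie derived series satisfies $A_{\mathrm{Lie}}^{(n)}\subseteq A^{(n)}$, so $A_{\mathrm{Lie}}$ is solvable. Likewise $(A,\cdot)$ is solvable as a Novikov algebra. It is commutative and associative, so it is in fact nilpotent; this is the equivalence already used in the proof of Corollary~\ref{cor:TP-solvability}. Consequently, for each $z\in A$ the operator $L_z\colon x\mapsto z\cdot x$ is nilpotent, since $L_z^k(A)\subseteq A^{k+1}$ in the associative power filtration.

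\textbf{Step 2: $L_z$ is a $\tfrac12$-derivation.} The defining transposed Poisson identity is
\[
2z\cdot[x,y]=[z\cdot x,y]+[x,z\cdot y].
\]
Written in operator form this reads $L_z([x,y])=\tfrac12\bigl([L_zx,y]+[x,L_zy]\bigr)$, which is exactly the hypothesis on $D$ in Lemma~\ref{lem:nilpotent-half-derivation}.

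\textbf{Step 3: conclusion.} The standing assumptions hold: finite dimension, an algebraically closed field of characteristic zero, and a solvable $\mathfrak g$. So Lemma~\ref{lem:nilpotent-half-derivation} applied to $D=L_z$ gives $z\cdot A\subseteq\mathfrak n(A_{\mathrm{Lie}})$ for every $z$. Since $\mathfrak n(A_{\mathrm{Lie}})$ is a subspace, summing over $z$ yields $A\cdot A\subseteq\mathfrak n(A_{\mathrm{Lie}})$.

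The only potential obstacle is Step 1, namely the claim that a solvable commutative associative algebra is nilpotent. If I do not simply cite Corollary~\ref{cor:TP-solvability}, I would prove it directly. If $A$ were not nilpotent, it would contain a nonzero idempotent $e$, by the Wedderburn principal theorem (equivalently, because nil finite-dimensional algebras are nilpotent and a non-nil element generates an idempotent). Then $e=e\cdot e$ lies in every term of the derived series, contradicting solvability. Everything else is a direct appeal to the preceding lemma.
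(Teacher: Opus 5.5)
Your proposal is correct and follows the paper's proof essentially verbatim: each $L_z$ is a nilpotent $\tfrac12$-derivation of the solvable Lie algebra $A_{\mathrm{Lie}}$, so Lemma~\ref{lem:nilpotent-half-derivation} gives $z\cdot A\subseteq\mathfrak n(A_{\mathrm{Lie}})$ for every $z\in A$. The only difference is that the paper gets nilpotency of $(A,\cdot)$ directly from the inclusion $A^{2^N}\subseteq A^{(N)}=0$ (the associative powers sit inside the GD derived series), so your idempotent fallback is not needed.
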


\begin{proof}
For each $z \in A$, the multiplication operator $L_z(a) = z \cdot a$ satisfies $L_z([x,y]) = \frac{1}{2}([L_zx,y] + [x,L_zy])$ for all $x, y \in A$, by the transposed Poisson identity. Thus \(L_z\) is a \(\frac12\)-derivation of the Lie algebra \(A_{\mathrm{Lie}}\).

Since \(A\) is GD-solvable and \(\cdot\) is commutative associative, there exists \(N\) such that
$
        A^{2^N}\subseteq A^{(N)}=0.
$
Hence \((A,\cdot)\) is nilpotent, and every \(L_z\) is nilpotent. Also, \(A_{\mathrm{Lie}}\) is solvable. By Lemma~\ref{lem:nilpotent-half-derivation},
$
        L_z(A)\subseteq \mathfrak n(A_{\mathrm{Lie}})
$
for all \(z\in A\). Therefore \(A\cdot A\subseteq \mathfrak n(A_{\mathrm{Lie}})\).
\end{proof}

\begin{lem}
\label{lem:TP-mixed-ideal-nilpotent}
Let \((A,\cdot,[\, ,\,])\) be a transposed Poisson algebra. If \(A_{\mathrm{Lie}}=(A,[\, ,\,])\) is nilpotent, then
$
        \mathcal I_{R\operatorname{ad}}(A)
$
is nilpotent.
\end{lem}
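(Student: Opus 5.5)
The plan is to use the lower central series of the Lie algebra \(A_{\mathrm{Lie}}\) as a filtration. Every generator of \(\mathcal M(A)\) preserves it, and every \(\operatorname{ad}_y\) raises it by one step. Consequently every element of \(\mathcal I_{R\operatorname{ad}}(A)\) raises it by at least one step, which forces nilpotency. Unlike the proof of Corollary~\ref{cor:TP-nilpotent-criterion}, we cannot use the associative-power filtration here, since \((A,\cdot)\) is not assumed nilpotent. The Lie filtration is the natural replacement.

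Put \(C^1=A\) and \(C^{k+1}=[A,C^k]\). Since \(A_{\mathrm{Lie}}\) is nilpotent, \(C^{s+1}=0\) for some \(s\). By the Jacobi identity \(\operatorname{ad}_y(C^k)\subseteq C^{k+1}\) for all \(y\in A\).

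The key step is the claim that \(z\cdot C^k\subseteq C^k\) for all \(z\in A\) and \(k\geq1\), which we prove by induction on \(k\). The case \(k=1\) is trivial. For the inductive step, take \(x\in A\) and \(c\in C^k\), and use the transposed Poisson identity in the form
\[
2\,z\cdot[x,c]=[z\cdot x,c]+[x,z\cdot c].
\]
The term \([z\cdot x,c]\) lies in \([A,C^k]=C^{k+1}\). By the inductive hypothesis \(z\cdot c\in C^k\), so the term \([x,z\cdot c]\) also lies in \(C^{k+1}\). Since the characteristic is zero we may divide by \(2\), which gives \(z\cdot C^{k+1}\subseteq C^{k+1}\). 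This is the same \(\tfrac12\)-derivation property of \(L_z\) already noted in Lemma~\ref{lem:TP-square-in-nilradical}. This step is the only place where the compatibility identity enters, and I expect it to be the main (though mild) obstacle.

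It follows that \(L_z=R_z\) preserves each \(C^k\), while \(\operatorname{ad}_y\) maps \(C^k\) into \(C^{k+1}\). Hence every word in the generators \(L_x,R_x,\operatorname{ad}_x\) of \(\mathcal M(A)\) maps \(C^k\) into \(C^k\), and every product of such words containing a factor \(R_x\operatorname{ad}_y\) maps \(C^k\) into \(C^{k+1}\). The ideal \(\mathcal I_{R\operatorname{ad}}(A)\) is spanned by the elements \(R_x\operatorname{ad}_y\) together with products \(U R_x\operatorname{ad}_y\), \(R_x\operatorname{ad}_y V\) and \(U R_x\operatorname{ad}_y V\) with \(U,V\in\mathcal M(A)\). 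Each of these spanning elements therefore maps \(C^k\) into \(C^{k+1}\), and so does every element of the ideal.

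Therefore any product of \(s\) elements of \(\mathcal I_{R\operatorname{ad}}(A)\) maps \(A=C^1\) into \(C^{s+1}=0\). This gives \(\mathcal I_{R\operatorname{ad}}(A)^s=0\), so the ideal is nilpotent.
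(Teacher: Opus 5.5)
Your proof is correct, but it is organized differently from the paper's. The paper works directly with operator words. From the transposed Poisson identity it derives the commutation rule \(\operatorname{ad}_xL_y=2L_y\operatorname{ad}_x-\operatorname{ad}_{x\cdot y}\). Using \(R_y=L_y\), it rewrites every word in \(\mathcal I_{R\operatorname{ad}}(A)^s\) as a combination of normal-ordered words \(L_{u_1}\cdots L_{u_p}\operatorname{ad}_{v_1}\cdots\operatorname{ad}_{v_t}\). The rewriting never lowers the number of \(\operatorname{ad}\)-factors, so \(t\geq s\), and these words vanish because any \(s\) consecutive \(\operatorname{ad}\)'s are zero.

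You instead show, by induction from the same identity, that the lower central series \(C^k\) of \(A_{\mathrm{Lie}}\) is stable under every \(L_z=R_z\). Hence \(\mathcal M(A)\) preserves the filtration and \(\mathcal I_{R\operatorname{ad}}(A)\) raises it by one.

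The two arguments give the same bound \(\mathcal I_{R\operatorname{ad}}(A)^s=0\) and are two packagings of one idea. Your inductive step is just the commutation rule read as \(L_z\operatorname{ad}_x=\tfrac12(\operatorname{ad}_xL_z+\operatorname{ad}_{x\cdot z})\). Your version avoids any termination or counting bookkeeping for the rewriting. It also shows, as a by-product, that each \(C^k\) is an ideal of the whole transposed Poisson algebra. The paper's version instead produces a normal form for \(\mathcal M(A)\), parallel to Lemma~\ref{lem:normal-form}.

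One minor point: \(\operatorname{ad}_y(C^k)\subseteq C^{k+1}\) holds directly by the definition \(C^{k+1}=[A,C^k]\); the Jacobi identity is not needed there.
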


\begin{proof}
Since \(A_{\mathrm{Lie}}\) is nilpotent, there exists \(s \geq 1\) such that
\(\operatorname{ad}_{x_1}\operatorname{ad}_{x_2}\cdots\operatorname{ad}_{x_s}=0\) for all \(x_1,\ldots,x_s\in A\).
The transposed Poisson identity gives
$
        \operatorname{ad}_xL_y
        =
        2L_y\operatorname{ad}_x-\operatorname{ad}_{x\cdot y}.
$
Since the product is commutative, \(L_y=R_y\). Hence this relation moves every
\(\operatorname{ad}\)-factor to the right of every multiplication factor without decreasing the number of \(\operatorname{ad}\)-factors. Therefore, any monomial in \(\mathcal I_{R\operatorname{ad}}(A)^s\) is a linear combination of terms of the form \(L_{u_1}\cdots L_{u_p}\operatorname{ad}_{v_1}\cdots\operatorname{ad}_{v_t}\) with \(t\geq s\), all of which are zero. Thus
$
        \mathcal I_{R\operatorname{ad}}(A)^s=0.
$
\end{proof}

\begin{cor}
\label{cor:TP-solvable-iff-derived-nilpotent}
Let \((A,\cdot,[\, ,\,])\) be a transposed Poisson algebra. Then \(A\) is GD-solvable if and only if
$
        A^{(1)}=A\cdot A+[A,A]
$
is GD-nilpotent.
\end{cor}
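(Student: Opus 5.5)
The plan is to derive this from Proposition~\ref{prop:GD-derived-nilpotency-reduction}, with Lemmas~\ref{lem:TP-square-in-nilradical} and~\ref{lem:TP-mixed-ideal-nilpotent} supplying the conditions that are not automatic. We regard \(A\) as a GD algebra with \(x\circ y=x\cdot y\), so that \(B:=A^{(1)}=A\cdot A+[A,A]\). The forward direction "\(B\) GD-nilpotent \(\Rightarrow\) \(A\) GD-solvable" is already part of Proposition~\ref{prop:GD-derived-nilpotency-reduction}. Alternatively, it follows directly: \(A/B\) has trivial operations, and \(B^{(n)}\subseteq B^{\langle 2^n\rangle}\), so \(A^{(n+1)}=B^{(n)}=0\) for large \(n\). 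The real work is therefore the converse.

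Assume \(A\) is GD-solvable. We verify conditions (1)--(4) of Proposition~\ref{prop:GD-derived-nilpotency-reduction} in turn.
\begin{enumerate}
\item Condition (1) is the hypothesis.
\item Condition (2), \(A\cdot A\subseteq\mathfrak n(A_{\mathrm{Lie}})\), is exactly Lemma~\ref{lem:TP-square-in-nilradical}.
\item For condition (3), \(B\) is a commutative associative subalgebra of \((A,\cdot)\). As shown in the proof of Lemma~\ref{lem:TP-square-in-nilradical}, GD-solvability with a commutative associative product forces \((A,\cdot)\) to be nilpotent. Hence \((B,\cdot)\) is nilpotent, and for a commutative associative algebra Novikov nilpotency coincides with associative nilpotency.
\item For condition (4), first note that \(B\) is a GD ideal of \(A\), so it is itself a transposed Poisson algebra. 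The identities restrict, since \(B\cdot B\subseteq B\) and \([B,B]\subseteq B\). The proof of Proposition~\ref{prop:GD-derived-nilpotency-reduction} gives \([A,A]\subseteq\mathfrak n(A_{\mathrm{Lie}})\), using solvability of \(A_{\mathrm{Lie}}\) (a consequence of GD-solvability) and the standard fact that the derived algebra of a solvable Lie algebra in characteristic zero lies in the nilradical. Together with (2), this gives \(B\subseteq\mathfrak n(A_{\mathrm{Lie}})\), so \((B,[\cdot,\cdot])\) is a nilpotent Lie algebra. Lemma~\ref{lem:TP-mixed-ideal-nilpotent} applied to the transposed Poisson algebra \(B\) then gives that \(\mathcal I_{R\operatorname{ad}}(B)\) is nilpotent.
\end{enumerate}
With all four conditions in hand, Proposition~\ref{prop:GD-derived-nilpotency-reduction} yields that \(B\) is GD-nilpotent.

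The main obstacle is condition (2), the inclusion \(A\cdot A\subseteq\mathfrak n(A_{\mathrm{Lie}})\), which depends on the \(\tfrac12\)-derivation weight argument. That work is already done in Lemma~\ref{lem:nilpotent-half-derivation} and Lemma~\ref{lem:TP-square-in-nilradical}. What remains needs care only in two places: checking that \(B\) inherits the transposed Poisson structure, so that Lemma~\ref{lem:TP-mixed-ideal-nilpotent} applies to \(B\) rather than to \(A\), and confirming that Lie nilpotency of \(B\) holds intrinsically. The latter follows because a Lie subalgebra of a nilpotent Lie algebra is nilpotent.
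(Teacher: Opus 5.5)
Your proposal is correct and follows the paper's proof essentially step for step. The easy direction goes through the trivial quotient $A/B$. For the converse, you check the four conditions of Proposition~\ref{prop:GD-derived-nilpotency-reduction}: nilpotency of $(A,\cdot)$; Lemma~\ref{lem:TP-square-in-nilradical} together with $[A,A]\subseteq\mathfrak n(A_{\mathrm{Lie}})$, which gives Lie nilpotency of $B$; and Lemma~\ref{lem:TP-mixed-ideal-nilpotent} applied to the transposed Poisson subalgebra $B$.
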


\begin{proof}
Put
$
        B=A^{(1)}=A\cdot A+[A,A].
$
If \(B\) is GD-nilpotent, then \(B\) is GD-solvable. Since \(A/B\) has zero product and zero bracket, \(A\) is GD-solvable. Conversely, assume that \(A\) is GD-solvable. Since \(\cdot\) is commutative associative, there exists \(N\) such that
$
        A^{2^N}\subseteq A^{(N)}=0.
$
Thus \((A,\cdot)\) is nilpotent. Hence \((B,\cdot)\) is nilpotent as a Novikov algebra.

By Lemma~\ref{lem:TP-square-in-nilradical},
$
        A\cdot A\subseteq \mathfrak n(A_{\mathrm{Lie}}).
$
Moreover, \(A_{\mathrm{Lie}}\) is solvable, so
$
        [A,A]\subseteq \mathfrak n(A_{\mathrm{Lie}}).
$
Therefore
$
        B=A\cdot A+[A,A]\subseteq \mathfrak n(A_{\mathrm{Lie}}),
$
and \((B,[\, ,\,])\) is nilpotent as a Lie algebra. The subspace \(B\) is a transposed Poisson subalgebra of \(A\), since
$B \cdot B \subseteq A \cdot A \subseteq B$ and $[B, B] \subseteq [A, A] \subseteq B$. By Lemma~\ref{lem:TP-mixed-ideal-nilpotent} applied to \(B\),
$
        \mathcal I_{R\operatorname{ad}}(B)
$
is nilpotent. Now Proposition~\ref{prop:GD-derived-nilpotency-reduction}, applied to the GD algebra \(A\) with \(B=A^{(1)}\), implies that \(B\) is GD-nilpotent. The proof is complete.
\end{proof}

\section{Several Constructions of GD Algebras and Their Speciality}
\label{sec:constructions}

In this section, we present several methods for constructing GD algebras, and we determine whether the resulting GD algebras are special.

\subsection{Constructions of GD Algebras from Other Algebraic Structures}

\begin{prop}
\label{prop:Novikov_GD_construction}
\label{prop:Novikov_GD_special}
Let \((A,\circ)\) be a Novikov algebra and let
\(D\in\operatorname{Der}(A,\circ)\). Define
\[
        [x,y] := D(x) \circ y - D(y) \circ x
        \qquad \forall x,y \in A.
\]
Then \((A,\circ,[\cdot,\cdot])\) is a special GD algebra.
\end{prop}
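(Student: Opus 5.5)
The plan is to prove both assertions at once: I will realize \((A,\circ,[\cdot,\cdot])\) directly as a GD subalgebra of \(P^{(d)}\) for a suitable differential Poisson algebra \(P\). Then the GD axioms for \(A\) come for free, since a subspace of \(P^{(d)}\) closed under both operations is automatically a GD subalgebra. The key observation is the following. If \((C,\cdot)\) is a commutative associative algebra with two \emph{commuting} derivations \(\partial\) and \(\widetilde D\), then the Jacobian-type bracket
\[
\{u,v\}:=\widetilde D(u)\,\partial(v)-\partial(u)\,\widetilde D(v)
\]
is a Poisson bracket on \(C\). Moreover, \(\partial\) is a derivation of \(\{\cdot,\cdot\}\) precisely because \([\partial,\widetilde D]=0\). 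Hence \((C,\cdot,\{\cdot,\cdot\},\partial)\) is a differential Poisson algebra.

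The steps are as follows.

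\textbf{Step 1 (embedding the Novikov product).} Use the known embedding theorem for Novikov algebras (Dzhumadildaev--L\"ofwall for free ones, Bokut--Chen--Zhang in general). It says that the universal differential commutative envelope
\[
C:=\operatorname{Com}\langle \partial^n x\mid x\in A,\ n\ge 0\rangle\big/\bigl(x\cdot\partial y-x\circ y\bigr)
\]
contains \(A\) injectively. Under this embedding \(x\circ y=x\cdot\partial(y)\) for all \(x,y\in A\).

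\textbf{Step 2 (extending \(D\)).} Extend \(D\) to the free differential commutative algebra by setting \(\widetilde D(\partial^n x):=\partial^n D(x)\) and applying the Leibniz rule. By construction \(\widetilde D\) commutes with \(\partial\). It preserves the defining ideal, because
\[
\widetilde D(x\partial y-x\circ y)=(Dx\,\partial y-Dx\circ y)+(x\,\partial Dy-x\circ Dy),
\]
where we used \(D(x\circ y)=Dx\circ y+x\circ Dy\). So \(\widetilde D\) descends to \(C\), extends \(D\), and commutes with \(\partial\).

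\textbf{Step 3 (verifying the bracket).} Verify that the Jacobian bracket above is Poisson. The Leibniz rule in each argument is immediate. For Jacobi, the second-order terms cancel by the standard Jacobian computation, and the remaining terms involve \([\widetilde D,\partial]\), which is zero. Then check that \(\partial\{u,v\}=\{\partial u,v\}+\{u,\partial v\}\), which again uses only commutation of the derivations.

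\textbf{Step 4 (restricting to \(A\)).} For \(x,y\in A\),
\[
\{x,y\}=Dx\,\partial y-\partial x\,Dy=Dx\circ y-Dy\circ x=[x,y],
\]
and \(x\cdot\partial y=x\circ y\). So \(A\) is closed under both operations of \(P^{(d)}\) and carries exactly the given structure. Thus \(A\) is a GD algebra, and it is special.

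I expect the main obstacle to be Step 1, the injectivity of \(A\to C\). That is a nontrivial Gr\"obner--Shirshov type result, and I would cite it rather than reprove it. The Jacobi identity in Step 3 is routine but needs care.

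As a fallback, if citing the embedding theorem were undesirable, I would instead verify the GD compatibility identity \eqref{eq:GD_compatibility} and the Jacobi identity for \([x,y]=Dx\circ y-Dy\circ x\) directly from the Novikov identities and the derivation property. I would still obtain speciality from Steps 1--4.
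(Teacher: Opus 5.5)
Your proposal is correct and follows essentially the same route as the paper. Both embed $(A,\circ)$ into its universal differential commutative envelope, extend $D$ to a derivation $\widetilde D$ commuting with $\partial$, and use the bracket $\widetilde D(u)\partial(v)-\partial(u)\widetilde D(v)$, which restricts to $[\cdot,\cdot]$ on $A$. The paper simply invokes the universal property to obtain $\widetilde D$, whereas you build it on generators and check that it preserves the defining differential ideal.
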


\begin{proof}
By the embedding theorem for Novikov algebras, we may embed \(A\) into its universal differential commutative envelope \((P,\cdot,d)\), which is not required to be finite-dimensional, such that \(x\circ y=x\cdot d(y)\) for all \(x,y\in A\). By the universal property, \(D\) extends to a derivation \(\widetilde D\) of \(P\) satisfying \(\widetilde Dd=d\widetilde D\).

Define
$
        \{u,v\}:=\widetilde D(u)\cdot d(v)-\widetilde D(v)\cdot d(u)
$
for all \(u,v\in P\). Since \(d\) and \(\widetilde D\) are commuting derivations of the commutative associative algebra \(P\), this is a Poisson bracket, and \(d\) is a derivation of the bracket. Hence \((P,\cdot,\{\cdot,\cdot\},d)\) is a differential Poisson algebra.

For any \(x,y\in A\), we have
$
        \{x,y\}=D(x)\cdot d(y)-D(y)\cdot d(x)
        =D(x)\circ y-D(y)\circ x=[x,y],
$
and the Novikov product satisfies \(x\circ y=x\cdot d(y)\). Therefore the operations on \(A\) are induced from the differential Poisson algebra \((P,\cdot,\{\cdot,\cdot\},d)\). Thus \((A,\circ,[\cdot,\cdot])\) is a special GD algebra.
\end{proof}

\begin{prop}
\label{prop:square-zero-construction}
Let \((L,[\cdot,\cdot])\) be a Lie algebra and let \(N\in\operatorname{End}(L)\) satisfy
\[
        N^2=0,\quad [N(L),N(L)]=0,\quad N([N(L),L])=0.
\]
Define
$
        x\circ_N y:=N([x,y])-[N(x),y].
$
Then \((L,\circ_N,[\cdot,\cdot])\) is a GD algebra.
\end{prop}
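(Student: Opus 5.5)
We verify three things in turn: the left-symmetric identity for $\circ_N$, the right-commutative identity for $\circ_N$, and the GD compatibility identity \eqref{eq:GD_compatibility}. The Lie bracket is already given. Throughout we use the three hypotheses in the following operational form.
\begin{itemize}
\item[(a)] $N^2=0$ kills any term with $N$ applied twice, such as $N(N(\cdot))$.
\item[(b)] $[N(L),N(L)]=0$ kills any bracket of two $N$-images.
\item[(c)] $N([N(u),v])=0$ kills $N$ applied to a bracket containing an $N$-image.
\end{itemize}
First we record two immediate consequences. We have $N(x\circ_N y)=N^2[x,y]-N[Nx,y]=0$, so $N(L\circ_N L)=0$; equivalently, $N$ vanishes on $L\circ_N L$. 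Also $x\circ_N y\in N(L)+[N(L),L]$. It is convenient to write $x\circ_N y=N(\operatorname{ad}_x y)-\operatorname{ad}_{Nx}y$, that is,
\[
L_x=N\operatorname{ad}_x-\operatorname{ad}_{Nx},\qquad R_y(x)=N[x,y]-[Nx,y].
\]

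\emph{Right commutativity.} We expand
\[
(x\circ_N y)\circ_N z=N([x\circ_N y,z])-[N(x\circ_N y),z]=N([x\circ_N y,z]).
\]
The second term vanishes because $N(L\circ_N L)=0$. Substituting $x\circ_N y=N[x,y]-[Nx,y]$ gives $N[N[x,y],z]-N[[Nx,y],z]$. The first piece is zero by (c), so
\[
(x\circ_N y)\circ_N z=-N([[Nx,y],z]).
\]
By Jacobi, $[[Nx,y],z]-[[Nx,z],y]=[Nx,[y,z]]$, and $N$ of this vanishes by (c). Hence the expression is symmetric in $y,z$, which is exactly right commutativity.

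\emph{Left symmetry.} We must show that the associator $(x\circ_N y)\circ_N z-x\circ_N(y\circ_N z)$ is symmetric in $x,y$. The first term equals $-N[[Nx,y],z]$ by the computation above. For the second,
\[
x\circ_N(y\circ_N z)=N[x,\,N[y,z]-[Ny,z]]-[Nx,\,N[y,z]-[Ny,z]].
\]
By (b), $[Nx,N[y,z]]=0$, and by (c), $N[x,N[y,z]]=0$. This leaves
\[
x\circ_N(y\circ_N z)=-N[x,[Ny,z]]+[Nx,[Ny,z]].
\]
The associator is therefore
\[
-N[[Nx,y],z]+N[x,[Ny,z]]-[Nx,[Ny,z]].
\]
Subtracting its $x\leftrightarrow y$ swap, the non-$N$ parts give $-[Nx,[Ny,z]]+[Ny,[Nx,z]]=-[[Nx,Ny],z]$ by Jacobi, which is $0$ by (b). The $N$-parts combine into $N$ applied to a Jacobi-type expression in $Nx,y,z$ and $Ny,x,z$. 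We expect these to cancel after one use of Jacobi together with (c), since every term can be rewritten as $N$ of a bracket containing an $N$-image. This bookkeeping is routine.

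\emph{GD compatibility.} We expand \eqref{eq:GD_compatibility} with $y\circ_N x=N[y,x]-[Ny,x]$, grouping terms by whether they lie in $N(\cdots)$ or not. The pure-bracket terms involving a single $N$, such as $[x,[Ny,z]]$, $-[z,[Ny,x]]$, $-[Ny,[x,z]]$, $-[N[y,x],z]$ and $[N[y,z],x]$, should cancel via Jacobi in the triple $(Ny,x,z)$. Terms like $[x,N[y,z]]$ must be matched against the $N$-parts arising from $[y,x]\circ_N z$ and $-[y,z]\circ_N x$. The remaining $N$-terms vanish by (c).

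The main obstacle is this last step, where the signs must line up exactly. If a residue of the form $[x,N[y,z]]-[z,N[y,x]]+[N[y,z],x]-\cdots$ survives, we will try to pair it termwise using skew-symmetry. The structure of the construction is that of a deformation of the zero product by a square-zero operator, which suggests that all first-order terms telescope.
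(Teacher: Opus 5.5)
Your plan is the paper's: check right commutativity, left symmetry and \eqref{eq:GD_compatibility} by direct expansion. Your right-commutativity argument (use $N(L\circ_N L)=0$ to get $(x\circ_N y)\circ_N z=-N([[Nx,y],z])$, then Jacobi and (c)) is essentially the paper's. The gap is the compatibility identity. You never actually verify it, and the cancellation pattern you describe would not work:
\begin{itemize}
\item the term $[x,N[y,z]]$ is not in $N(L)$ in general, so it cannot be matched against the $N$-parts $N([[y,x],z])$ and $-N([[y,z],x])$;
\item the terms $-[N[y,x],z]$ and $[N[y,z],x]$ are not part of the Jacobi triple for $(Ny,x,z)$;
\item the leftover $N$-terms contain no $N$-image inside the brackets, so (c) says nothing about them.
\end{itemize}
The correct grouping of the ten terms is
\begin{align*}
&\bigl([x,N[y,z]]+[N[y,z],x]\bigr)-\bigl([z,N[y,x]]+[N[y,x],z]\bigr)\\
&\quad+N\bigl([[y,x],z]-[[y,z],x]-[y,[x,z]]\bigr)+\bigl([Ny,[x,z]]-[x,[Ny,z]]+[z,[Ny,x]]\bigr).
\end{align*}
The first two groups vanish by skew-symmetry, the third by the Jacobi identity applied inside $N$, and the fourth by the Jacobi identity for $Ny,x,z$. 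So none of the hypotheses on $N$ is used here. Compatibility holds for every $N\in\operatorname{End}(L)$, because $R_y=[\operatorname{ad}_y,N]$ makes $R$ a Chevalley--Eilenberg coboundary, hence a cocycle. This is the converse direction in Theorem~\ref{thm:T-reduction}, via Proposition~\ref{prop:cocycle}. Missing this is why you expected (c) to be needed and could not rule out a surviving residue.

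There is a smaller imprecision in the left-symmetry step: the $N$-terms do not vanish one by one. For instance, $-N([[Nx,y],z])=(x\circ_N y)\circ_N z$, and taking the basis vectors $x=c$, $y=z=t$ of Example~\ref{ex:square-zero-solvable-nonspecial} gives $b\neq 0$. So ``every term is $N$ of a bracket containing an $N$-image'' is not the mechanism. Instead, Jacobi gives
\[
N[x,[Ny,z]]+N[[Ny,x],z]=N[Ny,[x,z]],\qquad N[[Nx,y],z]+N[y,[Nx,z]]=N[Nx,[y,z]].
\]
Only these two surviving terms are killed by (c), exactly as in the paper's formula for $A(x,y,z)-A(y,x,z)$.
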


\begin{proof}
We first verify the GD compatibility identity. For \(x,y,z\in L\), expanding the left-hand side of \eqref{eq:GD_compatibility} gives
\begin{equation*}
\begin{aligned}
&[x,y\circ_N z]-[z,y\circ_N x]+[y,x]\circ_N z
-[y,z]\circ_N x-y\circ_N[x,z]  \\
&\quad =
-N([[x,y],z])-N([[y,z],x])-N([y,[x,z]])  \\
&\qquad
-[x,[N(y),z]]+[z,[N(y),x]]+[N(y),[x,z]] .
\end{aligned}
\end{equation*}
The first three terms vanish by the Jacobi identity after applying \(N\), and the last three terms vanish by the Jacobi identity. Hence the GD compatibility identity holds.

It remains to prove that \(\circ_N\) is Novikov. From the assumptions we have, for all \(x,y\in L\),
$
        N(x)\circ_N y=0, x\circ_N N(y)=0.
$
Therefore
$
        (x\circ_N y)\circ_N z=-N([[N(x),y],z]).
$
It follows that
\begin{equation*}
\begin{aligned}
        (x\circ_N y)\circ_N z-(x\circ_N z)\circ_N y
        &=
        -N([[N(x),y],z])+N([[N(x),z],y])  \\
        &=
        -N([N(x),[y,z]])=0.
\end{aligned}
\end{equation*}
Thus the right-commutativity identity holds.

For the left-symmetric identity, let
$
        A(x,y,z):=(x\circ_N y)\circ_N z-x\circ_N(y\circ_N z).
$
Using \(x\circ_N N(u)=0\), a direct calculation gives
\begin{equation*}
\begin{aligned}
A(x,y,z)-A(y,x,z)
&=
N([N(y),[x,z]])-N([N(x),[y,z]])  \\
&\quad
-[N(x),[N(y),z]]+[N(y),[N(x),z]] .
\end{aligned}
\end{equation*}
The first two terms vanish by \(N([N(L),L])=0\), while the last two terms equal \(-[[N(x),N(y)],z]\), which is zero because \([N(L),N(L)]=0\). Hence \(A(x,y,z)=A(y,x,z)\). Therefore \(\circ_N\) is a Novikov product. Thus \((L,\circ_N,[\cdot,\cdot])\) is a GD algebra.
\end{proof}

The following example demonstrates that GD algebras obtained via the construction in Proposition~\ref{prop:square-zero-construction} are not necessarily special.

\begin{ex}
\label{ex:square-zero-solvable-nonspecial}
Let \(L=\operatorname{span}\{t,a,b,c\}\) be the Lie algebra with nonzero brackets
$
        [t,b]=-a+c$ and $ [t,c]=-c.
$
Then \(L\) is solvable. Define \(N\in\operatorname{End}(L)\) by
\[
        N(a)=b,\qquad N(c)=b,\qquad N(t)=N(b)=0.
\]
Then \(N^2=0\), \(N(L)=\Bbbk b\), \([N(L),N(L)]=0\), and
$
        [N(L),L]\subseteq \Bbbk(a-c)\subseteq \ker N.
$
Thus \(N([N(L),L])=0\), and Proposition~\ref{prop:square-zero-construction} gives a GD algebra \((L,\circ_N,[\cdot,\cdot])\).

The nonzero products are
\[
        a\circ_N t=-a+c,\qquad
        t\circ_N c=-b,\qquad
        c\circ_N t=-a+b+c.
\]

We show that this GD algebra is not special. Evaluate the special identity \eqref{eq:GD_identity_35} at
\[
        x_1=t,\qquad x_2=t,\qquad x_3=t,\qquad x_4=c.
\]
Its left-hand side is
\[
\begin{aligned}
        [t,t\circ_N c]\circ_N t
        +([t,c]\circ_N t)\circ_N t
        &=
        [t,-b]\circ_N t+((-c)\circ_N t)\circ_N t  \\
        &=
        (a-c)\circ_N t+(a-b-c)\circ_N t
        =
        -2b.
\end{aligned}
\]
Its right-hand side is
\[
\begin{aligned}
        [t,(t\circ_N c)\circ_N t]
        +[t\circ_N c,t]\circ_N t
        &=
        [t,(-b)\circ_N t]+[-b,t]\circ_N t  \\
        &=
        0+(-a+c)\circ_N t
        =
        b.
\end{aligned}
\]
Thus \eqref{eq:GD_identity_35} fails. Hence \((L,\circ_N,[\cdot,\cdot])\) is not special.
\end{ex}

\begin{prop}
\label{prop:gen-Poisson-GD-special}
Let \((A,\cdot,[\cdot,\cdot],D)\) be a generalized Poisson algebra. Define \(x\circ y=x\cdot D(y)\). Then \((A,\circ,[\cdot,\cdot])\) is a special GD algebra.
\end{prop}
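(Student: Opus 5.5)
The plan is to realize $(A,\circ,[\cdot,\cdot])$ inside a differential Poisson algebra built from $A$ itself, in the spirit of Proposition~\ref{prop:Novikov_GD_special}. The naive attempt, taking $P=A$ with $d=D$, fails: the bracket $[\cdot,\cdot]$ is not a Poisson bracket, because by \eqref{eq:Gen_Poisson_compatibility} its Leibniz defect in the first argument is $x\cdot y\cdot D(z)$. I would first correct this defect. A direct check gives
\[
\{x,y\}_0:=[x,y]+x\cdot D(y)-y\cdot D(x),
\]
and the defect of the correction term $x\cdot D(z)-z\cdot D(x)$ in the first argument is exactly $-x\cdot y\cdot D(z)$. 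Hence $\{\cdot,\cdot\}_0$ satisfies the Leibniz rule in each argument, and $D$ is a derivation of it. What remains for $(A,\cdot,\{\cdot,\cdot\}_0,D)$ to be a differential Poisson algebra is the Jacobi identity for $\{\cdot,\cdot\}_0$. This should follow from Jacobi for $[\cdot,\cdot]$, the facts that $D$ is a derivation of both operations, and \eqref{eq:Gen_Poisson_compatibility}, by a routine but lengthy expansion.

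However, $A^{(D)}$ carries the bracket $\{\cdot,\cdot\}_0$, not $[\cdot,\cdot]$; the two differ by the commutator $x\circ y-y\circ x$. The key step is therefore to remove this commutator term by an extension. I would adjoin a new variable and work in $P=A\otimes\Bbbk[t,t^{-1}]$ (or a polynomial or power-series extension) with the following data:
\begin{itemize}
\item the derivation $d=D\otimes 1+\lambda\,(1\otimes t\partial_t)$, for a scalar $\lambda$ to be chosen;
\item a Poisson bracket extending $\{\cdot,\cdot\}_0$, together with brackets $\{t,x\}$ chosen to be proportional to $t\cdot D(x)$ or to $t\cdot x$;
\item an embedding $\iota(x)=x\otimes t^{k}$ for a suitable $k$.
\end{itemize}
The parameters $\lambda$, $k$ and the structure constants of $\{t,\cdot\}$ are to be fixed so that two conditions hold:
\[
\iota(x)\,d\iota(y)=\iota(x\cdot D(y))
\qquad\text{and}\qquad
\{\iota(x),\iota(y)\}=\iota([x,y]).
\]
Once these hold, $\iota$ is a GD embedding of $A$ into $P^{(d)}$, and speciality follows by the definition.

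An alternative route, if the twisting does not close up, is to mimic Proposition~\ref{prop:Novikov_GD_special} directly. One embeds $(A,\circ)$ into its universal differential commutative envelope $(P,\cdot,d)$, through the surjection onto the image of $x\mapsto x$, $d\mapsto D$. One then extends $[\cdot,\cdot]$ to $P$ by the Leibniz rule together with the prescribed values on $A\cup d(A)$, and verifies that the extension is well defined and Poisson.

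I expect the main obstacle to be the second step: matching both operations simultaneously under a single embedding. The product condition forces the twist factor to behave like an idempotent, with $d$ acting on it trivially. The bracket condition, by contrast, requires the twist to absorb the term $x\cdot D(y)-y\cdot D(x)$. My first attempt would be to let $\iota$ be the identity on $A$ and to put the new variable only into the bracket, via $\{t,x\}=t\cdot D(x)$. I would then check whether restricting to the slice $t=1$ of a suitable localization recovers $[\cdot,\cdot]$, while $d=D$ keeps $\circ$ unchanged. If this fails, I would fall back on the envelope argument and verify well-definedness of the extended bracket against the defining relations of the envelope.
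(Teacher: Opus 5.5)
There is a genuine gap: neither of your two main steps can be carried out as stated.

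\emph{The bracket $\{\cdot,\cdot\}_0$ is not Lie in general.} Your corrected bracket $\{x,y\}_0=[x,y]+x\cdot D(y)-y\cdot D(x)$ does satisfy the Leibniz rule, and $D$ is a derivation of it. But it is \emph{not} a Lie bracket in general, so $(A,\cdot,\{\cdot,\cdot\}_0,D)$ need not be a differential Poisson algebra. The bracket $x\cdot D(y)-y\cdot D(x)$ is itself Lie. Expanding the Jacobiator with \eqref{eq:Gen_Poisson_compatibility} and $D[x,y]=[Dx,y]+[x,Dy]$ therefore leaves exactly
\[
\sum_{\mathrm{cyc}}\{\{x,y\}_0,z\}_0=-\sum_{\mathrm{cyc}}[x,y]\cdot D(z),
\]
and nothing forces this to vanish. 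For instance, take $A=\Bbbk[p,q,s]$ with $D=\partial_s$ and the contact (Jacobi) bracket $[f,g]=f_p(g_q+pg_s)-g_p(f_q+pf_s)-fg_s+gf_s$. All axioms of a generalized Poisson algebra hold. Here $\{\cdot,\cdot\}_0$ is the bivector bracket $f_p(g_q+pg_s)-g_p(f_q+pf_s)$, and its Jacobiator at $(p,q,s)$ equals $-1$. So no Poisson bracket can restrict to $\{\cdot,\cdot\}_0$ on $A\otimes 1$.

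\emph{The extension ansatz fails for degree reasons.} The derivation $d=D\otimes 1+\lambda\,t\partial_t$ preserves $t$-degree, so $\iota(x)\,d\iota(y)=(x\cdot D(y)+\lambda k\,x\cdot y)\otimes t^{2k}$. Matching this with $\iota(x\cdot D(y))=x\cdot D(y)\otimes t^{k}$ forces $k=0$ as soon as $\circ\neq 0$. Then $A\otimes 1$ would be a Poisson subalgebra with bracket $[\cdot,\cdot]$, contradicting the nonzero Leibniz defect $x\cdot y\cdot D(z)$ you started from. The slice $t=1$ gives a quotient, not an embedding. The envelope fallback only restates the problem, since all of its content is deferred to an unspecified well-definedness check.

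\emph{The missing idea} is to let the derivation \emph{lower} the $t$-degree, so that the factor $t$ is absorbed rather than squared. The paper takes $P=A\otimes\Bbbk[t,t^{-1}]$, $\partial(x_m)=D(x)_{m-1}$, $\iota(x)=x_1$, and a bracket of degree $-1$. This is the Poissonization of a Jacobi structure. In terms of your $\{\cdot,\cdot\}_0$, set $\{x,y\}=t^{-1}\{x,y\}_0$ for $x,y\in A$ and $\{t,x\}=-D(x)$. This gives
\[
\{x_m,y_n\}=\bigl([x,y]-(m-1)\,x\cdot D(y)+(n-1)\,D(x)\cdot y\bigr)_{m+n-1}.
\]
The extra term $\{x,y\}_0\cdot\{t^{-1},z\}=t^{-2}\{x,y\}_0\cdot D(z)$ cancels exactly the Jacobiator displayed above, because $\sum_{\mathrm{cyc}}(x\cdot D(y)-y\cdot D(x))\cdot D(z)=0$. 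One also checks that $\partial$ is a Poisson derivation, $\{x_1,y_1\}=[x,y]_1$, and $x_1\,\partial(y_1)=(x\cdot D(y))_1$.

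A remark on signs: with the convention of \eqref{eq:Gen_Poisson_compatibility} one has $[x,y\cdot z]=[x,y]\cdot z+y\cdot[x,z]-y\cdot z\cdot D(x)$. So the signs above are the right ones, and the coefficients displayed in \eqref{eq:Poissonization-bracket} correspond to replacing $D$ by $-D$. This does not affect speciality.
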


\begin{proof}
We realize \((A,\circ,[\,,\,])\) as a GD subalgebra of a GD algebra arising from a differential Poisson algebra. Put
$
        P=A\otimes_{\Bbbk}\Bbbk[t,t^{-1}],
       $ and $
        x_m=x\otimes t^m .
$
This induced algebra \(P\) is not required to be finite-dimensional. Define the commutative associative product on \(P\) by
$
        x_m y_n=(x\cdot y)_{m+n}.
$
For homogeneous elements, define a bracket by
\begin{equation}
\label{eq:Poissonization-bracket}
\begin{aligned}
        \{x_m,y_n\}
        =
        \Big(
        [x,y]
        +
        (m-1)x\cdot D(y)
        -
        (n-1)D(x)\cdot y
        \Big)_{m+n-1}.
\end{aligned}
\end{equation}
This bracket is clearly skew-symmetric.

We first check the Leibniz rule. For \(x,y,z\in A\) and \(m,n,p\in \mathbb Z\), using the generalized Poisson compatibility \eqref{eq:Gen_Poisson_compatibility} and the fact that \(D\) is a derivation of \((A,\cdot)\), we have
\[
\begin{aligned}
\{x_m,(y\cdot z)_{n+p}\}
&=
\Big(
[x,y\cdot z]
+(m-1)x\cdot D(y\cdot z)  \\
&\qquad
-(n+p-1)D(x)\cdot y\cdot z
\Big)_{m+n+p-1}                                    \\
&=
\Big(
[x,y]\cdot z
+
y\cdot[x,z]                                      \\
&\qquad
+(m-1)x\cdot D(y)\cdot z
+(m-1)x\cdot y\cdot D(z)              \\
&\qquad
-(n+p-2)D(x)\cdot y\cdot z
\Big)_{m+n+p-1}                                  \\
&=
        \{x_m,y_n\}z_p+y_n\{x_m,z_p\}.
\end{aligned}
\]
Hence \(\{\,,\,\}\) is a derivation in each argument with respect to the associative product.

It remains to check the Jacobi identity. Set
$
        B_{m,n}(x,y)
        =
        [x,y]
        +
        (m-1)x\cdot D(y)
        -
        (n-1)D(x)\cdot y .
$
Then the Jacobi identity for \(\{\,,\,\}\) is equivalent to
\[
\begin{aligned}
& B_{m+n-1,p}\bigl(B_{m,n}(x,y),z\bigr)
 +B_{n+p-1,m}\bigl(B_{n,p}(y,z),x\bigr) 
 +B_{p+m-1,n}\bigl(B_{p,m}(z,x),y\bigr)
 =0 .
\end{aligned}
\]
Expanding this expression, the terms without \(D\) vanish by the Jacobi identity of \([\,,\,]\). The terms containing one occurrence of \(D\) vanish by
\eqref{eq:Gen_Poisson_compatibility} together with
$
        D([x,y])=[D(x),y]+[x,D(y)].
$
Finally, the remaining terms, containing products of \(D(x),D(y),D(z)\) or \(D^2(x),D^2(y)\), \(D^2(z)\) cancel by the commutativity and associativity of \((A,\cdot)\) and the derivation property of \(D\). Thus \((P,\cdot,\{\,,\,\})\) is a Poisson algebra.

Now define
$
        \partial:P\to P,
        \partial(x_m)=D(x)_{m-1}.
$
Since \(D\) is a derivation of \((A,\cdot)\), the map \(\partial\) is a derivation of the associative product of \(P\). Moreover, by \eqref{eq:Poissonization-bracket},
\[
\begin{aligned}
\partial\{x_m,y_n\}
&=
\Big(
[D(x),y]+[x,D(y)]
+(m-n)D(x)\cdot D(y)  \\
&\qquad
+(m-1)x\cdot D^2(y)
-(n-1)D^2(x)\cdot y
\Big)_{m+n-2}.
\end{aligned}
\]
The same expression is obtained by expanding
$
        \{\partial(x_m),y_n\}+\{x_m,\partial(y_n)\}.
$
Therefore
$
        \partial\{u,v\}
        =
        \{\partial u,v\}+\{u,\partial v\},
      \forall u,v\in P.
$
Hence \((P,\cdot,\{\,,\,\},\partial)\) is a differential Poisson algebra.

The associated special GD product on \(P\) is
$
        u\circ_P v=u\cdot \partial(v).
$
Define
$
        \iota:A\longrightarrow P,
        \iota(x)=x_1 .
$
Then \(\iota\) is injective, and for all \(x,y\in A\),
$
        \{\iota(x),\iota(y)\}
        =
        \{x_1,y_1\}
        =
        [x,y]_1
        =
        \iota([x,y]),
$
while
\[
        \iota(x)\circ_P \iota(y)
        =
        x_1\cdot \partial(y_1)
        =
        x_1\cdot D(y)_0
        =
        (x\cdot D(y))_1
        =
        \iota(x\circ y).
\]
Thus \(\iota(A)\) is a GD subalgebra of the special GD algebra associated to the differential Poisson algebra \((P,\cdot,\{\,,\,\},\partial)\). Consequently \((A,\circ,[\,,\,])\) is special.
\end{proof}

\subsection{Constructions of Larger GD Algebras from smaller Ones}

\begin{prop}\label{prop:n_GD_construction}
Let \((A,\circ,[\cdot,\cdot])\) be an \(n\)-dimensional GD algebra and \(d:A\to A\) a linear map which is a derivation of both \((A,[\cdot,\cdot])\) and \((A,\circ)\), and satisfies \(d(x)\circ y=d(y)\circ x\) for any \(x,y\in A\). On
\(\bar A:=A\oplus\langle h\rangle\), define
\[
        [A,h]=0, \quad x\circ h=0, \quad h\circ x=d(x), \quad h\circ h=0 \quad \text{for all } x\in A.
\]
Then \((\bar A,\circ,[\cdot,\cdot])\) is an \((n+1)\)-dimensional GD algebra.
\end{prop}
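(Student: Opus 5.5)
The plan is a direct verification. Extend $d$ to $\bar A$ by $d(h)=0$. Then on $\bar A$ we have $L_h=d$, $R_h=0$ and $\operatorname{ad}_h=0$. All defining identities are multilinear, so it suffices to check them when each argument is either an element of $A$ or equal to $h$. When no argument equals $h$, they hold because $A$ is a GD algebra. For the Lie part, $h$ is central, so skew-symmetry and the Jacobi identity on $\bar A$ reduce immediately to those of $A$.

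\emph{Right-commutativity} $(x\circ y)\circ z=(x\circ z)\circ y$ will be checked case by case.
\begin{itemize}
\item If $h$ occupies the second or third slot while the first slot is in $A$, both sides vanish because $R_h=0$. For example, $(x\circ y)\circ h=0$ and $(x\circ h)\circ y=0$.
\item If the first slot is $h$ and $y,z\in A$, the identity reads $d(y)\circ z=d(z)\circ y$. This is exactly the hypothesis $d(x)\circ y=d(y)\circ x$.
\item The remaining cases, with at least two entries equal to $h$, are trivially $0=0$.
\end{itemize}

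\emph{Left-symmetry.} Let $(a,b,c)$ denote the associator. Left-symmetry says $(a,b,c)=(b,a,c)$, which is symmetric in the first two slots, so only the following cases need checking.
\begin{itemize}
\item $(h,x,y)$ against $(x,h,y)$. We have
\[
(h,x,y)=d(x)\circ y-d(x\circ y)=-x\circ d(y),
\]
using that $d$ is a derivation of $\circ$. Also $(x,h,y)=0-x\circ d(y)$. These agree.
\item $h$ in the third slot. All terms vanish because $R_h=0$ and $h\circ h=0$.
\item $(h,h,x)$. This case is trivially symmetric.
\end{itemize}

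\emph{Compatibility \eqref{eq:GD_compatibility}.} The only nontrivial case, and the one that genuinely uses the Lie-derivation hypothesis, is a single $h$ placed in the $y$-slot. Then $[y,\cdot]=0$ kills two terms, and the identity becomes
\[
[x,d(z)]-[z,d(x)]-d([x,z])=[x,d(z)]+[d(x),z]-d([x,z])=0,
\]
which holds because $d$ is a derivation of the bracket. If $h$ occupies the $x$- or $z$-slot, every term vanishes:
\begin{itemize}
\item the terms $[h,\cdot]$ and $[\cdot,h]$ vanish since $h$ is central;
\item the terms $y\circ h$ and $[y,z]\circ h$ vanish since $R_h=0$;
\item the term $y\circ[h,z]$ vanishes since $[h,z]=0$.
\end{itemize}
Cases with two or three copies of $h$ vanish for the same reasons.

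I expect no real obstacle. The only care needed is bookkeeping: making sure every placement of $h$ is covered, and noticing which hypothesis each nontrivial case consumes. Right-commutativity uses the condition $d(x)\circ y=d(y)\circ x$, left-symmetry uses that $d$ is a derivation of $\circ$, and compatibility uses that $d$ is a derivation of $[\cdot,\cdot]$. Since $\dim\bar A=n+1$, this will prove the statement.
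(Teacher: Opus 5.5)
Your proposal is correct and follows essentially the same route as the paper. Both are direct verifications in which centrality of $h$ and $R_h=0$ reduce each identity on $\bar A$ to the corresponding identity on $A$ plus exactly one hypothesis on $d$ (right-commutativity uses $d(x)\circ y=d(y)\circ x$, left-symmetry uses that $d$ is a derivation of $\circ$, compatibility uses that $d$ is a derivation of the bracket); the paper expands general elements $a+\alpha h$ instead of doing a multilinear case analysis, but the content is the same.
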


\begin{proof}
For \(X=a+\alpha h\) and \(Y=b+\beta h\), where \(a,b\in A\), we have
$
        X\circ Y=a\circ b+\alpha d(b)$ and $
        [X,Y]=[a,b].
$
Thus the extended bracket is a Lie bracket, since \(h\) is central.

The Novikov identities on \(\bar A\) reduce to those on \(A\), together with the derivation property of \(d\) and the condition \(d(x)\circ y=d(y)\circ x\) for any \(x,y\in A\). More precisely, right-commutativity follows from
$
        (a\circ b+\alpha d(b))\circ c
        =
        (a\circ c+\alpha d(c))\circ b,
$
while the left-symmetric identity follows by expanding the associator and using that \(d\) is a derivation of \((A,\circ)\).

Finally, the GD compatibility for \(X=a+\alpha h\), \(Y=b+\beta h\) and \(Z=c+\gamma h\) equals the GD compatibility expression for \(a,b,c\in A\), plus
$
        \beta\bigl([a,d(c)]-[c,d(a)]-d([a,c])\bigr),
$
which vanishes because \(A\) is a GD algebra and \(d\) is a derivation of the Lie bracket. Hence \((\bar A,\circ,[\cdot,\cdot])\) is a GD algebra.
\end{proof}

\begin{prop}
\label{prop:inverse_n_GD_construction}
Let \((\bar A,\circ,[\cdot,\cdot])\) be a GD algebra, and set \(\bar A^2:=\bar A\circ\bar A+[\bar A,\bar A]\). Suppose that there exists \(h\in \bar A\) such that \([\bar A,h]=0\), \(\bar A\circ h=0\) and \(h\notin \bar A^2\). Then \(\bar A\) is obtained by the construction in Proposition~\ref{prop:n_GD_construction}.
\end{prop}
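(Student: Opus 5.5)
Since \(h\notin \bar A^2\), I will choose a complement \(A\) of \(\Bbbk h\) in \(\bar A\) with \(\bar A^2\subseteq A\). To do this, I extend a basis of \(\bar A^2\) to a basis of a hyperplane \(A\) not containing \(h\); this is possible precisely because \(h\notin\bar A^2\). Then \(\bar A=A\oplus\Bbbk h\), and since every product and bracket lands in \(\bar A^2\subseteq A\), the subspace \(A\) is closed under \(\circ\) and \([\cdot,\cdot]\). It is therefore a GD subalgebra, of dimension \(n\) if \(\dim\bar A=n+1\).

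Next I read off the structure constants involving \(h\). The hypotheses give \([A,h]=0\) and \(x\circ h=0\) for all \(x\in\bar A\), in particular \(h\circ h=0\). I then define \(d:A\to A\) by \(d(x):=h\circ x\); this lands in \(A\) because \(h\circ x\in\bar A^2\subseteq A\). Thus the multiplication table of \(\bar A\) is exactly the one in Proposition~\ref{prop:n_GD_construction}, and it remains to verify that \(d\) satisfies the three required conditions.

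\emph{Condition \(d(x)\circ y=d(y)\circ x\).} Right-commutativity with first argument \(h\) gives \((h\circ x)\circ y=(h\circ y)\circ x\).

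\emph{\(d\) is a derivation of \(\circ\).} The left-symmetric identity applied to \((h,x,y)\), together with \(x\circ h=0\), gives
\[
(h\circ x)\circ y-h\circ(x\circ y)=(x\circ h)\circ y-x\circ(h\circ y)=-x\circ d(y),
\]
so \(d(x\circ y)=d(x)\circ y+x\circ d(y)\).

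\emph{\(d\) is a derivation of the bracket.} This comes from the GD compatibility identity \eqref{eq:GD_compatibility}, and it is the only step where the choice of substitution matters. Taking \(y=h\) and \(x,z\in A\) gives
\[
[x,h\circ z]-[z,h\circ x]+[h,x]\circ z-[h,z]\circ x-h\circ[x,z]=0.
\]
Since \([h,\cdot]=0\), the third and fourth terms drop out, leaving \([x,d(z)]+[d(x),z]=d([x,z])\).

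All three conditions hold, so \(\bar A\) coincides with the algebra produced by Proposition~\ref{prop:n_GD_construction} from \((A,d)\), and the GD axioms on \(\bar A\) are automatic.
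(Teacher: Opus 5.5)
Your proposal is correct and follows essentially the same route as the paper. You choose a complement $A\supseteq\bar A^2$ of $\Bbbk h$, set $d(x)=h\circ x$, and read off the three required properties of $d$ from the GD identity with $y=h$ and from right-commutativity and left-symmetry with first argument $h$ (you additionally spell out why the complement exists and why $d$ lands in $A$).
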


\begin{proof}
Choose a subspace \(A\subseteq \bar A\) such that \(\bar A=A\oplus\langle h\rangle\) and \(\bar A^2\subseteq A\). Since \(A\) contains \(\bar A^2\), it is a GD ideal of \(\bar A\). Define \(d:A\to A\) by \(d(x)=h\circ x\). Then the restrictions of the operations to \(A\) make \(A\) a GD algebra, and \(\bar A\) has the form
\[
        [A,h]=0,\quad
        x\circ h=0,\quad
        h\circ x=d(x),\quad
        h\circ h=0,\quad
       \text{for all } x\in A. 
\]

It remains to check that \(d\) satisfies the hypotheses of Proposition~\ref{prop:n_GD_construction}. Applying the GD compatibility identity
with \(y=h\) gives
$
        [x,d(z)]-[z,d(x)]-d([x,z])=0,
$
so \(d\) is a derivation of the Lie algebra \((A,[\cdot,\cdot])\). The Novikov
right-commutativity identity with first argument \(h\) gives \(d(x)\circ y=d(y)\circ x\). Finally, the left-symmetric identity with first argument \(h\) gives
$
        d(x\circ y)=d(x)\circ y+x\circ d(y),
$
so \(d\) is a derivation of the Novikov algebra \((A,\circ)\). Therefore \(\bar A\) is precisely obtained from \(A\) and \(d\) by the construction in Proposition~\ref{prop:n_GD_construction}.
\end{proof}

We now present a example demonstrating that above extension of a GD algebra can destroy the special property. Specifically, we will construct a larger non-special GD algebra starting from a special one.

\begin{ex}\label{ex:non-special-extension}
Let \(\mathcal P=\Bbbk[x_1,x_2,x_3]\) be the infinite-dimensional polynomial algebra. Define a Poisson bracket on \(\mathcal P\) by
$
        \{x_1,x_2\}=x_2,
        \{x_3,x_1\}=1$ and $
        \{x_2,x_3\}=0.
$
And extend it by the Leibniz rule. The Jacobi identity is checked on the generators. Let \(\partial=\partial/\partial x_1\). Then \(\partial\) is a Poisson derivation, and hence
$
        f\circ g:=f\,\partial(g)$ and $ [f,g]:=\{f,g\}
$
defines a special GD algebra on \(\mathcal P\).

Set \(D:=x_3\partial\). A direct check on the generators shows that \(D\) is a
derivation of the Lie algebra \((\mathcal P,[\cdot,\cdot])\). Moreover,
$
        D(f\circ g)=D(f)\circ g+f\circ D(g)$ and $
        D(f)\circ g=D(g)\circ f
$
for all \(f,g\in\mathcal P\). Thus \(D\) satisfies the hypotheses of Proposition~\ref{prop:n_GD_construction} in its dimension-free form. Define \(\overline{\mathcal P}:=\mathcal P\oplus\langle h\rangle\) by
\[
        [\mathcal P,h]=0,\quad
        f\circ h=0,\quad
        h\circ f=D(f),\quad
        h\circ h=0,\quad
        \text{for all } f\in\mathcal P.
\]
Then \((\overline{\mathcal P},\circ,[\cdot,\cdot])\) is a GD algebra.

We claim that this GD algebra is not special. Evaluate \eqref{eq:GD_identity_35} at
\[
        x_1=x_1, \quad x_2=x_1,\quad  x_3=h,\quad
        x_4=\frac{1}{2}x_1^2 .
\]
Since \(h\circ x_1=x_3\) and \([h,x_1]=0\), the left-hand side equals
\[
        [x_1,h\circ x_1]\circ \frac{1}{2}x_1^2
        =
        [x_1,x_3]\circ \frac{1}{2}x_1^2
        =
        -1\circ \frac{1}{2}x_1^2
        =
        -x_1 .
\]
On the other hand, \(h\circ \frac{1}{2}x_1^2=x_3x_1\), and the right-hand side
equals
\[
        [x_1,x_3x_1]+[x_3x_1,x_1]\circ x_1
        =
        -x_1+x_1
        =
        0 .
\]
Thus the special identity \eqref{eq:GD_identity_35} fails. Therefore \(\overline{\mathcal P}\) is not a special GD algebra.
\end{ex}

\begin{prop}
\label{prop:GD_holomorph}
Let \((A,\circ,[\cdot,\cdot])\) be a GD algebra. Define
\[
        \operatorname{Der}_s(A)
        :=
        \{D\in\operatorname{End}(A)\mid D \text{ is a derivation of both }
        (A,\circ) \text{ and } (A,[\cdot,\cdot]),\]
\[        
        \ D(x)\circ y=D(y)\circ x \text{ for all } x,y\in A\}.
\]
On the vector space \(A\rtimes \operatorname{Der}_s(A)\), define
$
        [x+f,y+g]:=[x,y]+f(y)-g(x)+fg-gf
$
and
$
        (x+f)\circ(y+g):=x\circ y
$
for all \(x,y\in A\) and \(f,g\in\operatorname{Der}_s(A)\). Then \((A\rtimes \operatorname{Der}_s(A),\circ,[\cdot,\cdot])\) is a GD algebra.
\end{prop}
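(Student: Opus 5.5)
We verify the three defining properties of a GD algebra on $\bar A:=A\rtimes\operatorname{Der}_s(A)$ directly, writing $X=x+f$, $Y=y+g$, $Z=z+k$ with $x,y,z\in A$ and $f,g,k\in\operatorname{Der}_s(A)$. The guiding observation is that the Novikov product only sees the $A$-components, $X\circ Y=x\circ y\in A$. This kills most mixed terms, and the remaining ones are governed exactly by the three conditions defining $\operatorname{Der}_s(A)$. This mirrors Proposition~\ref{prop:n_GD_construction}, which is the special case of a one-dimensional (abelian) subspace of $\operatorname{Der}_s(A)$ acting through $h$.

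\textbf{Lie and Novikov structures.} First, the bracket is the usual semidirect product $A\rtimes\operatorname{Der}(A,[\cdot,\cdot])$ restricted to $\operatorname{Der}_s(A)$. So we check that $\operatorname{Der}_s(A)$ is closed under the commutator. For $f,g\in\operatorname{Der}_s(A)$, the commutator $fg-gf$ is again a derivation of both $\circ$ and $[\cdot,\cdot]$. For the symmetry condition we compute
\[
(fg)(x)\circ y=f(g(x)\circ y)-g(x)\circ f(y)=f(g(y)\circ x)-g(x)\circ f(y),
\]
and antisymmetrize in $f,g$:
\[
[f,g](x)\circ y-[f,g](y)\circ x=f\bigl(g(y)\circ x-g(x)\circ y\bigr)-g\bigl(f(y)\circ x-f(x)\circ y\bigr)+\bigl(f(x)\circ g(y)-g(y)\circ f(x)\bigr)-\bigl(g(x)\circ f(y)-f(y)\circ g(x)\bigr).
\]
The first two terms vanish. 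The remaining terms need $f(x)\circ g(y)-g(x)\circ f(y)$ to be symmetric in $x\leftrightarrow y$, which we obtain from right-commutativity together with $f(u)\circ v=f(v)\circ u$. I expect this bookkeeping to be the one genuinely delicate point; if it resists, I would fall back to verifying it in the form $[f,g](x)\circ y=f(g(x)\circ y)-\ldots$ using the derivation rule twice. Jacobi then follows from the standard semidirect-product argument.

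The Novikov identities for $\circ$ on $\bar A$ reduce verbatim to those of $(A,\circ)$, since $(X\circ Y)\circ Z=(x\circ y)\circ z$ and $X\circ(Y\circ Z)=x\circ(y\circ z)$.

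\textbf{Compatibility identity.} Expand \eqref{eq:GD_compatibility} for $X,Y,Z$. The purely $A$-part is the compatibility expression for $x,y,z$, which is zero. Since every product lands in $A$, the derivation parts contribute only through brackets with, and actions by, $A$-elements:
\begin{itemize}
\item $[X,Y\circ Z]$ contributes $f(y\circ z)$;
\item $-[Z,Y\circ X]$ contributes $-k(y\circ x)$;
\item $[Y,X]\circ Z$ contributes $(g(x)-f(y))\circ z$;
\item $-[Y,Z]\circ X$ contributes $-(g(z)-k(y))\circ x$;
\item $-Y\circ[X,Z]$ contributes $-y\circ(f(z)-k(x))$.
\end{itemize}
We group these by the derivation involved.
\begin{itemize}
\item \textbf{Terms in $g$:} $g(x)\circ z-g(z)\circ x=0$ by the symmetry condition.
\item \textbf{Terms in $f$:} $f(y\circ z)-f(y)\circ z-y\circ f(z)=0$, since $f$ is a derivation of $\circ$.
\item \textbf{Terms in $k$:} $-k(y\circ x)+k(y)\circ x+y\circ k(x)=0$, likewise.
\end{itemize}
Hence the compatibility identity holds and $\bar A$ is a GD algebra.
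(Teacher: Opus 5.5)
\textbf{There is a gap in the one non-routine step.} Your treatment of the Novikov identities and of the compatibility identity is correct and matches the paper: your three groups of leftover terms are exactly the paper's. The gap is in showing that $[f,g]=fg-gf$ again satisfies $[f,g](x)\circ y=[f,g](y)\circ x$. Without this the bracket is not even defined on $A\rtimes\operatorname{Der}_s(A)$.

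First, your displayed expansion has sign errors. If you expand both $f(g(x))\circ y$ and $g(f(x))\circ y$ by the derivation rule, the leftover terms are
\[
f(x)\circ g(y)+g(y)\circ f(x)-g(x)\circ f(y)-f(y)\circ g(x),
\]
not the commutator-type differences you wrote. Your conclusion that $S(x,y):=f(x)\circ g(y)-g(x)\circ f(y)$ must be symmetric is consistent with this corrected expression, not with your display.

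More seriously, your justification for the symmetry of $S$ is circular. The symmetry conditions give $f(x)\circ g(y)=f(g(y))\circ x$ and $g(x)\circ f(y)=g(f(y))\circ x$. Hence $S(x,y)=[f,g](y)\circ x$, and symmetry of $S$ is literally the claim you are trying to prove. Right-commutativity plays no role, since no iterated products appear. Your fallback of applying the derivation rule to both terms is exactly what produced this remainder in the first place.

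\textbf{The missing idea.} Apply the symmetry condition of one derivation at an argument in the image of the other, namely $g(f(x))\circ y=g(y)\circ f(x)$. The paper uses this directly: expanding only the $fg$-term by the derivation rule gives
\[
[f,g](x)\circ y=f\bigl(g(x)\circ y\bigr)-g(x)\circ f(y)-g(y)\circ f(x).
\]
This is manifestly symmetric in $x,y$, because $g(x)\circ y=g(y)\circ x$.

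The same identity also repairs your own route. By the derivation rule for $g$ and then its symmetry condition at $(f(x),y)$,
\[
f(x)\circ g(y)+g(y)\circ f(x)=g\bigl(f(x)\circ y\bigr)=g\bigl(f(y)\circ x\bigr),
\]
which is symmetric in $x,y$. The corrected remainder above is exactly the antisymmetrization of this expression, so it vanishes.

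\textbf{A minor point.} Proposition~\ref{prop:n_GD_construction} is not a special case of this construction. There, $h$ acts through the Novikov product ($h\circ x=d(x)$) and is central for the bracket. Here, $f$ acts through the bracket and $f\circ x=0$.
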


\begin{proof}
First, \(\operatorname{Der}_s(A)\) is closed under commutators. Indeed, the commutator of two derivations is again a derivation of both operations, and for
\(f,g\in\operatorname{Der}_s(A)\) we have
\[
\begin{aligned}
        (fg-gf)(x)\circ y
        &=f(g(x))\circ y-g(f(x))\circ y  \\
        &=f(g(x)\circ y)-g(x)\circ f(y)-g(y)\circ f(x)  \\
        &=f(g(y)\circ x)-g(f(y))\circ x-g(y)\circ f(x)  \\
        &=(fg-gf)(y)\circ x .
\end{aligned}
\]
Thus the displayed bracket is the usual semidirect product Lie bracket.

The product on \(A\rtimes\operatorname{Der}_s(A)\) is Novikov, since it depends
only on the \(A\)-components and \((A,\circ)\) is Novikov. It remains to check
the GD compatibility. For \(x,y,z\in A\) and \(f,g,w\in\operatorname{Der}_s(A)\), the GD expression for \(x+f, y+g\) and \(z+w\) is the GD expression for \(x,y,z\in A\), plus the terms
\[
        f(y\circ z)-f(y)\circ z-y\circ f(z),\quad
         -w(y\circ x)+w(y)\circ x+y\circ w(x),\quad
           g(x)\circ z-g(z)\circ x.
\]
The first two vanish because \(f\) and \(w\) are derivations of \((A,\circ)\), while the last one vanishes by the defining condition of \(\operatorname{Der}_s(A)\). Hence the GD compatibility identity holds. Therefore \(A\rtimes\operatorname{Der}_s(A)\) is a GD algebra.
\end{proof}

In the following, we provide concrete examples illustrating how to construct non-special GD algebras from both finite-dimensional and infinite-dimensional cases, using the construction in Proposition~\ref{prop:GD_holomorph}.

\begin{ex}\label{ex:semidirect-nonspecial-fd}
Let \(A=\operatorname{span}\{x,y\}\). Define
$
        [x,y]=y,
        x\circ x=x,
        y\circ x=y.
$
Then \((A,\circ,[\cdot,\cdot])\) is a 2-dimensional GD algebra. By \cite{Kolesnikov-Sartayev}, every 2-dimensional GD algebra is special, hence \(A\) is special.

Define \(\delta:A\to A\) by \(\delta(x)=y\) and \(\delta(y)=0\). A direct check shows that \(\delta\) is a derivation of both \((A,[\cdot,\cdot])\) and \((A,\circ)\), and that \(\delta(u)\circ v=\delta(v)\circ u\) for all \(u,v\in A\). Hence \(\delta\in\operatorname{Der}_s(A)\), and Proposition~\ref{prop:GD_holomorph} gives a GD algebra structure on \(A\rtimes\operatorname{Der}_s(A)\).

We show that this GD algebra is not special. Evaluate \eqref{eq:GD_identity_35} at
\[
        x_1=x,\quad x_2=x,\quad x_3=\delta,\quad x_4=x,
\]
where \(x\) and \(\delta\) are identified with \(x+0\) and \(0+\delta\), respectively. Since \(\delta\circ x=0\), the first term on the left-hand side vanishes. Moreover, \([\delta,x]=\delta(x)=y\), and hence
$
        ([\delta,x]\circ x)\circ x=(y\circ x)\circ x=y.
$
Thus the left-hand side of \eqref{eq:GD_identity_35} equals \(y\). On the other hand, the right-hand side is
$
        [x,(\delta\circ x)\circ x]+[\delta\circ x,x]\circ x=0.
$
Therefore \eqref{eq:GD_identity_35} fails. Since every special GD algebra satisfies \eqref{eq:GD_identity_35}, the GD algebra \(A\rtimes\operatorname{Der}_s(A)\) is not special.
\end{ex}

\begin{ex}\label{ex:semidirect-nonspecial-infdim}
Let \(P=\Bbbk[x]\) and put \(\partial=x\frac{d}{dx}\). Define $[f,g]=0$ and $f\circ g=f\,\partial(g)$ for all $f,g \in P$. Then \((P,\circ,[\cdot,\cdot])\) is an infinite-dimensional GD algebra. Indeed, it is induced by the differential commutative algebra \((P,\cdot,\partial)\) with zero Poisson bracket.

Now set \(D:=\partial\). Since the Lie bracket is zero, \(D\) is trivially a derivation of the Lie algebra. Moreover,
$
        D(f\circ g)=D(f)\circ g+f\circ D(g)$ and $
        D(f)\circ g=D(g)\circ f
$
for all \(f,g\in P\). Hence \(D\in\operatorname{Der}_s(P)\), and Proposition~\ref{prop:GD_holomorph} gives a GD algebra structure on \(P\rtimes\operatorname{Der}_s(P)\).

We show that this GD algebra is not special. Evaluate \eqref{eq:GD_identity_35} at
\[
        x_1=x,\quad x_2=x,\quad x_3=D,\quad x_4=x.
\]
Since \(D\circ x=0\), the first term on the left-hand side is zero. Moreover,
$
        [D,x]=D(x)=x,
$
and therefore
\[
        ([D,x]\circ x)\circ x=(x\circ x)\circ x=x^3.
\]
Thus the left-hand side of \eqref{eq:GD_identity_35} equals \(x^3\). On the other hand, since \(D\circ x=0\), the right-hand side is
\[
        [x,(D\circ x)\circ x]+[D\circ x,x]\circ x=0.
\]
Hence \eqref{eq:GD_identity_35} fails. Since every special GD algebra satisfies \eqref{eq:GD_identity_35}, the GD algebra \(P\rtimes\operatorname{Der}_s(P)\) is not special.
\end{ex}

\section{GD Algebra Structures on Simple Lie Algebras}
\label{sec:simple-lie}

In this section, we study GD algebra structures on simple Lie algebras. It is well known that Poisson and transposed Poisson structures on simple Lie algebras are trivial\cite{FernandezOuaridi2024,BenayadiBoucetta2014}, however, as we shall see, GD algebra structures do not share this property.

\begin{prop}
\label{prop:cocycle}
Let \((L,[\cdot,\cdot])\) be a Lie algebra and let \(\circ\) be a bilinear product on \(L\). Let \(L\) act on \(\operatorname{End}(L)\) by \(x\cdot S=[\operatorname{ad}_x,S]\), where \(\operatorname{ad}_x(y)=[x,y]\). Then the GD compatibility identity \eqref{eq:GD_compatibility} is equivalent to the condition that \(R:L\to\operatorname{End}(L)\), \(a\mapsto R_a\), is a Chevalley--Eilenberg
\(1\)-cocycle, namely
\begin{equation}
\label{eq:cocycle}
        R_{[x,z]}=[\operatorname{ad}_x,R_z]-[\operatorname{ad}_z,R_x]
        \qquad \forall x,z \in L.
\end{equation}
Moreover, \(\circ\) is a Novikov product if and only if
\begin{align}
        [R_x,R_y]&=0,                         \label{eq:Rcomm}\\
        [L_x,L_y]&=L_{x\circ y-y\circ x}       \label{eq:Lleft}
\end{align}
for all \(x,y\in L\).
\end{prop}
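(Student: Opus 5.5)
The plan is to prove both equivalences by direct computation: evaluate every identity on an arbitrary element \(y\in L\) and rewrite it in terms of the operators \(L_a\), \(R_a\), \(\operatorname{ad}_a\).

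\emph{The cocycle condition.} Take the compatibility identity \eqref{eq:GD_compatibility} with variables \(x,y,z\) and translate each term, regarding \(y\) as the argument on which operators act:
\[
[x,y\circ z]=\operatorname{ad}_xR_z(y),\qquad [z,y\circ x]=\operatorname{ad}_zR_x(y),\qquad y\circ[x,z]=R_{[x,z]}(y),
\]
\[
[y,x]\circ z=-R_z\operatorname{ad}_x(y),\qquad [y,z]\circ x=-R_x\operatorname{ad}_z(y).
\]
Substituting these, the identity becomes
\[
\bigl(\operatorname{ad}_xR_z-R_z\operatorname{ad}_x\bigr)(y)-\bigl(\operatorname{ad}_zR_x-R_x\operatorname{ad}_z\bigr)(y)-R_{[x,z]}(y)=0,
\]
which is \([\operatorname{ad}_x,R_z]-[\operatorname{ad}_z,R_x]=R_{[x,z]}\) applied to \(y\). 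Since \(y\) is arbitrary, this is exactly \eqref{eq:cocycle}. With the action \(x\cdot S=[\operatorname{ad}_x,S]\), \eqref{eq:cocycle} reads \(R_{[x,z]}=x\cdot R_z-z\cdot R_x\), the standard Chevalley--Eilenberg \(1\)-cocycle condition for \(\operatorname{End}(L)\) as an \(L\)-module. That this action is a representation follows from the Jacobi identity, since \(\operatorname{ad}\) is a Lie homomorphism; this is only needed to justify the cocycle terminology.

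\emph{The Novikov conditions.} Right-commutativity \((a\circ x)\circ y=(a\circ y)\circ x\) says \(R_yR_x(a)=R_xR_y(a)\) for all \(a\), which is \eqref{eq:Rcomm}. For left-symmetry, write the associator identity in the form
\[
x\circ(y\circ a)-y\circ(x\circ a)=(x\circ y)\circ a-(y\circ x)\circ a
\]
for the pair \(x,y\), varying the last argument \(a\). The left side is \([L_x,L_y](a)\) and the right side is \(L_{x\circ y-y\circ x}(a)\), which gives \eqref{eq:Lleft}. Each step is reversible, so together these yield the stated equivalence with the Novikov axioms.

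\emph{Expected difficulty.} There is no real obstacle beyond bookkeeping. The only point needing care is the signs from skew-symmetry in the two terms \([y,x]\circ z\) and \([y,z]\circ x\), which are what produce the commutators with \(\operatorname{ad}\).
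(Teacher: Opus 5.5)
Your proposal is correct and follows essentially the same route as the paper: you evaluate $\bigl([\operatorname{ad}_x,R_z]-[\operatorname{ad}_z,R_x]-R_{[x,z]}\bigr)(y)$ term by term to recover the left-hand side of \eqref{eq:GD_compatibility}, and you read right-commutativity and left-symmetry as the operator identities \eqref{eq:Rcomm} and \eqref{eq:Lleft}. Your explicit signs $[y,x]\circ z=-R_z\operatorname{ad}_x(y)$ and $[y,z]\circ x=-R_x\operatorname{ad}_z(y)$ are exactly what the paper's one-line computation uses without writing them out.
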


\begin{proof}
For \(x,y,z\in L\), we have
\[
\begin{aligned}
&\bigl([\operatorname{ad}_x,R_z]-[\operatorname{ad}_z,R_x]-R_{[x,z]}\bigr)y  \\
&\quad =
[x,y\circ z]-[z,y\circ x]+[y,x]\circ z-[y,z]\circ x-y\circ[x,z].
\end{aligned}
\]
Thus \eqref{eq:cocycle} is precisely the GD compatibility identity.

The last assertion follows from the standard operator form of the two Novikov identities. Indeed, \([R_x,R_y]=0\) is equivalent to right-commutativity, while
\([L_x,L_y]=L_{x\circ y-y\circ x}\) is equivalent to the left-symmetric identity. Hence \(\circ\) is Novikov if and only if \eqref{eq:Rcomm} and \eqref{eq:Lleft} hold.
\end{proof}

\begin{thm}
\label{thm:T-reduction}
Let \(\mathfrak g\) be a finite-dimensional complex simple Lie algebra. A bilinear product \(\circ\) makes \((\mathfrak g,\circ,[\cdot,\cdot])\) a GD algebra if and only if there exists \(T\in\operatorname{End}(\mathfrak g)\) such that
\begin{equation}
\label{eq:T-product}
        x\circ y=T([x,y])-[T(x),y]
        \qquad \forall x,y \in\mathfrak g,
\end{equation}
and the product defined by \eqref{eq:T-product} is Novikov.

Moreover, two endomorphisms \(T,T'\) define the same product if and only if
$
        T'-T\in \mathbb C\,\operatorname{id}_{\mathfrak g}.
$
\end{thm}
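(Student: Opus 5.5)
By Proposition~\ref{prop:cocycle}, a bilinear product $\circ$ on $\mathfrak g$ satisfies the GD compatibility identity if and only if $R:\mathfrak g\to\operatorname{End}(\mathfrak g)$, $a\mapsto R_a$, is a Chevalley--Eilenberg $1$-cocycle for the $\mathfrak g$-module $\operatorname{End}(\mathfrak g)$ with action $x\cdot S=[\operatorname{ad}_x,S]$. The plan is to apply Whitehead's first lemma: since $\mathfrak g$ is simple over $\mathbb C$ and $\operatorname{End}(\mathfrak g)\cong\mathfrak g\otimes\mathfrak g^*$ is finite-dimensional, $H^1(\mathfrak g,\operatorname{End}(\mathfrak g))=0$. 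Hence every cocycle is a coboundary: there is $T\in\operatorname{End}(\mathfrak g)$ with $R_z=z\cdot T$ for all $z$. The sign convention needs to be matched to \eqref{eq:cocycle}; I would take $R_z=[T,\operatorname{ad}_z]$, possibly replacing $T$ by $-T$.

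With that choice, evaluating on $x$ gives
\[
x\circ z=R_z(x)=T([z,x])-[z,T(x)].
\]
To land exactly on \eqref{eq:T-product}, I would instead posit $R_y=[\operatorname{ad}_y,T]+c$ with the appropriate sign, and check directly that
\[
R_y(x)=T([x,y])-[T(x),y]
\]
satisfies \eqref{eq:cocycle}. Expanding $[\operatorname{ad}_x,R_z]-[\operatorname{ad}_z,R_x]$ and using the Jacobi identity should leave exactly $R_{[x,z]}$. Indeed, $R_y=-\operatorname{ad}_y\circ' \ldots$: one verifies that $R_y=T\operatorname{ad}_x$-type terms combine so that $R_y=-[\operatorname{ad}_y,T]$ up to sign, and then $R$ is the coboundary of $\mp T$. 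This direct check gives the converse direction: any $T$ produces a product satisfying the compatibility identity, and if that product is also Novikov, we obtain a GD algebra. The forward direction is Whitehead's lemma combined with Novikov being part of the definition of a GD algebra.

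For uniqueness, two endomorphisms $T,T'$ give the same product exactly when $S=T'-T$ satisfies $S([x,y])=[S(x),y]$ for all $x,y$, that is, $[S,\operatorname{ad}_y]=0$ for every $y$. So $S$ lies in the centroid of $\mathfrak g$. By Schur's lemma applied to the irreducible adjoint module over $\mathbb C$, $S$ is a scalar. Conversely, scalars clearly give zero contribution.

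The main obstacle is only bookkeeping: fixing the sign convention so that the coboundary of $T$ matches \eqref{eq:T-product} exactly. The cohomological input is the standard Whitehead lemma.
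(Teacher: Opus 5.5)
Your proposal is correct and follows the paper's proof exactly: Proposition~\ref{prop:cocycle} turns the compatibility identity into the cocycle condition, Whitehead's first lemma makes $R$ a coboundary, and Schur's lemma on the irreducible adjoint module gives the uniqueness up to $\mathbb C\,\operatorname{id}_{\mathfrak g}$. The sign bookkeeping you worry about is not an issue: the coboundary $R_y=[\operatorname{ad}_y,T]$ evaluated at $x$ equals $[y,T(x)]-T([y,x])=T([x,y])-[T(x),y]$, which is \eqref{eq:T-product} with no sign change, and this same identity gives the converse. You can therefore replace your garbled middle paragraph, including the stray ``$+c$'', by this one-line computation.
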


\begin{proof}
By Proposition~\ref{prop:cocycle}, the GD compatibility identity is equivalent to the assertion that \(R:\mathfrak g\to\operatorname{End}(\mathfrak g)\), \(x\mapsto R_x\), is a \(1\)-cocycle for the \(\mathfrak g\)-module \(\operatorname{End}(\mathfrak g)\) with action \(x\cdot S=[\operatorname{ad}_x,S]\). Since \(\mathfrak g\) is semisimple and \(\operatorname{End}(\mathfrak g)\) is finite-dimensional, Whitehead's first lemma gives
$
        H^1(\mathfrak g,\operatorname{End}(\mathfrak g))=0.
$
Hence every such cocycle is a coboundary. Thus there exists \(T\in\operatorname{End}(\mathfrak g)\) such that \(R_y=[\operatorname{ad}_y,T]\) for all \(y\in\mathfrak g\). Therefore, for \(x,y\in\mathfrak g\),
$
        x\circ y
        =
        R_y(x)
        =
        [y,T(x)]-T([y,x])
        =
        T([x,y])-[T(x),y].
$
Conversely, any product defined by \eqref{eq:T-product} satisfies \(R_y=[\operatorname{ad}_y,T]\). Hence its right multiplication map is a coboundary, and so the GD compatibility identity holds. Thus the remaining condition is precisely that the product \eqref{eq:T-product} be Novikov.

Finally, \(T\) and \(T'\) define the same product if and only if
\([\operatorname{ad}_y,T'-T]=0\) for all \(y\in\mathfrak g\). Hence \(T'-T\) is an endomorphism of the adjoint \(\mathfrak g\)-module. Since \(\mathfrak g\) is simple, the adjoint module is irreducible. By Schur's lemma, \(T'-T\in\mathbb C\,\operatorname{id}_{\mathfrak g}\). Conversely, if \(T'-T\in\mathbb C\,\operatorname{id}_{\mathfrak g}\), then \eqref{eq:T-product} gives the same product. Therefore GD products on the fixed Lie algebra \(\mathfrak g\) are parametrized by the classes
$
        [T]\in
        \operatorname{End}(\mathfrak g)/
        \mathbb C\,\operatorname{id}_{\mathfrak g}
$
for which the product \eqref{eq:T-product} is Novikov.
\end{proof}

\begin{prop}
\label{prop:right-nilpotent}
Let \((L,\circ,[\cdot,\cdot])\) be a finite-dimensional complex GD algebra. If the Lie algebra \((L,[\cdot,\cdot])\) is perfect, that is, \([L,L]=L\), then every right multiplication operator \(R_a\) is nilpotent.
\end{prop}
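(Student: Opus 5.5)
The plan is to use only two facts about $R$ established in Proposition~\ref{prop:cocycle}: right multiplications commute with one another, and $R$ satisfies the cocycle identity. From these I will show $\operatorname{tr}(R_a^k)=0$ for every $a\in L$ and every $k\geq 1$. Since we are over $\mathbb C$ (characteristic zero), vanishing of all power traces forces every eigenvalue of $R_a$ to be zero (Newton's identities), so $R_a$ is nilpotent.

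First, by Proposition~\ref{prop:cocycle}, the GD compatibility identity is equivalent to
\[
R_{[x,z]}=[\operatorname{ad}_x,R_z]-[\operatorname{ad}_z,R_x] \qquad \forall x,z\in L,
\]
and right-commutativity of the Novikov product is equivalent to $[R_x,R_y]=0$ for all $x,y$. Let $\mathcal C$ denote the unital associative subalgebra of $\operatorname{End}(L)$ generated by all $R_x$. By the second fact, $\mathcal C$ is commutative. The key step is the trace computation: for any $S\in\mathcal C$ and any $x,z\in L$,
\[
\operatorname{tr}\bigl(S[\operatorname{ad}_x,R_z]\bigr)
=\operatorname{tr}(S\operatorname{ad}_xR_z)-\operatorname{tr}(SR_z\operatorname{ad}_x)
=\operatorname{tr}\bigl(\operatorname{ad}_x(R_zS-SR_z)\bigr)=0 .
\]
Here the middle equality uses cyclicity of the trace, and the last uses $R_zS=SR_z$, which holds because $\mathcal C$ is commutative. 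Applying this to both terms of the cocycle identity gives $\operatorname{tr}(SR_{[x,z]})=0$ for all $S\in\mathcal C$ and all $x,z$.

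Next I use perfectness. Since $[L,L]=L$, every $a\in L$ is a finite sum $\sum_i[x_i,z_i]$, and $a\mapsto R_a$ is linear. Hence $\operatorname{tr}(SR_a)=0$ for all $a\in L$ and all $S\in\mathcal C$. Taking $S=R_a^{k-1}$ (with $S=\operatorname{id}$ when $k=1$) gives $\operatorname{tr}(R_a^k)=0$ for all $k\geq1$. Because the field has characteristic zero, the power sums of the eigenvalues of $R_a$ all vanish, so every eigenvalue is $0$ and $R_a$ is nilpotent.

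I do not expect a genuine obstacle. The one point to watch is that the trace argument needs $S$ to commute with $R_z$, not with $\operatorname{ad}_x$. That requirement is exactly what right-commutativity supplies, which is why I restrict $S$ to the commutative algebra $\mathcal C$ generated by the right multiplications. Note also that the argument needs only finite-dimensionality of $L$ and characteristic zero, not simplicity or Whitehead's lemma.
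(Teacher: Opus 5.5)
Your proposal is correct and follows the paper's proof essentially verbatim. You pair $[\operatorname{ad}_x,R_z]$ with an operator commuting with all right multiplications so that its trace vanishes, use perfectness to get $\operatorname{tr}(R_aP)=0$ for all $a$, and then take $P=R_a^{k-1}$ and apply Newton's identities.
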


\begin{proof}
By Proposition~\ref{prop:cocycle}, the GD compatibility identity is equivalent
to
$
        R_{[x,y]}=[\operatorname{ad}_x,R_y]-[\operatorname{ad}_y,R_x].
$
Since \((L,\circ)\) is Novikov, the right multiplications commute pairwise. Let \(P\in\operatorname{End}(L)\) commute with all \(R_a\). Using the cyclicity of the trace, we have
$
        \operatorname{tr}([\operatorname{ad}_x,R_y]P)
        =
        \operatorname{tr}(\operatorname{ad}_xR_yP-\operatorname{ad}_xPR_y)=0,
$
and similarly \(\operatorname{tr}([\operatorname{ad}_y,R_x]P)=0\). Hence
$
        \operatorname{tr}(R_{[x,y]}P)=0
$
for all \(x,y\in L\). Since \(L=[L,L]\), it follows that
\(\operatorname{tr}(R_aP)=0\) for all \(a\in L\) and each such \(P\).

Now fix \(a\in L\). Taking \(P=R_a^{m-1}\), which commutes with all right
multiplications, gives
$
        \operatorname{tr}(R_a^m)=0
$
for \(m\geq 1\). Consequently, all power sums of the eigenvalues of \(R_a\) vanish. By Newton's identities, all eigenvalues of \(R_a\) are zero, and hence \(R_a\) is nilpotent.
\end{proof}

\begin{cor}
\label{cor:T-nilpotent-condition}
Let \(\mathfrak g\) be a finite-dimensional complex simple Lie algebra, and let \(T\in \operatorname{End}(\mathfrak g)\) define a GD product by
$
        x\circ_T y=T([x,y])-[T(x),y].
$
Then
$
        R_y^T=[\operatorname{ad}_y,T]
$
is nilpotent for every \(y\in\mathfrak g\), where \(\operatorname{ad}_y(z)=[y,z]\). Equivalently,
$
        \operatorname{tr}\bigl([\operatorname{ad}_y,T]^m\bigr)=0,
        \text{where }1\leq m\leq \dim\mathfrak g,\ y\in\mathfrak g.
$
\end{cor}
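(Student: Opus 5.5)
The corollary follows by combining Theorem~\ref{thm:T-reduction} with Proposition~\ref{prop:right-nilpotent}. The plan has three steps.

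First, I would identify the right multiplication operators of \(\circ_T\). For \(x,y\in\mathfrak g\),
\[
R_y^T(x)=x\circ_T y=T([x,y])-[T(x),y]=[y,T(x)]-T([y,x])=(\operatorname{ad}_yT-T\operatorname{ad}_y)(x).
\]
Hence \(R_y^T=[\operatorname{ad}_y,T]\), exactly as computed in the proof of Theorem~\ref{thm:T-reduction}.

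Second, I would prove nilpotency. By hypothesis \((\mathfrak g,\circ_T,[\cdot,\cdot])\) is a GD algebra. A finite-dimensional complex simple Lie algebra is perfect, since \([\mathfrak g,\mathfrak g]\) is a nonzero ideal and therefore equals \(\mathfrak g\). Proposition~\ref{prop:right-nilpotent} then shows that every \(R_y^T\) is nilpotent. This gives the first claim.

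Third, I would prove the equivalence with the trace conditions. Over \(\mathbb C\), an operator \(X\) on the \(n\)-dimensional space \(\mathfrak g\), with \(n=\dim\mathfrak g\), is nilpotent if and only if all its eigenvalues \(\lambda_1,\dots,\lambda_n\) (counted with multiplicity) are zero.
\begin{itemize}
\item If \(X\) is nilpotent, every \(X^m\) is nilpotent, so \(\operatorname{tr}(X^m)=0\) for all \(m\ge 1\).
\item Conversely, suppose \(\operatorname{tr}(X^m)=\sum_i\lambda_i^m=0\) for \(1\le m\le n\). Newton's identities in characteristic zero express the elementary symmetric polynomials \(e_1,\dots,e_n\) in terms of these power sums, so all \(e_k\) vanish.
\item Then the characteristic polynomial of \(X\) is \(t^n\), and Cayley--Hamilton gives \(X^n=0\).
\end{itemize}
Applying this to \(X=[\operatorname{ad}_y,T]\) gives the stated reformulation.

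There is no real obstacle here. The only subtlety is the range \(1\le m\le\dim\mathfrak g\): one has to note that the first \(n\) power sums suffice to determine the characteristic polynomial, and this uses characteristic zero.
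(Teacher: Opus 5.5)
Your proposal is correct and follows the paper's proof essentially step for step: perfectness of $\mathfrak g$ together with Proposition~\ref{prop:right-nilpotent} gives nilpotency, the identity $R_y^T=[\operatorname{ad}_y,T]$ is the same direct computation, and the trace reformulation comes from Newton's identities in characteristic zero. Your extra details, namely why a simple Lie algebra is perfect and the Cayley--Hamilton step, only make explicit what the paper leaves implicit.
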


\begin{proof}
Since \(\mathfrak g\) is simple, it is perfect. Hence Proposition~\ref{prop:right-nilpotent} implies that every right multiplication operator \(R_y^T\) is nilpotent.
Moreover, for \(z\in\mathfrak g\), the definition of \(\circ_T\) gives
$
        R_y^T(z)
        =
        z\circ_T y
        =
        T([z,y])-[T(z),y]
        =
        [y,T(z)]-T([y,z])
        =
        [\operatorname{ad}_y,T](z).
$
Thus \(R_y^T=[\operatorname{ad}_y,T]\). The trace formulation is equivalent to nilpotency over \(\mathbb C\) by Newton's identities, since the traces of the first
\(\dim\mathfrak g\) powers determine the coefficients of the characteristic polynomial.
\end{proof}

We now classify the GD products whose associated class in \(\operatorname{End}(\mathfrak g)/\mathbb C\,\operatorname{id}_{\mathfrak g}\) has a rank-one representative. If a product is represented by \(T\in\operatorname{End}(\mathfrak g)\) as in Theorem~\ref{thm:T-reduction}, then its associated class is
$
        [T]=T+\mathbb C\,\operatorname{id}_{\mathfrak g}.
$
We say that this class has a rank-one representative if \(T+\lambda\operatorname{id}_{\mathfrak g}\) has rank one for some \(\lambda\in\mathbb C\). This is well-defined because adding a scalar multiple of \(\operatorname{id}_{\mathfrak g}\) to \(T\) does not change the product.

\begin{thm}
\label{thm:rank-one}
Let \(\mathfrak g\) be a finite-dimensional complex simple Lie algebra. Let \(u\in\mathfrak g\) and \(\varphi\in\mathfrak g^*\) be nonzero, and set \(T=\varphi\otimes u\), that is, \(T(x)=\varphi(x)u\). Then the product
\begin{equation}
\label{eq:rank-one-product}
        x\circ_T y
        =
        \varphi([x,y])u-\varphi(x)[u,y]
\end{equation}
defines a GD algebra structure on \(\mathfrak g\) if and only if
\begin{equation}
\label{eq:rank-one-condition}
        \varphi(u)=0,\quad \varphi([u,\mathfrak g])=0.
\end{equation}
\end{thm}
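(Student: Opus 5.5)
By Theorem~\ref{thm:T-reduction}, every product of the form \eqref{eq:T-product} already satisfies the GD compatibility identity. So for $T=\varphi\otimes u$ the statement reduces to one claim: the product \eqref{eq:rank-one-product} is Novikov if and only if \eqref{eq:rank-one-condition} holds. I will prove the two directions separately. Sufficiency is a direct computation; necessity goes through the nilpotency criterion of Corollary~\ref{cor:T-nilpotent-condition}.

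\emph{Sufficiency.} Assume $\varphi(u)=0$ and $\varphi([u,\mathfrak g])=0$, and put $w=x\circ y$.
\begin{enumerate}
\item Since $\varphi([u,\mathfrak g])=0$, we get $\varphi(w)=\varphi([x,y])\varphi(u)-\varphi(x)\varphi([u,y])=0$.
\item Using $\varphi(u)=0$ and $\varphi([u,\mathfrak g])=0$ again, a short expansion gives
\[
(x\circ y)\circ z=-\varphi(x)\,\varphi([[u,y],z])\,u .
\]
\item Right-commutativity then amounts to $\varphi([[u,y],z]-[[u,z],y])=0$. By Jacobi this expression equals $\varphi([u,[y,z]])$, which vanishes by hypothesis.
\item The same expansion gives
\[
x\circ(y\circ z)=-\varphi(y)\,\varphi([x,[u,z]])\,u+\varphi(x)\varphi(y)\,[u,[u,z]] .
\]
\item In the associator $(x,y,z)$, the last term of step 4 is symmetric in $x,y$. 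After collecting, the remaining part becomes $\varphi(x)$ times $\varphi([[u,y],z]+[y,[u,z]])$, minus the same expression with $x$ and $y$ swapped.
\item Because $\operatorname{ad}_u$ is a derivation, $[[u,y],z]+[y,[u,z]]=[u,[y,z]]$. So both bracketed quantities vanish, and left-symmetry holds.
\end{enumerate}

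\emph{Necessity.} If the product is Novikov, Corollary~\ref{cor:T-nilpotent-condition} says that $R_y=[\operatorname{ad}_y,T]$ is nilpotent for every $y$. Explicitly,
\[
R_y(z)=\varphi(z)[y,u]-\varphi([y,z])\,u ,
\]
so $R_y$ has rank at most $2$ and its image lies in $\operatorname{span}\{a,u\}$ with $a=[y,u]$.

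On this span, $R_y$ is represented by a $2\times 2$ matrix with trace $0$ and determinant $\varphi(u)\varphi([y,[y,u]])-\varphi([y,u])^2$. Nilpotency is therefore equivalent to the identity
\[
\varphi([y,u])^2=\varphi(u)\,\varphi([y,[y,u]])\qquad\forall y\in\mathfrak g. \tag{$\ast$}
\]
If we already know $\varphi(u)=0$, then $(\ast)$ immediately gives $\varphi([u,\mathfrak g])=0$. So the whole necessity direction reduces to proving $\varphi(u)=0$.

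\emph{Main obstacle: excluding $\varphi(u)\neq 0$.} Suppose $\varphi(u)\neq 0$ and normalize $\varphi(u)=1$. Then $(\ast)$ says that the quadratic form $y\mapsto\varphi([y,[y,u]])$ is the square of the linear form $\ell(y)=\varphi([y,u])$, hence has rank at most $1$. I plan to contradict this in two stages.
\begin{enumerate}
\item Write $\varphi=K(v,\cdot)$ using the Killing form $K$. Then
\[
\varphi([y,[y,u]])=-K([y,v],[y,u]) .
\]
\item Choose a Cartan subalgebra $\mathfrak h$ and decompose $u$ and $v$ into root components. Since $K(u,v)=1\neq 0$, some pair of components pairs nontrivially: either both Cartan parts, or components in $\mathfrak g_\alpha$ and $\mathfrak g_{-\alpha}$.
\item Restrict $y$ to suitable subspaces, for instance $\mathfrak h$ or $\mathfrak g_\beta\oplus\mathfrak g_{-\beta}$, or an $\mathfrak{sl}_2$-triple through $\alpha$. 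The aim is a $2$-dimensional subspace on which the form $-K([y,v],[y,u])$ has rank $2$, contradicting rank at most $1$.
\end{enumerate}
If this rank argument alone does not close, I will bring in the remaining Novikov identities, namely $[R_x,R_y]=0$ and \eqref{eq:Lleft}. Evaluated on these rank-$\le 2$ operators, they give further bilinear constraints that should force $\varphi(u)=0$.
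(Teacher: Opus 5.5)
\textbf{Gap in the necessity direction.} Your sufficiency computation is correct. The paper reaches the same conclusion by checking the hypotheses of Proposition~\ref{prop:square-zero-construction}. Your reduction of the nilpotency of $R_y$ to $(\ast)$ is also correct, and so is the deduction of $\varphi([u,\mathfrak g])=0$ once $\varphi(u)=0$ is known. That last step is exactly the paper's $\operatorname{tr}(R_y^2)=2\varphi([u,y])^2$. (When $[y,u]\in\mathbb C u$, read your ``$2\times2$ matrix'' as $CB$ for a factorization $R_y=BC$ through $\mathbb C^2$; the conclusion is unchanged.)

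The gap is the step you flagged yourself: excluding $\varphi(u)\neq0$ is not proved. Your steps 2--3 never name the $2$-dimensional subspace and never check that the form has rank $2$ on it. Since $u$ and $v$ can have many interacting root components, that is a case analysis you have not carried out. The fallback, that the remaining Novikov identities ``should'' force $\varphi(u)=0$, is only a hope. So the necessity direction is open as written.

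\textbf{How the paper closes it.} It evaluates the commutator at $u$. A direct computation gives $[R_y,R_z]u=\varphi(u)\,\varphi([u,[y,z]])\,u$. Hence $\varphi(u)\neq0$ forces $\varphi([u,\mathfrak g])=0$, because $\mathfrak g$ is perfect. Under this condition, $[R_y,R_z]=0$ collapses to $[u,z]\otimes\beta_y=[u,y]\otimes\beta_z$, where $\beta_y=\varphi([\cdot,y])$. Since $\dim[u,\mathfrak g]\ge2$, this forces every $\beta_w=0$, so $\varphi([\mathfrak g,\mathfrak g])=0$ and $\varphi=0$.

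\textbf{How your own route can be closed.} Your rank idea also works, with no root decomposition. Normalize $K(u,v)=1$, put $w=[u,v]$, and let $S=\operatorname{ad}_u\operatorname{ad}_v+\operatorname{ad}_v\operatorname{ad}_u$, which is $K$-symmetric.
\begin{enumerate}
\item One checks that $\varphi([y,u])=K(y,w)$ and $\varphi([y,[y,u]])=K(y,\operatorname{ad}_u\operatorname{ad}_v y)$.
\item Polarizing $(\ast)$ then gives $Sz=2K(z,w)\,w$ for all $z$.
\item At $z=v$ this reads $[v,w]=2K(v,[u,v])\,w=0$.
\item Hence $K(w,w)=K(u,[v,w])=0$.
\item Taking traces gives $2=2K(u,v)=\operatorname{tr}S=2K(w,w)=0$, a contradiction.
\end{enumerate}
This version proves necessity from the nilpotency of the $R_y$ alone, i.e.\ Corollary~\ref{cor:T-nilpotent-condition}. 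The paper's argument uses the commutation relations $[R_y,R_z]=0$ instead.
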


\begin{proof}
Assume first that \eqref{eq:rank-one-condition} holds. Then \(T^2=0\), \(T(\mathfrak g)=\mathbb C u\) is abelian, and \(T([T(\mathfrak g),\mathfrak g])=0\). Hence \(T\) satisfies the hypotheses of Proposition~\ref{prop:square-zero-construction}, and therefore \eqref{eq:rank-one-product} defines a GD algebra.

Conversely, suppose that \eqref{eq:rank-one-product} defines a GD algebra. For
\(y\in\mathfrak g\), set \(v_y=[u,y]\), \(\beta_y(x)=\varphi([x,y])\)  and
\(a=\varphi(u)\). Then
$
        R_y=u\otimes\beta_y-v_y\otimes\varphi .
$
Since the product is Novikov, \([R_y,R_z]=0\). Evaluating this equality on \(u\) gives
$
        0=[R_y,R_z]u
        =
        a\,\varphi([u,[y,z]])u .
$
If \(a\neq0\), then \(\varphi([u,\mathfrak g])=0\), because \(\mathfrak g=[\mathfrak g,\mathfrak g]\). Under this condition, \([R_y,R_z]=0\) reduces to
$
        v_z\otimes\beta_y=v_y\otimes\beta_z$ for all $ y,z\in \mathfrak g.
$
Since \(u\neq0\) and \(\mathfrak g\) is simple, \(\operatorname{ad}_u\neq0\).
Moreover, \(\operatorname{ad}_u\) is skew-adjoint with respect to the Killing form, so \(\dim [u,\mathfrak g]\geq2\). Choose \(y,z\) such that \(v_y\) and \(v_z\) are linearly independent. The tensor identity forces \(\beta_y=\beta_z=0\), and then, keeping one of these elements fixed, gives \(\beta_w=0\) for all \(w\in\mathfrak g\). Thus \(\varphi([\mathfrak g,\mathfrak g])=0\), hence \(\varphi=0\), a contradiction. Therefore \(\varphi(u)=0\).

It remains to prove \(\varphi([u,\mathfrak g])=0\). Since \(\mathfrak g\) is perfect, Proposition~\ref{prop:right-nilpotent} implies that every \(R_y\) is nilpotent. Hence \(\operatorname{tr}(R_y^2)=0\). Using \(\varphi(u)=0\) and \(\operatorname{tr}(w\otimes\alpha)=\alpha(w)\), we obtain
$
        \operatorname{tr}(R_y^2)
        =
        2\varphi([u,y])^2 .
$
Therefore \(\varphi([u,y])=0\) for all \(y\in\mathfrak g\). This proves \eqref{eq:rank-one-condition}.
\end{proof}

Let \(B\) be a nonzero invariant symmetric bilinear form on \(\mathfrak g\). Since \(\mathfrak g\) is simple, \(B\) is nondegenerate. We identify \(\mathfrak g\) with \(\mathfrak g^*\) by \(v\mapsto B(\,\cdot\,,v)\).

\begin{cor}
\label{cor:rank-one-B-form}
Let \(\mathfrak g\) be a finite-dimensional complex simple Lie algebra. The nonzero GD products whose associated class in \(\operatorname{End}(\mathfrak g)/\mathbb C\operatorname{id}_{\mathfrak g}\) has a rank-one representative are precisely the products
\begin{equation}
\label{eq:rank-one-B-product}
        x\circ_{u,v} y
        =
        B([x,y],v)u-B(x,v)[u,y],
\end{equation}
where \(u,v\in\mathfrak g\) are nonzero elements satisfying
\begin{equation}
\label{eq:commuting-isotropic-pair}
        [u,v]=0,\quad B(u,v)=0.
\end{equation}
\end{cor}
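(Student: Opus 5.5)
The plan is to reduce the corollary to Theorem~\ref{thm:rank-one} by transporting everything through the identification \(\mathfrak g\cong\mathfrak g^*\) given by \(B\).

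First, let a nonzero GD product be given whose class has a rank-one representative. Since the class is well defined, we may take the representing endomorphism itself to be \(T=\varphi\otimes u\) with \(u\neq0\) and \(\varphi\neq0\). Nondegeneracy of \(B\) gives a unique nonzero \(v\in\mathfrak g\) with \(\varphi=B(\,\cdot\,,v)\). Substituting into \eqref{eq:rank-one-product} yields exactly \eqref{eq:rank-one-B-product}. Theorem~\ref{thm:rank-one} then says that this is a GD product if and only if \eqref{eq:rank-one-condition} holds. It remains to translate \eqref{eq:rank-one-condition} into the language of \(B\).

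The condition \(\varphi(u)=0\) reads \(B(u,v)=0\) directly. For the condition \(\varphi([u,\mathfrak g])=0\), invariance gives \(B([u,y],v)=-B(y,[u,v])\). This vanishes for all \(y\) if and only if \([u,v]=0\), by nondegeneracy of \(B\). Hence \eqref{eq:rank-one-condition} is equivalent to \eqref{eq:commuting-isotropic-pair}.

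Conversely, take nonzero \(u,v\) satisfying \eqref{eq:commuting-isotropic-pair} and set \(T=B(\,\cdot\,,v)\,u\). This \(T\) has rank one, so the translation above and Theorem~\ref{thm:rank-one} show that \(\circ_{u,v}\) is a GD product with a rank-one representative.

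The only point needing care is that \(\circ_{u,v}\) is actually nonzero, which is required by the statement. I would argue as follows. If the product were zero, then by Theorem~\ref{thm:T-reduction} \(T\) would be a scalar multiple of \(\operatorname{id}_{\mathfrak g}\). But a rank-one operator cannot be scalar, because \(\dim\mathfrak g\geq 3\). This contradiction gives nonzeroness, and in the forward direction nonzeroness is assumed anyway.

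I expect no real obstacle. The step most easily mishandled is the sign in the invariance identity, together with checking that the rank-one representative can be used directly as \(T\) without any shift by a scalar.
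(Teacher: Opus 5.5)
Your proposal is correct and follows essentially the same route as the paper. You write the rank-one representative as $T=B(\cdot,v)\,u$ and invoke Theorem~\ref{thm:rank-one}. You then translate $\varphi(u)=0$ and $\varphi([u,\mathfrak g])=0$ into $B(u,v)=0$ and $[u,v]=0$ via invariance and nondegeneracy of $B$. Finally, nonzeroness of $\circ_{u,v}$ comes from Theorem~\ref{thm:T-reduction}, since a rank-one operator on a space of dimension at least $3$ cannot be a scalar multiple of the identity.
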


\begin{proof}
Let \(T=\varphi\otimes u\) be a nonzero rank-one representative. Since \(B\) is nondegenerate, \(\varphi(x)=B(x,v)\) for a unique nonzero \(v\in\mathfrak g\). By Theorem~\ref{thm:rank-one}, the corresponding product is GD if and only if
$
        \varphi(u)=0, \varphi([u,\mathfrak g])=0.
$
These conditions become \(B(u,v)=0\) and \(B([u,x],v)=0\) for all \(x\in\mathfrak g\). By invariance of \(B\), the second condition is equivalent to \(B([v,u],x)=0\) for all \(x\), hence to \([u,v]=0\). Thus the conditions are exactly \eqref{eq:commuting-isotropic-pair}, and \eqref{eq:rank-one-B-product} follows from
\(x\circ y=T([x,y])-[T(x),y]\).

Conversely, if \(u,v\neq0\) satisfy \eqref{eq:commuting-isotropic-pair}, then \(T_{u,v}(x)=B(x,v)u\) satisfies the conditions of Theorem~\ref{thm:rank-one}. Hence \eqref{eq:rank-one-B-product} defines a GD product. It is nonzero because \(T_{u,v}\) is rank one and therefore is not a scalar multiple of \(\operatorname{id}_{\mathfrak g}\).
\end{proof}

\begin{rem}
For two pairs \((u,v)\) and \((u',v')\) satisfying \eqref{eq:commuting-isotropic-pair}, we have
$
        \circ_{u,v}=\circ_{u',v'}
$
if and only if there exists \(c\in\mathbb C^*\) such that \(u'=cu\) and \(v'=c^{-1}v\). Indeed, equality of the products implies, by Theorem~\ref{thm:T-reduction}, that \(T_{u,v}-T_{u',v'}\in\mathbb C\operatorname{id}_{\mathfrak g}\). Since both operators have rank one and \(\dim\mathfrak g\geq3\), this scalar must be zero. Thus \(T_{u,v}=T_{u',v'}\), which is equivalent to the stated rescaling.
Moreover, every automorphism of \(\mathfrak g\) preserves \(B\) and sends \(\circ_{u,v}\) to \(\circ_{\sigma(u),\sigma(v)}\). Hence the isomorphism classes, under Lie algebra automorphisms of \(\mathfrak g\), of nonzero rank-one-representable GD products are parametrized by the \(\operatorname{Aut}(\mathfrak g)\)-orbits on
$
        \{(u,v)\in\mathfrak g\times\mathfrak g
        \mid u\neq0,\ v\neq0,\ [u,v]=0,\ B(u,v)=0\}/\mathbb C^*,
$
where \(c\cdot(u,v)=(cu,c^{-1}v)\).
\end{rem}

We now classify all GD products on the fixed Lie algebra \(\mathfrak{sl}_2(\mathbb C)\). We first show that every nonzero GD product on \(\mathfrak{sl}_2(\mathbb C)\) belongs to the rank-one case considered above.

\begin{lem}
\label{lem:sl2-rank-one-reduction}
Let \(\mathfrak g=\mathfrak{sl}_2(\mathbb C)\). Let \(\circ\) be a nonzero GD product on the fixed Lie algebra \(\mathfrak g\), and let
$
        [T]\in
        \operatorname{End}(\mathfrak g)/
        \mathbb C\operatorname{id}_{\mathfrak g}
$
be the class associated with \(\circ\) by Theorem~\ref{thm:T-reduction}. Then \([T]\) contains a nonzero rank-one endomorphism.
\end{lem}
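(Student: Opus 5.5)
We first record what is already known about \(T\). By Corollary~\ref{cor:T-nilpotent-condition}, every \(R_y=[\operatorname{ad}_y,T]\) is nilpotent, and by Proposition~\ref{prop:cocycle} the operators \(R_y\) commute pairwise. The plan is to decompose \(T\) under the adjoint action of \(\mathfrak{sl}_2\) and then use nilpotency and commutativity to kill every component except a rank-one one.

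\textbf{Step 1: decomposition.} As an \(\mathfrak{sl}_2\)-module we have
\[
\operatorname{End}(\mathfrak g)\cong \mathfrak g\otimes\mathfrak g^*\cong V_0\oplus V_2\oplus V_4,
\]
where \(V_{2k}\) is irreducible of dimension \(2k+1\). The \(V_0\) component is \(\mathbb C\operatorname{id}\), which is irrelevant. The \(V_2\) component consists of the operators \(\operatorname{ad}_w\), and the \(V_4\) component consists of the traceless \(B\)-symmetric operators. So, modulo \(\mathbb C\operatorname{id}\), we write \(T=\operatorname{ad}_w+S\) with \(S\) traceless and \(B\)-symmetric.

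\textbf{Step 2: killing \(\operatorname{ad}_w\).} The equation \(R_y=[\operatorname{ad}_y,T]\) is \(\mathfrak g\)-equivariant in \(T\). Its part coming from \(\operatorname{ad}_w\) is \(\operatorname{ad}_{[y,w]}\), which is skew, while its part coming from \(S\) is \([\operatorname{ad}_y,S]\), which is symmetric. Nilpotency of \(R_y\) gives \(\operatorname{tr}(R_y^2)=0\). Since skew and symmetric operators are trace-orthogonal, this becomes
\[
\operatorname{tr}(\operatorname{ad}_{[y,w]}^2)+\operatorname{tr}([\operatorname{ad}_y,S]^2)=0
\quad\text{for all } y.
\]
This identity alone will not force \(w=0\). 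We therefore also use the higher trace conditions and the commutativity \([R_y,R_z]=0\). Using the automorphism group \(\operatorname{Aut}(\mathfrak g)\), we normalize \(w\) to \(0\), to \(h\), or to \(e\), and then expand in coordinates in the basis \(e,h,f\).

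\textbf{Step 3: coordinates and case analysis.} A symmetric traceless \(S\) corresponds to a degree-4 binary form, which has 5 parameters. Together with the 3 parameters of \(w\), this gives 8 parameters in total, consistent with \(\dim\operatorname{End}(\mathfrak g)/\mathbb C=8\). We then impose:
\begin{enumerate}
\item \(\operatorname{tr}(R_y^2)=\operatorname{tr}(R_y^3)=0\) for \(y\in\{e,f,h,e+f,\dots\}\);
\item \([R_e,R_f]=[R_e,R_h]=0\).
\end{enumerate}
Each equation is polynomial and low degree, so the system is a finite case analysis.

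\textbf{Expected outcome.} We expect the solutions to be, up to automorphism, \(T=\varphi\otimes u\) with \(u\) nilpotent and \(\varphi=B(\cdot,v)\), \(v\in\mathbb C u\). By Corollary~\ref{cor:rank-one-B-form}, in \(\mathfrak{sl}_2\) commuting isotropic pairs satisfy exactly this, since both \(u\) and \(v\) must be proportional to one nilpotent element. For example, with \(u=e\) and \(v=e\) we get \(T(x)=B(x,e)e\), which is rank one. The right-left weight considerations suggest normalizing so that \(T\) has highest weight: the quartic binary form is a perfect fourth power \(\ell^4\), which corresponds to \((e^*)\otimes e\) up to scalar, and \(w\) is proportional to that same \(e\). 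We then absorb the \(\operatorname{ad}_w\) term and show the final representative has rank one.

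\textbf{Main obstacle.} The main obstacle is the nilpotency and commutativity analysis for a general \(T\), where the binary quartic is not a fourth power and \(w\) is arbitrary. We would attack it with weights. After an automorphism, we place a nonzero highest-weight component of \(T\) in a convenient position. We then look at the lowest-weight terms of the Novikov condition \([R_x,R_y]=0\) and of \(\operatorname{tr}(R_h^2)=0\), and show that any non-highest-weight component forces a nonzero eigenvalue of some \(R_y\). If coordinate bookkeeping becomes heavy, we fall back to solving the polynomial system directly in the 8 parameters. We expect this to be manageable because the equations are quadratic.
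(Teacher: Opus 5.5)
Your proposal does not yet prove the lemma. Steps 1--2 only set up the decomposition \(T=\operatorname{ad}_w+S\). The decisive part is announced as a finite case analysis in 8 parameters, but it is never carried out: the ``Expected outcome'' is stated as a conjecture, and the ``Main obstacle'' is left open, even though it is exactly the content of the lemma.

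The expected outcome also contains an error: the \(\operatorname{ad}_w\) term cannot be ``absorbed.'' Changing the representative of \([T]\) only adds multiples of \(\operatorname{id}_{\mathfrak g}\), so every representative has the same skew part \(\operatorname{ad}_w\). The rank-one solutions \(B(\cdot,u)u\), however, are \(B\)-symmetric, so you must actually prove \(w=0\). For example, take \(T=a\operatorname{ad}_e+b\operatorname{ad}_e^2\) with \(a\neq0\). Then \(T+\lambda\operatorname{id}\) has rank \(2\) or \(3\) for every \(\lambda\), so the class contains no rank-one element. Such a \(T\) is excluded only because \(\operatorname{tr}([\operatorname{ad}_f,T]^2)=8a^2\neq0\).

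The idea your plan lacks, which is what the paper uses, is a dimension bound that makes the problem small. The operators \(R_y=[\operatorname{ad}_y,T]\) pairwise commute and are nilpotent, so they generate a commutative nilpotent subalgebra of \(\operatorname{End}(\mathbb C^3)\). Such a subalgebra has dimension at most \(\lfloor 9/4\rfloor=2\). Hence the linear map \(y\mapsto[\operatorname{ad}_y,T]\) has nonzero kernel. By the Jacobi identity this kernel is a subalgebra: it is the set of \(y\) for which \(\operatorname{ad}_y\) commutes with \(T\).
\begin{enumerate}
\item If the kernel has dimension at least \(2\), it contains a Borel subalgebra, and then \(T\) is scalar.
\item If the kernel is \(\mathbb C u\) with \(u\) semisimple, take \(u=h\). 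Then \(T\) is diagonal in the basis \(e,h,f\), and \([R_e,R_f]=0\) forces \(T\) to be scalar.
\item If the kernel is \(\mathbb C u\) with \(u\) nilpotent, normalize \(u=e\). Then \(T\in\operatorname{span}\{\operatorname{id},\operatorname{ad}_e,\operatorname{ad}_e^2\}\), and \(\operatorname{tr}(R_f^2)=8a^2\) kills the \(\operatorname{ad}_e\) coefficient. What remains is the rank-one operator \(b\operatorname{ad}_e^2\).
\end{enumerate}
This reduces your 8-parameter system to a 2-parameter one. It replaces the unexecuted weight and coordinate analysis with a short argument.
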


\begin{proof}
Choose the representative \(T\) with \(\operatorname{tr}T=0\). By Theorem~\ref{thm:T-reduction}, the right multiplications are \(R_y=[\operatorname{ad}_y,T]\). Since \((\mathfrak g,\circ)\) is Novikov, the operators \(R_y\) commute pairwise, and by Proposition~\ref{prop:right-nilpotent}, they are nilpotent. Thus
$
        \{[\operatorname{ad}_y,T]\mid y\in\mathfrak g\}
$
is a commutative subspace of \(\operatorname{End}(\mathfrak g)\) consisting of nilpotent endomorphisms. Since \(\dim\mathfrak g=3\), such a subspace has dimension at most \(2\). Hence
$
        \dim\{[\operatorname{ad}_y,T]\mid y\in\mathfrak g\}\leq 2.
$

If this dimension is \(0\), then \(T\) commutes with the adjoint representation, so \(T\) is scalar. Since \(\operatorname{tr}T=0\), this gives \(T=0\), contrary to the nonzero product assumption. If the dimension is \(1\), then the kernel of \(y\mapsto [\operatorname{ad}_y,T]\) is a two-dimensional subalgebra of \(\mathfrak{sl}_2(\mathbb C)\). Hence it is conjugate to a Borel subalgebra. Thus \(T\) commutes with both \(\operatorname{ad}_h\) and \(\operatorname{ad}_e\), for a standard basis \(e,h,f\), which forces \(T\) to be scalar, again a contradiction.

Therefore the dimension is \(2\), and the kernel is \(\mathbb C u\) for some nonzero \(u\in\mathfrak g\). If \(u\) is semisimple, we may take \(u=h\). Then \(T\) is diagonal in the basis \(e,h,f\), set
$
        T(e)=\alpha e, T(h)=\beta h$ and $T(f)=\gamma f.
$
A short computation gives
$
        [R_e,R_f]e=-2(\beta-\alpha)^2e$ and $
        [R_e,R_f]f=2(\gamma-\beta)^2f.
$
Since \([R_e,R_f]=0\), we obtain \(\alpha=\beta=\gamma\), so \(T\) is scalar, a contradiction. Hence \(u\) is nilpotent.

After an automorphism, take \(u=e\). Since \(T\) commutes with \(\operatorname{ad}_e\), and \(\operatorname{ad}_e\) is a single nilpotent Jordan block on the adjoint module, modulo scalar endomorphisms one has
$
        T=a\operatorname{ad}_e+b\operatorname{ad}_e^2.
$
Then
$
        \operatorname{tr}([\operatorname{ad}_f,T]^2)=8a^2.
$
But \([\operatorname{ad}_f,T]=R_f\) is nilpotent, hence \(a=0\). Therefore, modulo scalar endomorphisms,
$
        T=b\operatorname{ad}_e^2.
$
Since the product is nonzero, \(b\neq0\). As \(\operatorname{ad}_e^2\) has rank one, the class \([T]\) contains a nonzero rank-one representative.
\end{proof}

We can now give the complete classification.

\begin{thm}
\label{thm:sl2-classification}
Let \(\mathfrak g=\mathfrak{sl}_2(\mathbb C)\) with standard basis
$
        [h,e]=2e, [h,f]=-2f,[e,f]=h.
$
Normalize the invariant symmetric bilinear form \(B\) by
\(B(e,f)=1\) and \(B(h,h)=2\). Set
$
        \mathcal N:=\{u\in\mathfrak g\mid B(u,u)=0\}.
$
For \(u\in\mathcal N\), define
$
        T_u(x):=B(x,u)u,
        x\circ_u y:=T_u([x,y])-[T_u(x),y].
$
Then the assignment \(u\mapsto \circ_u\) induces a bijection from \(\mathcal N/\{\pm1\}\) onto the set of all GD products on the fixed Lie algebra \(\mathfrak{sl}_2(\mathbb C)\).
\end{thm}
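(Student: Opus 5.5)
The plan is to assemble the bijection from the results already established on rank-one representatives: Corollary~\ref{cor:rank-one-B-form}, the subsequent Remark on equality of the products $\circ_{u,v}$, and Lemma~\ref{lem:sl2-rank-one-reduction}. In the notation of Corollary~\ref{cor:rank-one-B-form} we have $T_u=T_{u,u}$ and $\circ_u=\circ_{u,u}$. The zero element of $\mathcal N$ will be treated separately: $T_0=0$, so $\circ_0$ is the zero product, which is trivially a GD product on the fixed Lie algebra.

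\emph{Well-definedness.} Since $T_{-u}(x)=B(x,-u)(-u)=T_u(x)$, the product $\circ_u$ depends only on the class of $u$ in $\mathcal N/\{\pm1\}$. For $0\neq u\in\mathcal N$, the pair $(u,v)=(u,u)$ satisfies \eqref{eq:commuting-isotropic-pair}, because $[u,u]=0$ and $B(u,u)=0$. Hence Corollary~\ref{cor:rank-one-B-form} shows that $\circ_u$ is a nonzero GD product. In particular, $\circ_u$ is nonzero exactly when $u\neq0$.

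\emph{Surjectivity.} Let $\circ$ be a nonzero GD product on $\mathfrak{sl}_2(\mathbb C)$. By Lemma~\ref{lem:sl2-rank-one-reduction}, its class contains a nonzero rank-one representative. Corollary~\ref{cor:rank-one-B-form} then gives $\circ=\circ_{u',v'}$ for some nonzero $u',v'$ with $[u',v']=0$ and $B(u',v')=0$.

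The key $\mathfrak{sl}_2$-specific step is that the centralizer of any nonzero element of $\mathfrak{sl}_2(\mathbb C)$ is one-dimensional. Up to an automorphism and scaling, a nonzero element is either $h$ or $e$. A direct check in the basis $e,h,f$ shows that the centralizer of $h$ is $\mathbb C h$ and the centralizer of $e$ is $\mathbb C e$. Consequently $v'=cu'$ for some $c\in\mathbb C^*$. Then $0=B(u',v')=cB(u',u')$ gives $u'\in\mathcal N$.

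Choose $s\in\mathbb C^*$ with $s^2=c$. By the rescaling invariance in the Remark, $(u',cu')$ and $(su',s^{-1}cu')=(su',su')$ define the same product. Therefore $\circ=\circ_{su'}$ with $su'\in\mathcal N$.

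\emph{Injectivity.} Since $\circ_u=0$ if and only if $u=0$, only the nonzero case needs argument. Suppose $u,u'\in\mathcal N$ are nonzero and $\circ_u=\circ_{u'}$. The Remark yields $c\in\mathbb C^*$ with $u'=cu$ and $u'=c^{-1}u$. Hence $c^2=1$ and $u'=\pm u$.

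The only step that is not a direct citation is the centralizer fact. Together with the square-root rescaling, it converts the general commuting isotropic pairs $(u,v)$ into diagonal pairs $(u,u)$, and this is where I expect the main care is needed.
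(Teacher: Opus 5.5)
Your proposal is correct and follows essentially the same route as the paper: each $\circ_u$ is shown to be a GD product via the rank-one/square-zero construction, and surjectivity uses Lemma~\ref{lem:sl2-rank-one-reduction}, Corollary~\ref{cor:rank-one-B-form}, the one-dimensional centralizers and the square-root rescaling. The only minor difference is in injectivity: you invoke the rank argument from the Remark following Corollary~\ref{cor:rank-one-B-form}, while the paper notes that $\operatorname{tr}T_u=B(u,u)=0$ forces the scalar in $T_u-T_{u'}\in\mathbb C\operatorname{id}_{\mathfrak g}$ to vanish; both arguments work.
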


\begin{proof}
Let \(u\in\mathcal N\). If \(u=0\), then \(\circ_u=0\). If \(u\neq0\), then \(T_u^2=0\), \(T_u(\mathfrak g)=\mathbb C u\) is abelian, and, by invariance of
\(B\),
$
        B([u,x],u)=0,
        \text{for } x\in\mathfrak g.
$
Hence \(T_u([u,\mathfrak g])=0\). Thus \(T_u\) satisfies the hypotheses of Proposition~\ref{prop:square-zero-construction}, and \(\circ_u\) is a GD product.

Conversely, let \(\circ\) be a GD product on \(\mathfrak g\). If \(\circ=0\), then \(\circ=\circ_0\). Assume \(\circ\neq0\). By Lemma~\ref{lem:sl2-rank-one-reduction}, the associated class in
$
        \operatorname{End}(\mathfrak g)/
        \mathbb C\operatorname{id}_{\mathfrak g}
$
contains a nonzero rank-one representative. Hence Corollary~\ref{cor:rank-one-B-form} applies, and the product is represented by \(T_{u,v}(x)=B(x,v)u\), where \(u,v\neq0\), \([u,v]=0\), and \(B(u,v)=0\). Since the centralizer of every nonzero element of \(\mathfrak{sl}_2(\mathbb C)\) is one-dimensional, \(v=\lambda u\) for some \(\lambda\in\mathbb C^*\). Hence \(B(u,v)=0\) gives \(B(u,u)=0\). Choosing
\(c\in\mathbb C^*\) with \(c^2=\lambda\), we have
$
        T_{u,v}=\lambda T_u=T_{cu},
$
so the product is of the form \(\circ_{cu}\), with \(cu\in\mathcal N\).

It remains to identify when two parameters give the same product. If \(\circ_u=\circ_v\), then Theorem~\ref{thm:T-reduction} gives
$
        T_u-T_v\in\mathbb C\operatorname{id}_{\mathfrak g}.
$
Since \(B(u,u)=B(v,v)=0\), both \(T_u\) and \(T_v\) have trace zero, hence this
scalar multiple of the identity must be zero. Thus \(T_u=T_v\). If \(u=0\), then \(v=0\). If \(u\neq0\), then \(\operatorname{Im}T_u=\mathbb C u=\operatorname{Im}T_v\), so \(v=cu\) for some \(c\in\mathbb C^*\). The equality \(T_v=T_u\) gives \(c^2=1\), hence \(v=\pm u\). Therefore \(u\mapsto\circ_u\) induces the asserted bijection from \(\mathcal N/\{\pm1\}\) onto the set of all GD products on \(\mathfrak{sl}_2(\mathbb C)\).
\end{proof}

\begin{cor}
\label{cor:sl2-two-isomorphism-classes}
Up to isomorphism, there are two GD algebra structures on $\mathfrak{sl}_2(\mathbb C)$: the trivial one and the structure defined by 
\[
h \circ f = -2e, \quad f \circ h = 4e, \quad f \circ f = -h.
\]

\end{cor}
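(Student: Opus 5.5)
By Theorem~\ref{thm:sl2-classification}, every GD product on $\mathfrak{sl}_2(\mathbb C)$ is of the form $\circ_u$ for a unique $u\in\mathcal N/\{\pm1\}$. The plan is to show that the nonzero $u\in\mathcal N$ all give isomorphic GD algebras, and that $u=0$ gives the trivial one. Then we identify a representative with the displayed products.

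First, the nonzero isotropic vectors form a single orbit. With the normalization $B(e,f)=1$, $B(h,h)=2$, we have $B(u,u)=0$ exactly when $u$ is nilpotent (for $u=\alpha e+\beta h+\gamma f$, $B(u,u)=2\alpha\gamma+2\beta^2$, which is a nonzero multiple of $\det$). Nonzero nilpotent elements of $\mathfrak{sl}_2$ form one orbit under $\operatorname{Aut}(\mathfrak g)=PSL_2$ acting by conjugation. Scalars also act: the automorphism $e\mapsto te$, $f\mapsto t^{-1}f$, $h\mapsto h$ is $\operatorname{Ad}$ of a diagonal matrix, so it already sits inside that orbit. Every automorphism $\sigma$ preserves $B$, hence $\sigma T_u\sigma^{-1}=T_{\sigma(u)}$. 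It follows that $\sigma$ is a GD isomorphism from $(\mathfrak g,\circ_u,[\cdot,\cdot])$ to $(\mathfrak g,\circ_{\sigma(u)},[\cdot,\cdot])$, as noted in the remark after Corollary~\ref{cor:rank-one-B-form}. So all nonzero $u\in\mathcal N$ give mutually isomorphic GD algebras.

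Second, the zero product and a nonzero product are not isomorphic, since a GD isomorphism preserves whether $\circ$ vanishes. Hence there are exactly two classes.

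Third, compute a representative: choose a convenient nilpotent $u$ and evaluate $x\circ_u y=B([x,y],u)u-B(x,u)[u,y]$ on basis pairs. With $u=e$ we get $T_e(x)=B(x,e)e$, which is nonzero only on $f$, where $T_e(f)=e$. This gives:
\begin{itemize}
\item $f\circ f=B([f,f],e)e-B(f,e)[e,f]=-h$;
\item $h\circ f=B([h,f],e)e-0=-2B(f,e)e=-2e$;
\item $f\circ h=B([f,h],e)e-[e,h]=2e+2e=4e$;
\item $f\circ e=B(-h,e)e-[e,e]=0$;
\item $e\circ f=B(h,e)e=0$;
\item all remaining products vanish because $B(e,\cdot)$ and $B(h,\cdot)$ kill the relevant brackets.
\end{itemize}
These are exactly the displayed products. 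The main obstacle is only bookkeeping of signs in the formula, and the single-orbit step, which follows from Jordan form.
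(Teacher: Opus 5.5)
Your proposal is correct and follows the paper's proof. You invoke Theorem~\ref{thm:sl2-classification}, note that the nonzero isotropic vectors are the nonzero nilpotents, which form a single $\operatorname{Aut}(\mathfrak{sl}_2(\mathbb C))$-orbit with $\sigma T_u\sigma^{-1}=T_{\sigma(u)}$ making $\sigma$ a GD isomorphism, and compute the representative at $u=e$. You also make explicit a few points the paper leaves implicit: that $B(u,u)$ is a nonzero multiple of $\det u$, that automorphisms preserve $B$, and that the zero and nonzero products are non-isomorphic. Your values $h\circ f=-2e$, $f\circ h=4e$, $f\circ f=-h$ check out.
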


\begin{proof}
By Theorem~\ref{thm:sl2-classification}, the GD products on the fixed Lie algebra \(\mathfrak{sl}_2(\mathbb C)\) are parametrized by \(\mathcal N/\{\pm1\}\), where
\(\mathcal N=\{u\in\mathfrak{sl}_2(\mathbb C)\mid B(u,u)=0\}\). The zero product
corresponds to \(u=0\). If \(u\neq0\), then \(u\) is nilpotent, and \(\operatorname{Aut}(\mathfrak{sl}_2(\mathbb C))\) acts transitively on the nonzero nilpotent cone. Hence all nonzero products \(\circ_u\) are isomorphic. Taking \(u=e\), we have
$
        T_e(e)=0,T_e(h)=0$ and $T_e(f)=e.
$
Using \(x\circ_e y=T_e([x,y])-[T_e(x),y]\), we obtain
\[
        h\circ_e f=T_e(-2f)=-2e,\quad
        f\circ_e h=T_e(2f)-[e,h]=2e+2e=4e,\quad
        f\circ_e f=-[e,f]=-h.
\]
This gives the stated representative and proves the corollary.
\end{proof}

\begin{rem}
It is worth comparing this result with the Poisson and transposed Poisson cases. Poisson and transposed Poisson structures on finite-dimensional simple Lie algebras are trivial
\cite{FernandezOuaridi2024,BenayadiBoucetta2014}. By contrast, the preceding classification shows that \(\mathfrak{sl}_2(\mathbb C)\) admits nontrivial GD algebra structures. Thus, on the simple Lie algebra, GD algebra structures are less rigid than both Poisson and transposed Poisson structures.
\end{rem}

\section{The Classification of low-dimensional GD Algebras}
\label{sec:classification-3d}

In this section, we give a complete classification of 2,3-dimensional GD algebras over \(\mathbb C\).

\begin{defn}
Let \((L_1,\circ_1,[\cdot,\cdot]_1)\) and \((L_2,\circ_2,[\cdot,\cdot]_2)\) be two GD algebras. They are said to be isomorphic if there exists a linear isomorphism \(\varphi:L_1\to L_2\) such that
$
        \varphi([x,y]_1)=[\varphi(x),\varphi(y)]_2
$
and
$
        \varphi(x\circ_1 y)=\varphi(x)\circ_2\varphi(y)
$
for all \(x,y\in L_1\).
\end{defn}

\begin{defn}
Let \((L,[\cdot,\cdot])\) be a Lie algebra. Denote by \(\mathcal Z_{\mathrm{GD}}(L)\) the set of all bilinear maps \(\theta:L\times L\to L\) satisfying, for all \(x,y,z\in L\),
\[
        \theta(\theta(x,y),z)-\theta(x,\theta(y,z))
        =
        \theta(\theta(y,x),z)-\theta(y,\theta(x,z)),
\]
\[
        \theta(\theta(x,y),z)=\theta(\theta(x,z),y),
\]
and
\[
        [x,\theta(y,z)]-[z,\theta(y,x)]
        +\theta([y,x],z)-\theta([y,z],x)-\theta(y,[x,z])=0.
\]
\end{defn}

\begin{prop}
\label{prop:fixed-Lie-GD-products}
Let $(L, [\cdot, \cdot])$ be a Lie algebra. For any bilinear map $\theta: L \times L \to L$, define the product $x \circ_\theta y = \theta(x, y)$ for all $x, y \in L$. Then $(L, \circ_\theta, [\cdot, \cdot])$ is a GD algebra if and only if $\theta \in \mathcal{Z}_{\mathrm{GD}}(L)$.
\end{prop}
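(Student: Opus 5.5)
This is essentially a matter of unwinding definitions, and the plan is to match the three defining conditions of \(\mathcal Z_{\mathrm{GD}}(L)\) one by one with the axioms of a GD algebra. By definition, \((L,\circ_\theta,[\cdot,\cdot])\) is a GD algebra precisely when three things hold: \((L,[\cdot,\cdot])\) is a Lie algebra, \((L,\circ_\theta)\) is a Novikov algebra, and the compatibility identity \eqref{eq:GD_compatibility} holds. The first holds by hypothesis, so only the remaining two need to be examined.

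First, I will recall that a Novikov algebra is a left-symmetric algebra satisfying right-commutativity. Substituting \(x\circ_\theta y=\theta(x,y)\), left-symmetry \((x\circ y)\circ z-x\circ(y\circ z)=(y\circ x)\circ z-y\circ(x\circ z)\) becomes verbatim the first defining identity of \(\mathcal Z_{\mathrm{GD}}(L)\). Likewise, right-commutativity \((x\circ y)\circ z=(x\circ z)\circ y\) becomes verbatim the second.

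Next, I will substitute \(\circ_\theta\) into \eqref{eq:GD_compatibility}. The term \([x,y\circ z]\) becomes \([x,\theta(y,z)]\), \([y,x]\circ z\) becomes \(\theta([y,x],z)\), and the remaining terms transform in the same way. The result is exactly the third defining identity, for all \(x,y,z\in L\).

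Since each condition is an equivalence term by term, both directions follow simultaneously. There is no real obstacle here. The only point requiring care is to confirm that the Novikov convention used in the paper (left-symmetric plus right-commutative, consistent with the operator form \eqref{eq:Rcomm}--\eqref{eq:Lleft} in Proposition~\ref{prop:cocycle}) matches the ordering of the arguments in the definition of \(\mathcal Z_{\mathrm{GD}}(L)\).
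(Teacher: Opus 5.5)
Your proposal is correct and is the intended argument; the paper omits the proof as straightforward. The three identities defining $\mathcal Z_{\mathrm{GD}}(L)$ are exactly the left-symmetric identity, right-commutativity, and \eqref{eq:GD_compatibility} rewritten with $x\circ_\theta y=\theta(x,y)$, while the Lie axioms hold by hypothesis.
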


\begin{proof}
 The proof is straightforward and therefore omitted.
\end{proof}

Consequently, the classification of GD algebra structures on the fixed Lie algebra \((L,[\cdot,\cdot])\) is equivalent to the classification of \(\operatorname{Aut}(L,[\cdot,\cdot])\)-orbits in \(\mathcal Z_{\mathrm{GD}}(L)\).

    \begin{prop}
\label{prop:isomorphism-fixed-Lie}
Let \((L,[\cdot,\cdot])\) be a Lie algebra, and let \(\theta_1,\theta_2\in\mathcal Z_{\mathrm{GD}}(L)\). Then the GD algebras
$
        (L,\circ_{\theta_1},[\cdot,\cdot])
       \text{ and } 
        (L,\circ_{\theta_2},[\cdot,\cdot])
$
are isomorphic if and only if there exists \(\varphi\in\operatorname{Aut}(L [\cdot,\cdot])\) such that
$
        \varphi(\theta_1(x,y))
        =
        \theta_2(\varphi(x),\varphi(y))$ for all $
        x,y \in L.
$
\end{prop}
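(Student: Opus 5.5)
The plan is to unwind the definition of GD isomorphism from the preceding definition, applied to the two structures that share the underlying space \(L\) and the bracket \([\cdot,\cdot]\), and to prove each implication separately.

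For necessity, suppose \(\varphi:(L,\circ_{\theta_1},[\cdot,\cdot])\to(L,\circ_{\theta_2},[\cdot,\cdot])\) is a GD isomorphism. By definition, \(\varphi\) is a linear bijection of \(L\) onto itself satisfying \(\varphi([x,y])=[\varphi(x),\varphi(y)]\). Hence \(\varphi\in\operatorname{Aut}(L,[\cdot,\cdot])\). The second condition in the definition, \(\varphi(x\circ_{\theta_1}y)=\varphi(x)\circ_{\theta_2}\varphi(y)\), reads \(\varphi(\theta_1(x,y))=\theta_2(\varphi(x),\varphi(y))\) once we recall \(x\circ_\theta y=\theta(x,y)\) from Proposition~\ref{prop:fixed-Lie-GD-products}.

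For sufficiency, take \(\varphi\in\operatorname{Aut}(L,[\cdot,\cdot])\) with \(\varphi(\theta_1(x,y))=\theta_2(\varphi(x),\varphi(y))\). Then \(\varphi\) is a linear isomorphism that preserves the bracket, because it is a Lie automorphism. It also intertwines the products, because the displayed relation is exactly \(\varphi(x\circ_{\theta_1}y)=\varphi(x)\circ_{\theta_2}\varphi(y)\). Both triples are GD algebras by Proposition~\ref{prop:fixed-Lie-GD-products}, since \(\theta_1,\theta_2\in\mathcal Z_{\mathrm{GD}}(L)\), so \(\varphi\) is a GD isomorphism.

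There is no genuine obstacle. The one point worth stating is that \emph{any} isomorphism between the two GD structures on \(L\) must be bracket-preserving from \(L\) to itself, and therefore lies in \(\operatorname{Aut}(L,[\cdot,\cdot])\). This is what justifies describing the isomorphism classes as \(\operatorname{Aut}(L)\)-orbits, under the action \((\varphi\cdot\theta)(x,y)=\varphi(\theta(\varphi^{-1}x,\varphi^{-1}y))\), which preserves \(\mathcal Z_{\mathrm{GD}}(L)\).
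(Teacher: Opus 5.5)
Your proof is correct and takes the same approach the paper intends: the paper omits the argument as straightforward, and it is exactly your unwinding of the definition, where a bracket-preserving linear bijection of \(L\) onto itself is by definition an element of \(\operatorname{Aut}(L,[\cdot,\cdot])\), and intertwining \(\circ_{\theta_1}\) with \(\circ_{\theta_2}\) is literally the displayed identity. Your closing remark on the orbit description is also accurate, since the hypothesis says precisely that \(\theta_2=\varphi\cdot\theta_1\) under the action you write down.
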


\begin{proof}
The proof is straightforward and therefore omitted.
\end{proof}

\begin{prop}
\label{prop:transport-GD-structure}
Let \((L_1,[\cdot,\cdot]_1)\) and \((L_2,[\cdot,\cdot]_2)\) be isomorphic Lie algebras. If \(\theta_1\in\mathcal Z_{\mathrm{GD}}(L_1)\), then there exists \(\theta_2\in\mathcal Z_{\mathrm{GD}}(L_2)\) such that the GD algebras
$
        (L_1,\circ_{\theta_1},[\cdot,\cdot]_1)
       \text{ and }
        (L_2,\circ_{\theta_2},[\cdot,\cdot]_2)
$
are isomorphic.
\end{prop}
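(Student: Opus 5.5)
The plan is to transport the structure along a Lie algebra isomorphism. Since \((L_1,[\cdot,\cdot]_1)\) and \((L_2,[\cdot,\cdot]_2)\) are isomorphic, fix a Lie algebra isomorphism \(\psi:L_1\to L_2\). Then define \(\theta_2:L_2\times L_2\to L_2\) by
\[
        \theta_2(u,v):=\psi\bigl(\theta_1(\psi^{-1}(u),\psi^{-1}(v))\bigr),
        \qquad u,v\in L_2 .
\]
This is bilinear because \(\psi\) and \(\psi^{-1}\) are linear and \(\theta_1\) is bilinear. By construction, \(\psi(\theta_1(x,y))=\theta_2(\psi(x),\psi(y))\) for all \(x,y\in L_1\).

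Next I will check that \(\theta_2\in\mathcal Z_{\mathrm{GD}}(L_2)\). The three defining identities of \(\mathcal Z_{\mathrm{GD}}\) are multilinear expressions built only from \(\theta\) and the bracket. Take \(u,v,w\in L_2\) and write \(u=\psi(x)\), \(v=\psi(y)\), \(w=\psi(z)\). Each term of each identity for \((\theta_2,[\cdot,\cdot]_2)\) at \((u,v,w)\) is \(\psi\) applied to the corresponding term for \((\theta_1,[\cdot,\cdot]_1)\) at \((x,y,z)\). For example,
\[
        [u,\theta_2(v,w)]_2=[\psi(x),\psi(\theta_1(y,z))]_2=\psi([x,\theta_1(y,z)]_1),
\]
using that \(\psi\) preserves brackets, and \(\theta_2(\theta_2(u,v),w)=\psi(\theta_1(\theta_1(x,y),z))\). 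Hence each identity for \(\theta_2\) is the image under \(\psi\) of the same identity for \(\theta_1\), and the latter vanishes because \(\theta_1\in\mathcal Z_{\mathrm{GD}}(L_1)\). So \(\theta_2\in\mathcal Z_{\mathrm{GD}}(L_2)\). By Proposition~\ref{prop:fixed-Lie-GD-products}, \((L_2,\circ_{\theta_2},[\cdot,\cdot]_2)\) is then a GD algebra.

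Finally, \(\psi\) is a linear isomorphism that preserves the brackets and satisfies \(\psi(x\circ_{\theta_1}y)=\psi(x)\circ_{\theta_2}\psi(y)\). It is therefore a GD algebra isomorphism in the sense of the definition above. No step is a real obstacle; the only point needing care is the term-by-term transport of the compatibility identity, which uses the bracket-preservation of \(\psi\) in the three mixed terms.
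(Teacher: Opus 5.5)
Your proposal is correct and follows essentially the same route as the paper: fix a Lie algebra isomorphism, define \(\theta_2\) by conjugating \(\theta_1\) with it, transport the three defining identities term by term using bracket preservation, and observe that the same map is then a GD algebra isomorphism.
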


\begin{proof}
Let \(f:L_1\to L_2\) be a Lie algebra isomorphism. Define \(\theta_2:L_2\times L_2\to L_2\) by
$
        \theta_2(x,y)
        :=
        f\bigl(\theta_1(f^{-1}(x),f^{-1}(y))\bigr),
        \quad \forall x,y \in L_2. 
$
Since \(f\) preserves the Lie brackets, the Novikov identities and the GD compatibility identity for \(\theta_1\) are transported to the corresponding identities for \(\theta_2\). Hence \(\theta_2\in\mathcal Z_{\mathrm{GD}}(L_2)\).
Moreover, for all \(x,y\in L_1\),
$
        f(\theta_1(x,y))
        =
        \theta_2(f(x),f(y)).
$
Thus \(f\) is an isomorphism from \((L_1,\circ_{\theta_1},[\cdot,\cdot]_1)\) to
\((L_2,\circ_{\theta_2},[\cdot,\cdot]_2)\).
\end{proof}

\subsection{The classification of 2-dimensional GD algebras}

Based on the classification method outlined above, we study GD algebra structures on given Lie algebras. Since isomorphic Lie algebras induce isomorphic GD algebra structures, it suffices to consider the classification of complex 2-dimensional Lie algebras up to isomorphism. This classification is given as follows:

\begin{thm}{\rm\cite{Snobl}}
\label{thm:2dim-Lie-classification}
Let \((L,[\cdot,\cdot])\) be a \(2\)-dimensional complex Lie algebra. Then \(L\) is isomorphic to one of the following Lie algebras:
\begin{align*}
        \mathbb C^2:
        &\quad [e_1,e_2]=0, \\
        \mathfrak r_2:
        &\quad [e_1,e_2]=e_1.
\end{align*}

\end{thm}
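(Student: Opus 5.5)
This is the classical classification of two-dimensional complex Lie algebras, which the paper cites from \cite{Snobl}. I would prove it by splitting on the dimension of the derived algebra $[L,L]$.

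Since $L=\operatorname{span}\{x,y\}$ and the bracket is bilinear and skew-symmetric, we have $[L,L]=\mathbb C[x,y]$. Hence $\dim[L,L]\in\{0,1\}$.

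If $\dim[L,L]=0$, the bracket is identically zero. Then any basis $e_1,e_2$ exhibits $L\cong\mathbb C^2$.

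If $\dim[L,L]=1$, choose a spanning vector $e_1$ of $[L,L]$ and complete it to a basis $e_1,y$. Since $[e_1,e_1]=0$, we can write $[y,e_1]=\lambda e_1$ for some $\lambda\in\mathbb C$. Moreover $\lambda\neq0$, because otherwise $[L,L]=\mathbb C[y,e_1]=0$.

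Now put $e_2:=-\lambda^{-1}y$. This gives
\[
[e_1,e_2]=-\lambda^{-1}[e_1,y]=\lambda^{-1}[y,e_1]=e_1,
\]
which is the defining relation of $\mathfrak r_2$. The Jacobi identity holds automatically in dimension $2$, since every triple of basis vectors repeats an element. So both algebras in the list really are Lie algebras.

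Finally, $\mathbb C^2\not\cong\mathfrak r_2$, because their derived algebras have different dimensions. No step is a genuine obstacle; the only point needing care is the rescaling of $y$ to normalize the structure constant to $1$.
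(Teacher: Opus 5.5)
Your proof is correct: the case split on $\dim[L,L]\in\{0,1\}$, the rescaling $e_2=-\lambda^{-1}y$, and the remark that Jacobi holds automatically in dimension $2$ are all sound. The paper gives no proof of its own and simply cites \cite{Snobl}, where the classification rests on exactly this standard derived-algebra argument.
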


\begin{thm}\label{thm:2dim-GD-structure}
Let \((L,\circ,[\cdot,\cdot])\) be a complex 2-dimensional GD algebra. Then \((L,\circ,[\cdot,\cdot])\) is isomorphic to one of the following algebras:
\begin{itemize}
    \item \textbf{T1(0):}
    $ \begin{cases} [e_1, e_2] = 0 \end{cases} $

    \item \textbf{T1(1):}
    $ \begin{cases} [e_1, e_2] = e_1 \end{cases} $

    \item \textbf{T2(0):}
    $ \begin{cases} e_2 \circ e_2 = e_1 \end{cases} $

    \item \textbf{T2(1):}
    $ \begin{cases} [e_1, e_2] = e_1 \\ e_2 \circ e_2 = e_1 \end{cases} $

   \item \textbf{T3$_r^{r\in\mathbb{C}}$:}
    $ \begin{cases} [e_1, e_2] = r e_1 \\ e_2 \circ e_1 = -e_1 \end{cases} $

    \item \textbf{N1:}
    $ \begin{cases} e_1 \circ e_1 = e_1, \quad e_2 \circ e_2 = e_2 \end{cases} $

    \item \textbf{N2:}
    $ \begin{cases} e_2 \circ e_2 = e_2 \end{cases} $

    \item \textbf{N3$_r^{r\in\mathbb{C}}$:}
    $ \begin{cases} [e_1, e_2] = r e_1 \\ e_1 \circ e_2 = e_1, \quad e_2 \circ e_1 = e_1, \quad e_2 \circ e_2 = e_2 \end{cases} $

    \item \textbf{N4$_r^{r\in\mathbb{C}}$:}
    $ \begin{cases} [e_1, e_2] = r e_1 \\ e_1 \circ e_2 = e_1, \quad e_2 \circ e_2 = e_2 \end{cases} $

    \item \textbf{N5$_r^{r\in\mathbb{C}}$:}
    $ \begin{cases} [e_1, e_2] = r e_1 \\ e_1 \circ e_2 = e_1, \quad e_2 \circ e_2 = e_1+e_2 \end{cases} $

    \item \textbf{N6$_{\ell,r}^{\ell \in \mathbb{C} \setminus \{0,1\}, r \in \mathbb{C}}$}:
    $ \begin{cases} [e_1, e_2] = r e_1 \\ e_1 \circ e_2 = e_1, \quad e_2 \circ e_1 = \ell e_1, \quad e_2 \circ e_2 = e_2 \end{cases} $
\end{itemize}

Moreover, the parameter equivalences are as follows:
\begin{enumerate}
\item \(\mathbf{T3}_{r_1} \cong \mathbf{T3}_{r_2}\) if and only if \(r_1 = r_2\).
\item \(\mathbf{N3}_{r_1} \cong \mathbf{N3}_{r_2}\) if and only if \(r_1 = r_2\).
\item \(\mathbf{N4}_{r_1} \cong \mathbf{N4}_{r_2}\) if and only if \(r_1 = r_2\).
\item \(\mathbf{N5}_{r_1} \cong \mathbf{N5}_{r_2}\) if and only if \(r_1 = r_2\).
\item \(\mathbf{N6}_{\ell_1,r_1} \cong \mathbf{N6}_{\ell_2,r_2}\) if and only if \(\ell_1 = \ell_2\) and \(r_1 = r_2\).
\end{enumerate}
\end{thm}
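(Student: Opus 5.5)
The plan is to classify by fixing the Novikov structure first and then solving for compatible brackets, instead of fixing the Lie algebra as Theorem~\ref{thm:2dim-Lie-classification} suggests. This fits the shape of the list: every entry is a Novikov algebra (types T1, T2, T3, N1--N6 in the Bai--Meng classification of complex 2-dimensional Novikov algebras, which I take as known) together with a bracket of the form \([e_1,e_2]=re_1\). Since \(\dim L=2\), a Lie bracket is determined by one vector \(w=[e_1,e_2]=\lambda e_1+\mu e_2\). The Jacobi identity is automatic in dimension 2, and the compatibility identity \eqref{eq:GD_compatibility} is linear in \((\lambda,\mu)\).

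\textbf{Step 1: compatible brackets.} For each Novikov algebra \((L,\circ)\) in the list, substitute \(x,y,z\in\{e_1,e_2\}\) into \eqref{eq:GD_compatibility}. Skew-symmetry in \(x,z\) means only the triples \((e_1,e_j,e_2)\), \(j=1,2\), need checking. This gives a linear system in \((\lambda,\mu)\). I expect the following outcomes:
\begin{itemize}
\item For the idempotent-rich algebras N1 (and, as the list shows, N2) only \(w=0\) survives.
\item For T1, T2 (\(e_2\circ e_2=e_1\)) and N3--N6, the system forces \(\mu=0\), leaving \([e_1,e_2]=\lambda e_1\).
\item For T3 (\(e_2\circ e_1=-e_1\)) the same holds with \(\lambda\) free.
\end{itemize}
For T1, the zero product, every bracket is compatible. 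Here I would appeal directly to Theorem~\ref{thm:2dim-Lie-classification}, giving T1(0) and T1(1).

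\textbf{Step 2: normal forms.} Reduce \(w\) under \(\operatorname{Aut}(L,\circ)\), using Proposition~\ref{prop:isomorphism-fixed-Lie} with the roles of the two operations exchanged.
\begin{itemize}
\item For T2, the automorphisms \(e_1\mapsto a^2e_1\), \(e_2\mapsto ae_2+be_1\) send \([e_1,e_2]=\lambda e_1\) to \(a\lambda e_1\) after renormalizing. So \(\lambda\neq0\) can be scaled to \(1\), giving T2(0) and T2(1).
\item For T3 and N3--N6, the Novikov structure rigidifies \(e_2\). It is pinned down, for example, as the element acting by a prescribed right or left multiplication, or as an idempotent modulo \(\Bbbk e_1\). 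The automorphism group is then at most \(e_1\mapsto ae_1\), \(e_2\mapsto e_2+be_1\) with \(b\) constrained. Under such maps \([e_1,e_2]=re_1\) is preserved exactly, so \(r\) is an isomorphism invariant. This yields the one-parameter families and the stated parameter equivalences.
\item For N6, \(\ell\) is already a Novikov invariant (the eigenvalue ratio of \(L_{e_2}\) and \(R_{e_2}\) on \(\Bbbk e_1\)).
\end{itemize}

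\textbf{Step 3: no overlaps.} Non-isomorphism across different types follows from non-isomorphism of the underlying Novikov algebras, together with the invariance of \(r\) within each type. Completeness follows because every GD algebra has an underlying Novikov algebra from the list.

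\textbf{Main obstacle.} I expect the hardest part to be the automorphism computation for each Novikov type, and in particular verifying that no automorphism rescales \(r\) in the families T3, N3--N6. The point is that \(e_2\) is normalized by an idempotent or eigenvalue condition that fixes its coefficient to \(1\). I would check this first for N5, where \(e_2\circ e_2=e_1+e_2\) makes the constraint on \(b\) least obvious. As a cross-check, I would compare against the \(\mathfrak r_2\) normal form found in the proof of Proposition~\ref{thm:dim3}, where \(\gamma=0\) is forced.
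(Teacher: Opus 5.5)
Your proposal is correct, but it runs the classification in the opposite direction from the paper.

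\textbf{The paper's route.} The paper fixes the Lie algebra. The abelian branch is imported verbatim from the Novikov classification. For $\mathfrak r_2$, the paper solves the compatibility and Novikov identities for a general product and obtains $e_1\circ e_2=ce_1$, $e_2\circ e_1=fe_1$, $e_2\circ e_2=he_1+ce_2$. It then uses $\operatorname{Aut}(\mathfrak r_2)$, under which $c,f$ are invariant and $h\mapsto ph-qf$, together with rescalings of $e_2$. This lands on T1(1), T2(1), T3$_r$, N3$_r$--N6$_{\ell,r}$ with $r=1/c$ or $r=-1/f$.

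\textbf{Your route.} You instead fix the Novikov normal form and solve for $w=[e_1,e_2]$. Jacobi is vacuous and compatibility is linear in $w$. In fact, in dimension $2$ the identity is equivalent to $(L_x+R_x)w=\operatorname{tr}(L_x)\,w$ for all $x$, which gives your Step 1 outcomes at once. For N6 at $\ell=-1$ the $e_1$-equation is empty, and $\mu=0$ comes from the $e_2$-equation $(1-\ell)\mu=0$.

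\textbf{Automorphism groups.} The automorphism groups you need check out:
\begin{itemize}
\item for T3, N3 and N6 they are $e_1\mapsto ae_1$, $e_2\mapsto e_2$;
\item for N4 they are $e_1\mapsto ae_1$, $e_2\mapsto e_2+be_1$;
\item for N5 they are $e_1\mapsto e_1$, $e_2\mapsto e_2+be_1$, so there $a$, not $b$, is the constrained parameter.
\end{itemize}
All of these preserve $[e_1,e_2]=re_1$ exactly, so $r$ is invariant as you claim.

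\textbf{Comparison.} Your route makes the parameter equivalences essentially automatic. It also explains why every entry has the shape ``Novikov normal form plus $[e_1,e_2]=re_1$''. Cross-type distinctness is inherited from the Novikov classification. The paper's route needs only one genuinely new computation, the $\mathfrak r_2$ branch, but must then sort the $(c,f,h)$ family into the named types by ad hoc basis changes.

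\textbf{Two small slips.} First, T1 should not appear in your ``$\mu=0$ is forced'' bullet; you correct this immediately afterwards. Second, for N1 and N2 the conclusion $w=0$ should come from the computation rather than ``as the list shows''. For example, for N2 the $e_2$-equation reads $2\mu e_2=\lambda e_1+\mu e_2$.
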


\begin{proof}
By Proposition~\ref{prop:fixed-Lie-GD-products}, it suffices to classify the
\(\operatorname{Aut}(L,[\cdot,\cdot])\)-orbits in \(\mathcal Z_{\mathrm{GD}}(L)\) for each 2-dimensional complex Lie algebra \(L\). There are two cases.

For \(L=\mathbb C^2\), the Lie bracket is zero, and hence the GD compatibility identity is identically satisfied. Therefore this branch is exactly the classification of two-dimensional complex Novikov algebras. This gives
\[
        T1(0),\ T2(0),\ T3_0,\ N1,\ N2,\ N3_0,\ N4_0,\ N5_0,\ N6_{\ell,0}.
\]

It remains to consider \(L=\mathfrak r_2\). Choose a basis \(\{e_1,e_2\}\) such that \([e_1,e_2]=e_1\), and set
\[
\begin{aligned}
        e_1\circ e_1&=a e_1+b e_2, &
        e_1\circ e_2&=c e_1+d e_2,&
        e_2\circ e_1&=f e_1+g e_2, &
        e_2\circ e_2&=h e_1+k e_2 .
\end{aligned}
\]
Substitution into the GD compatibility identity gives
\[
        b=0,\qquad d=a,\qquad g=-a,\qquad k=c.
\]
The Novikov identities then force \(a=0\), for instance, right-commutativity for \((e_1,e_1,e_2)\) gives \(2a^2=0\). Thus every compatible product in this branch has the form
\begin{equation}
\label{eq:2d-nonabelian-compatible-product}
        e_1\circ e_2=c e_1,\qquad
        e_2\circ e_1=f e_1,\qquad
        e_2\circ e_2=h e_1+c e_2 .
\end{equation}
Conversely, products of the form
\eqref{eq:2d-nonabelian-compatible-product} satisfy the Novikov identities and the GD compatibility identity.

We now reduce the parameters \(c,f,h\). The automorphisms of the Lie algebra \(L=\mathfrak r_2\) are 
$
        \varphi(e_1)=p e_1 $ and $
        \varphi(e_2)=q e_1+e_2,
        $ where $ p\in\mathbb C^*,\ q\in\mathbb C.
$
If the product with parameters \((c,f,h)\) is transformed into one with parameters \((c',f',h')\), the equations
$
        \varphi(e_i\circ e_j)=\varphi(e_i)\circ'\varphi(e_j)
       $ for $ i,j=1,2
$
give
\[
        c'=c,\qquad f'=f,\qquad h'=ph-qf.
\]
Thus \(c\) and \(f\) are invariant in the fixed non-abelian Lie branch, while \(h\) can be normalized according to whether \(f\) is zero.

If \(f\neq0\), choose \(q=\frac{ph}{f}\), so that \(h'=0\). If \(c=0\), then after the change \(E_1=e_1,\ E_2=-f^{-1}e_2\), we obtain
$
        E_2\circ E_1=-E_1$ and $ [E_1,E_2]=-\frac{1}{f}E_1,
$
which corresponds to \(T3_r\) with \(r=-\frac{1}{f}\). If \(c\neq0\), after the change \(E_1=e_1,\ E_2=c^{-1}e_2\), we obtain
\[
        E_1\circ E_2=E_1,\qquad
        E_2\circ E_1=\frac{f}{c}E_1,\qquad
        E_2\circ E_2=E_2,\qquad
        [E_1,E_2]=\frac{1}{c} E_1.
\]
This gives \(N3_r\) when \(\frac{f}{c}\), and \(N6_{\ell,r}\) when \(\ell=f/c\neq1\), where \(r=\frac{1}{c}\).

Assume next that \(f=0\). Then \(h'=ph\). If \(h=0\), the only nonzero products are
$
        e_1\circ e_2=c e_1$ and $ e_2\circ e_2=c e_2.
$
For \(c=0\), this gives \(T1(1)\). For \(c\neq0\), the change \(E_1=e_1,\ E_2=c^{-1}e_2\) gives
\[
        E_1\circ E_2=E_1,\qquad
        E_2\circ E_2=E_2,\qquad
        [E_1,E_2]=\frac{1}{c}E_1,
\]
which corresponds to \(N4_r\) with \(r=\frac{1}{c}\).

Finally, suppose that \(f=0\) and \(h\neq0\). We normalize \(h=1\). If \(c=0\), then
$
        e_2\circ e_2=e_1$ and $ [e_1,e_2]=e_1,
$
which is \(T2(1)\). If \(c\neq0\), the change \(E_1=c^{-2}e_1,\ E_2=c^{-1}e_2\) gives
\[
        E_1\circ E_2=E_1,\qquad
        E_2\circ E_2=E_1+E_2,\qquad
        [E_1,E_2]=\frac{1}{c}E_1,
\]
which corresponds to \(N5_r\) with \(r=\frac{1}{c}\).

Combining the all algebras listed in the theorem. The same parameter transformation shows that no further identifications occur:
\[
        T3_{r_1}\cong T3_{r_2}
        \Longleftrightarrow r_1=r_2,
\]
\[
        N_i{}_{r_1}\cong N_i{}_{r_2}
        \Longleftrightarrow r_1=r_2,
        \qquad i=3,4,5,
\]
and
\[
        N6_{\ell_1,r_1}\cong N6_{\ell_2,r_2}
        \Longleftrightarrow
        \ell_1=\ell_2,\quad r_1=r_2.
\]
The abelian and non-abelian branches are not isomorphic because their underlying Lie algebras are not isomorphic. 
\end{proof}

Next, we consider the classification of 3-dimensional GD algebras over $\mathbb{C}$. For the 3-dimensional case, solving the defining polynomial identities for a GD product on a fixed Lie algebra usually gives several parameter families. To obtain a classification up to isomorphism, we first simplify the parameter set by isomorphism reduction, and then compare the remaining families by isomorphism testing. We use the
standard reduction and testing methods described in \cite{Zhang}, and therefore do not repeat the details here.

\subsection{The classification of 3-dimensional GD algebras}

Up to isomorphism, the 3-dimensional complex Lie algebras are classified as follows:

\begin{thm}{\rm\cite{Snobl}}
	Let $(L, [\,,\,])$ be a $3$-dimensional nontrivial Lie algebra over the complex field $\mathbb{C}$. Then, up to isomorphism, it is isomorphic to one of the following:
	\begin{align*}
		\mathfrak{h}:           &\quad [e_1, e_2] = e_3, \\
		\mathfrak{g}_1:         &\quad [e_1, e_3] = e_1,\ [e_2, e_3] = e_2, \\
		\mathfrak{g}_2^\alpha:  &\quad [e_1, e_3] = e_1 + e_2,\ [e_2, e_3] = \alpha e_2, \\
		\mathfrak{sl}_2(\mathbb{C}): &\quad [e_1, e_2] = e_3,\ [e_1, e_3] = -e_2,\ [e_2, e_3] = e_1.
	\end{align*}

	Two non-trivial Lie algebras $\mathfrak{g}_2^{\alpha_1}$ and $\mathfrak{g}_2^{\alpha_2}$ are isomorphic if and only if $\alpha_1 = \alpha_2^{-1}$.
		
	\end{thm}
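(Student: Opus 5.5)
This statement is the classical classification of three-dimensional complex Lie algebras, cited from \cite{Snobl}. I would reprove it by splitting on $\dim L'$, where $L'=[L,L]$ and $\dim L'\in\{1,2,3\}$ for a nontrivial algebra.

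\emph{Case $\dim L'=3$.} Then $L$ is perfect, hence not solvable. A three-dimensional Lie algebra with trivial radical is semisimple, and the only candidate is $\mathfrak{sl}_2(\mathbb C)$. To reach the stated basis I would begin with the standard triple $h,e,f$ and rescale to $e_1=\tfrac{i}{2}(e+f)$, $e_2=\tfrac12(e-f)$, $e_3=\tfrac{i}{2}h$, adjusting the constants as needed. This produces the $\mathfrak{so}_3$-type relations $[e_1,e_2]=e_3$, $[e_1,e_3]=-e_2$, $[e_2,e_3]=e_1$.

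\emph{Case $\dim L'=1$.} Write $L'=\mathbb C z$ and $[x,y]=\omega(x,y)z$.
\begin{itemize}
\item If $z$ is central, $\omega$ descends to a nonzero alternating form on $L/\mathbb Cz$. Choosing $e_1,e_2$ with $\omega(e_1,e_2)=1$ and setting $e_3=z$ gives $\mathfrak h$.
\item If $z$ is not central, pick $x$ with $[x,z]=z$. Then $\mathrm{ad}_x$ has eigenvalues $0,0,1$, and $\ker\omega(\cdot,z)$ is two-dimensional. Correcting elements of this kernel by multiples of $x$ and $z$ yields a central element $c$, so $L\cong\mathfrak r_2\oplus\mathbb C$.
\end{itemize}
In the second subcase I would exhibit $\mathfrak r_2\oplus\mathbb C$ inside the family $\mathfrak g_2^\alpha$, after the change of variables discussed in the last paragraph.

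\emph{Case $\dim L'=2$.} Since $L'\ne L$, the algebra is solvable, and $L'$ must be abelian: a non-abelian two-dimensional $L'\cong\mathfrak r_2$ has only inner derivations, which would force $[L,L]\subseteq[L',L']$, a contradiction as in the paper's proof of Proposition~\ref{thm:dim3}. Take $e_3\notin L'$. Then $L$ is determined by $M=-\mathrm{ad}_{e_3}|_{L'}$, which is invertible because $[e_3,L']=L'$, and $M$ is determined up to conjugation and nonzero scaling. Jordan form gives two possibilities:
\begin{itemize}
\item $M$ scalar, which gives $\mathfrak g_1$;
\item otherwise, either a Jordan block normalized to $\begin{pmatrix}1&0\\1&1\end{pmatrix}$, or $\operatorname{diag}(1,\alpha)$ with $\alpha\ne1$. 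Scaling so that the first eigenvalue is $1$ and using the basis $e_1$, $e_2=(M-1)e_1$ where appropriate, these land in $\mathfrak g_2^\alpha$, with $[e_1,e_3]=e_1+e_2$ and $[e_2,e_3]=\alpha e_2$.
\end{itemize}

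For the isomorphism statement, the invariant is the unordered eigenvalue ratio of $M$ up to scaling. Swapping the roles of the eigenvalues and rescaling sends $\alpha\mapsto\alpha^{-1}$. The main obstacle is the bookkeeping for $\mathfrak g_2^\alpha$: when $\alpha\ne1$, $M$ is diagonalizable, so the off-diagonal entry can be conjugated away and $\mathfrak g_2^\alpha$ has the same invariant as $\operatorname{diag}(1,\alpha)$. I would compute the characteristic polynomial of $\mathrm{ad}_{e_3}$ up to scaling, namely the ratio $\lambda_1/\lambda_2$ up to inversion, and check that it is a complete invariant. This also covers $\dim L'=1$ non-central via the degenerate value $\alpha=0$. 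It separates the $\mathfrak g_2^\alpha$ from one another, from $\mathfrak g_1$, and from $\mathfrak h$, which is nilpotent, and from $\mathfrak{sl}_2$, which is perfect. Here $\alpha=1$ is the Jordan block and $\alpha=0$ is $\mathfrak r_2\oplus\mathbb C$.
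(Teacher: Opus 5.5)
Your proof is correct, and it takes a genuinely different route from the paper. The paper does not prove this statement; it simply quotes it from \cite{Snobl}. You instead give the standard self-contained derivation, split by $\dim[L,L]$:
\begin{itemize}
\item In the perfect case you pass through the radical to $\mathfrak{sl}_2(\mathbb C)$. Write out the one line this needs: $L$ perfect implies $L$ not solvable, so $L/\operatorname{rad}L$ is a nonzero semisimple algebra of dimension at most $3$, hence $\operatorname{rad}L=0$.
\item For $\dim[L,L]=1$ you use the alternating form $\omega$ to obtain $\mathfrak h$ or $\mathfrak r_2\oplus\mathbb C=\mathfrak g_2^{0}$.
\item For $\dim[L,L]=2$ you show $L'$ is abelian because $\mathfrak r_2$ has only inner derivations. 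You then reduce to the conjugacy class of the invertible operator $M=-\operatorname{ad}_{e_3}|_{L'}$ up to nonzero scaling.
\end{itemize}

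The citation buys brevity; your route buys a check of the statement itself. Your invariant yields $\mathfrak g_2^{\alpha_1}\cong\mathfrak g_2^{\alpha_2}$ if and only if $\alpha_1\in\{\alpha_2,\alpha_2^{-1}\}$. The value $\alpha=0$ (where $\dim L'=1$) and the value $\alpha=1$ (the Jordan block) give algebras isomorphic only to themselves. The printed clause ``if and only if $\alpha_1=\alpha_2^{-1}$'' is literally false: take $\alpha_1=\alpha_2=2$. You should say explicitly that the corrected form is what you prove.

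Two small repairs are needed.
\begin{itemize}
\item With $e_3=\tfrac{i}{2}h$, all three brackets come out with the wrong sign, e.g.\ $[e_1,e_2]=-e_3$. Keeping your $e_1,e_2$ and taking $e_3=-\tfrac{i}{2}h$ gives exactly $[e_1,e_2]=e_3$, $[e_1,e_3]=-e_2$, $[e_2,e_3]=e_1$.
\item The characteristic polynomial of $\operatorname{ad}_{e_3}|_{L'}$ up to scaling is not by itself a complete invariant. Eigenvalue ratio $1$ occurs both for $\mathfrak g_1$ (scalar $M$) and for $\mathfrak g_2^{1}$ (non-scalar $M$ with a double eigenvalue). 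At that value you must use diagonalizability, i.e.\ the full conjugacy class of $M$ up to scaling that you already set up in the $\dim L'=2$ case.
\end{itemize}
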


In the following, we provide a complete classification of 3-dimensional GD algebras over the complex field $\mathbb{C}$.
Due to space limitations, we present a typical case to illustrate how the Novikov identities and the GD compatibility equations lead to the classification results.

\begin{thm}
	Let \((L,\circ,[\cdot,\cdot])\) be a complex 3-dimensional GD algebra with nonzero Lie bracket, then \((L,\circ,[\cdot,\cdot])\) is isomorphic to one of the following algebras:

	\begin{itemize}
		\item \textbf{G1$_{\alpha}$:}
		$
		\begin{cases}
			[e_1, e_2] = e_3 \\
			e_1 \circ e_2 = e_2 + \alpha e_3, \quad e_1 \circ e_3 = e_3
		\end{cases}
		$

		\item \textbf{G2$_{\alpha}$:}
		$
		\begin{cases}
			[e_1, e_2] = e_3 \\
			e_1 \circ e_1 = e_3, \quad e_1 \circ e_2 = e_1, \quad e_2 \circ e_1 = e_1 + \alpha e_3, \\
			e_2 \circ e_2 = e_2, \quad e_2 \circ e_3 = e_3, \quad e_3 \circ e_2 = e_3
		\end{cases}
		$

		\item \textbf{G3$_{\alpha,\beta}^{\alpha\neq 0}$:}
		$
		\begin{cases}
			[e_1, e_2] = e_3 \\
			e_1 \circ e_1 = e_1, \quad e_1 \circ e_2 = \alpha e_2+\beta e_3, \quad e_1 \circ e_3 = \alpha e_3, \quad e_2 \circ e_1 = e_2, \quad e_3 \circ e_1 = e_3
		\end{cases}
		$

		\item \textbf{G4$_{\alpha}$:}
		$
		\begin{cases}
			[e_1, e_2] = e_3 \\
			e_1 \circ e_1 = e_1 + e_2, \quad e_1 \circ e_2 = \alpha e_3, \quad e_2 \circ e_1 = e_2, \quad e_3 \circ e_1 = e_3
		\end{cases}
		$

		\item \textbf{G5:}
		$
		\begin{cases}
			[e_1, e_2] = e_3 \\
			e_1 \circ e_1 = e_1 + e_3, \quad e_2 \circ e_1 = e_2, \quad e_3 \circ e_1 = e_3
		\end{cases}
		$

		\item \textbf{G6$_{\alpha}$:}
		$
		\begin{cases}
			[e_1, e_2] = e_3 \\
			e_1 \circ e_2 = -\alpha e_3, \quad e_2 \circ e_1 = \alpha e_3
		\end{cases}
		$

		\item \textbf{G7:}
		$
		\begin{cases}
			[e_1, e_2] = e_3 \\
			e_1 \circ e_1 = e_1, \quad e_1 \circ e_2 = -e_2, \quad e_2 \circ e_1 = e_2
		\end{cases}
		$

		\item \textbf{G8$_{\alpha,\beta}$:}
		$
		\begin{cases}
			[e_1, e_2] = e_3 \\
			e_1 \circ e_2 = \alpha e_3, \quad e_2 \circ e_1 = \beta e_3, \quad e_2 \circ e_2 = e_1
		\end{cases}
		$

		\item \textbf{G9:}
		$
		\begin{cases}
			[e_1, e_2] = e_3 \\
			e_2 \circ e_2 = e_2 + e_3, \quad e_3 \circ e_2 = e_3
		\end{cases}
		$

		\item \textbf{G10:}
		$
		\begin{cases}
			[e_1, e_2] = e_3 \\
			e_2 \circ e_2 = e_2, \quad e_3 \circ e_2 = e_3
		\end{cases}
		$

		\item \textbf{G11$_{\alpha,\beta}$:}
		$
		\begin{cases}
			[e_1, e_2] = e_3 \\
			e_1 \circ e_1 = \alpha e_3, \quad e_1 \circ e_2 = \beta e_3, \quad e_2 \circ e_2 = e_3
		\end{cases}
		$

		\item \textbf{G12$_{\alpha}$:}
		$
		\begin{cases}
			[e_1, e_2] = e_3 \\
			e_1 \circ e_1 = e_1, \quad e_1 \circ e_2 = \alpha e_3, \quad e_2 \circ e_1 = e_2, \quad e_3 \circ e_1 = e_3
		\end{cases}
		$

		\item \textbf{G13$_{\alpha,\beta,\gamma}^{\beta \neq0}$:}
		$
		\begin{cases}
			[e_1, e_3] = e_1, \quad [e_2, e_3] = e_2 \\
			e_1 \circ e_3 = \alpha e_1, \quad e_2 \circ e_3 = \alpha e_2, \quad e_3 \circ e_1 = \beta e_2, \\
			e_3 \circ e_2 = e_1 + \gamma e_2, \quad e_3 \circ e_3 = \alpha e_3
		\end{cases}
		$

		\item \textbf{G14$_{\alpha,\beta}$:}
		$
		\begin{cases}
			[e_1, e_3] = e_1, \quad [e_2, e_3] = e_2 \\
			e_1 \circ e_3 = \alpha e_1, \quad e_2 \circ e_3 = \alpha e_2, \\
			e_3 \circ e_2 = e_1 + \beta e_2, \quad e_3 \circ e_3 = e_2 + \alpha e_3
		\end{cases}
		$

		\item \textbf{G15$_{\alpha,\beta}$:}
		$
		\begin{cases}
			[e_1, e_3] = e_1, \quad [e_2, e_3] = e_2 \\
			e_1 \circ e_3 = \alpha e_1, \quad e_2 \circ e_3 = \alpha e_2, \\
			e_3 \circ e_2 = e_1 + \beta e_2, \quad e_3 \circ e_3 = \alpha e_3
		\end{cases}
		$

		\item \textbf{G16$_{\alpha,\beta}$:}
		$
		\begin{cases}
			[e_1, e_3] = e_1, \quad [e_2, e_3] = e_2 \\
			e_1 \circ e_3 = \alpha e_1, \quad e_2 \circ e_3 = \alpha e_2, \\
			e_3 \circ e_1 = \beta e_1, \quad e_3 \circ e_2 = \beta e_2, \quad e_3 \circ e_3 = \alpha e_3
		\end{cases}
		$

		\item \textbf{G17$_{\alpha}$:}
		$
		\begin{cases}
			[e_1, e_3] = e_1, \quad [e_2, e_3] = e_2 \\
			e_1 \circ e_3 = \alpha e_1, \quad e_2 \circ e_3 = \alpha e_2, \quad e_3 \circ e_3 = e_1 + \alpha e_3
		\end{cases}
		$

		\item \textbf{G18$_{\alpha,\beta,\gamma,\delta}^{\alpha\in\mathbb C\setminus\{0,1,-1\},\ \gamma\delta\ne0}$:}
		$
		\begin{cases}
			[e_1,e_3]=e_1,\quad [e_2,e_3]=\alpha e_2,\\
			e_1\circ e_3=\beta e_1,\quad e_2\circ e_3=\beta e_2,\\
			e_3\circ e_1=\gamma e_1,\quad e_3\circ e_2=\delta e_2,\quad
			e_3\circ e_3=\beta e_3
		\end{cases}
		$

		\item \textbf{G19$_{\alpha,\beta,\gamma,\delta}^{\alpha\in\mathbb C\setminus\{0,1,-1\},\ \beta\in\{0,1\},\ \delta\ne0}$:}
		$
		\begin{cases}
			[e_1,e_3]=e_1,\quad [e_2,e_3]=\alpha e_2,\\
			e_1\circ e_3=\gamma e_1,\quad e_2\circ e_3=\gamma e_2,\\
			e_3\circ e_2=\delta e_2,\quad
			e_3\circ e_3=\beta e_1+\gamma e_3
		\end{cases}
		$

		\item \textbf{G20$_{\alpha,\beta,\gamma,\delta}^{\alpha\in\mathbb C\setminus\{0,1,-1\},\ \beta\in\{0,1\},\ \delta\ne0}$:}
		$
		\begin{cases}
			[e_1,e_3]=e_1,\quad [e_2,e_3]=\alpha e_2,\\
			e_1\circ e_3=\gamma e_1,\quad e_2\circ e_3=\gamma e_2,\\
			e_3\circ e_1=\delta e_1,\quad
			e_3\circ e_3=\beta e_2+\gamma e_3
		\end{cases}
		$

		\item \textbf{G21$_{\alpha,\beta,\gamma,\delta}^{\alpha\in\mathbb C\setminus\{0,1,-1\},\ \beta,\gamma\in\{0,1\}}$:}
		$
		\begin{cases}
			[e_1,e_3]=e_1,\quad [e_2,e_3]=\alpha e_2,\\
			e_1\circ e_3=\delta e_1,\quad e_2\circ e_3=\delta e_2,\quad
			e_3\circ e_3=\beta e_1+\gamma e_2+\delta e_3
		\end{cases}
		$

		\item \textbf{G22$_{\alpha,\beta,\gamma}^{\alpha\in\mathbb C\setminus\{0,1,-1\},\ \beta\in\{0,1\}}$:}
		$
		\begin{cases}
			[e_1,e_3]=e_1,\quad [e_2,e_3]=\alpha e_2,\\
			e_1\circ e_3=\gamma e_1,\quad e_2\circ e_3=e_1+\gamma e_2,\quad
			e_3\circ e_1=-\gamma e_1,\\
			e_3\circ e_2=-\dfrac{\alpha}{\alpha-1}e_1,\quad
			e_3\circ e_3=\beta e_2+\gamma e_3
		\end{cases}
		$

		\item \textbf{G23$_{\alpha,\beta,\gamma}^{\alpha\in\mathbb C\setminus\{0,1,-1\},\ \beta\in\{0,1\}}$:}
		$
		\begin{cases}
			[e_1,e_3]=e_1,\quad [e_2,e_3]=\alpha e_2,\\
			e_1\circ e_3=\gamma e_1+e_2,\quad e_2\circ e_3=\gamma e_2,\quad
			e_3\circ e_1=\dfrac{1}{\alpha-1}e_2,\\
			e_3\circ e_2=-\gamma e_2,\quad
			e_3\circ e_3=\beta e_1+\gamma e_3
		\end{cases}
		$

		\item \textbf{G24$_{\alpha,\beta}^{\alpha\ne 2\beta}$:}
		$
		\begin{cases}
			[e_1,e_3]=e_1,\quad [e_2,e_3]=2e_2,\\
			e_1\circ e_1=e_2,\quad e_1\circ e_3=\alpha e_1,\quad e_2\circ e_3=\alpha e_2,\quad
			e_3\circ e_1=\beta e_1,\\
            e_3\circ e_2=(-\alpha+2\beta)e_2,\quad
			e_3\circ e_3=\alpha e_3
		\end{cases}
		$

		\item \textbf{G25$_{\alpha,\beta}^{\alpha\in\{0,1\}}$:}
		$
		\begin{cases}
			[e_1,e_3]=e_1,\quad [e_2,e_3]=2e_2,\\
			e_1\circ e_1=e_2,\quad e_1\circ e_3=2\beta e_1,\quad
			e_2\circ e_3=2\beta e_2,\quad e_3\circ e_1=\beta e_1,\\
			e_3\circ e_3=\alpha e_2+2\beta e_3
		\end{cases}
		$

		\item \textbf{G26$_{\alpha,\beta}^{\beta\ne 0}$:}
		$
		\begin{cases}
			[e_1,e_3]=e_1,\quad [e_2,e_3]=\dfrac12 e_2,\\
			e_2\circ e_2=e_1,\quad e_1\circ e_3=\alpha e_1,\quad e_2\circ e_3=\alpha e_2,\quad
			e_3\circ e_1=\beta e_1,\\
            e_3\circ e_2=\dfrac{\alpha+\beta}{2}e_2,\quad
			e_3\circ e_3=\alpha e_3
		\end{cases}
		$

		\item \textbf{G27$_{\alpha,\beta}^{\alpha\in\{0,1\}}$:}
		$
		\begin{cases}
			[e_1,e_3]=e_1,\quad [e_2,e_3]=\dfrac12 e_2,\\
			e_2\circ e_2=e_1,\quad e_1\circ e_3=\beta e_1,\quad e_2\circ e_3=\beta e_2,\quad
			e_3\circ e_2=\dfrac \beta 2 e_2,\\
			e_3\circ e_3=\alpha e_1+\beta e_3
		\end{cases}
		$

		\item \textbf{G28$_{\alpha,\beta,\gamma}^{\beta\gamma\ne 0}$:}
		$
		\begin{cases}
			[e_1,e_3]=e_1,\quad [e_2,e_3]=-e_2,\\
			e_1\circ e_3=\alpha e_1,\quad e_2\circ e_3=\alpha e_2,\quad
			e_3\circ e_1=\beta e_1,\quad e_3\circ e_2=\gamma e_2,\quad
			e_3\circ e_3=\alpha e_3
		\end{cases}
		$

		\item \textbf{G29$_{\alpha,\beta,\gamma}^{\alpha\in\{0,1\},\ \gamma\ne 0}$:}
		$
		\begin{cases}
			[e_1,e_3]=e_1,\quad [e_2,e_3]=-e_2,\\
			e_1\circ e_3=\beta e_1,\quad e_2\circ e_3=\beta e_2,\quad
			e_3\circ e_2=\gamma e_2,\quad
			e_3\circ e_3=\alpha e_1+\beta e_3
		\end{cases}
		$

		\item \textbf{G30$_{\alpha}$:}
		$
		\begin{cases}
			[e_1,e_3]=e_1,\quad [e_2,e_3]=-e_2,\\
			e_1\circ e_3=\alpha e_1,\quad e_2\circ e_3=\alpha e_2,\quad e_3\circ e_3=\alpha e_3
		\end{cases}
		$

		\item \textbf{G31$_{\alpha}$:}
		$
		\begin{cases}
			[e_1,e_3]=e_1,\quad [e_2,e_3]=-e_2,\\
			e_1\circ e_3=\alpha e_1,\quad e_2\circ e_3=\alpha e_2,\quad
			e_3\circ e_3=e_1+e_2+\alpha e_3
		\end{cases}
		$

		\item \textbf{G32$_{\alpha}$:}
		$
		\begin{cases}
			[e_1,e_3]=e_1,\quad [e_2,e_3]=-e_2,\\
			e_1\circ e_3=\alpha e_1,\quad e_2\circ e_3=\alpha e_2,\quad
			e_3\circ e_3=e_1+\alpha e_3
		\end{cases}
		$

		\item \textbf{G33$_{\alpha,\beta}^{\alpha\in\{0,1\}}$:}
		$
		\begin{cases}
			[e_1,e_3]=e_1,\quad [e_2,e_3]=-e_2,\\
			e_1\circ e_3=\beta e_1,\quad e_2\circ e_3=e_1+\beta e_2,\quad
			e_3\circ e_1=-\beta e_1,\\
            e_3\circ e_2=-\dfrac12 e_1,\quad
			e_3\circ e_3=\alpha e_2+\beta e_3
		\end{cases}
		$

		\item \textbf{G34$_{\alpha,\beta,\gamma}^{\beta\ne 0}$:}
		$
		\begin{cases}
			[e_1,e_3]=e_1+e_2,\quad [e_2,e_3]=e_2,\\
			e_1\circ e_3=\alpha e_1,\quad e_2\circ e_3=\alpha e_2,\quad
			e_3\circ e_1=\beta e_1+\gamma e_2,\\
            e_3\circ e_2=\beta e_2,\quad
			e_3\circ e_3=\alpha e_3
		\end{cases}
		$

		\item \textbf{G35$_{\alpha,\beta,\gamma}^{\alpha\in\{0,1\},\ \gamma\ne 0}$:}
		$
		\begin{cases}
			[e_1,e_3]=e_1+e_2,\quad [e_2,e_3]=e_2,\\
			e_1\circ e_3=\beta e_1,\quad e_2\circ e_3=\beta e_2,\quad
			e_3\circ e_1=\gamma e_2,\quad
			e_3\circ e_3=\alpha e_1+\beta e_3
		\end{cases}
		$

		\item \textbf{G36$_{\alpha}$:}
		$
		\begin{cases}
			[e_1,e_3]=e_1+e_2,\quad [e_2,e_3]=e_2,\\
			e_1\circ e_3=\alpha e_1,\quad e_2\circ e_3=\alpha e_2,\quad e_3\circ e_3=\alpha e_3
		\end{cases}
		$

		\item \textbf{G37$_{\alpha}$:}
		$
		\begin{cases}
			[e_1,e_3]=e_1+e_2,\quad [e_2,e_3]=e_2,\\
			e_1\circ e_3=\alpha e_1,\quad e_2\circ e_3=\alpha e_2,\quad
			e_3\circ e_3=e_1+\alpha e_3
		\end{cases}
		$

		\item \textbf{G38$_{\alpha}$:}
		$
		\begin{cases}
			[e_1,e_3]=e_1+e_2,\quad [e_2,e_3]=e_2,\\
			e_1\circ e_3=\alpha e_1,\quad e_2\circ e_3=\alpha e_2,\quad
			e_3\circ e_3=e_2+\alpha e_3
		\end{cases}
		$

		\item \textbf{G39$_{\alpha}^{\alpha\ne 0}$:}
		$
		\begin{cases}
			[e_1,e_3]=e_1+e_2,\quad [e_2,e_3]=e_2,\\
			e_1\circ e_3=\alpha(e_1+e_2),\quad e_2\circ e_3=\alpha e_2,\quad
			e_3\circ e_2=-\alpha e_2,\quad e_3\circ e_3=\alpha e_3
		\end{cases}
		$

		\item \textbf{G40$_{\alpha}^{\alpha\ne 0}$:}
		$
		\begin{cases}
			[e_1,e_3]=e_1+e_2,\quad [e_2,e_3]=e_2,\\
			e_1\circ e_3=\alpha(e_1+e_2),\quad e_2\circ e_3=\alpha e_2,\quad
			e_3\circ e_2=-\alpha e_2,\quad e_3\circ e_3=e_1+\alpha e_3
		\end{cases}
		$

		\item \textbf{G41$_{\alpha}^{\alpha\in\mathbb C\setminus\{0,-1\}}$:}
		$
		\begin{cases}
			[e_1,e_3]=e_1,\\
			e_1\circ e_2=e_1,\quad e_2\circ e_1=\alpha e_1,\quad
			e_2\circ e_2=e_2,\quad e_3\circ e_2=e_3
		\end{cases}
		$

		\item \textbf{G42$_{\alpha}^{\alpha\in\{0,1\}}$:}
		$
		\begin{cases}
			[e_1,e_3]=e_1,\\
			e_1\circ e_2=e_1,\quad e_2\circ e_2=e_2,\quad
			e_2\circ e_3=\alpha e_1,\quad e_3\circ e_2=e_3
		\end{cases}
		$

		\item \textbf{G43$_{\alpha}^{\alpha\in\{0,1\}}$:}
		$
		\begin{cases}
		[e_1,e_3]=e_1,\\
			e_1\circ e_2=e_1,\quad e_2\circ e_1=-e_1,\quad
			e_2\circ e_2=e_2,\quad e_3\circ e_2=e_3,\quad
			e_3\circ e_3=\alpha e_1
		\end{cases}
		$

		\item \textbf{G44:}
		$
		\begin{cases}
			[e_1,e_3]=e_1,\\
			e_1\circ e_3=e_2,\quad e_3\circ e_1=-e_2
		\end{cases}
		$

		\item \textbf{G45$_{\alpha}$:}
		$
		\begin{cases}
			[e_1,e_3]=e_1,\\
			e_1\circ e_3=e_2,\quad e_3\circ e_1=-e_2,\quad
			e_3\circ e_3=e_1+\alpha e_2
		\end{cases}
		$

		\item \textbf{G46:}
		$
		\begin{cases}
			[e_1,e_3]=e_1,\\
			e_1\circ e_3=e_2,\quad e_3\circ e_1=-e_2,\quad e_3\circ e_3=e_2
		\end{cases}
		$

		\item \textbf{G47$_{\alpha}^{\alpha\ne 0}$:}
		$
		\begin{cases}
			[e_1,e_3]=e_1,\\
			e_1\circ e_3=\alpha e_1+e_2,\quad
			e_3\circ e_1=-\alpha e_1-e_2,\quad e_3\circ e_3=\alpha e_3
		\end{cases}
		$

		\item \textbf{G48$_{\alpha,\beta}^{\alpha\in\{0,1\},\ \beta\ne 0}$:}
		$
		\begin{cases}
			[e_1,e_3]=e_1,\\
			e_1\circ e_3=\beta e_1+e_2,\quad e_2\circ e_3=\beta e_2,\quad
			e_3\circ e_1=-e_2,\\
            e_3\circ e_2=-\beta e_2,\quad
			e_3\circ e_3=\alpha e_1+\beta e_3
		\end{cases}
		$

		\item \textbf{G49:}
		$
		\begin{cases}
			[e_1,e_3]=e_1,\\
			e_2\circ e_1=e_1
		\end{cases}
		$

		\item \textbf{G50$_{\alpha,\beta}^{\beta\ne 0}$:}
		$
		\begin{cases}
			[e_1,e_3]=e_1,\\
			e_1\circ e_3=\alpha e_1,\quad e_2\circ e_2=e_2,\quad
			e_3\circ e_1=\beta e_1,\quad e_3\circ e_3=\alpha e_3
		\end{cases}
		$

		\item \textbf{G51$_{\alpha,\beta}^{\alpha\in\{0,1\}}$:}
		$
		\begin{cases}
			[e_1,e_3]=e_1,\\
			e_1\circ e_3=\beta e_1,\quad e_2\circ e_2=e_2,\quad
			e_3\circ e_3=\alpha e_1+\beta e_3
		\end{cases}
		$

		\item \textbf{G52$_{\alpha,\beta,\gamma}^{\beta\gamma\ne 0}$:}
		$
		\begin{cases}
			[e_1,e_3]=e_1,\\
			e_1\circ e_3=\alpha e_1,\quad e_2\circ e_3=\alpha e_2,\quad
			e_3\circ e_1=\beta e_1,\quad e_3\circ e_2=\gamma e_2,\quad
			e_3\circ e_3=\alpha e_3
		\end{cases}
		$

		\item \textbf{G53$_{\alpha,\beta,\gamma}^{\alpha\in\{0,1\},\ \gamma\ne 0}$:}
		$
		\begin{cases}
			[e_1,e_3]=e_1,\\
			e_1\circ e_3=\beta e_1,\quad e_2\circ e_3=\beta e_2,\quad
			e_3\circ e_1=\gamma e_1,\quad e_3\circ e_3=\alpha e_2+\beta e_3
		\end{cases}
		$

		\item \textbf{G54$_{\alpha,\beta,\gamma}^{\alpha\in\{0,1\},\ \gamma\ne 0}$:}
		$
		\begin{cases}
			[e_1,e_3]=e_1,\\
			e_1\circ e_3=\beta e_1,\quad e_2\circ e_3=\beta e_2,\quad
			e_3\circ e_2=\gamma e_2,\quad e_3\circ e_3=\alpha e_1+\beta e_3
		\end{cases}
		$

		\item \textbf{G55$_{\alpha,\beta,\gamma}^{\alpha,\beta\in\{0,1\}}$:}
		$
		\begin{cases}
			[e_1,e_3]=e_1,\\
			e_1\circ e_3=\gamma e_1,\quad e_2\circ e_3=\gamma e_2,\quad
			e_3\circ e_3=\alpha e_1+\beta e_2+\gamma e_3
		\end{cases}
		$

		\item \textbf{G56$_{\alpha,\beta}^{\alpha\beta\ne 0}$:}
		$
		\begin{cases}
			[e_1,e_3]=e_1,\\
			e_1\circ e_3=\alpha e_1,\quad e_3\circ e_1=\beta e_1,\quad e_3\circ e_3=\alpha e_3
		\end{cases}
		$

		\item \textbf{G57$_{\alpha,\beta}^{\alpha\in\{0,1\},\ \beta\ne 0}$:}
		$
		\begin{cases}
		[e_1,e_3]=e_1,\\
			e_1\circ e_3=\beta e_1,\quad e_3\circ e_3=\alpha e_1+\beta e_3
		\end{cases}
		$

		\item \textbf{G58$_{\alpha,\beta}^{\alpha\in\{0,1\}}$:}
		$
		\begin{cases}
			[e_1,e_3]=e_1,\\
			e_1\circ e_3=\beta e_1,\quad e_2\circ e_3=e_1,\quad
			e_3\circ e_3=\alpha e_2+\beta e_3
		\end{cases}
		$

		\item \textbf{G59$_{\alpha,\beta}^{\alpha\in\{0,1\},\ \beta\ne 0}$:}
		$
		\begin{cases}
			[e_1,e_3]=e_1,\\
			e_1\circ e_3=\beta e_1,\quad e_2\circ e_3=e_1+\beta e_2,\quad
			e_3\circ e_1=-\beta e_1,\\
            e_3\circ e_3=\alpha e_2+\beta e_3
		\end{cases}
        $
        
        \item \textbf{G60:}
$
\begin{cases}
	[e_1, e_2] = e_3, \quad [e_1, e_3] = -e_2, \quad [e_2, e_3] = e_1
\end{cases}
$

\item \textbf{G61:}
$
\begin{cases}
	[e_1, e_2] = e_3, \quad [e_1, e_3] = -e_2, \quad [e_2, e_3] = e_1, \\[1ex]
	e_2 \circ e_2 = -\dfrac{1}{4} e_3, \quad e_2 \circ e_3 = e_1, \quad e_3 \circ e_2 = -\dfrac{1}{2} e_1
\end{cases}
$

	\end{itemize}

	\noindent

\noindent
$\mathbf{G1}_{\alpha_1} \cong \mathbf{G1}_{\alpha_2}$ if and only if $\alpha_1 = \alpha_2$. \\
$\mathbf{G2}_{\alpha_1} \cong \mathbf{G2}_{\alpha_2}$ if and only if $\alpha_1 = \alpha_2$. \\
$\mathbf{G3}_{\alpha_1, \beta_1} \cong \mathbf{G3}_{\alpha_2, \beta_2}$ if and only if $(\alpha_1, \beta_1) = (\alpha_2, \beta_2)$. \\
$\mathbf{G4}_{\alpha_1} \cong \mathbf{G4}_{\alpha_2}$ if and only if $\alpha_1 = \alpha_2$. \\
$\mathbf{G6}_{\alpha_1} \cong \mathbf{G6}_{\alpha_2}$ if and only if $\alpha_1 = \alpha_2$. \\
$\mathbf{G8}_{\alpha_1, \beta_1} \cong \mathbf{G8}_{\alpha_2, \beta_2}$ if and only if $(\alpha_1, \beta_1) = (\alpha_2, \beta_2)$. \\
$\mathbf{G11}_{\alpha_1, \beta_1} \cong \mathbf{G11}_{\alpha_2, \beta_2}$ if and only if $(\alpha_1, \beta_1) = (\alpha_2, \beta_2)$. \\
$\mathbf{G12}_{\alpha_1} \cong \mathbf{G12}_{\alpha_2}$ if and only if $\alpha_1 = \alpha_2$. \\
$\mathbf{G13}_{\alpha_1, \beta_1, \gamma_1} \cong \mathbf{G13}_{\alpha_2, \beta_2, \gamma_2}$ if and only if $(\alpha_1, \beta_1, \gamma_1) = (\alpha_2, \beta_2, \gamma_2)$. \\
$\mathbf{G14}_{\alpha_1, \beta_1} \cong \mathbf{G14}_{\alpha_2, \beta_2}$ if and only if $(\alpha_1, \beta_1) = (\alpha_2, \beta_2)$. \\
$\mathbf{G15}_{\alpha_1, \beta_1} \cong \mathbf{G15}_{\alpha_2, \beta_2}$ if and only if $(\alpha_1, \beta_1) = (\alpha_2, \beta_2)$. \\
$\mathbf{G16}_{\alpha_1, \beta_1} \cong \mathbf{G16}_{\alpha_2, \beta_2}$ if and only if $(\alpha_1, \beta_1) = (\alpha_2, \beta_2)$. \\
$\mathbf{G17}_{\alpha_1} \cong \mathbf{G17}_{\alpha_2}$ if and only if $\alpha_1 = \alpha_2$. \\ 
 $\mathbf{G18}_{\alpha_1, \beta_1, \gamma_1, \delta_1} \cong \mathbf{G18}_{\alpha_2, \beta_2, \gamma_2, \delta_2}$ if and only if $(\alpha_1, \beta_1, \gamma_1, \delta_1) = (\alpha_2^{-1}, \beta_2, \gamma_2, \delta_2)$.\\
$\mathbf{G19}_{\alpha_1, \beta_1, \gamma_1, \delta_1} \cong \mathbf{G19}_{\alpha_2, \beta_2, \gamma_2, \delta_2}$ if and only if $(\alpha_1, \beta_1, \gamma_1, \delta_1) = (\alpha_2^{-1}, \beta_2, \gamma_2, \delta_2)$.\\
$\mathbf{G20}_{\alpha_1, \beta_1, \gamma_1, \delta_1} \cong \mathbf{G20}_{\alpha_2, \beta_2, \gamma_2, \delta_2}$ if and only if $(\alpha_1, \beta_1, \gamma_1, \delta_1) = (\alpha_2^{-1}, \beta_2, \gamma_2, \delta_2)$.\\
$\mathbf{G21}_{\alpha_1, \beta_1, \gamma_1, \delta_1} \cong \mathbf{G21}_{\alpha_2, \beta_2, \gamma_2, \delta_2}$ if and only if $(\alpha_1, \beta_1, \gamma_1, \delta_1) = (\alpha_2^{-1}, \beta_2, \gamma_2, \delta_2)$.\\
$\mathbf{G22}_{\alpha_1, \beta_1, \gamma_1} \cong \mathbf{G22}_{\alpha_2, \beta_2, \gamma_2}$ if and only if $(\alpha_1, \beta_1, \gamma_1) = (\alpha_2^{-1}, \beta_2, \gamma_2)$.\\
$\mathbf{G23}_{\alpha_1, \beta_1, \gamma_1} \cong \mathbf{G23}_{\alpha_2, \beta_2, \gamma_2}$ if and only if $(\alpha_1, \beta_1, \gamma_1) = (\alpha_2^{-1}, \beta_2, \gamma_2)$.\\
	$\mathbf{G24}_{\alpha_1, \beta_1} \cong \mathbf{G24}_{\alpha_2, \beta_2}$ if and only if $(\alpha_1, \beta_1) = (\alpha_2, \beta_2)$. \\
	$\mathbf{G25}_{\alpha_1, \beta_1} \cong \mathbf{G25}_{\alpha_2, \beta_2}$ if and only if $(\alpha_1, \beta_1) = (\alpha_2, \beta_2)$. \\
	$\mathbf{G26}_{\alpha_1, \beta_1} \cong \mathbf{G26}_{\alpha_2, \beta_2}$ if and only if $(\alpha_1, \beta_1) = (\alpha_2, \beta_2)$. \\
	$\mathbf{G27}_{\alpha_1, \beta_1} \cong \mathbf{G27}_{\alpha_2, \beta_2}$ if and only if $(\alpha_1, \beta_1) = (\alpha_2, \beta_2)$. \\
	$\mathbf{G28}_{\alpha_1, \beta_1, \gamma_1} \cong \mathbf{G28}_{\alpha_2, \beta_2, \gamma_2}$ if and only if $(\alpha_1, \beta_1, \gamma_1) = (\alpha_2, \beta_2, \gamma_2)$ or $(\alpha_1, \beta_1, \gamma_1) = (-\alpha_2, -\gamma_2,-\beta_2, )$. \\
	$\mathbf{G29}_{\alpha_1, \beta_1, \gamma_1} \cong \mathbf{G29}_{\alpha_2, \beta_2, \gamma_2}$ if and only if $(\alpha_1, \beta_1, \gamma_1) = (\alpha_2, \beta_2, \gamma_2)$. \\
	$\mathbf{G30}_{\alpha_1} \cong \mathbf{G30}_{\alpha_2}$ if and only if $\alpha_1 = \pm\alpha_2$. \\
	$\mathbf{G31}_{\alpha_1} \cong \mathbf{G31}_{\alpha_2}$ if and only if $\alpha_1 = \pm\alpha_2$. \\
	$\mathbf{G32}_{\alpha_1} \cong \mathbf{G32}_{\alpha_2}$ if and only if $\alpha_1 = \alpha_2$. \\
	$\mathbf{G33}_{\alpha_1, \beta_1} \cong \mathbf{G33}_{\alpha_2, \beta_2}$ if and only if $(\alpha_1, \beta_1) = (\alpha_2, \beta_2)$. \\
	$\mathbf{G34}_{\alpha_1, \beta_1, \gamma_1} \cong \mathbf{G34}_{\alpha_2, \beta_2, \gamma_2}$ if and only if $(\alpha_1, \beta_1, \gamma_1) = (\alpha_2, \beta_2, \gamma_2)$. \\
	$\mathbf{G35}_{\alpha_1, \beta_1, \gamma_1} \cong \mathbf{G35}_{\alpha_2, \beta_2, \gamma_2}$ if and only if $(\alpha_1, \beta_1, \gamma_1) = (\alpha_2, \beta_2, \gamma_2)$. \\
	$\mathbf{G36}_{\alpha_1} \cong \mathbf{G36}_{\alpha_2}$ if and only if $\alpha_1 = \alpha_2$. \\
	$\mathbf{G37}_{\alpha_1} \cong \mathbf{G37}_{\alpha_2}$ if and only if $\alpha_1 = \alpha_2$. \\
	$\mathbf{G38}_{\alpha_1} \cong \mathbf{G38}_{\alpha_2}$ if and only if $\alpha_1 = \alpha_2$. \\
	$\mathbf{G39}_{\alpha_1} \cong \mathbf{G39}_{\alpha_2}$ if and only if $\alpha_1 = \alpha_2$. \\
	$\mathbf{G40}_{\alpha_1} \cong \mathbf{G40}_{\alpha_2}$ if and only if $\alpha_1 = \alpha_2$. \\
	$\mathbf{G41}_{\alpha_1} \cong \mathbf{G41}_{\alpha_2}$ if and only if $\alpha_1 = \alpha_2$. \\
	$\mathbf{G42}_{\alpha_1} \cong \mathbf{G42}_{\alpha_2}$ if and only if $\alpha_1 = \alpha_2$. \\
	$\mathbf{G43}_{\alpha_1} \cong \mathbf{G43}_{\alpha_2}$ if and only if $\alpha_1 = \alpha_2$. \\
	$\mathbf{G45}_{\alpha_1} \cong \mathbf{G45}_{\alpha_2}$ if and only if $\alpha_1 = \alpha_2$. \\
	$\mathbf{G47}_{\alpha_1} \cong \mathbf{G47}_{\alpha_2}$ if and only if $\alpha_1 = \alpha_2$. \\
	$\mathbf{G48}_{\alpha_1, \beta_1} \cong \mathbf{G48}_{\alpha_2, \beta_2}$ if and only if $(\alpha_1, \beta_1) = (\alpha_2, \beta_2)$. \\
	$\mathbf{G50}_{\alpha_1, \beta_1} \cong \mathbf{G50}_{\alpha_2, \beta_2}$ if and only if $(\alpha_1, \beta_1) = (\alpha_2, \beta_2)$. \\
	$\mathbf{G51}_{\alpha_1, \beta_1} \cong \mathbf{G51}_{\alpha_2, \beta_2}$ if and only if $(\alpha_1, \beta_1) = (\alpha_2, \beta_2)$. \\
	$\mathbf{G52}_{\alpha_1, \beta_1, \gamma_1} \cong \mathbf{G52}_{\alpha_2, \beta_2, \gamma_2}$ if and only if $(\alpha_1, \beta_1, \gamma_1) = (\alpha_2, \beta_2, \gamma_2)$. \\
	$\mathbf{G53}_{\alpha_1, \beta_1, \gamma_1} \cong \mathbf{G53}_{\alpha_2, \beta_2, \gamma_2}$ if and only if $(\alpha_1, \beta_1, \gamma_1) = (\alpha_2, \beta_2, \gamma_2)$. \\
	$\mathbf{G54}_{\alpha_1, \beta_1, \gamma_1} \cong \mathbf{G54}_{\alpha_2, \beta_2, \gamma_2}$ if and only if $(\alpha_1, \beta_1, \gamma_1) = (\alpha_2, \beta_2, \gamma_2)$. \\
	$\mathbf{G55}_{\alpha_1, \beta_1, \gamma_1} \cong \mathbf{G55}_{\alpha_2, \beta_2, \gamma_2}$ if and only if $(\alpha_1, \beta_1, \gamma_1) = (\alpha_2, \beta_2, \gamma_2)$. \\
	$\mathbf{G56}_{\alpha_1, \beta_1} \cong \mathbf{G56}_{\alpha_2, \beta_2}$ if and only if $(\alpha_1, \beta_1) = (\alpha_2, \beta_2)$. \\
	$\mathbf{G57}_{\alpha_1, \beta_1} \cong \mathbf{G57}_{\alpha_2, \beta_2}$ if and only if $(\alpha_1, \beta_1) = (\alpha_2, \beta_2)$. \\
	$\mathbf{G58}_{\alpha_1, \beta_1} \cong \mathbf{G58}_{\alpha_2, \beta_2}$ if and only if $(\alpha_1, \beta_1) = (\alpha_2, \beta_2)$. \\
	$\mathbf{G59}_{\alpha_1, \beta_1} \cong \mathbf{G59}_{\alpha_2, \beta_2}$ if and only if $(\alpha_1, \beta_1) = (\alpha_2, \beta_2)$. \\
\end{thm}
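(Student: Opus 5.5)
The plan is to run the same fixed-Lie-algebra scheme as in Theorem~\ref{thm:2dim-GD-structure}. By Proposition~\ref{prop:fixed-Lie-GD-products}, Proposition~\ref{prop:isomorphism-fixed-Lie} and Proposition~\ref{prop:transport-GD-structure}, the classification of $3$-dimensional GD algebras with nonzero bracket reduces to classifying $\operatorname{Aut}(L)$-orbits in $\mathcal Z_{\mathrm{GD}}(L)$. Here $L$ runs over the nonabelian representatives $\mathfrak h$, $\mathfrak g_1$, $\mathfrak g_2^\alpha$ (one representative of each class $\{\alpha,\alpha^{-1}\}$, with the special values $\alpha=0,\pm1,2,\tfrac12$ treated separately, since the GD solution sets change there) and $\mathfrak{sl}_2(\mathbb C)$. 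GD algebras over non-isomorphic Lie algebras are automatically non-isomorphic, so the list splits accordingly: G1--G12 over $\mathfrak h$, G13--G17 over $\mathfrak g_1$, G18--G59 over the $\mathfrak g_2^\alpha$ family, and G60--G61 over $\mathfrak{sl}_2$.

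For $\mathfrak{sl}_2(\mathbb C)$ nothing new is needed. Corollary~\ref{cor:sl2-two-isomorphism-classes} gives exactly two classes. One only has to rewrite the representative $h\circ f=-2e,\ f\circ h=4e,\ f\circ f=-h$ in the compact basis $[e_1,e_2]=e_3,[e_1,e_3]=-e_2,[e_2,e_3]=e_1$. To do this, pick a nonzero $u$ with $B(u,u)=0$ in that basis and compute $x\circ_u y=T_u([x,y])-[T_u(x),y]$. A suitable scaling (using the freedom $u\mapsto cu$ together with automorphisms) should give G61.

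For each solvable $L$, I write $e_i\circ e_j=\sum_k c_{ij}^k e_k$, which gives $27$ unknowns. I first impose the GD compatibility identity~\eqref{eq:GD_compatibility}. By Proposition~\ref{prop:cocycle} it is linear in the $c_{ij}^k$, so solving it is linear algebra and cuts the parameters down to a small affine family. Equivalently, $R$ must be a $1$-cocycle, and one can parametrize cocycles modulo coboundaries.

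On this family I impose the quadratic Novikov identities $[R_x,R_y]=0$ and $[L_x,L_y]=L_{x\circ y-y\circ x}$. The constraints found in the proofs of Propositions~\ref{prop:no-low-dimensional-counterexample} and~\ref{thm:dim3} give useful first consequences. For example, over $\mathfrak h$ one gets $c\circ c\in C$, and over the $\mathfrak g$-type algebras the $2t^2=0$-style eliminations apply. I split the solution variety into irreducible branches by case distinctions on which leading coefficients vanish.

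Next I compute $\operatorname{Aut}(L)$ explicitly. For $\mathfrak h$ it consists of matrices with $\mathrm{GL}_2$ on $\operatorname{span}\{e_1,e_2\}$, the determinant acting on $e_3$, and arbitrary $e_3$-components. For $\mathfrak g_1$ and $\mathfrak g_2^\alpha$ it is a triangular group preserving the ideal $\operatorname{span}\{e_1,e_2\}$, plus the swap $e_1\leftrightarrow e_2$ with $\alpha\mapsto\alpha^{-1}$ when allowed. On each branch I write down how the parameters transform, as was done for $(c,f,h)$ in the $2$-dimensional case. I then normalize: nonzero coefficients are scaled to $1$, translations kill off-diagonal terms, and the surviving parameters are genuine invariants.

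Non-isomorphism within a family is proved by exhibiting invariants. Useful ones are eigenvalues of $R_{e_3}$ or of $R$ restricted to the nilradical, the scalars $\beta$ in $x\circ e_3=\beta x$, ranks of $A\circ A$, and the weights of $\operatorname{ad}_{e_3}$. The relations $\alpha\leftrightarrow\alpha^{-1}$ in G18--G23 and the sign symmetries in G28, G30, G31 come precisely from the extra automorphisms.

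The main obstacle is the $\mathfrak g_2^\alpha$ branch. The Novikov solution set depends on $\alpha$ in a non-uniform way: the special values $\alpha=2,\tfrac12,-1,0$ yield the extra families G24--G33 and G41--G59. So one must locate all exceptional $\alpha$ at which the determinant of the linearized system drops rank. I would first solve generically in $\alpha$, then list the resonances $\alpha=2\cdot1$, $1=2\alpha$, $\alpha=-1$, $\alpha=0$ visible in the coefficient matrices, and solve each separately. The orbit separation there needs careful invariant bookkeeping, and that step will be verified by computer algebra as in \cite{Zhang}.
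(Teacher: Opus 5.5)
For the solvable Lie algebras your plan follows the paper's own scheme: fix $L$, solve the linear compatibility identity, then the Novikov identities, then normalize by $\operatorname{Aut}(L)$. The paper's proof works this out only for G18 and says nothing about $\mathfrak{sl}_2(\mathbb C)$.

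\textbf{The $\mathfrak{sl}_2(\mathbb C)$ step fails.} You treat it as routine, but no choice of $u$ and no automorphism turns the representative of Corollary~\ref{cor:sl2-two-isomorphism-classes} into G61, because G61 is not a GD algebra. For $(x,y,z)=(e_2,e_2,e_3)$, every term of \eqref{eq:GD_compatibility} vanishes except $[e_2,e_2\circ e_3]=[e_2,e_1]=-e_3$.

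An invariant confirms this. The two-sided annihilator of G61 is $\mathbb C e_1$, and $\operatorname{ad}_{e_1}$ has eigenvalues $0,\pm i$. The annihilator of $h\circ f=-2e$, $f\circ h=4e$, $f\circ f=-h$ is $\mathbb C e$ with $e$ nilpotent. So by Corollary~\ref{cor:sl2-two-isomorphism-classes}, G61 cannot be GD.

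If you run your recipe as stated, you get a different structure. In the compact basis $B(e_i,e_j)=\delta_{ij}$ is invariant, $u=e_2+ie_3$ is isotropic, and $x\circ y=B([x,y],u)u-B(x,u)[u,y]$ gives
\[
\begin{gathered}
e_1\circ e_2=iu,\quad e_1\circ e_3=-u,\quad e_2\circ e_1=-2iu,\quad e_3\circ e_1=2u,\\
e_2\circ e_2=ie_1,\quad e_2\circ e_3=e_3\circ e_2=-e_1,\quad e_3\circ e_3=-ie_1,
\end{gathered}
\]
with annihilator $\mathbb C u$. This, or an equivalent rescaling, must replace G61. What your argument can prove is therefore a corrected statement, not the one given.

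\textbf{The isomorphism criteria for G18--G23 cannot be verified.} You attribute the $\alpha\leftrightarrow\alpha^{-1}$ relations to ``extra automorphisms'', but for $\alpha\notin\{0,\pm1\}$ there are none. An automorphism of $\mathfrak g_2^\alpha$ must preserve the $\operatorname{ad}$-spectrum on the derived algebra. Hence it is diagonal on $e_1,e_2$ and sends $e_3\mapsto e_3+se_1+te_2$, so for fixed $\alpha$ the other parameters are invariants.

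The relation instead comes from the Lie isomorphism $\mathfrak g_2^\alpha\to\mathfrak g_2^{1/\alpha}$ given by $e_1\leftrightarrow e_2$, $e_3\mapsto\alpha^{-1}e_3$. It rescales the remaining parameters and sometimes changes family:
\[
\begin{gathered}
G18_{\alpha,\beta,\gamma,\delta}\cong G18_{\alpha^{-1},\beta/\alpha,\delta/\alpha,\gamma/\alpha},\quad
G19_{\alpha,\beta,\gamma,\delta}\cong G20_{\alpha^{-1},\beta,\gamma/\alpha,\delta/\alpha},\\
G22_{\alpha,\beta,\gamma}\cong G23_{\alpha^{-1},\beta,\gamma/\alpha}.
\end{gathered}
\]
The stated criteria omit both the identity case and the rescaling. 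Read literally, they even deny $G18_{\alpha,\beta,\gamma,\delta}\cong G18_{\alpha,\beta,\gamma,\delta}$.

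Similarly, $\mathfrak g_2^2\cong\mathfrak g_2^{1/2}$, and the same kind of swap gives $G24_{\alpha,\beta}\cong G26_{\alpha/2,\,\beta-\alpha/2}$ and $G25_{\alpha,\beta}\cong G27_{\alpha,\beta}$. This is why your two instructions, ``one representative of each class $\{\alpha,\alpha^{-1}\}$'' and ``treat $2$ and $\tfrac12$ separately'', cannot both hold.

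\textbf{A smaller correction.} $\operatorname{Aut}(\mathfrak g_1)$ is not triangular. It contains all of $\mathrm{GL}_2$ acting on $\operatorname{span}\{e_1,e_2\}$, and you need that to reach the companion-type normal forms of G13--G17.
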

 
\begin{proof}
We explain one representative branch
\(G18_{\alpha,\beta,\gamma,\delta}^{\alpha\in\mathbb C\setminus\{0,1,-1\},\
\gamma\delta\neq0}\), to illustrate how the normal forms are obtained from the Novikov identities and the GD compatibility identity.

Fix the 3-dimensional Lie algebra
$
        [e_1,e_3]=e_1$ and $ [e_2,e_3]=\alpha e_2,
        $ where $ \alpha\in\mathbb C\setminus\{0,1,-1\}.
$
By Proposition~\ref{prop:fixed-Lie-GD-products}, GD products on this fixed Lie algebra are obtained by imposing the Novikov identities and the GD compatibility identity on the bilinear map \(\theta(x,y)=x\circ y\).

We first use the GD compatibility identity. Write
\[
\begin{aligned}
        e_3\circ e_1&=\gamma e_1+b e_2+c e_3,&
        e_3\circ e_2&=d e_1+\delta e_2+f e_3,&
        e_3\circ e_3&=p e_1+q e_2+\beta e_3 .
\end{aligned}
\]
Substituting these expressions into the GD compatibility identity determines the remaining products:
\begin{equation}
\label{eq:G18-preliminary-products}
\begin{aligned}
        e_1\circ e_1&=-c e_1,
        &
        e_1\circ e_2&=-\alpha c e_2,
        &
        e_1\circ e_3&=\beta e_1+(\alpha-1)b e_2-c e_3,\\
        e_2\circ e_1&=-\frac{f}{\alpha}e_1,
        &
        e_2\circ e_2&=-f e_2,
        &
        e_2\circ e_3&=\frac{1-\alpha}{\alpha}d e_1+\beta e_2-f e_3 .
\end{aligned}
\end{equation}
Thus, after imposing the GD compatibility identity, the possible products in this branch are controlled by the parameters
$
        \beta,\gamma,\delta,b,c,d,f,p,q .
$

The branch \(G18\) is the case
$
        \gamma\delta\neq0.
$
Now substitute \eqref{eq:G18-preliminary-products} into the two Novikov identities. Among the resulting equations, we have
\[
        2c^2=0,\quad
        2f^2=0,\quad
        (\alpha-1)b\gamma=0,\quad
        \frac{\alpha-1}{\alpha}d\delta=0.
\]
Since \(\alpha\notin\{0,1\}\) and \(\gamma\delta\neq0\), it follows that
$
        c=f=b=d=0.
$
Consequently the product reduces to
\begin{equation}
\label{eq:G18-before-normalization}
\begin{aligned}
    e_1\circ e_3 &= \beta e_1, \quad & e_2\circ e_3 &= \beta e_2, \quad & e_3\circ e_3 &= p e_1 + q e_2 + \beta e_3.\\
    e_3\circ e_1 &= \gamma e_1, \quad & e_3\circ e_2 &= \delta e_2.
\end{aligned}
\end{equation}

It remains to remove the inessential parameters \(p\) and \(q\). For
\(\alpha\in\mathbb C\setminus\{0,1,-1\}\), the following transformations are automorphisms of the fixed Lie algebra:
\[
        e_1\mapsto a e_1,\quad
        e_2\mapsto d_0 e_2,\quad
        e_3\mapsto e_3+s e_1+t e_2,
        \quad a d_0\neq0.
\]
Under the transformation \(e_3 \mapsto e_3 + s e_1 + t e_2\), the coefficients of \(e_1\) and \(e_2\) in the expression of \(e_3 \circ e_3\) from \eqref{eq:G18-before-normalization} transform into \(p+\gamma s\) and \(q+\delta t\), respectively. Since \(\gamma \delta \neq 0\), we may set \(s=-\frac{p}{\gamma}\) and \(t=-\frac{q}{\delta}\) to achieve the normalization \(p=q=0\).

Therefore we obtain the normal form
\[
\begin{cases}
        [e_1,e_3]=e_1,\quad [e_2,e_3]=\alpha e_2,\\
        e_1\circ e_3=\beta e_1,\quad e_2\circ e_3=\beta e_2,\quad
        e_3\circ e_1=\gamma e_1,\quad e_3\circ e_2=\delta e_2,\quad
        e_3\circ e_3=\beta e_3.
\end{cases}
\]
This is exactly the family
$
        G18_{\alpha,\beta,\gamma,\delta}^{
        \alpha\in\mathbb C\setminus\{0,1,-1\},\ \gamma\delta\neq0}.
$

Finally, after the normalization \(p=q=0\), the remaining diagonal automorphisms \(e_1\mapsto a e_1\), \(e_2\mapsto d_0 e_2\) preserve \(\beta,\gamma,\delta\). Hence, for fixed generic \(\alpha\), no further parameter reduction occurs in this branch.

The other branches are obtained in the same way: one first imposes the GD compatibility identity, then the Novikov identities, and finally reduces the remaining parameters by automorphisms of the fixed Lie algebra. This gives exactly the normal forms listed in the theorem.
\end{proof}

In what follows, we determine which of the above algebras are special. Since it was proved in \cite{Kolesnikov-Sartayev} that every 2-dimensional GD algebra is special, we now consider the 3-dimensional case.

\begin{longtable}{@{} p{3.5cm} >{\raggedright\arraybackslash}p{6.5cm} >{\raggedright\arraybackslash}p{4.5cm} @{}}
\caption{Summary of special and non-special GD algebra structures.}
\label{tab:special_summary} \\
\toprule
\textbf{Lie Algebra} & \textbf{Special Cases} & \textbf{Non-Special Cases} \\
\midrule
\endfirsthead

\multicolumn{3}{c}%
{{\bfseries \tablename\ \thetable{} -- continued from previous page}} \\
\toprule
\textbf{Lie Algebra} & \textbf{Special Cases} & \textbf{Non-Special Cases} \\
\midrule
\endhead

\midrule \multicolumn{3}{r}{{Continued on next page}} \\
\endfoot

\bottomrule
\endlastfoot

$\mathfrak{h}$ 
& $G1_\alpha$, $G2_\alpha$, $G3_{\alpha,\beta}^{\alpha\neq 0}$, \par $G4_\alpha$, $G5$, $G6_\alpha$, \par $G8_{\alpha,\beta}$, $G11_{\alpha,\beta}$, $G12_\alpha$
& $G7$, $G9$, $G10$ \\
\midrule

$\mathfrak{g}_1$ 
& $G13_{\alpha,\beta,\gamma}^{\beta\neq 0}$, $G14_{\alpha,\beta}$, $G15_{\alpha,\beta}$, \par $G16_{\alpha,\beta}$, $G17_{\alpha}$ 
& None \\
\midrule

$\mathfrak{g}_2^{\alpha}$ 
\newline {\footnotesize $(\alpha \notin \{0, 1, -1\})$}
& $G18_{\alpha,\beta,\gamma,\delta}$, $G19_{\alpha,\beta,\gamma,\delta}$, \par $G20_{\alpha,\beta,\gamma,\delta}$, $G21_{\alpha,\beta,\gamma,\delta}$, \par $G22_{\alpha,\beta,\gamma}$ ($\beta \neq 1$ or $\alpha = 1/2$), \par $G23_{\alpha,\beta,\gamma}$ ($\beta \neq 1$ or $\alpha = 2$)
& $G22_{\alpha,\beta,\gamma}$ ($\beta = 1$, $\alpha \neq 1/2$), \par $G23_{\alpha,\beta,\gamma}$ ($\beta = 1$, $\alpha \neq 2$) \\
\midrule

$\mathfrak{g}_2^{2}$ 
& $G24_{a,b}^{a\neq 2b}$, $G25_{\varepsilon,b}^{\varepsilon\in\{0,1\}}$
& None \\
\midrule

$\mathfrak{g}_2^{1/2}$ 
& $G26_{a,b}^{b\neq 0}$, $G27_{\varepsilon,b}^{\varepsilon\in\{0,1\}}$
& None \\
\midrule

$\mathfrak{g}_2^{-1}$ 
& $G28_{a,b,c}^{bc\neq 0}$, $G29_{\varepsilon,b,c}^{\varepsilon\in\{0,1\}, c\neq 0}$, \par $G30_a$, $G31_a$, $G32_a$, $G33_{0,b}$
& $G33_{1,b}$ \\
\midrule

$\mathfrak{g}_2^{1}$ 
& $G34$--$G39$ \par (All except $G40_a^{a\neq 0}$)
& $G40_a^{a\neq 0}$ \\
\midrule

$\mathfrak{g}_2^{0}$ 
& All other structures from \par $G41$ to $G59$ not listed \par in the adjacent column
& $G45_a$, $G47_a^{a\neq 0}$, $G48_{1,b}^{b\neq 0}$, \par $G58_{\varepsilon,b}^{(\varepsilon,b)\neq(0,0)}$, $G59_{1,b}^{b\neq 0}$ \\
\midrule

$\mathfrak{sl}_2(\mathbb{C})$ 
& $G60$
& $G61$ \\

\end{longtable}

Because of space constraints, the detailed verification of the special property is presented only for the GD algebras over \(\mathfrak h\).

\begin{prop}\label{thm:special-GD-on-h3}
Let \(\mathfrak h\) be the three-dimensional Heisenberg Lie algebra over \(\mathbb C\) with basis \(\{e_1,e_2,e_3\}\) and nonzero bracket \([e_1,e_2]=e_3\). For the GD algebra structures \(G1_\alpha,\ldots,G12_\alpha\) listed above, the non-special ones are precisely \(G7,G9,G10\). All the remaining cases,
\[
G1_\alpha,\ G2_\alpha,\ G3_{\alpha,\beta}^{\alpha\neq 0},\ G4_\alpha,\ G5,\
G6_\alpha,\ G8_{\alpha,\beta},\ G11_{\alpha,\beta},\ G12_\alpha,
\]
are special.
\end{prop}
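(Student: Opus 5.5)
The proof splits into two halves. For non-speciality it suffices to exhibit, for each of $G7,G9,G10$, an evaluation of one of the special identities \eqref{eq:GD_identity_35} or \eqref{eq:GD_identity_40} on basis vectors that fails. For speciality we realize each of the remaining algebras inside a GD algebra coming from a differential Poisson algebra. We proceed, wherever possible, through the construction results already proved: Proposition~\ref{prop:Novikov_GD_special}, where the bracket is $[x,y]=D(x)\circ y-D(y)\circ x$ for a Novikov derivation $D$, and Proposition~\ref{prop:gen-Poisson-GD-special}, which covers generalized Poisson algebras.

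\textbf{Non-special cases.} Since all brackets land in $\Bbbk e_3$ and only a few products are nonzero, the search space is small. We evaluate \eqref{eq:GD_identity_35} on tuples $(x_1,x_2,x_3,x_4)$ drawn from $\{e_1,e_2,e_3\}$.

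For $G7$ we try $x_1=e_2$, $x_3=e_1$, $x_2=x_4=e_1$. Then $[e_1,e_2]=e_3$ while $e_3\circ e_1=0$, and $e_1\circ e_1=e_1$, $e_2\circ e_1=e_2$, so the left and right sides pick up different multiples of $e_2$ or $e_3$.

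For $G9$ and $G10$, where $e_2$ acts as a right identity-like element on $\langle e_2,e_3\rangle$, we try $x_1=e_1$ and $x_2=x_3=x_4=e_2$. There $[e_1,e_2\circ e_2]\circ e_2=e_3\circ e_2=e_3$. The right-hand side involves $[e_1,(e_2\circ e_2)\circ e_2]=[e_1,e_2]=e_3$ plus $[e_2\circ e_2,e_1]\circ e_2=-e_3$, which gives a mismatch. If \eqref{eq:GD_identity_35} happens to hold on all basis tuples for some algebra, we run through \eqref{eq:GD_identity_40} instead. This is a finite, routine check.

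\textbf{Special cases.} For each remaining family we look for a Novikov derivation $D$ of $(\mathfrak h,\circ)$ with $D(e_1)\circ e_2-D(e_2)\circ e_1=e_3$ and $D(e_i)\circ e_3=D(e_3)\circ e_i$, so that Proposition~\ref{prop:Novikov_GD_special} applies. For example:
\begin{itemize}
\item in $G6_\alpha$ the Novikov product is itself $\alpha$ times a skew form valued in $e_3$;
\item in $G1_\alpha$ the operator $L_{e_1}$ is a derivation-like map.
\end{itemize}
When the bracket is not of this form, we instead build an explicit differential Poisson algebra $P=\Bbbk[t_1,\dots,t_k]$ with derivation $d$ and a Poisson bracket, and embed $e_i\mapsto f_i$ with $f_i\,d(f_j)$ and $\{f_i,f_j\}$ matching the structure constants. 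Typical choices are $d=\partial_{t_1}$ or $d=t_1\partial_{t_1}$ (for idempotent-type products such as $e_1\circ e_1=e_1$), together with a bracket of the form $\{t_i,t_j\}$ constant or linear, chosen so that $d$ is a Poisson derivation.

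\textbf{Main obstacle.} We expect the hardest step to be finding uniform embeddings for the parametric families $G3_{\alpha,\beta}$, $G8_{\alpha,\beta}$ and $G11_{\alpha,\beta}$. The plan is to first try generalized Poisson algebra realizations, setting $x\circ y=x\cdot D(y)$ with a commutative associative product found by solving for $\cdot$ and $D$ on the 3-dimensional space, possibly after adjoining a unit. We fall back to ad hoc polynomial models only where that fails.
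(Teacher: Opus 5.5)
\textbf{The gap: speciality.} The speciality half is the actual content of the proposition, and it is not carried out. You give no embedding, and both tools you name provably miss some families.

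First, Proposition~\ref{prop:Novikov_GD_special} cannot yield $G1_\alpha$. There $A\circ e_1=0$ and $A\circ e_2=\Bbbk(e_2+\alpha e_3)$, so $D(e_1)\circ e_2-D(e_2)\circ e_1\in\Bbbk(e_2+\alpha e_3)$ is never $e_3$. The same kind of computation excludes $G4_0$, $G6_0$, $G8_{\alpha,2\alpha}$ and $G11_{0,0}$.

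Second, your fallback $P=\Bbbk[t_1,\dots,t_k]$ fails because $P$ is a domain. On a GD subalgebra of $P^{(d)}$, $R_b$ is multiplication by the fixed element $d(b)$, so it is either zero or injective. This rules out:
\begin{itemize}
\item $G1_\alpha$, where $R_{e_2}\neq0$ but $e_2\circ e_2=0$;
\item $G8_{\alpha,\beta}$ and $G11_{\alpha,\beta}$, where $R_{e_2}\neq0$ kills $e_3$;
\item $G4_0$, where $e_2\circ e_1=e_2$ forces $d(\iota e_1)=1$, contradicting $e_1\circ e_1=e_1+e_2$.
\end{itemize}

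The missing idea is to use differential Poisson algebras with nilpotents, and usually an inverted generator, as the paper does. For $G1_\alpha$ it takes $P=\mathbb C[x,x^{-1},y,z]/(y^2,yz,z^2)$ with $\{x,y\}=z$, $z$ central, and $d(x)=0$, $d(y)=x^{-1}(y+\alpha z)$, $d(z)=x^{-1}z$. It uses similar quotients of $\mathbb C[y,y^{-1},x,z]$ for $G2_\alpha$ and $G8_{\alpha,\beta}$. For $G6_\alpha$ and $G11_{\alpha,\beta}$ it uses quotients of $\mathbb C[x,y,z,p,q]$ with square-zero auxiliaries $p=d(x)$, $q=d(y)$, whose products with $x,y$ encode the multiplication table.

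\textbf{What does work.} Your Proposition~\ref{prop:Novikov_GD_special} route is worth keeping where it applies, and there it is shorter than the paper's models.
\begin{itemize}
\item $D(e_2)=-e_3$, $D(e_1)=D(e_3)=0$ gives $G3_{\alpha,\beta}$, $G5$ and $G12_\alpha$.
\item $D(e_1)=e_3$, $D(e_2)=D(e_3)=0$ gives $G2_\alpha$.
\item $D=c\,\mathrm{diag}(2,1,3)$ with $c=(2\alpha-\beta)^{-1}$ gives $G8_{\alpha,\beta}$ for $\beta\neq2\alpha$.
\item Similar derivations handle $G4_\alpha$, $G6_\alpha$ and $G11_{\alpha,\beta}$ away from the degenerate values listed above.
\end{itemize}
So $G3$ is not the obstacle you anticipated; $G1_\alpha$ and the degenerate parameter values are.

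\textbf{A slip in the non-special half.} Your $G7$ witness is fine: $x_1=e_2$, $x_2=x_3=x_4=e_1$ gives $0$ on the left of \eqref{eq:GD_identity_35} and $[e_2,e_1]+e_3\circ e_1=-e_3$ on the right.

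The $G9$/$G10$ witness is wrong. You dropped the term $([x_3,x_1]\circ x_2)\circ x_4=([e_2,e_1]\circ e_2)\circ e_2=-e_3$, so at $x_1=e_1$, $x_2=x_3=x_4=e_2$ both sides equal $0$ and the identity holds.

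Take instead $x_3=e_1$, $x_1=x_2=x_4=e_2$. The left side is $([e_1,e_2]\circ e_2)\circ e_2=e_3$, while the right side vanishes because $e_1\circ e_2=0$. Your finite search over basis tuples would have found this, so this part is only a slip.
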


\begin{proof}
We first prove the special cases. In all constructions below we send \(e_1,e_2,e_3\) to \(x,y,z\), respectively, and use the Poisson bracket \(\{x,y\}=z\), with \(z\) Poisson central.

For \(G1_\alpha,G3_{\alpha,\beta}^{\alpha\neq0},G4_\alpha,G5,G12_\alpha\), take
\[
        P=\mathbb C[x,x^{-1},y,z]/(y^2,yz,z^2).
\]
The required Poisson derivations are:
\[
\begin{array}{c|ccc}
\text{case} & d(x) & d(y) & d(z)\\
\hline
G1_\alpha
& 0
& x^{-1}(y+\alpha z)
& x^{-1}z\\[1mm]
G3_{\alpha,\beta}^{\alpha\neq0}
& 1
& x^{-1}(\alpha y+\beta z)
& \alpha x^{-1}z\\[1mm]
G4_\alpha
& 1+x^{-1}y
& \alpha x^{-1}z
& 0\\[1mm]
G5
& 1+x^{-1}z
& 0
& 0\\[1mm]
G12_\alpha
& 1
& \alpha x^{-1}z
& 0 .
\end{array}
\]
Since \((y,z)^2=0\), the ideal \((y^2,yz,z^2)\) is stable under the bracket and under each \(d\), and a direct check on \(x,y,z\) shows that \(d\) is a Poisson derivation. The products \(u\circ v=u\,d(v)\) give exactly the five corresponding multiplication tables.

For \(G2_\alpha\), take
$
        P=\mathbb C[y,y^{-1},x,z]/(xz,z^2,x^2-yz),
$
with
$
        d(x)=y^{-1}(x+\alpha z), d(y)=1$ and $ d(z)=y^{-1}z.
$
The defining ideal is stable under the bracket and under \(d\), and \(d\) is a
Poisson derivation. The induced products are precisely those of \(G2_\alpha\).

For \(G8_{\alpha,\beta}\), take
$
        P=\mathbb C[y,y^{-1},x,z]/(xz,z^2,x^2-\alpha yz),
$
with
$
        d(x)=\beta y^{-1}z, d(y)=y^{-1}x$ and $ d(z)=0.
$
Again the defining ideal is stable and \(d\) is a Poisson derivation. The products \(u\circ v=u\,d(v)\) give \(G8_{\alpha,\beta}\).

For \(G6_\alpha\), take
$
        P=\mathbb C[x,y,z,p,q]/I_6,
$
where
$
        I_6=(xp,\ xq+\alpha z,\ yp-\alpha z,\ yq,\ zp,\ zq,\ p^2,\ pq,\ q^2),
$
let \(z,p,q\) be Poisson central, and define
$
        d(x)=p, d(y)=q, d(z)=d(p)=d(q)=0.
$
Then \(I_6\) is stable under the bracket and under \(d\), and the induced products satisfy
$
        x\circ y=-\alpha z, y\circ x=\alpha z,
$
with all other products among \(x,y,z\) equal to zero. Thus this realizes \(G6_\alpha\).

For \(G11_{\alpha,\beta}\), take
$
        P=\mathbb C[x,y,z,p,q]/I_{11},
$
where
$
        I_{11}=(xp-\alpha z,\ yp,\ xq-\beta z,\ yq-z,\ zp,\ zq,\ p^2,\ pq,\ q^2),
$
with the same Poisson bracket and the same derivation as above. Then \(I_{11}\) is stable under the bracket and under \(d\), and
$
        x\circ x=\alpha z, x\circ y=\beta z, y\circ y=z,
$
while \(y\circ x=0\). Hence \(G11_{\alpha,\beta}\) is special. This proves all the special cases.

It remains to exclude \(G7,G9,G10\). Every special GD algebra satisfies
\[
\tag{S}
[c,a\circ d]\circ b+([a,c]\circ d)\circ b
=
[c,(a\circ b)\circ d]-[c,a\circ b]\circ d.
\]
For \(G7\), substituting \(a=e_1\), \(b=e_1\), \(c=e_1\), \(d=e_2\) into \((S)\) gives \(0=-e_3\). Thus \(G7\) is not special. For \(G9\) and \(G10\), substituting \(a=e_1\), \(b=e_2\), \(c=e_2\), \(d=e_2\) gives \(e_3=0\). Hence \(G9\) and \(G10\) are not special. 
\end{proof}

Furthermore, based on the aforementioned 2,3-dimensional classification results, we consider the generality of the construction introduced in Proposition \ref{prop:n_GD_construction}. Below, we detail the specific procedure for constructing 3-dimensional GD algebras from 2-dimensional ones, and analyze the generality of this approach.

Let \(A=\operatorname{span}_{\mathbb{C}}\{x_1,x_2\}\) be a 2-dimensional GD algebra from the classification, and set \(\bar A=A\oplus \mathbb{C}h\). We fix the basis \(\{x_1,x_2,h\}\) of \(\bar A\). The operations on \(\operatorname{span}\{x_1,x_2\}\) are those of the 2-dimensional source algebra, while the extension is given by
\[
        [x_i,h]=0,\quad
        x_i\circ h=0,\quad
        h\circ h=0,\quad
        h\circ x_i=d(x_i),
        \quad \text{where }i=1,2,
\]
\(d:A\to A\) satisfies the conditions in Proposition~\ref{prop:n_GD_construction}.

The resulting 3-dimensional GD algebras are listed in Table~\ref{tab:constructed}.

\begin{longtable}{>{\raggedright\arraybackslash}p{0.28\textwidth}
                  >{\raggedright\arraybackslash}p{0.66\textwidth}}
\caption{3-dimensional GD algebras obtained from 2-dimensional GD algebras.}
\label{tab:constructed}\\
\toprule
Representative & Nonzero products and brackets \\
\midrule
\endfirsthead
\toprule
Representative & Nonzero products and brackets \\
\midrule
\endhead
\bottomrule
\endfoot

\(Z_0\)
& none. \\[0.35em]

\(Z_{\mathrm{nil}}\)
& \(h\circ x_2=x_1\). \\[0.35em]

\(Z_{\mathrm J}\)
& \(h\circ x_1=x_1,\quad h\circ x_2=x_1+x_2\). \\[0.35em]

\(Z_\mu\), \(\mu\in\mathbb C,\ \mu\sim\mu^{-1}\)
& \(h\circ x_1=x_1,\quad h\circ x_2=\mu x_2\). \\[0.35em]

\(T_1^{(0)}\)
& \([x_1,x_2]=x_1\). \\[0.35em]

\(T_1^{(1)}\)
& \([x_1,x_2]=x_1,\quad h\circ x_1=x_1\). \\[0.35em]

\(T_1^{(2)}\)
& \([x_1,x_2]=x_1,\quad h\circ x_2=x_1\). \\[0.35em]

\(T_2^{(0)}\)
& \(x_2\circ x_2=x_1\). \\[0.35em]

\(T_2^{(1)}\)
& \(x_2\circ x_2=x_1,\quad h\circ x_1=2x_1,\quad h\circ x_2=x_2\). \\[0.35em]

\(T_2^{\mathrm{Lie}}\)
& \(x_2\circ x_2=x_1,\quad [x_1,x_2]=x_1\). \\[0.35em]

\(T_3(r)\), \(r\in\mathbb C^*\)
& \(x_2\circ x_1=-x_1,\quad [x_1,x_2]=r x_1\). \\[0.35em]

\(N_1\)
& \(x_1\circ x_1=x_1,\quad x_2\circ x_2=x_2\). \\[0.35em]

\(N_2^{(0)}\)
& \(x_2\circ x_2=x_2\). \\[0.35em]

\(N_2^{(1)}\)
& \(x_2\circ x_2=x_2,\quad h\circ x_1=x_1\). \\[0.35em]

\(N_3(r)\), \(r\in\mathbb C\)
& \(x_1\circ x_2=x_1,\quad x_2\circ x_1=x_1,\quad
   x_2\circ x_2=x_2,\quad [x_1,x_2]=r x_1\). \\[0.35em]

\(N_4^{(0)}(r)\), \(r\in\mathbb C\)
& \(x_1\circ x_2=x_1,\quad x_2\circ x_2=x_2,\quad
   [x_1,x_2]=r x_1\). \\[0.35em]

\(N_4^{(1)}(r)\), \(r\in\mathbb C^*\)
& \(x_1\circ x_2=x_1,\quad x_2\circ x_2=x_2,\quad
   [x_1,x_2]=r x_1,\quad h\circ x_2=x_1\). \\[0.35em]

\(N_5^{(0)}(r)\), \(r\in\mathbb C\)
& \(x_1\circ x_2=x_1,\quad x_2\circ x_2=x_1+x_2,\quad
   [x_1,x_2]=r x_1\). \\[0.35em]

\(N_5^{(1)}(r)\), \(r\in\mathbb C^*\)
& \(x_1\circ x_2=x_1,\quad x_2\circ x_2=x_1+x_2,\quad
   [x_1,x_2]=r x_1,\quad h\circ x_2=x_1\). \\[0.35em]

\(N_6(\ell,r)\), \(\ell\in\mathbb C\setminus\{0,1\},\ r\in\mathbb C\)
& \(x_1\circ x_2=x_1,\quad x_2\circ x_1=\ell x_1,\quad
   x_2\circ x_2=x_2,\quad [x_1,x_2]=r x_1\). \\
\end{longtable}
\begin{prop}\label{thm:constructed-3d-gd}
Up to isomorphism, the 3-dimensional GD algebras obtained from Proposition~\ref{prop:n_GD_construction} are precisely those listed in Table~\ref{tab:constructed}. Apart from the identification \(Z_\mu\simeq Z_{\mu^{-1}}\) in the family \(Z_\mu\), no further identifications occur.
\end{prop}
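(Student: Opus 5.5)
The strategy is to run through the 2-dimensional classification of Theorem~\ref{thm:2dim-GD-structure} one source algebra at a time. For a fixed source $A$ I will first determine the space
\[
\operatorname{Der}_s(A)=\{d\mid d\in\operatorname{Der}(A,\circ)\cap\operatorname{Der}(A,[\cdot,\cdot]),\ d(x)\circ y=d(y)\circ x\ \ \forall x,y\in A\}.
\]
This is the set of admissible maps in Proposition~\ref{prop:n_GD_construction}. Next I will describe when two pairs $(A,d)$ and $(A',d')$ give isomorphic extensions $\bar A$. After that I will pick one normal form per orbit and compare everything with Table~\ref{tab:constructed}.

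The computation of $\operatorname{Der}_s(A)$ is a small linear problem. I write $d$ as a $2\times 2$ matrix and impose three things: the Leibniz rule for $[\cdot,\cdot]$, the Leibniz rule for $\circ$, and the symmetry condition. Here is how I expect this to go for each source algebra.
\begin{itemize}
\item For $T1(0)$, every $d$ is allowed. Conjugating $d$ gives the Jordan types, which are $Z_0$, $Z_{\mathrm{nil}}$, $Z_{\mathrm J}$ and $Z_\mu$.
\item For $T1(1)$, i.e.\ $\mathfrak r_2$, the Lie derivations are $\operatorname{ad}$-type. They reduce to $d=0$, $d(x_1)=x_1$, or $d(x_2)=x_1$.
\item For $T2$, the condition $d(x_2)\circ x_2=d(x_2)\circ x_2$ is automatic. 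The derivation condition on $x_2\circ x_2=x_1$ forces $d(x_1)=2\lambda x_1$ when $d(x_2)=\lambda x_2+\mu x_1$. The symmetry condition $d(x_1)\circ x_2=d(x_2)\circ x_1$ then kills $\lambda$ unless the products vanish, and this leaves the cases displayed in the table.
\item The $N$-families should mostly force $d$ to be nilpotent, or $d=0$, with a surviving $h\circ x_2=x_1$ only when $r\ne0$.
\end{itemize}

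For isomorphisms I will use Proposition~\ref{prop:inverse_n_GD_construction}. In an extension, $h$ spans a complement to $\bar A^2$ inside the subspace
\[
\{h\mid [\bar A,h]=0,\ \bar A\circ h=0\},
\]
and $A$ is a complement containing $\bar A^2$. I will show that an isomorphism $\bar A\to\bar A'$ may be normalized so that it restricts to an isomorphism $\psi\colon A\to A'$ and sends $h\mapsto ch+a$ with $a\in A$. The induced map then satisfies $d'=c\,\psi d\psi^{-1}+L_a$ restricted to $A$, for suitable $a$. So the classification reduces to orbits of $\operatorname{Aut}(A)\times\mathbb C^*$ acting on $\operatorname{Der}_s(A)$, modulo left multiplications by central, right-annihilating elements.

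A subtlety is that different 2-dimensional sources, or different choices of the complement $A$, can produce isomorphic $\bar A$. In particular, when $\bar A^2$ is 1-dimensional the complement $A$ is not unique. This is why only the listed representatives survive, for example $T_3(r)$ with $r\neq 0$ and $N_4^{(1)}(r)$ with $r\ne 0$.

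I expect the main obstacle to be the non-identification claim: showing that no two entries of the table are isomorphic, apart from $Z_\mu\simeq Z_{\mu^{-1}}$. To handle this, I will separate the algebras by invariants. The invariants I plan to use are:
\begin{itemize}
\item the isomorphism type of the Lie algebra, which is abelian or $\mathfrak r_2\oplus\mathbb C$;
\item the dimensions of $\bar A\circ\bar A$, $[\bar A,\bar A]$ and $\bar A^2$;
\item the dimension of the two-sided annihilator;
\item whether $(\bar A,\circ)$ is nilpotent;
\item the spectrum of $L_h$, taken up to scaling.
\end{itemize}

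The family $Z_\mu$ needs separate treatment. There the Novikov algebra is $\bar A=\mathbb C h\ltimes\mathbb C^2$ with $L_h=\operatorname{diag}(1,\mu)$. Rescaling $h$ to $\mu^{-1}h$ and swapping $x_1,x_2$ gives $Z_{\mu^{-1}}$. Conversely, the eigenvalue ratio of $L_h$ on $\bar A\circ\bar A$ is an invariant up to inversion, so no other identifications occur.

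For the remaining parametric families, I will recover $r$ and $\ell$ from the structure constants on $\bar A^2$, using the 2-dimensional parameter equivalences of Theorem~\ref{thm:2dim-GD-structure} applied to the subquotient determined by $A$.
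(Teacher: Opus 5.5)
The paper states this proposition without proof, so your plan has to stand on its own, and its central reduction step is false as stated. An isomorphism $\phi\colon\bar A\to\bar A'$ can be normalized to map $A$ onto $A'$ only when the complement is forced, that is, when $\bar A^2=A$ is $2$-dimensional. Write $K'=\{v\mid [\bar A',v]=0,\ \bar A'\circ v=0\}$. In that case $\phi(h)=ch'+a$ with $c\neq 0$ and $a\in A'\cap K'$, and your orbit description on $\operatorname{Der}_s(A)$ is exact.

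When $\dim\bar A^2\le 1$ the normalization fails, and these are precisely the cases that determine which entries appear in the table:
\begin{itemize}
\item $T2(0)$ with $d(x_2)=cx_1$, $c\neq0$, is $Z_{\mathrm{nil}}$ in the basis $c^{-1}h,\ x_2-c^{-1}h,\ x_1$.
\item $T2(1)$ with the same $d$ is $T_1^{(2)}$ in the same basis.
\item $T3_r$ with $r\neq0$ and $d(x_1)=ax_1$, $a\neq0$, is $T_1^{(1)}$ in the basis $a^{-1}h,\ r^{-1}(x_2+a^{-1}h),\ x_1$.
\item Every extension of $T3_0$ is $Z_\mu$ with $\mu=0$ (i.e.\ $h\circ x_1=x_1$), where the new $h$ is $-x_2$.
\end{itemize}
In each of these pairs the two sources are non-isomorphic, so no isomorphism can map source onto source, and your framework cannot detect them. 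Your sentence ``this is why only the listed representatives survive'' is therefore exactly the unproved half of ``precisely those listed''. To close the gap, split by $\dim\bar A^2$. In the low-dimensional case, exhibit basis changes of the above kind for every per-source orbit that is not in the table. Alternatively, classify directly the $3$-dimensional GD algebras with $K\not\subseteq\bar A^2$; by Proposition~\ref{prop:inverse_n_GD_construction} this is an intrinsic description of the outputs.

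Several of your per-source predictions are also wrong.
\begin{itemize}
\item For $T2$ the symmetry condition is vacuous, since $x_1\circ x_2=x_2\circ x_1=x_1\circ x_1=0$. With your $d(x_2)=\lambda x_2+\mu x_1$, the parameter $\lambda$ survives in $T2(0)$. After $x_2\mapsto x_2-(\mu/\lambda)x_1$ and rescaling $h$, it gives $T_2^{(1)}$.
\item In $T2(1)$ it is the Lie-derivation condition that forces $\lambda=0$: derivations of $\mathfrak r_2$ send $x_2$ into $\mathbb C x_1$.
\item For $N2$ you get the semisimple map $d(x_1)=ax_1$, $d(x_2)=0$, which produces $N_2^{(1)}$; it is not nilpotent.
\item You omit $T3_r$, where $\operatorname{Der}_s(A)=\{d\mid d(x_1)\in\mathbb C x_1,\ d(x_2)=0\}$.
\item The restriction $r\neq0$ for $N_4^{(1)}(r)$ and $N_5^{(1)}(r)$ is not a complement phenomenon, since there $\bar A^2=A$. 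At $r=0$ your own move $h\mapsto h-x_1$ kills $d$. This is allowed exactly because $x_1$ is right-annihilating and is Lie-central only when $r=0$.
\end{itemize}

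For non-identification, neither the spectrum of $L_h$ nor the annihilator of $\circ$ alone separates $N_4^{(0)}(r)$ from $N_4^{(1)}(r)$. Both $L_h$ are nilpotent, and both annihilators are lines, namely $\mathbb C h$ and $\mathbb C(h-x_1)$. Restriction to $\bar A^2$ gives $N4_r$ in both cases. The annihilator with respect to both operations does separate them: it is $\mathbb C h$ versus $0$. For $T_3(r)$, where $A$ is not canonical, recover $r$ either from $\bar A/\operatorname{Ann}(\bar A)\cong T3_r$ or from the identity $\operatorname{ad}_v=r\,L_v$ on the line $[\bar A,\bar A]$.
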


Among the algebras in Table~\ref{tab:constructed}, the ones with nonzero Lie bracket correspond to the normal forms in the 3-dimensional classification as follows.

\renewcommand{\arraystretch}{1.15}
\begin{longtable}{>{\raggedright\arraybackslash}p{0.38\textwidth}
                  >{\raggedright\arraybackslash}p{0.56\textwidth}}
\caption{Correspondence between the constructed algebras with nonzero Lie bracket and the normal forms \(G_i\).}
\label{tab:constructed-to-G}\\
\toprule
\textbf{Isomorphism} & \textbf{Explicit isomorphism \(\Phi\)} \\
\midrule
\endfirsthead
\toprule
\textbf{Isomorphism} & \textbf{Explicit isomorphism \(\Phi\)} \\
\midrule
\endhead
\bottomrule
\endfoot

\(T_1^{(0)} \cong G55_{0,0,0}\)
& \(\Phi(x_1)=E_1,\quad \Phi(x_2)=E_3,\quad \Phi(h)=E_2\). \\

\(T_1^{(1)} \cong G49\)
& \(\Phi(x_1)=E_1,\quad \Phi(x_2)=E_3,\quad \Phi(h)=E_2\). \\

\(T_1^{(2)} \cong G58_{0,0}\)
& \(\Phi(x_1)=E_1,\quad \Phi(x_2)=E_3,\quad \Phi(h)=E_2\). \\

\(T_2^{\mathrm{Lie}} \cong G55_{1,0,0}\)
& \(\Phi(x_1)=E_1,\quad \Phi(x_2)=E_3,\quad \Phi(h)=E_2\). \\

\(T_3(r) \cong G53_{0,0,-1/r}\), \(r\in\mathbb{C}^*\)
& \(\Phi(x_1)=E_1,\quad \Phi(x_2)=rE_3,\quad \Phi(h)=E_2\). \\

\(N_3(r) \cong G56_{1/r,\,1/r}\), \(r\in\mathbb{C}^*\)
& \(\Phi(x_1)=E_1,\quad \Phi(x_2)=rE_3,\quad \Phi(h)=E_2\). \\

\(N_4^{(0)}(r) \cong G57_{0,\,1/r}\), \(r\in\mathbb{C}^*\)
& \(\Phi(x_1)=E_1,\quad \Phi(x_2)=rE_3,\quad \Phi(h)=E_2\). \\

\(N_4^{(1)}(r) \cong G58_{0,\,1/r}\), \(r\in\mathbb{C}^*\)
& \(\Phi(x_1)=rE_1,\quad \Phi(x_2)=rE_3,\quad \Phi(h)=E_2\). \\

\(N_5^{(0)}(r) \cong G57_{1,\,1/r}\), \(r\in\mathbb{C}^*\)
& \(\Phi(x_1)=r^2E_1,\quad \Phi(x_2)=rE_3,\quad \Phi(h)=E_2\). \\

\(N_5^{(1)}(r) \cong G58_{1,\,1/r}\), \(r\in\mathbb{C}^*\)
& \(\Phi(x_1)=r^3E_1,\quad
  \Phi(x_2)=r^2E_2+rE_3,\quad
  \Phi(h)=r^2E_2\). \\

\(N_6(\ell,r) \cong G56_{1/r,\,\ell/r}\),
\(\ell\in\mathbb{C}\setminus\{0,1\}\), \(r\in\mathbb{C}^*\)
& \(\Phi(x_1)=E_1,\quad \Phi(x_2)=rE_3,\quad \Phi(h)=E_2\). \\

\end{longtable}

\begin{rem}
    
It is evident that the 3-dimensional GD algebras obtained via this construction are GD algebras on the Lie algebra $\mathfrak{g}_2^0$. Consequently, all the resulting algebras are special, with the exception of $N_4^{(1)}(r)$ and $N_5^{(1)}(r)$.
\end{rem}

In the following, we provide a characterization of nilpotency and solvability for the 2,3-dimensional classification results obtained above.

\begin{prop}\label{prop:nil-solv-2d-GD}
For the 2-dimensional complex GD algebras listed in Theorem~\ref{thm:2dim-GD-structure}, the nilpotent cases are 
$
        T1(0)$ and $ T2(0).
$
The solvable cases are 
\[
        T1(0),\quad T1(1),\quad T2(0),\quad T2(1),\quad T3_r.
\]

\end{prop}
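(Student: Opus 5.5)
The proof is a direct inspection of each normal form in Theorem~\ref{thm:2dim-GD-structure}. I would use two tools. For nilpotency I use Proposition~\ref{prop:no-low-dimensional-counterexample}: since $\dim A=2$, a GD algebra is nilpotent if and only if both its Lie algebra and its Novikov algebra are nilpotent. For solvability I use Proposition~\ref{thm:dim3}, together with the trivial converse that a solvable GD algebra has solvable underlying Lie and Novikov algebras, since $A_{\mathrm{Lie}}^{(n)}$ and $A_{\mathrm{Nov}}^{(n)}$ are contained in $A^{(n)}$. In dimension $\le 3$, solvability is therefore equivalent to solvability of both underlying algebras. Every $2$-dimensional Lie algebra is solvable, so solvability reduces to solvability of $(A,\circ)$.

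\textbf{Solvable cases.} For T1(0), T1(1), T2(0), T2(1) and T3$_r$ I compute $A^{(1)}$ directly.
\begin{itemize}
\item In T1 and T2 we get $A^{(1)}\subseteq \Bbbk e_1$. Since $e_1\circ e_1=0$ and $[e_1,e_1]=0$, it follows that $A^{(2)}=0$.
\item In T3$_r$ we have $A^{(1)}=\Bbbk e_1$ and $e_1\circ e_1=0$, so $A^{(2)}=0$.
\end{itemize}

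\textbf{Non-solvable cases.} For N1--N6 the Novikov algebra contains an idempotent:
\begin{itemize}
\item N1 and N2 have $e_2\circ e_2=e_2$;
\item N3, N4 and N6 have $e_2\circ e_2=e_2$;
\item N5 has $e_2\circ e_2=e_1+e_2$.
\end{itemize}
In every case $A\circ A$ contains an element with nonzero $e_2$-component. I would show the derived series stabilizes at a nonzero subspace.
\begin{itemize}
\item For N1, $A\circ A=A$.
\item For N2, the line $\Bbbk e_2$ satisfies $e_2\circ e_2=e_2$, so $e_2\in A^{(n)}$ for all $n$ by induction.
\item For N3--N6, $e_2\circ e_2\equiv e_2 \pmod{\Bbbk e_1}$ and $e_1\circ e_2=e_1$. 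Hence $A\circ A=A$, and the series is constant.
\end{itemize}
Thus N1--N6 are not solvable, and therefore not nilpotent.

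\textbf{Nilpotency among the solvable ones.}
\begin{itemize}
\item T1(0) and T2(0) have zero bracket and a Novikov product with all products in $\Bbbk e_1$, where $e_1$ annihilates everything. Hence $A^{\langle 3\rangle}=0$; one can also invoke Theorem~\ref{thm:engel} with $\mathcal I_{R\operatorname{ad}}=0$.
\item T1(1), T2(1), and T3$_r$ with $r\neq0$ have Lie algebra $\mathfrak r_2$, which is not nilpotent: $\operatorname{ad}_{e_2}e_1=-e_1$ for T1 and T2, and the analogous statement holds with $r\ne0$ for T3$_r$. So these are not nilpotent.
\item T3$_r$ with $r=0$ needs separate treatment. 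There $e_2\circ e_1=-e_1$, so $L_{e_2}$ has eigenvalue $-1$ and the Novikov algebra is not nilpotent, by the operator characterization implicit in Lemma~\ref{lem:dial-vs-M}. Hence T3$_0$ is not nilpotent either.
\end{itemize}

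\textbf{Main obstacle.} The only point needing care is the non-solvability of N2, whose product $e_2\circ e_2=e_2$ does not give $A\circ A=A$. This is settled by the idempotent argument: $e_2$ lies in every $A^{(n)}$, so the derived series never reaches zero.
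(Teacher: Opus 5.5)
Your proposal is correct and follows essentially the same route as the paper. It is a case-by-case computation showing $A^{(2)}=0$ for the T-types and $A\circ A=A$ for the N-types (or the stable line $\Bbbk e_2$ for N2), with non-nilpotency coming either from the non-nilpotent Lie algebra $\mathfrak r_2$ or, for T3$_0$, from $e_2\circ e_1=-e_1$. Two remarks: the appeals to Propositions~\ref{prop:no-low-dimensional-counterexample} and~\ref{thm:dim3} are not needed, since you verify everything directly; and your claim that every N-type contains an idempotent is false for N5, which has none, though your $A\circ A=A$ argument covers N5 anyway.
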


\begin{proof}
The algebras \(T1(0)\) and \(T2(0)\) are nilpotent. Indeed, \(T1(0)\) has both operations zero, while for \(T2(0)\), we have
$
        A^{\langle 2\rangle}=\mathbb C e_1,
        A^{\langle 3\rangle}=0.
$

For \(T1(1)\) and \(T2(1)\), we have
$
        A^{(1)}=\mathbb C e_1$ and $
        A^{(2)}=0,
$
so both algebras are solvable. They are not nilpotent, since their underlying Lie algebra is the non-abelian 2-dimensional Lie algebra, which is not nilpotent.

For \(T3_r\), we also have
$
        A^{(1)}=\mathbb C e_1,
        A^{(2)}=0.
$
Thus \(T3_r\) is solvable for every \(r\). It is not nilpotent: if \(r\neq0\), the underlying Lie algebra is not nilpotent. If \(r=0\), then \(e_2\circ e_1=-e_1\), so
$
        A^{\langle n\rangle}=\mathbb C e_1,
      \text{for } n\geq2.
$

It remains to exclude the \(N\)-type families. For
$
        N1,N3_r,N4_r,N5_r, N6_{\ell,r},
$
the displayed Novikov products give \(A\circ A=A\). Hence \(A^{(1)}=A\), so these algebras are not solvable. For \(N2\), we have
$
        A^{(1)}=\mathbb C e_2,
        A^{(2)}=\mathbb C e_2,
$
because \(e_2\circ e_2=e_2\). Thus \(N2\) is not solvable.

Therefore the solvable cases are
$
        T1(0),T1(1), T2(0),T2(1),T3_r,
$
and among them the nilpotent cases are \(T1(0)\) and \(T2(0)\).
\end{proof}

\begin{prop}\label{thm:nil-solv-G1-G61}
For the complex 3-dimensional GD algebras \(G1,\ldots,G61\) listed above, the nilpotent cases are 
$
        G6_\alpha,
        G8_{\alpha,\beta}$ and $
        G11_{\alpha,\beta}.
$
The solvable cases are 
\[
\begin{gathered}
        G1_\alpha,\quad
        G6_\alpha,\quad
        G8_{\alpha,\beta},\quad
        G11_{\alpha,\beta},                               
        G13_{0,\beta,\gamma}^{\beta\neq0},\quad
        G14_{0,\beta},\quad
        G15_{0,\beta},\quad
        G16_{0,\beta},\quad
        G17_0, \quad 
         G18_{\alpha,0,\gamma,\delta}
        \\[1mm]
        G19_{\alpha,\beta,0,\delta},\quad
        G20_{\alpha,\beta,0,\delta},\quad
        G21_{\alpha,\beta,\gamma,0},                      
        G22_{\alpha,\beta,0},\quad
        G23_{\alpha,\beta,0},\quad
        G24_{0,\beta},\quad
        G25_{\alpha,0},\quad
        G26_{0,\beta}                                       \\[1mm]
         G27_{\alpha,0}, \quad   
        G28_{0,\beta,\gamma},\quad
        G29_{\alpha,0,\gamma},\quad
        G30_0,\quad
        G31_0,\quad
        G32_0,\quad
        G33_{\alpha,0},                                   
        G34_{0,\beta,\gamma},\quad
        G35_{\alpha,0,\gamma},                                             \\[1mm]
        G36_0,\quad
        G37_0,\quad
        G38_0, \quad 
        G44,\quad
        G45_\alpha,\quad
        G46,\quad
        G49,                                               
        G52_{0,\beta,\gamma}^{\beta\gamma\neq0},\quad
        G53_{\alpha,0,\gamma}^{\alpha\in\{0,1\},\,\gamma\neq0},
        \\[1mm]
         G54_{\alpha,0,\gamma}^{\alpha\in\{0,1\},\,\gamma\neq0},  \quad    
        G55_{\alpha,\beta,0}^{\alpha,\beta\in\{0,1\}},\quad
        G58_{\alpha,0}^{\alpha\in\{0,1\}} .
\end{gathered}
\]

\end{prop}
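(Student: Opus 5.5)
The verification is a case-by-case check over the list $G1,\dots,G61$. It is organised by the general criteria of Sections~\ref{sec:nilpotency} and~\ref{sec:solvability}, so that most families are settled by a single invariant rather than by computing the full series.

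\textbf{Excluding non-solvable structures.} I would first use the necessary conditions. A solvable GD algebra has solvable underlying Lie and Novikov algebras, so $G60$ and $G61$ (on $\mathfrak{sl}_2(\mathbb C)$) are excluded at once. For the remaining families, all Lie algebras are solvable. The Novikov part is solvable exactly when no idempotent-like element survives in the derived series. Concretely, in each family I look for an element $u$ with $u\circ u=\lambda u+(\text{terms in an ideal})$ and $\lambda\neq0$, or a subalgebra with $B\circ B=B$, as in the repeated-squaring arguments of Proposition~\ref{thm:dim3}.
\begin{itemize}
\item On $\mathfrak g_1$ and $\mathfrak g_2^\alpha$, the product always contains $e_3\circ e_3=(\text{parameter})\,e_3+\cdots$, with $e_1,e_2$ spanning the Lie derived ideal. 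Solvability then forces that parameter to vanish: $\alpha=0$ in $G13$--$G17$, $\beta=0$ in $G18$, $\gamma=0$ in $G19$--$G23$, and so on.
\item Families containing $e_1\circ e_1=e_1$, $e_2\circ e_2=e_2$ or similar (for instance $G2$--$G5$, $G7$, $G9$, $G10$, $G12$, $G41$--$G43$, $G50$, $G51$) are excluded the same way.
\item Parameter families that force a nonzero parameter, such as $G39$, $G40$, $G47$, $G48$, $G56$, $G57$ and $G59$, are non-solvable for the same reason.
\end{itemize}

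\textbf{Confirming solvability of the survivors.} I would not expand the derived series directly; instead I would apply Proposition~\ref{thm:dim3}. Its hypotheses are that the Lie algebra is solvable (true for all families except $G60$, $G61$) and that the Novikov algebra is solvable. For the latter, after setting the diagonal parameter to zero, every product lands in $\mathrm{span}\{e_1,e_2\}$ (or $\Bbbk e_3$ for $\mathfrak h$). The Novikov algebra is then checked to be solvable in one or two squaring steps. For example, in $G18_{\alpha,0,\gamma,\delta}$ one has $A\circ A\subseteq\mathrm{span}\{e_1,e_2\}$ and this span squares to zero. This yields exactly the listed solvable set, and $G1_\alpha$, $G44$--$G46$, $G49$ are handled identically.

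\textbf{Nilpotency.} A nilpotent GD algebra has a nilpotent Lie algebra, so only the Heisenberg branch $G1$--$G12$ (and the abelian one, which is not in this list) can be nilpotent. Among the solvable Heisenberg cases $G1,G6,G8,G11$, the algebra $G1_\alpha$ has $e_1\circ e_3=e_3$. Hence $L_{e_1}$ is not nilpotent and $A^{\langle n\rangle}\ni e_3$ for all $n$, so $G1_\alpha$ is not nilpotent. For $G6,G8,G11$, the Novikov algebra is nilpotent: in $G8$ one has $e_2\circ e_2=e_1$ with products of $e_1,e_2$ landing in $\Bbbk e_3$, so a length-3 chain ends at zero. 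Proposition~\ref{prop:no-low-dimensional-counterexample} then gives GD nilpotency. The non-solvable cases are automatically non-nilpotent.

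\textbf{Main obstacle.} The main obstacle is bookkeeping on the $\mathfrak g_2^0$ branch ($G41$--$G59$). There the Lie algebra has a nontrivial center direction $e_2$, so solvability can fail through $e_2\circ e_2$ as well as through $e_3\circ e_3$. I would treat each family by projecting onto $A/\mathrm{span}\{e_1\}$, which is a Novikov ideal. In that two-dimensional quotient the 2-dimensional solvability list of Proposition~\ref{prop:nil-solv-2d-GD} can be invoked directly.
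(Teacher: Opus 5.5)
Your argument is correct, but it is organised differently from the paper's. The paper does not invoke Proposition~\ref{thm:dim3} or Proposition~\ref{prop:no-low-dimensional-counterexample} here. Instead it computes the GD derived series and lower central series family by family: $A^{(2)}=0$ for $G1_\alpha,G6_\alpha,G8_{\alpha,\beta},G11_{\alpha,\beta}$; $A^{\langle 3\rangle}=0$ for $G6_\alpha,G11_{\alpha,\beta}$ and $A^{\langle 4\rangle}=0$ for $G8_{\alpha,\beta}$; and $A^{\langle n\rangle}=\operatorname{span}\{e_2,e_3\}$ for $G1_\alpha$. On $G13$--$G38$ it uses the same observation you do (every product other than $e_3\circ e_3$ lies in $\operatorname{span}\{e_1,e_2\}$), but applies it to the mixed series directly.

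You instead use that the Lie part is known (solvable except on $\mathfrak{sl}_2(\mathbb C)$, nilpotent only on $\mathfrak h$). You reduce everything to the Novikov product and lift back with the two low-dimensional propositions. Your obstruction is a Novikov quotient containing $\bar u$ with $\bar u\circ\bar u=\lambda\bar u$, $\lambda\neq0$. This makes precise what the paper only summarizes as ``$A^{(1)}=A$ or a nonzero stable derived subspace''. The trade-off is that your proof inherits its validity from Propositions~\ref{thm:dim3} and~\ref{prop:no-low-dimensional-counterexample}, so you must rely on their direct case-analysis proofs. The paper remarks that both could also be read off from this classification, which would make your argument circular. The paper's computation is self-contained and in turn serves as an independent check of those propositions.

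One statement in your last paragraph is false and should be removed. $\Bbbk e_1$ is not a Novikov ideal in $G44$--$G48$: for instance $e_1\circ e_3=e_2$ in $G44$--$G46$ and $e_1\circ e_3=\alpha e_1+e_2$ in $G47$. So there is no Novikov quotient $A/\Bbbk e_1$ to feed into Proposition~\ref{prop:nil-solv-2d-GD}. This does not damage the proof, since your earlier steps already settle these families. $G47,G48$ are excluded via the $e_3$-coefficient of $e_3\circ e_3$, and $G44$--$G46$ are solvable because $A\circ A\subseteq\operatorname{span}\{e_1,e_2\}$ and this span squares to zero.

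There are also a few slips to fix:
\begin{enumerate}
\item In $G21$ the parameter that must vanish is $\delta$, not $\gamma$.
\item In $G1_\alpha$ and $G8_{\alpha,\beta}$ the products land in $\operatorname{span}\{e_2,e_3\}$ and $\operatorname{span}\{e_1,e_3\}$ respectively, not in $\Bbbk e_3$.
\item In $G8_{\alpha,\beta}$ it is the length-$4$ Novikov monomials that vanish, since $(e_2\circ e_2)\circ e_2=\alpha e_3$.
\item On $\mathfrak g_2^0$ the Lie derived ideal is $\Bbbk e_1$. What your $e_3\circ e_3$ test actually needs is that no product or bracket other than $e_3\circ e_3$ has an $e_3$-component. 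This does hold in every family where you apply the test.
\end{enumerate}
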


\begin{proof}
We first determine the nilpotent cases. By Theorem~\ref{thm:main}, a nilpotent GD algebra must have nilpotent underlying Lie algebra. Hence all families whose underlying Lie algebra is not nilpotent are excluded. This removes \(G13,\ldots,G61\): in \(G13,\ldots,G40\) the lower central series of the underlying Lie algebra stabilizes at \(\operatorname{span}\{e_1,e_2\}\), in \(G41,\ldots,G59\) it stabilizes at \(\mathbb C e_1\), and in \(G60,G61\) the underlying Lie algebra is simple.

Thus only \(G1,\ldots,G12\) can be nilpotent. Among them, direct computation from \eqref{eq:GD-powers} gives
$
        A^{\langle3\rangle}=0
$
for \(G6_\alpha\) and \(G11_{\alpha,\beta}\), since all products and brackets take values in \(\mathbb C e_3\) and \(e_3\) annihilates the algebra. For \(G8_{\alpha,\beta}\), we have
\[
        A^{\langle2\rangle}=\operatorname{span}\{e_1,e_3\},\qquad
        A^{\langle3\rangle}=\mathbb C e_3,\qquad
        A^{\langle4\rangle}=0.
\]
Hence these three families are nilpotent. The remaining Heisenberg families are not nilpotent: \(G1_\alpha\) satisfies
$
        A^{\langle n\rangle}=\operatorname{span}\{e_2,e_3\}
        $ for $ n\geq2,
$
while all other non-solvable cases listed below are automatically non-nilpotent.

We now determine solvability by computing the first terms of the GD derived series. For \(G1,\ldots,G12\), one obtains
$
        A^{(2)}=0
$
for
\[
        G1_\alpha,\quad G6_\alpha,\quad
        G8_{\alpha,\beta},\quad G11_{\alpha,\beta}.
\]
All other families in this block have either \(A^{(1)}=A\), or a nonzero stable derived subspace, hence they are not solvable.

For \(G13,\ldots,G38\), we have
$
        [A,A]=\operatorname{span}\{e_1,e_2\}.
$
Moreover, all products except possibly \(e_3\circ e_3\) take values in \(\operatorname{span}\{e_1,e_2\}\). Therefore, if the \(e_3\)-component of \(e_3\circ e_3\) is nonzero, then \(A^{(1)}=A\), and the algebra is not solvable. If this component is zero, then \(A^{(1)}\subseteq \operatorname{span}\{e_1,e_2\}\), and the induced products on this subspace are either zero or one of the nilpotent products
$
        e_1\circ e_1=e_2$ and $ e_2\circ e_2=e_1.
$
Thus the GD derived series terminates exactly in the parameter cases listed above from \(G13\) to \(G38\).

For \(G39,\ldots,G59\), the direct GD derived computations give
$
        A^{(2)}=0
$
for
\[
        G44,\quad G45_\alpha,\quad G46,\quad G49.
\]

For \(G52,G53,G54,G55\) and \(G58\), solvability is again equivalent to the vanishing of the \(e_3\) component of \(e_3\circ e_3\), giving 
\[
        G52_{0,\beta,\gamma},\quad
        G53_{\alpha,0,\gamma},\quad
        G54_{\alpha,0,\gamma},\quad
        G55_{\alpha,\beta,0},\quad
        G58_{\alpha,0}.
\]
The remaining families in this block either have \(A^{(1)}=A\), or contain a nonzero stable derived subspace, for instance the one generated by \(e_2\circ e_2=e_2\) in \(G50\) and \(G51\). Hence they are not solvable.

Finally, \(G60\) and \(G61\) have simple underlying Lie algebra. Therefore \([A,A]=A\), so \(A^{(1)}=A\). They are not solvable.
This completes the proof.
\end{proof}

\section*{Acknowledgements}
Finally, the authors would like to thank Professor Ivan Kaygorodov for his valuable suggestions on the content of this paper.

\end{document}